\documentclass[10pt]{amsart}
\usepackage{euscript}
\usepackage{amssymb}
\usepackage{amsmath}
\usepackage{enumitem}
\usepackage{epic}
\usepackage{graphics}
\usepackage{epsfig}
\usepackage{color}
\usepackage{quiver}

\usepackage{amscd,euscript}
\usepackage[frame,cmtip,curve,arrow,matrix,line,graph]{xy}

\usepackage{setspace}

\usepackage{tikz}
\usepackage{tikz-cd}
\tikzset{
	cross/.pic = {
		\draw[rotate = 45] (-#1,0) -- (#1,0);
		\draw[rotate = 45] (0,-#1) -- (0, #1);
	}
}

\usepackage[breaklinks,colorlinks,citecolor=teal,linkcolor=teal,urlcolor=teal,pagebackref,hyperindex]{hyperref}

\usepackage[margin=1.2in]{geometry}

\newtheoremstyle{my}{1.5em}{0.5em}{\em}{}{\sc}{.}{0.5em}{}

\newtheorem{theorem}{Theorem}[section]
\newtheorem*{theorem*}{Theorem}
\newtheorem{thm}{Theorem}[section]

\newtheorem*{Theorem*}{Theorem}

\newtheorem{corollary}[thm]{Corollary}
\newtheorem*{corollary*}{Corollary}

\newtheorem{prop}[thm]{Proposition}
\newtheorem*{prop*}{Proposition}

\newtheorem*{conjecture*}{Conjecture}

\newtheorem*{question*}{Question}
\newtheorem{defn}[thm]{Definition}

\newtheorem*{definitions*}{Definitions}

\newtheorem*{rem*}{Remark}

\newtheorem*{remark*}{Remark}

\newtheorem*{remarks*}{Remarks}
\newtheorem*{example*}{Example}
\newtheorem{Example}[thm]{Example}
\newtheorem*{examples*}{Examples}

\newtheorem*{convention*}{Convention}
\newtheorem*{conventions*}{Conventions}

\newtheorem*{exercise*}{Exercise}
\newtheorem*{bibliographical-note*}{Bibliographical note}

\newtheorem{lemma}[thm]{Lemma}

\newtheorem{remark}[thm]{Remark}

\newcommand{\scrH}{\EuScript{H}}

\newcommand{\supp}{\textnormal{supp}}
\newcommand{\val}{\textnormal{val}}
\newcommand{\ival}{\textnormal{ival}}

\newcommand{\even}{\textnormal{even}}

\newcommand{\reg}{\textnormal{reg}}

\newcommand{\scrC}{\EuScript{C}}

\newcommand{\calO}{\mathcal{O}}

\newcommand{\frI}{\mathfrak{I}}

\newcommand{\Vol}{\textnormal{-vol}}

\newcommand{\bR}{\mathbb{R}}
\newcommand{\bZ}{\mathbb{Z}}
\newcommand{\bQ}{\mathbb{Q}}
\newcommand{\bC}{\mathbb{C}}
\newcommand{\bN}{\mathbb{N}}
\newcommand{\bP}{\mathbb{P}}

\newcommand{\fprod}[1]{\sideset{}{_{#1}}\prod}

\newcommand{\Sym}{\mathrm{Sym}}

 \newcommand{\edge}{\mathrm{edg}}
 \newcommand{\leaf}{\mathrm{lf}}
\newcommand{\id}{\mathrm{id}}

\renewcommand{\ker}{\mathrm{ker}}

\newcommand{\Hom}{\mathrm{Hom}}
\newcommand{\RHom}{\mathrm{R}{\mathcal H}{\mathit om}}

\def\lra#1{\overset{#1}{\longrightarrow}}
\def\mod#1{#1\text{-mod}}

\renewcommand{\hbar}{\overline{\frak{h}}}
\newcommand{\Aut}{\mathrm{Aut}}

\newcommand{\scrM}{\EuScript{M}}

\newcommand{\scrF}{\EuScript{F}}

\newcommand{\scrI}{\EuScript{I}}

\newcommand{\scrG}{\EuScript{G}}

 \numberwithin{equation}{section}

\renewcommand{\leq}{\leqslant}
\renewcommand{\geq}{\geqslant}

\newcommand{\C}{\mathbb C}

\newcommand{\bT}{\mathbb{T}}

\def\frM{\mathfrak{M}}

\newcommand{\ev}{\operatorname{ev}}

\newcommand{\Coh}{\operatorname{Coh}}

\newcommand{\AbC}{\operatorname{AbC}}

\newcommand{\Spec}{\operatorname{Spec}}
\newcommand{\Proj}{\operatorname{Proj}}

\newcommand{\ver}{\mathrm{ver}}

\newcommand{\sing}{\mathrm{sing}}
\newcommand{\BL}{\mathrm{Bl}}

\newcommand{\virt}{\operatorname{vir}}

\newcommand{\CVect}{\bC\textnormal{-Vect}}

\renewcommand{\sc}{\operatorname{sc}}

\title[Bounds for Genus Zero GW Invariants.]{Bounds for Genus Zero Gromov-Witten Invariants.}
\author{Mark McLean}
\address{Mark McLean, Stony Brook}
\email{mark.mclean@stonybrook.edu}

\begin{document}

\begin{abstract}
	We give a bound for the norm of each primary genus zero Gromov-Witten invariant in any smooth projective variety.
	This bound depends only on the maximum degree of its defining polynomials, the norms of the differential forms representing each insertion, the
	number of marked points and the degree of the curves.
	These bounds grow factorially in the number of marked points and a multiple of the degree.
	This implies that the Borel transform of the genus zero primary generating function converges absolutely in a particular translation of the ample cone.

	To prove these bounds, we start with Siebert's formula, expressing genus zero Gromov-Witten invariants in terms of the Segre class of the normal cone of the underlying moduli space inside the space of curves mapping to projective space and the Chern classes of the bundle whose fiber over a curve is the space of sections of the pulled back normal bundle.
	We then use this description to bound each primary Gromov-Witten invariant in terms of an intermediate count of curves called the $D$-volume.
	A deformation to the normal cone argument is then used to compare such an intermediate count with one in projective space.
\end{abstract}

\maketitle

{ \setcounter{tocdepth}{4}
	\setcounter{secnumdepth}{4}
	\hypersetup{linkcolor=black}
	\tableofcontents
}

\section{Introduction}

We establish upper bounds for primary genus-zero Gromov-Witten invariants of smooth projective varieties.
We will prove the following main theorem:

\begin{theorem} \label{maintheorem}
	Let $X \subset \bP^N$ be a smooth projective subvariety cut out by polynomials of degree at most $\delta_X \in \bN$.
	Let $d \in H_2(X;\bZ)$ be a curve class and let $\deg(d) \in \bN$ be its image in $H_2(\bP^N;\bZ) \cong \bZ$.
	Then for each tuple $\alpha_1,\cdots,\alpha_m$ of homogenous closed differential forms on $X$ we have the following bound of primary genus $0$ Gromov-Witten invariants:
	\begin{equation} \label{eqnmaintheoremineqluaty}
		\left|\langle [\alpha_1],\cdots,[\alpha_m] \rangle^X_{m,d} \right| \leq
		m!\left(4952{N+(4\delta_X)^{2^{N-1}}+1 \choose N}^6(\deg(d)+1)\right)!
		\prod_{j=1}^m |\alpha_j|_{C^0,\omega}
	\end{equation}
	where $|\alpha_j|_{C^0,\omega}$ is the $C^0$ norm of $\alpha_j$ with respect to the metric on $X$ coming from the Fubini-Study form $\omega$\footnote{We use the convention that the Fubini-Study form is Poincar\'{e} dual to the hyperplane.} for each $j$.
\end{theorem}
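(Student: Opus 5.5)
The argument follows the route sketched in the abstract.

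\emph{Step 1: Siebert's formula.} Write $X$ as the zero locus of a section $s$ of $E=\bigoplus_i \calO(\delta_i)$ on $\bP^N$ with $\delta_i\le\delta_X$. To keep the rank controlled, I will replace the defining polynomials by at most $\binom{N+\delta_X}{N}$ of them, or use generic combinations so the scheme structure is unchanged. Let $M=\overline{\scrM}_{0,m}(\bP^N,\deg d)$. It is smooth as an orbifold, with dimension bounded linearly in $N$, $m$ and $\deg d$. On $M$ there is the vector bundle $\mathcal E=\pi_*f^*E$, whose fiber over $(C,f)$ is $H^0(C,f^*E)$; its rank is $\sum_i(\delta_i\deg d+1)$. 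The moduli space $\overline{\scrM}_{0,m}(X,d)$ is a union of components of the zero locus of the induced section. Siebert's formula then gives
\[
[\overline{\scrM}_{0,m}(X,d)]^{\virt}=\bigl\{c(\mathcal E)\cap s(C_{Z}M)\bigr\}_{\textnormal{vdim}},
\]
where $C_ZM$ is the normal cone and the expression is restricted to the class $d$. The invariant is obtained by integrating $\prod_j\ev_j^*\alpha_j$ against this class.

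\emph{Step 2: reduce to a $D$-volume.} Expand the product of Chern and Segre classes. Each Chern class of $\mathcal E$ will be expressed through the splitting principle and Grothendieck–Riemann–Roch in terms of $\psi$-classes, evaluation pullbacks of $\omega$, and boundary divisors. Each of these is represented by a semipositive form, or by a difference of two such forms, whose norm is bounded by an explicit constant. Using the $C^0$ bounds $|\alpha_j|_{C^0,\omega}$, I bound $|\alpha_j|$ pointwise by $|\alpha_j|_{C^0,\omega}\,\omega^{\deg\alpha_j/2}$ up to a constant. The integral is then bounded by a sum of integrals of positive classes $D^k$ against the Segre-class cycle of the cone. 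This is the "$D$-volume" of the cone for a suitable ample class $D$ on $M$.

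\emph{Step 3: compare with projective space.} Deform to the normal cone. The cone $C_ZM$ is a specialization of $M$ itself inside $M\times\bP^1$, and positivity of $D$ makes intersection numbers of $D$ with its components and multiplicities bounded by those on $M$. The multiplicities are governed by the degree of $s$, which grows like $\delta_X\deg d$. Here I will use Fulton–MacPherson/van Gastel type degree bounds for Segre classes of ideals generated in bounded degree, iterated through the $N$ dimensions; this is the source of the $(4\delta_X)^{2^{N-1}}$ term. The remaining quantity, $\int_M D^{\dim M}$, is a count on $\overline{\scrM}_{0,m}(\bP^N,\deg d)$. I will bound it by mapping into a product of projective spaces via the parametrized-map (Kontsevich–Manin graph space) description, giving factorial growth $m!\,(C(\deg d+1))!$.

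The main obstacle is Step 3. Controlling the Segre class of a possibly very non-reduced cone by positive degrees requires a careful effective Bézout-type argument. Collecting all constants into the single factorial in \eqref{eqnmaintheoremineqluaty} is then routine but tedious.
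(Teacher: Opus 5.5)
Your Step~1 uses the wrong obstruction bundle. In Siebert's formula (Proposition~\ref{propcomparisonwithvfc}) the Chern class is that of $V_{m,d}=\pi_*f^*N_{X/\bP^N}$, not of $\mathcal E=\pi_*f^*E$. The two agree only if $E|_X=N_{X/\bP^N}$, which fails unless $X$ is a complete intersection cut out by $s$. In general $N_{X/\bP^N}\subset E|_X$ has a nonzero excess quotient $K$, which is convex as a quotient of $E|_X$. Hence $c(\mathcal E|_Z)=c(V_{m,d})\,c(K_{m,d})$, and with $k=c_1(TX)(d)+\dim X-3+m$,
\[
\bigl\{c(\mathcal E)\cap s(C_ZM)\bigr\}_{k}=[X_{m,d}]^{\virt}+\sum_{i\ge 1}c_i(K_{m,d})\cap\bigl\{c(V_{m,d})\cap s(C_ZM)\bigr\}_{k+i}.
\]
The brackets on the right do not vanish for obstructed moduli: if $X_{m,d}$ is smooth with obstruction bundle $\mathrm{Ob}$ of rank $e$, the bracket is $c_{e-i}(\mathrm{Ob})\cap[X_{m,d}]$. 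If you instead take $k=\dim M-\mathrm{rk}\,\mathcal E$, the class sits $c_1(K)(d)+\mathrm{rk}\,K>0$ below the virtual dimension, so every invariant would vanish. With the correct bundle your Step~2 expansion through $\ev_j^*\omega$ also breaks, because the Chern classes of $N_{X/\bP^N}$ are not in general polynomials in $[\omega]$ and must be controlled in terms of $\delta_X$ some other way. The paper does this with a resolution $0\to V\to T\to W\to 0$, where $T=\calO_X(r)^{\mu}$, $\mu=\binom{N+r}{N}$, $W$ is strongly convex, and $r$ bounds the Castelnuovo--Mumford regularity of $\frI_X$ and $\frI_X^2$ (Lemma~\ref{lemmaexactsequence}). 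This gives $c(V_{m,d})\cap s(C)=c(T_{m,d})\cap s(C\oplus W_{m,d})$, and Lemma~\ref{lemmasegreclasscomputation} then turns $c(T_{m,d})$ into an alternating sum of Segre classes of $C\oplus T_{m,d}^{\ell}\oplus W_{m,d}$. The term $(4\delta_X)^{2^{N-1}}$ is the Bayer--Mumford regularity bound for $\frI_X^2$, not a Segre-class degree bound.

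The more serious gap is that Steps~2--3 have no positivity mechanism. The Segre class is not an effective cycle. Its pieces are $\pi_*\bigl(c_1(\calO(1))^j\cap[\overline C]\bigr)$ on the projective completion, and $\calO(1)$ is not nef for a positive bundle; already $s(\calO_{\bP^1}(1))=[\bP^1]-[\mathrm{pt}]$. So integrals of positive classes against it are not volumes, and a pointwise bound $|\alpha_j|\lesssim|\alpha_j|_{C^0,\omega}\,\omega^{\deg\alpha_j/2}$ only controls integrals against positive currents. You must work on the effective cycle $[\overline C]$ and write $c_1(\calO(1))$ as an explicit ample class minus a pulled-back one. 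In practice this means making the dual obstruction bundle globally generated after twisting by an explicit ample class on the moduli space whose form dominates the $\ev_j^*\omega$. A GRR expansion in $\psi$-classes and boundary strata gives nothing of the sort. Your deformation to the normal cone of $Z$ in $M$ hits the same wall: it needs $\BL^*D-E$ nef on $\BL_ZM$, and the natural route, global generation of $\mathcal I_Z(D)$ with $\mathcal I_Z$ a quotient of $\mathcal E^*$, is the same statement.

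The missing idea is to mark every point of $u^{-1}(D)$ for a $V$-large divisor $D$, giving the spaces $X_{m+D!,d}$. This makes $V_{m+D!,d}\to\ev^*V^{\boxtimes m'}$ fiberwise injective (Lemma~\ref{lemmainjectivefromvlarge}). Everything then lives on $P_{V^{\boxtimes m'}}$ over $X^{m'}$, where $D_{\sum m',V}$ is explicitly ample (Lemma~\ref{lemmmavverylarge}) and the $C^0$-norms of the $\alpha_j$ compare directly (Proposition~\ref{propboundsforsegreclass}). The deformation is then done on the target, via $\BL_{0\times X}(\bP^1\times Y)$ for an ambient $Y\supset X$. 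There compatibility follows from global generation of twisted ideal sheaves such as $\frI_X(r)$ (Proposition~\ref{propositionboundsfornormalcones}). The projective-space side is converted back into Segre-twisted invariants of $\bP^N\times\bP^{2\mu^2-1}$ and bounded by torus localization (Propositions~\ref{propdvolumeboundsproductsfo} and~\ref{propGWboundsnonses}).

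Finally, nothing in your sketch reduces the $m$-dependence to exactly $m!$. The paper's volume estimates grow like $(\cdots+164\mu^4m)!$, and it needs the divisor-axiom reduction of Lemma~\ref{lemmamainargumentstuffesf} to reach \eqref{eqnmaintheoremineqluaty}. So the constant does not come from routine bookkeeping.
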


This is the first result of its kind providing a bound for all primary genus zero Gromov-Witten invariants in terms of the geometry of the target projective variety.
These bounds are very, very far from being optimal even in elementary examples \footnote{These inequalities say that there are at most $10$ to the power of $600$ million conics passing through five general points in the plane.}. For instance, we do not prove that the primary generating function converges, which is conjectured in \cite[Section 4.3]{kontsevich1994gromov}, \cite[Conjecture 8.1.2]{coxkatz1999mirror} and \cite[Conjecture 1]{zinger2014genus} and known in many examples (See \cite{iritani2007convergence}, \cite{zinger2014genus}, \cite{coatesiritani} and \cite{koto2022convergence}).
It is possible using the methods of this paper to improve the inequality \eqref{eqnmaintheoremineqluaty}, but we have chosen brevity instead.
A very different possible approach to giving bounds in the symplectic setting is contained in \cite{swaminathan2021quantitative}.

We have the following consequence of the theorem above. Let
\[
	H^{>2,\even}(X) := \bigoplus_{j > 1} H^{2j}(X; \bR).
\]
Let $\textnormal{Amp}(X) \subset H^2(X; \bR)$ be the ample cone of $X$.
We also let $H_2^+(X) \subset H_2(X;\bZ)$ be the subset of classes represented by proper curves mapping to $X$.
We write $\Gamma$ for the $\Gamma$-function.

\begin{corollary}
	Let $\Omega_X := 19808{N + (4\delta_X)^{2^{N-1}+1} \choose N}^6[\omega|_X] \in \textnormal{Amp}(X)$.
	Then the Borel transform of the primary generating function:
	\[
		B : (\Omega_X + \textnormal{Amp}(X)) \times H^{>2,\even}(X) \times \bR_{>0} \to \bR,
	\]
	\begin{equation} \label{eqnborelsumconvergeega}
		B(\eta, t^+,s) := \sum_{m \in \bN, \ d \in H_2^+(X)}\frac{\exp(d \cdot \eta)}{m! \, \Gamma(d \cdot \eta+1)} \langle t^+, \cdots, t^+ \rangle^X_{m,d} s^{d \cdot \eta}
	\end{equation}
	converges absolutely and hence is analytic.
\end{corollary}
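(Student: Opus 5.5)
The plan is to deduce the corollary directly from Theorem \ref{maintheorem}, combined with the dimension constraint for Gromov--Witten invariants and Stirling's formula. First fix $t^+\in H^{>2,\even}(X)$ and choose a closed form $\tau$ representing it. Split $\tau=\sum_{j>1}\tau_j$ into homogeneous pieces and expand $\langle t^+,\dots,t^+\rangle^X_{m,d}$ multilinearly. This gives at most $(\dim_\bR X)^m$ terms, each with homogeneous insertions of degree $\geq 4$. By Theorem \ref{maintheorem}, each such term is bounded by $m!\,(K(\deg d+1))!\,C^m$, where
\[
K=4952{N+(4\delta_X)^{2^{N-1}}+1 \choose N}^6
\]
and $C=\max_j|\tau_j|_{C^0,\omega}$. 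Hence the $(m,d)$ summand of \eqref{eqnborelsumconvergeega} is bounded in absolute value by
\[
\frac{e^{x}s^{x}}{\Gamma(x+1)}\,(K(\deg d+1))!\,(C\dim_\bR X)^m,\qquad x:=d\cdot\eta .
\]
The factor $m!$ cancels.

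The sum over $m$ still has to be controlled. For this I would use the dimension axiom. A term with homogeneous insertions of degrees $2k_j$ vanishes unless
\[
\sum_j (k_j-1)=\dim_\bC X-3+c_1(TX)\cdot d .
\]
Since every $k_j\geq 2$, this forces $m\leq \dim_\bC X+c_1(TX)\cdot d$. I then bound $c_1(TX)\cdot d\leq (N+1)\deg d$. Pull $TX\subset T\bP^N|_X$ back to each component $\bP^1$ of a stable map. By the Euler sequence, the pullback of $T\bP^N$ is a quotient of $\calO(e)^{N+1}$, where $e$ is the degree of the component, so its splitting summands all have degree $\geq 0$ and total degree $(N+1)e$. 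Any subbundle of such a bundle has degree at most the sum of the nonnegative summands, namely $(N+1)e$. Summing over components gives the claim. Consequently, for fixed $d$ there are only finitely many nonzero terms in $m$, and their total contribution is at most $(\dim_\bC X+(N+1)\deg d+1)\,C'^{\,\dim_\bC X+(N+1)\deg d}$ for a constant $C'$.

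Next I control $x$ in terms of $D:=\deg d$. Write $\eta=\Omega_X+\eta'$ with $\eta'\in\textnormal{Amp}(X)$. Then $\eta'\geq \epsilon[\omega|_X]$ for some $\epsilon>0$ on the closed cone of effective curves, so
\[
x\geq (4K'+\epsilon)D,\qquad K':=19808/4\cdot{N+(4\delta_X)^{2^{N-1}+1}\choose N}^6\geq K .
\]
In particular $x\geq (4K+\epsilon)D$. The number of classes $d\in H_2^+(X)$ with $\deg d=D$ is polynomial in $D$, since they are lattice points in a compact slice of the Mori cone. Hence it suffices to show that
\[
\sum_D \mathrm{poly}(D)\,C''^{D}\sup_{x\geq (4K+\epsilon)D}\frac{(e s)^{x}}{\Gamma(x+1)}(K(D+1))!
\]
converges, together with the finitely many terms with $D=0$. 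This is the main (though routine) obstacle. By Stirling, $(es)^x/\Gamma(x+1)\approx (e^2s/x)^x$, which is eventually decreasing in $x$. Meanwhile $(K(D+1))!\leq (K(D+1))^{K(D+1)}$. Comparing at $x=(4K+\epsilon)D$, the ratio behaves like
\[
\frac{(K(D+1))^{K(D+1)}(e^2s)^{x}}{x^{x}},
\]
and since $x$ exceeds $4K D$, this decays like $D^{-3KD}$ times an exponential in $D$. This beats $\mathrm{poly}(D)\,C''^{D}$, so the series converges absolutely. The convergence is locally uniform in $(\eta,t^+,s)$, because $C$, $\epsilon$ and $s$ vary continuously. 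Each term is real-analytic, so the sum is analytic.
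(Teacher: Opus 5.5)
Your argument is essentially the paper's. You bound each summand by Theorem \ref{maintheorem}, cap $m$ linearly in $\deg d$ using the dimension constraint (every insertion has complex degree at least $2$), and let $\Gamma(d\cdot\eta+1)$, with $d\cdot\eta\geq 4K\deg d$, absorb the factor $(K(\deg d+1))!$. The paper does this last step by noting $\tfrac12 d\cdot\eta\geq K(\deg d+3)$. Then the factorial is at most $\Gamma(\tfrac12 d\cdot\eta+1)$, and each $d$ contributes at most $(d\cdot\eta+1)A^{d\cdot\eta}/\Gamma(\tfrac12 d\cdot\eta)$. You instead compare at the smallest admissible value of $x$ via Stirling. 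You also make explicit the polynomial count of classes of fixed degree, which the paper leaves implicit.

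The one step that does not work as written is the final inference. A locally uniform limit of real-analytic functions need not be real-analytic: uniform limits of polynomials on an interval can be any continuous function. To conclude analyticity, let $(\eta,t^+,s)$ range over a small complex neighbourhood of a given real point and prove locally uniform convergence there, so that $B$ is the restriction of a holomorphic function.

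Your estimates survive this. For $z=d\cdot\eta$ one then has $|\operatorname{Im}z|\leq c\deg d$ while $\operatorname{Re}z\geq 4K\deg d$. The three factors behave as follows:
\begin{enumerate}
\item $|e^{z}|=e^{\operatorname{Re}z}$;
\item $|s^{z}|=|s|^{\operatorname{Re}z}e^{-\operatorname{Im}z\,\arg s}$;
\item by Stirling's formula for complex arguments, $1/|\Gamma(z+1)|\leq e^{(\operatorname{Im}z)^2/\operatorname{Re}z+O(1)}/\Gamma(\operatorname{Re}z+1)$.
\end{enumerate}
All the extra factors are $e^{O(\deg d)}$, which your $D^{-3KD}$ decay absorbs. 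The Gromov--Witten factor, being multilinear in $t^+$, obeys the same bound with a slightly larger $C$. The paper itself simply asserts analyticity from absolute convergence.
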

\begin{proof}
	We have
	\begin{equation} \label{eqncdotedt}
		\frac{1}{2}(d \cdot \eta) \geq 4952{N+(4\delta_X)^{2^{N-1}}+1 \choose N}^6(\deg(d)+3), \quad d \cdot \eta \geq (N+1)\deg(d) + N-3
	\end{equation}
	for each  $d \in H_2^+(X) - 0$ and each $\eta \in \Omega_X + \textnormal{Amp}(X)$.
	Since $t^+$ is a sum of classes of degree higher than $2$ and by Lemma \ref{lemmadimensionofX}, we have
	\begin{equation} \label{eqneafet49t4}
		m \leq (N+1)\deg(d) + N-3 \overset{\textnormal{\eqref{eqncdotedt}}}{\leq} d \cdot \eta
	\end{equation}
	if $\langle t^+, \cdots, t^+ \rangle^X_{m,d} \neq 0$.
	Hence, by Theorem \ref{maintheorem} combined with the fact that the sum of the $|\cdot|_{C^0,\omega}$-norms of the homogenous components of a form on $X$ is at most $2^{2\dim(X)}$ times its $|\cdot|_{C^0,\omega}$-norm, we get
	\begin{equation} \label{eqnboundgwinvariant}
		\begin{aligned}
			\sum_{m \in \bN} \left| \frac{\exp(d \cdot \eta)}{m! \, \Gamma(d \cdot \eta+1)} \langle t^+, \cdots, t^+ \rangle^X_{m,d} s^{d \cdot \eta}  \right|
			 & \leq \sum_{m=0}^{\lfloor d \cdot \eta \rfloor} \frac{m!\Gamma(\frac{1}{2}d \cdot \eta+1)(e s(2^{2\dim(X)}|t^+|_{C^0,\omega}+1))^{d \cdot \eta}}{m!\Gamma((d \cdot \eta+1))} \\
			 & \leq (d \cdot \eta+1) \frac{((e s(2^{2\dim(X)}|t^+|_{C^0,\omega}+1))^2)^{\frac{1}{2}d \cdot \eta}}{\Gamma(\frac{1}{2}(d \cdot \eta))}
		\end{aligned}
	\end{equation}
	for each $(\eta,t^+,s) \in (\Omega_X + \textnormal{Amp}(X)) \times H_2^{>2,\even}(X) \times \bR$, $m \in \bN$ and $d \in H_2^+(X) - 0$.
	So the sum \eqref{eqnborelsumconvergeega} converges absolutely.
\end{proof}
It would be nice if the Legendre transform in the $s$-variable for $B$ existed since this would produce an analytic function whose asymptotic power series at $0$ would be equal to the primary generating function. Such an analytic function would satisfy the WDVV equation for instance.

We will now give a brief sketch of the proof of Theorem \ref{maintheorem}.
The key idea is to try to compare Gromov-Witten invariants of $X$ with those of projective space.
For each variety $Y$, we let $H_2^+(Y) \subset H_2(Y;\bZ)$ be the subset of homology classes represented by morphisms of curves to $Y$.
We let $Y_{m,d}$ be the moduli space
of stable genus zero nodal curves with $m$ marked points representing $d$ for any $m \in \bN$ and $d \in H_2^+(Y)$.
For any smooth hypersurface $E \subset Y$, we let $Y_{m+E!,d} \subset Y_{m+E\cdot d,d}$ be the closure of the space of curves $u$ transverse to $E$ with $u^{-1}(E)$ the union of the last $E \cdot d$ marked points.
For any vector bundle $\pi_W : W \to Y$, we define $W_{m,d} := W_{m,\iota_*d}$ where $\iota : Y \hookrightarrow W$ is the zero section inclusion map.
We let $W_{m+E!,d} := W_{m+(\pi_W^{-1}E)!,\iota_*d} \subset W_{m+E \cdot d,d}$.
Note that these moduli spaces have some similarities with those in \cite{cieliebak5symplectic}.

In order to get bounds for genus zero Gromov-Witten invariants we will use a description of the virtual fundamental class by Siebert (\cite{siebert2004virtual}) as a homogenous component of the cap product of the Segre class of the normal cone of $X_{m,d}$ inside $\bP^N_{m,{\iota_X}_*d}$
with the total Chern class of $NX_{m,d}$ where $NX \to X$ is the normal bundle of $X$ in $\bP^N$
and $\iota_X : X \hookrightarrow \bP^N$ is the inclusion map.
As a result, we first find bounds for integrals of the form:
\begin{equation} \label{eqnintegralcone}
	\langle [\alpha_1],\cdots,[\alpha_m] \rangle^{s(C_\bullet)}_{m,d} := \int_{s(C_{m,d})} \prod_{i=1}^m \ev_i^* \alpha_j
\end{equation}
where $\alpha_1,\cdots,\alpha_m$ are homogenous de Rham forms on $X$, $C_{m,d} \to X_{m,d}$ is a subcone of $V_{m,d}$ for some vector bundle $V$ which is a pullback of a cone in $V_{0,d}$ under the forgetful map and $s(C_{m,d})$ is its Segre class.
This is done in terms of the \emph{$D$-volume} $D\Vol(C_{m,d})$ (Definition \ref{defnvolumeformultiplecones}).
To define the $D$-volume for a smooth divisor $D$ we first define $C_{m+D!,d}$ to be the pullback of the cone $C_{m,d}$ to $X_{m+D!,d}$ under the forgetful map forgetting the last $D \cdot d$ marked points.
Let $\ev : X_{m+D!,d} \to X^{m'}$ and
$\ev : C_{m+D!,d} \to V^{\boxtimes m'}$ be the evaluation maps where $V^{\boxtimes m'} \to X^{m'}$ is the product vector bundle and $m' = m+D \cdot d$.
Let $P_{V^{\boxtimes m'}} := \bP(V^{\boxtimes m'} \oplus \bC)$ be the projective compactification of $V^{\boxtimes m'}$.
Suppose $\pi_{P_{V^{\boxtimes m'}}} : P_{V^{\boxtimes m'}} \to X^{m'}$ is the natural projection map and $p_i : X^{m'} \to X$ the $i$th projection map for each $i$.
Let $\widetilde{\ev} : C_{m+D!,d} \to \ev^*V^{\boxtimes m'}$ be the natural evaluation map and let $P\ev : \ev^*P_{V^{\boxtimes m'}} \to P_{V^{\boxtimes m'}}$ be the natural projection map.
We let $\overline{\widetilde{\ev}(C_{m+D!,d})}$ be the closure of the image $\widetilde{\ev}(C_{m+D!,d})$ in $\ev^*P_{V^{\boxtimes m'}}$.
We let $D_{\sum m', V} := \pi_{P_{V^{\boxtimes m'}}}^*\sum_{i=1}^{m'} p_i^*D + \infty_{V^{\boxtimes m}}$ be a divisor in $P_{V^{\boxtimes m'}}$
where $\infty_{V^{\boxtimes m'}}$ is the divisor at infinity.
The $D$-volume is then defined to be:
\begin{equation} \label{exnqdet}
	D\Vol(C_{m,d}) := \frac{1}{m!(D\cdot d)!} \int_{P\ev_*[\overline{\widetilde{\ev}(C_{m+D!,d})}]} \exp(D_{\sum m',V}).
\end{equation}
More generally we define $D\Vol(C^i_{m,d})_{i \in I}$ for multiple cones $C_{m,d}$, $i \in I$ using a fiber product of bundles of the form $P_{V^{\boxtimes m'}}$ over $X^{m'}$
(See Definition \ref{defnvolumeformultiplecones}).

Next, we give give bounds for the norm of the integral \eqref{eqnintegralcone} in terms of $D\Vol(C_{m,d})$
(Proposition \ref{propboundsforsegreclass})
and conversely bounds for $D\Vol(C_{m,d})$ in terms of the norm of integral \eqref{eqnintegralcone} for certain $D$ (Proposition \ref{propsegreboundsforDvolume}).
Let us explain the key idea behind the proof of Proposition \ref{propboundsforsegreclass} in the special case where $m=1$, $X=Y \times \bP^1$, $d=[\textnormal{pt} \times \bP^1]$ and we have a single cone $C$ inside a vector bundle $V_{h,d}$ over $X$ where $V$ is the pullback of a vector bundle over $Y$. In this case, $X_{m,d}=X$ and $V_{m,d}=V$.
Let $\overline{C}$ be the closure of $C$ inside $\bP(V\oplus \bC)$. Let $\infty_V = \bP(V \oplus 0)$ be the divisor at infinity in $\bP(V\oplus \C)$ and let $\pi : \bP(V \oplus\C) \to X$ be the natural projection map.
Suppose $D$ is far inside the ample cone of $X$ and linearly equivalent to $4D'$ for some $D'$. Let $\omega'$ and $\omega_C$ be K\"{a}hler forms Poincar\'{e} dual to $D'$ and $2\pi^*D' + \infty_V$ respectively.
The norm of \eqref{eqnintegralcone} in our special case is
less than or equal to
\begin{equation}
	\sum_r \left|\int_{\overline{C}} \pi^*\alpha_1 \cup \infty_V^r \right| = \sum_{r=1}^{\dim(C)} \left|\int_{\overline{C}} \pi^*\alpha \cup ((2\pi^*D' + \infty_V) - (2\pi^*D'))^r\right|
\end{equation}
\begin{equation}
	\leq \sum_{r=1}^{\dim(C)} \sum_{k=1}^r {r \choose k} \left|\int_{\overline{C}} \pi^*\alpha_1 \cup (\omega_C)^{r-k} \cup (2\pi^*\omega')^k\right|
\end{equation}
\begin{equation}
	\overset{\textnormal{Remark \ref{remarkcomparisonofnorms}}}{\leq}
	\sum_{r=1}^{\dim(C)} \sum_{k=1}^r {r \choose k}
	\left| \pi^*\alpha_1 \cup \omega_C^{r-k} \cup (\pi^*\omega_{D'})^k \right|_{C^0,\omega_C+2\pi^*\omega'} \int_{\overline{C}} \exp(\omega_C + 2\pi^*\omega')
\end{equation}
\begin{equation}
	\overset{\textnormal{Lemmas \ref{lemmaproductnorms} and \ref{lemmaaboutpositivedefiniteforms}}}{\leq} C\left| \alpha_1 \right|_{C^0,\omega}
	\int_{\overline{C}} \exp(\pi^* D + \infty_V)
	\leq C \left| \alpha_1 \right|_{C^0,\omega} D\Vol(C_{h,d})
\end{equation}
where
\begin{equation}
	C = \sum_{r=1}^{\dim(C)}\sum_{k=1}^r {r \choose k} 2^{2\dim(X)} {2\dim(X) \choose 2}^k
	(r-k)! {\dim(C) \choose r-k} \dim(X)^k.
\end{equation}
In order for the proof of Proposition \ref{propboundsforsegreclass} to work in general, we need to embed our cone $C_{m+D!,d}$ in $\ev^* V^{\boxtimes m'}$ where $\ev : X_{m+D!,d} \to X^{m'}$ is the evaluation map.
Lemma \ref{lemmainjectivefromvlarge} does this showing that the natural map $V_{m+D!,d} \to \ev^*V^{\boxtimes m'}$ is injective for appropriate $D$. This lemma only works for moduli spaces of the form $X_{m+D!,d}$, rather than $X_{m,d}$, since there are many marked points on each irreducible component of each curve.


In the rest of Section \ref{sectionsomeboundsfordvolumes} we show that the $D$-volume satisfies some other useful technical properties.
One of these technical theorems allows us to bound the $D$-volumes of cones of the form $V_{m,d}$ for a convex vector bundle $V$ using curves in $P_V$ (Proposition \ref{propcurvesincompatificationsofconvexvecctorbundles}). This is important because the total Chern class of $NX_{m,d}$ appears in Siebert's description of the virtual fundamental class.

Now, one important cone is the image $C^{X/\bP^N}_{m,d}$ in $NX_{m,d}$ of the normal cone of $X_{m,d}$ inside $\bP^N_{m,(\iota_X)_*d}$.
In Section \ref{boundsfornormalcones}
we bound $D|_X\Vol(C^{X/\bP^N}_{m,d},(C^i_{m,d}|_{X_{m,d}})_{i \in I})$
by $D\Vol(C^i_{m,d})_{i \in I}$ where $C^i_{m,d}$, $i \in I$ are certain cones over $\bP^N_{m,(\iota_X)_*d}$ and $D$ is a generic divisor in $\bP^N$ of sufficiently high degree (See Proposition \ref{propositionboundsfornormalcones}).
Let us illustrate the proof of Proposition \ref{propositionboundsfornormalcones} in the case where $X = \bP^1$ is a line inside $\bP^2$, $m = 0$, $d = [X]$, $D$ is a generic degree $2$ hypersurface and $I$ is a singleton set with the only cone being $0$.
In this case, $X_{0,h+D|_X!,d}$ consists of two points. Its normal cone inside $\bP^2_{0,h+D!,(\iota_X)_*d}$ is $X_{0,h+D|_X!,d} \times \bC^2$. Hence, $D|_X\Vol(C^{X/\bP^2}_{h,d}) = \frac{1}{2}$.
Now, $\bP^2_{0,h+D!,(\iota_X)_*d} = D \times D \cong \bP^1 \times \bP^1$ since each curve is determined by its intersection with $D \cong \bP^1$ and these intersection points are labelled.
The image of this space under the evaluation map is the product embedding $D \times D \subset \bP^2 \times \bP^2$. Hence, $D\Vol(\bP^2_{h,(\iota_X)_*d}(0))$ is one-quarter of the self intersection of this image which is $4$.
Let us describe the geometric mechanism demonstrating $D|_X\Vol(C^{X/\bP^2}_{h,d}) \leq D\Vol(\bP^2_{h,(\iota_X)_*d})(0)$ in our special case.
Let $\chi$ be the blowup of $0 \times X$ inside $\bP^1 \times \bP^2$, let $E$ be its exceptional divisor and $\widetilde{D}$ the proper transform of $\bP^1 \times D$ in $\chi$.
Let $\chi_2 := \chi \times_{\bP^1} \chi$
and let $\Pi : \chi_2 \to \bP^1$ be the corresponding projection map.
Define $\pi_k : \chi_2 \to \chi$, $k=1,2$ to be the projection maps to each factor.
Let $D_2 := \widetilde{D} \times_{\bP^1} \widetilde{D} \subset \chi_2$.
In the proof Proposition \ref{propositionboundsfornormalcones}, $D_2$ is the image under the evaluation map of a moduli space of curves.
Define $D_E$ to be $D_2 \cap E^2$ minus the preimage of the diagonal in $X \times X$ under the blowdown map.
Then since $\widetilde{D}-E$ is nef, we have
\begin{equation}
	D|_X\Vol(C^{X/\bP^2}_{0,d}) = \frac{1}{2} \int_{\bP^2 \sqcup \bP^2} \exp(\bP^1 \sqcup \bP^1)
\end{equation}
\begin{equation}
	\leq \frac{1}{2} \int_{(\bP^1)^2 \sqcup (\bP^1)^2} \exp((\infty \times \bP^1 + \bP^1 \times \infty) \sqcup (\infty \times \bP^1 + \bP^1 \times \infty))
\end{equation}
\begin{equation}
	= \frac{1}{2} \int_{D_E} \exp(\pi_1^* (\widetilde{D}-E) + \pi_2^* (\widetilde{D}-E))
\end{equation}
\begin{equation}
	\leq \frac{1}{2} \int_{\Pi^{-1}(0) \cap D_2} \exp(\pi_1^* (\widetilde{D}-E) + \pi_2^* (\widetilde{D}-E))
\end{equation}
\begin{equation}
	= \frac{1}{2} \int_{\Pi^{-1}(1) \cap D_2} \exp(\pi_1^* (\widetilde{D}-E) + \pi_2^* (\widetilde{D}-E))
	=  D\Vol(\bP^2_{0,(\iota_X)_*d}(0)).
\end{equation}


Using Siebert's description of the virtual fundamental class we next describe the virtual fundamental class in terms of Segre classes of the cones $NX_{m,d}$
and $C^{X/\bP^N}_{m,d}$. Using this we then give upper bounds for norms Gromov-Witten invariants in terms of $D$-volumes of these cones (Section \ref{vfcconnection}).
In fact, we use cones coming from vector bundles $T$ and $W$ where there exists a short exact sequence of convex vector bundles \begin{equation} \label{eqnsaintroses}
	0 \to NX \to T \to W \to 0
\end{equation}
where $T$, later on, is the restriction of a sum of line bundles on $\bP^N$. This is because the tools above compute Segre classes rather than Chern classes and
the total Chern class of $NX_{m,d}$ is $c(T_{m,d}) \cap s(W_{m,d})$ and the total Chern class $c(T_{m,d})$ can be computed more easily since it extends to a sum of line bundles over $\bP^N$.

In Section \ref{sectionboundsforproductsofprojhectve} we give explicit upper bounds for Gromov-Witten invariants twisted by sums of line bundles over products of projective space.
This in turn allows us to give upper bounds for the $D$-volumes involving the bundle $T_{m,d}$. Since these bounds only need to be very coarse we just use the localization formula \cite{kontsevich1995enumeration} directly without needing any more sophisticated arguments (E.g. we don't need to work as hard as \cite{zinger2014genus}).

Finally, in Section \ref{sectionmainargument} we assemble all of these ingredients together.
Here is also where we use the constant $\delta_X$. Such a constant can be used to bound the Castelnuovo-Mumford regularity of the defining ideal of $X$ as well as its square.
The bound is needed to embed certain projectivizations of vector bundles over $X$ into products of projective space of bounded dimension and degree.
It also allows us to construct the short exact sequence \eqref{eqnsaintroses} above.
Once we have this data we can use the theorems involving $D$-volumes above to bound the norm of our Gromov-Witten invariants (See Theorem \ref{theoremproperestimates}).

Conjecturally, bounds for genus zero Gromov-Witten invariants should involve an exponential of a multiple of the degree, rather than a multiple of the degree factorial
(\cite[Section 4.3]{kontsevich1994gromov}, \cite[Conjecture 8.1.2]{coxkatz1999mirror} and \cite[Conjecture 1]{zinger2014genus}).
It is the techniques used to prove Propositions \ref{propboundsforsegreclass} and \ref{propositionboundsfornormalcones} which are largely responsible for this factorial, rather than an exponential bound, as seen in the calculations above.
At this time the author has no idea how to improve them to get the conjectured exponential bound.

\bigskip

{\bf Acknowledgements:} The author is grateful to Hiroshi Iritani, Eric Riedl, Bernd Siebert, Ivan Smith, Jason Starr, Mohan Swaminathan and Aleksey Zinger for helpful discussions.
This paper is supported by NSF grant number DMS-2203308.

\section{Notation and Conventions} \label{sectionconventions}

The natural numbers $\bN$ start from $0$. We write $\bN_{>0}$ if we wish to start from $1$.
By \emph{stack} we will mean a separated Deligne Mumford stack of finite type over $\bC$ with quasi-projective coarse moduli space and with the fppf topology unless stated otherwise.
The only exception to this rule is when we refer to the moduli stack of genus zero pre-stable curves $\frM_{m}$ (Section \ref{sectioncomprisinvfc}).
If $(Y_i)_{i \in I}$ is a collection of stacks over a stack $X$ indexed by a set $I$, we define $$\fprod{X}_{i \in I}Y_i$$ to be their fiber product over $X$.

\begin{defn} \label{defnmodulieg}
	Let $Y$ be a smooth quasi-projective variety.
	We define $H_2^+(Y) \subset H_2(Y;\bZ)$ to be the submonoid of classes represented by morphisms from curves to $Y$.
	For each $m \in \bN$ and $d \in H_2^+(Y)$,
	we let $Y_{m,d}$ be the moduli stack
	of stable genus zero curves with $m$ marked points
	representing $d$.
	We let $\ev_i : Y_{m,d} \to Y$
	be the evaluation map at the $i$th marked point for each $i = 1,\cdots,m$
	and $\ev := \prod_{i=1}^m \ev_i : Y_{m,d} \to Y^m$.
	We let $\pi_j : Y_{m,d} \to Y_{m-1,d}$ be the $j$th forgetful map for each $j=1,\cdots,m$.
	Let $[Y_{m,d}]^{\virt}$ be the virtual fundamental class
	(See \cite{BehrendFantechinormalcone} and \cite{BehrendGW}).
\end{defn}

For any stack $M$, $CH_*(M)$ will be the Chow group
of $M$ with \emph{rational coefficient}s as defined in
\cite{gillet1984intersection} and \cite{vistoli1989intersection}.
So Chow cycles will be rational linear combinations of integral closed substacks.
Now the stack $M$ has an underlying coarse moduli space,
which is a variety $\underline{M}$.
Such a variety has an analytic topology since it is of finite type over $\bC$.

All singular (co)homology groups have rational coefficients unless stated otherwise.
In other words, $H^*(A) = H^*(A;\bQ)$ and $H_*(A) = H_*(A;\bQ)$ for any topological space $A$.
We define $H^{\even}(A)$ to be the even dimensional cohomology of $A$ with coefficients in $\bQ$.
If we talk about singular (Borel Moore) homology or singular cohomology $H^*(M)$
of our stack $M$,
then we mean with respect to the analytic topology on $\underline{M}$.

If we have a singular cohomology cycle $\alpha$ on $\underline{M}$ with respect to the analytic topology above
and a class $C \in CH_*(M)$,
then we can pair them, giving a rational number.
This pairing is done first by pushing $C$ forward to $\underline{M}$
and then pairing with $\alpha$.
We will call this rational number the \emph{integral of $\alpha$ over $C$}
and write it as $\int_C \alpha$.
More generally, if we are given a power series expression with Chern class parameters $c$ then we define $\int_C \alpha \cup c = \int_{C \cap c} \alpha$.

If $V \to X$ is a vector bundle then we define $\bP(V)$ to be equal to $\Proj(\Sym(V^*))$ (I.e. the Grassman bundle of $1$-planes in $V$ rather than dimension $1$ quotients).

We write $\Coh(M)$ to be the category of coherent sheaves on $M$.
For any sheaf or complex of sheaves $\scrF$ over a stack $Z$, we let $\scrF^\vee = Hom(\scrF,\calO_Z)$
be its dual.



\section{Systems of Classes} \label{subsectionaxiomsforclasses}

In this section we define Gromov-Witten counts for systems of fundamental classes satisfying a forgetful axiom.
Let $X$ be a smooth projective variety.

\begin{defn} \label{defnalternativevfc}
	%
	A \emph{system of classes} for $X$ is a collection
	\begin{equation}
		[]' := ([X_{m,d}]')_{m \in \bN, \ d \in H_2^+(X)},
	\end{equation}
	\begin{equation}
		[X_{m,d}]'  \in CH_*(X_{m,d}), \ \ m \in \bN, \ d \in H_2^+(X),
	\end{equation}
	of classes
	so that the following property holds:
	\begin{enumerate}
		\item[(Forget.)] \label{ittemsese} For each $m,j \in \bN$, $d \in H_2^+(X)$ with $1 \leq j \leq m$,
		      we have $\pi_j^*([X_{m-1,d}]') = [X_{m,d}]'$.
	\end{enumerate}
\end{defn}

\begin{Example} \label{examplevirtual}
	$[]^{\virt} = ([X_{m,d}]^{\virt})_{m \in \bN, \ d \in H_2^+(X)}$
	is a system of fundamental classes for $X$.
	The (Forget.) axiom is proven in \cite[Axiom IV]{BehrendGW}.
\end{Example}

\begin{defn} \label{defnsefsef}
	Define
	\begin{equation} \label{eqn40primecor0}
		\langle a_1, \cdots, a_m\rangle^{[]'}_{m,d} = \int_{[X_{m,d}]'} \prod_{k=1}^m ev_k^*a_k
	\end{equation}
	for each $m \in \bN$, $a_1,\cdots,a_m \in H^*(X)$ and each $d \in H_2^+(X)$.
	We also define
	\begin{equation} \label{eqn40primecor}
		\langle a_1, \cdots, a_m \rangle^X_{m,d} := \langle a_1, \cdots, a_m \rangle^{[]^{\virt}}_{m,d}
	\end{equation}
	for each such $m,a_1,\cdots,a_m, d$.
\end{defn}


\section{Systems of Cones} \label{sectionssystemsofcones}

\subsection{Definition of Systems of Cones} \label{subsectiondefinitionofsystemsofcones}
Later on in Section \ref{sectioncomprisinvfc}, we describe the virtual fundamental class in terms of Segre and Chern classes of various cones. As a result it is natural to define systems of cones.
Let us fix a smooth projective variety $X$ and a vector bundle $\pi : V \to X$.

\begin{defn} \label{defntransfverse}
	Let $D \subset X$ be a divisor, let $\supp(D)$ be its support and let $\supp(D)^{\sing}$
	be the singular locus of its support.
	Suppose $u : \Sigma \to X$ is a morphism from a nodal curve $\Sigma$.
	Define $\Sigma^{\sing} \subset \Sigma$ to be the set of nodal points.
	We say that $u$ is \emph{transverse} to $D$
	if
	\begin{enumerate}
		\item the map $u|_{\Sigma - \Sigma^{\sing}}$ is transverse to $\supp(D) - \supp(D)^{\sing}$,
		\item $\supp(D)^{\sing} \cap u(\Sigma) = \emptyset$ and
		\item $u(\Sigma^{\sing}) \cap D = \emptyset$.
	\end{enumerate}
\end{defn}

\begin{defn} \label{defndfactorialmodulispaces}
	Let $m \in \bN$ and $d \in H_2^+(X)$.
	Let $D \subset X$ be a reduced divisor.
	Define
	\begin{equation}
		\mathring{X}_{m+D!,d} \subset X_{m+D\cdot d,d}
	\end{equation}
	to be the substack parameterizing flat families of curves where the image in $X_{0,d}$ of each curve $u$ corresponding to a $\bC$-point in each such family is transverse to $D$ and where $u^{-1}(D)$ is the union of the last $D \cdot d$ marked points.

	We define
	\begin{equation}
		X_{m+D!,d} \subset X_{m+D\cdot d,d}
	\end{equation}
	to be the closure of $\mathring{X}_{m+D!,d}$.
	We have the following evaluation map
	\begin{equation}
		\ev : X_{m+D!,d} \to X^{m+D \cdot d}, \quad \ev := \prod_{j=1}^{m+D \cdot d} \ev_j.
	\end{equation}
\end{defn}

Let $\pi : V \to X$ be a vector bundle over $X$.



\begin{defn} \label{evaluationmap}
	Let $m \in \bN$ and $d \in H_2^+(X)$.
	We define
	\begin{equation} \label{eqnidentificationofspecsymwithmodulispace}
		V_{m,d} = V_{m,\iota_*d}
	\end{equation}
	where $\iota : X \hookrightarrow V$ is zero section map.
	This has a natural projection map:
	\begin{equation}
		\pi_{V_{m,d}} : V_{m,d} \to X_{m,d}
	\end{equation}
	sending $u$ to $\pi \circ u$.
	Define $V^{\boxtimes m} := \bigoplus_{i=1}^m p_i^*V$
	where $p_i : X^m \to X$ is the $i$th projection map for each $i = 1,\cdots,m$.
	We define
	\begin{equation} \label{eqnevaluationV}
		\ev : V_{m,d} \to V^{\boxtimes m}
	\end{equation}
	to be the evaluation map given by $\ev_1 \times \cdots \times \ev_m$.
	We let
	\begin{equation}
		\ev^* V^{\boxtimes m} \to X_{m,d}
	\end{equation}
	be the pullback of the vector bundle $V^{\boxtimes m} \to X^m$
	via the map $\ev : X_{m,d} \to X^m$.
	Define
	\begin{equation}
		\widetilde{\ev} : V_{m,d} \to \ev^*V^{\boxtimes m}
	\end{equation}
	to be the pullback of the map \eqref{eqnevaluationV}.
	Now, let $D$ be a reduced divisor in $X$.
	Define
	\begin{equation}
		V_{m+D!,d} := V_{m+\pi^{-1}(D)!,\iota_*d}.
	\end{equation}
	%
	Define
	\begin{eqnarray} \label{evvdevinition}
		\widetilde{\ev} : V_{m+D!,d} \to \ev^* V^{\boxtimes m + D \cdot d}
	\end{eqnarray}
	to be the restriction of
	$\widetilde{\ev} : V_{m+D \cdot d, d} \to \ev^*V^{\boxtimes m + D \cdot d}$ to $V_{m+D!,d}$.
\end{defn}

Since $V$ is a vector bundle, we get that $V_{m,d}$ is a cone over $X_{m,d}$ where the $\bC^*$-action is given by post composing each curve by the scaling action on $V$.

\begin{defn} \label{defnsystemofcones}
	A \emph{system of cones for $V$} is a collection
	\begin{equation}
		C_\bullet = (C_{m,d})_{m \in \bN, \ d \in H_2^+(X)}
	\end{equation}
	where $C_{m,d}$ is a (not necessarily closed) subcone of $V_{m,d}$ so that the pullback of $C_{m,d}$ to $X_{m+1,d}$ via the forgetful map $\pi_j$ is $C_{m+1,d}$
	under the identification $V_{m+1,d} \cong \pi_j^*V_{m,d}$ for each $m,j \in \bN$ with $1 \leq j \leq m$ and $d \in H_2^+(X)$.


	We define $V_\bullet$ to be the following system of cones for $V$:
	\begin{equation}
		V_\bullet = (V_{m,d})_{m \in \bN, \ d \in H_2^+(X)}.
	\end{equation}
	Now let $D$ be a smooth hypersurface.
	For each $m \in \bN$ and $d \in H_2^+(X)$,
	we define $C_{m+D!,d}$ to be the pullback of the cone $C_{m,d}$ to $V_{m+D!,d}$
	under the map $V_{m+D!,d} \to V_{m,d}$ forgetting the last $D \cdot d$ marked points.
\end{defn}

\begin{defn} \label{defngenericallytransverse}
	Let $C_\bullet$ be a system of cones for $V$ as in Definition \ref{defnsystemofcones}.
	We say that a divisor $D$ is \emph{generically transverse to $C_\bullet$} if for each $m \in \bN$ and each $d \in H_2^+(X)$ there exists a dense open subspace $U \subset C_{m,d}$ so that $D$ is transverse to $\pi \circ u$ for each curve $u : \Sigma \to V$ corresponding to a $\bC$-point in $U$.

\end{defn}

\subsection{\texorpdfstring{$D$}--Volumes of Systems of Cones} \label{subsectionvolumesofsystemsofcones}

In order to estimate Gromov-Witten counts, we need to define the ``volume'' of a collection of cones with respect to an ample divisor. Such estimates will be stated and proven in Section \ref{sectionsomeboundsfordvolumes} later on.
Let $X$ be a smooth projective variety and let $\pi : V \to X$ be a vector bundle.

\begin{lemma} \label{lemagloballygenerated}
	The dual of $V$ is globally generated if and only if
	$L := \calO_{\bP(V)}(1)$ is globally generated.
\end{lemma}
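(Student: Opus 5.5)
The plan is to identify global sections of $L$ with global sections of $V^*$ and then compare the two evaluation maps. With the convention $\bP(V) = \Proj(\Sym(V^*))$ and projection $p : \bP(V) \to X$, the tautological quotient on $\bP(V)$ is
\[
p^*V^* \twoheadrightarrow \calO_{\bP(V)}(1) = L .
\]
At a point $(x,\ell)$, with $\ell \subset V_x$ a line, this quotient restricts a linear functional on $V_x$ to $\ell$. Moreover $p_*L \cong V^*$, and hence $H^0(\bP(V),L) \cong H^0(X,V^*)$. Both facts are standard for $\Proj$ of a symmetric algebra of a locally free sheaf: the degree one part of $\Sym(V^*)$ is $V^*$.

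For the forward direction, suppose $V^*$ is globally generated. Then $H^0(X,V^*)\otimes \calO_X \to V^*$ is surjective. Pulling back by $p$ (pullback is right exact) and composing with the surjection $p^*V^* \to L$ gives a surjection
\[
H^0(X,V^*) \otimes \calO_{\bP(V)} \twoheadrightarrow L .
\]
These sections of $L$ are exactly the images of the sections of $V^*$, so $L$ is globally generated.

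For the converse, suppose $L$ is globally generated. Fix $x \in X$ and let $W \subset V_x^*$ be the image of the evaluation map $H^0(X,V^*) \to V_x^*$. By the identification $H^0(L) = H^0(V^*)$, a section $s$ of $V^*$ evaluates at $(x,\ell)$ to $s(x)|_\ell$. If $W \neq V_x^*$, pick a nonzero vector $v \in V_x$ annihilated by $W$ and set $\ell = \bC v$. Every global section of $L$ then vanishes at $(x,\ell)$, a contradiction. Hence $W = V_x^*$ for every $x$. Since the target is a vector bundle, surjectivity on fibers implies surjectivity of the sheaf map by Nakayama, so $V^*$ is globally generated.

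The main point to verify carefully is the identification $H^0(\bP(V),L) = H^0(X,V^*)$ and that it is compatible with evaluation, so that no other sections of $L$ exist. This follows from $p_*\calO_{\bP(V)}(1) = V^*$, which holds for projective bundles of vector bundles of rank at least one; the rank zero case is trivial, since then $\bP(V)$ is empty and $V^* = 0$. Everything else is fiberwise linear algebra.
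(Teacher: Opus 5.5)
Your proposal is correct and follows essentially the same route as the paper: identify $H^0(L)$ with $H^0(V^*)$ via $p_*L \cong V^*$, then use that a proper linear subspace of $V_x^*$ has a common zero on $\bP(V_x)$. The only difference is cosmetic. For the forward direction you compose the pulled-back surjection with the tautological quotient $p^*V^* \twoheadrightarrow L$, whereas the paper argues both directions through the same fiberwise surjectivity criterion.
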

\begin{proof}
	Let $\pi_\bP : \bP(V) \to X$ be the natural projection map. Then $(\pi_\bP)_*L = V^*$.
	Hence, $H^0(V^*) \to H^0(V^*|_x)$ is surjective if and only if  $H^0(L) \to H^0(L|_{\bP(V)|_x})$ is surjective for each $x \in X$.
	Since $H^0(L|_{\bP(V)|_x})$ is globally generated and any proper linear subsystem of $H^0(L|_{\bP(V)|_x})$ is not globally generated, we then get that $V^*$ is globally generated if and only if $L$ is globally generated.
\end{proof}

\begin{defn} \label{defnprojectivecompactification}
	Define  $P_V := \bP(V \oplus \bC)$, $L_V := \calO_{P_V}(1)$ and $\infty_V := \bP(V) \subset P_V$.
	We define $\pi_{P_V} : P_V \to X$ to be the natural projection map.

	Let $(V_i)_{i \in I}$ be a finite collection of vector bundles over $X$.
	Define the fiber product
	\begin{equation}
		P_{(V_i)_{i \in I}} := \fprod{X}_{i \in I} P_{V_i}.
	\end{equation}
	Define
	\begin{equation}
		\pi_{P_{(V_i)_{i \in I}}} : P_{(V_i)_{i \in I}} \to X
	\end{equation}
	to be the corresponding projection map.
	We define
	\begin{equation}
		\infty_{(V_i)_{i \in I}} := \sum_{i \in I} P_i^* \infty_{V_i}
	\end{equation}
	where $P_i : P_{(V_i)_{i \in I}} \to P_{V_i}$ is the natural projection map for each $i \in I$.

	If $I = \sqcup_{j = 0}^k I_j$ then we sometimes write:
	\begin{equation}
		P_{(V_j)_{j \in I_0},\cdots,(V_j)_{j \in I_k}} = P_{(V_i)_{i \in I}},
	\end{equation}
	\begin{equation}
		\pi_{P_{(V_j)_{j \in I_0},\cdots,(V_j)_{j \in I_k}}} = \pi_{P_{(V_i)_{i \in I}}},
	\end{equation}
	and
	\begin{equation}
		\infty_{(V_j)_{j \in I_0},\cdots,(V_j)_{j \in I_k}} = \infty_{(V_i)_{i \in I}}.
	\end{equation}
\end{defn}

\begin{corollary} \label{corollarytwistedgloballygenerated}
	Suppose $L_0$ is a globally generated line bundle over $X$.
	Then $V^* \otimes L_0$ is globally generated if and only if $L_V \otimes \pi_{P_V}^*L_0$ is globally generated.
\end{corollary}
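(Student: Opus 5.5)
We apply Lemma \ref{lemagloballygenerated} to the twisted bundle $V' := V \otimes L_0^*$ together with the summand $\bC$ handled appropriately. The quickest route is to avoid twisting the trivial summand and instead compute pushforwards directly, mirroring the proof of Lemma \ref{lemagloballygenerated}.

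\emph{Pushforward.} Let $\pi := \pi_{P_V} : P_V = \bP(V \oplus \bC) \to X$. Since $\pi_* \calO_{P_V}(1) = (V\oplus \bC)^* = V^* \oplus \calO_X$, the projection formula gives
\[
\pi_*(L_V \otimes \pi^*L_0) = (V^* \otimes L_0) \oplus L_0 .
\]
Because $L_0$ is globally generated by hypothesis, the summand $L_0$ is harmless. Hence $(V^*\otimes L_0)\oplus L_0$ is globally generated if and only if $V^*\otimes L_0$ is.

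\emph{Fiberwise criterion.} Global sections of $L_V\otimes \pi^*L_0$ coincide with global sections of $(V^*\otimes L_0)\oplus L_0$. Fix $x \in X$ and restrict to the fiber $\pi^{-1}(x) \cong \bP(V_x\oplus \bC)$. There $L_V \otimes \pi^*L_0$ restricts to $\calO(1)\otimes (L_0)_x \cong \calO(1)$, whose space of sections is $(V_x\oplus\bC)^*\otimes (L_0)_x$, the fiber of the pushforward at $x$. As in Lemma \ref{lemagloballygenerated}, a linear subspace $S \subset H^0(\bP(W),\calO(1))$ generates $\calO(1)$ at every point of $\bP(W)$ if and only if $S$ is the whole space: a proper subspace has nonempty common zero locus, namely a nonzero point of its annihilator. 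So $L_V\otimes\pi^*L_0$ is globally generated along $\pi^{-1}(x)$ if and only if the restriction map $H^0(X,(V^*\otimes L_0)\oplus L_0) \to ((V^*\otimes L_0)\oplus L_0)|_x$ is surjective. Ranging over all $x$, $L_V\otimes\pi^*L_0$ is globally generated if and only if the pushforward is globally generated, which by the first step holds if and only if $V^*\otimes L_0$ is globally generated.

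\emph{Where the care goes.} The main point is the direct-sum bookkeeping in the first step: surjectivity onto the $L_0|_x$ factor comes from the hypothesis on $L_0$, and surjectivity onto a direct sum of fibers is equivalent to surjectivity onto each factor, because the global sections also split as a direct sum. Everything else is identical to the proof of Lemma \ref{lemagloballygenerated}.
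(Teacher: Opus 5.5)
Your proof is correct and is essentially the paper's argument. The paper twists the whole bundle, writing $P_V = \bP\bigl((V\otimes L_0^*)\oplus L_0^*\bigr)$ with $\calO(1) = L_V\otimes\pi_{P_V}^*L_0$, so that Lemma \ref{lemagloballygenerated} applies verbatim; you instead absorb the twist through the projection formula and re-run that lemma's fiberwise argument by hand. (Twisting the trivial summand too is what makes the paper's version the quicker route.) Both versions then need your observation that the extra summand $L_0$ in $(V^*\otimes L_0)\oplus L_0$ is globally generated by hypothesis, which the paper leaves implicit.
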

\begin{proof}
	Let $W := V \otimes L_0^* \oplus L_0^*$.
	Then $\bP(W) = P_V$
	and $\calO_{\bP(W)}(1) = L_V \otimes \pi_{P_V}^*L_0$.
	Our result follows from Lemma \ref{lemagloballygenerated}.
\end{proof}

\begin{defn} \label{defnnefproduct}
	Let $D$ be a divisor on $X$.
	Let $p_i : X^m \to X$ be the $i$th projection map for each $i=1,\cdots,m$.
	We define
	\begin{equation}
		D_{\sum m} := \sum_{i=1}^m p_i^*D.
	\end{equation}
	For each finite collection $(V_i)_{i \in I}$ of vector
	bundles over $X$, we define
	\begin{equation}
		D_{\sum m, (V_i)_{i \in I}} := \pi_{P_{(V_i^{\boxtimes m})_{i \in I}}}^*D_{\sum m} + \infty_{(V_i^{\boxtimes m})_{i \in I}}.
	\end{equation}
	If $I = \sqcup_{j = 0}^k I_j$ then we sometimes write
	\begin{equation}
		D_{\sum m,(V_j)_{j \in I_0},\cdots,(V_j)_{j \in I_k}} := D_{\sum m, (V_i)_{i \in I}}.
	\end{equation}
\end{defn}

\begin{corollary} \label{corollarynefsum}
	Let $D$ be a divisor in $X$ so that $\calO(D)$ and $V^* \otimes \calO(D)$ are globally generated.
	Then for each $m \in \bN$ we have that $D_{\sum m, V}$
	is globally generated and hence nef.
\end{corollary}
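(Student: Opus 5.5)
The plan is to reduce the statement to Corollary \ref{corollarytwistedgloballygenerated}, applied on the product $X^m$ to the bundle $V^{\boxtimes m}$ and the line bundle $\calO(D_{\sum m})$. Since $P_{V^{\boxtimes m}} = \bP(V^{\boxtimes m} \oplus \bC)$ has $\calO(\infty_{V^{\boxtimes m}}) \cong L_{V^{\boxtimes m}} = \calO_{P_{V^{\boxtimes m}}}(1)$, we get
\[
\calO(D_{\sum m, V}) \cong L_{V^{\boxtimes m}} \otimes \pi_{P_{V^{\boxtimes m}}}^* \calO(D_{\sum m}).
\]
Because $\infty_{V^{\boxtimes m}}$ is the zero locus of the section of $\calO(1)$ given by projecting to the $\bC$ factor, this identification is immediate. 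By Corollary \ref{corollarytwistedgloballygenerated}, with $X^m$ in place of $X$, $V^{\boxtimes m}$ in place of $V$ and $L_0 = \calO(D_{\sum m})$, it therefore suffices to check two things: that $\calO(D_{\sum m})$ is globally generated, and that $(V^{\boxtimes m})^* \otimes \calO(D_{\sum m})$ is globally generated.

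For the first, note that $\calO(D_{\sum m}) = \bigotimes_{i=1}^m p_i^* \calO(D)$. Pullbacks of globally generated line bundles are globally generated, and so are tensor products of them, so this holds. For the second, write
\[
(V^{\boxtimes m})^* \otimes \calO(D_{\sum m}) = \bigoplus_{i=1}^m \Big( p_i^*(V^* \otimes \calO(D)) \otimes \bigotimes_{j \neq i} p_j^* \calO(D) \Big).
\]
Each summand is a tensor product of pullbacks of globally generated sheaves, using the hypotheses on $\calO(D)$ and on $V^* \otimes \calO(D)$. Such a summand is globally generated, since tensor products of globally generated bundles are globally generated via products of sections. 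A direct sum of globally generated bundles is also globally generated.

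Finally, a globally generated line bundle is nef: for any curve there is a section not vanishing identically on it, so the degree is nonnegative. The only mild point of care is the identification of $\infty$ with $\calO(1)$ under the convention $\bP(V) = \Proj(\Sym(V^*))$, together with the formula $\calO_{\bP(W)}(1) = L_V \otimes \pi^*L_0$ already used in Corollary \ref{corollarytwistedgloballygenerated}. Beyond that, no real obstacle is expected.
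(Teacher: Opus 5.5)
Your proof is correct and is the paper's argument: identify $\calO(D_{\sum m,V})$ with $L_{V^{\boxtimes m}}\otimes\pi_{P_{V^{\boxtimes m}}}^*\calO(D_{\sum m})$ and apply Corollary \ref{corollarytwistedgloballygenerated} on $X^m$ with $L_0=\calO(D_{\sum m})$. Your decomposition of $(V^{\boxtimes m})^*\otimes\calO(D_{\sum m})$ as $\bigoplus_i p_i^*(V^*\otimes\calO(D))\otimes\bigotimes_{j\neq i}p_j^*\calO(D)$ is the precise form of the identity that the paper writes more loosely as $\sum_i p_i^*(V^*\otimes\calO(D))$.
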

\begin{proof}
	The line bundle on $P_{V^{\boxtimes m}}$ corresponding to $D_{\sum m, V}$ is equal to $L_{V^{\boxtimes m}} \otimes \pi_{P_{V^{\boxtimes m}}}^* \calO(D_{\sum m})$.
	This is globally generated by Corollary \ref{corollarytwistedgloballygenerated} since \begin{equation}
		\calO(D_{\sum m})=\sum_{i=1}^m p_i^*\calO(D), \quad (V^{\boxtimes m})^* \otimes \calO(D_{\sum m}) = \sum_{i=1}^m p_i^*(V^* \otimes \calO(D))
	\end{equation}
	are globally generated where  $p_i : X^m \to X$ is the $i$th projection map for each $i=1,\cdots,m$.
\end{proof}

\begin{corollary} \label{corollarynefsum2}
	Now suppose that $(V_i)_{i \in I}$ is a finite collection of vector bundles over $X$ and suppose that $D_i$ is a divisor in $X$ so that $\calO(D_i)$ and $V_i^* \otimes \calO(D_i)$ are globally generated for each $i \in I$. Let $D$ be a divisor in $X$ so that $D - \sum_{i \in I} D_i$ is nef. Then $D_{\sum m, (V_i)_{i \in I}}$ is nef.
\end{corollary}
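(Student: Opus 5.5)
We split $D_{\sum m,(V_i)_{i \in I}}$ into a globally generated (hence nef) part plus a pullback of a nef class. Write $D = \sum_{i \in I} D_i + (D - \sum_{i \in I} D_i)$. Then on $P := P_{(V_i^{\boxtimes m})_{i \in I}}$ we have
\begin{equation*}
D_{\sum m,(V_i)_{i \in I}} = \sum_{i \in I} \left( \pi_P^*(D_i)_{\sum m} + P_i^*\infty_{V_i^{\boxtimes m}} \right) + \pi_P^*\Big(\big(D - \textstyle\sum_{i \in I} D_i\big)_{\sum m}\Big),
\end{equation*}
where $P_i : P \to P_{V_i^{\boxtimes m}}$ is the projection. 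Since $\pi_P = \pi_{P_{V_i^{\boxtimes m}}} \circ P_i$, the $i$th summand equals $P_i^*\big((D_i)_{\sum m, V_i}\big)$.

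For each $i \in I$, the hypotheses that $\calO(D_i)$ and $V_i^* \otimes \calO(D_i)$ are globally generated let us apply Corollary \ref{corollarynefsum} with $(V,D)=(V_i,D_i)$. This shows $(D_i)_{\sum m, V_i}$ is globally generated on $P_{V_i^{\boxtimes m}}$, so its pullback under $P_i$ is globally generated on $P$, in particular nef.

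For the remaining term, $D - \sum_i D_i$ is nef on $X$. Then $(D-\sum_i D_i)_{\sum m} = \sum_j p_j^*(D - \sum_i D_i)$ is a sum of pullbacks of a nef class, hence nef on $X^m$. Its pullback under $\pi_P$ is again nef, since pullbacks of nef divisors under proper morphisms are nef: curve degrees are computed by the projection formula.

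A sum of nef divisors is nef, which gives the claim. No step is a real obstacle. The only point needing care is the bookkeeping identity above: we must check that $\infty_{(V_i^{\boxtimes m})_{i\in I}}$ is by definition $\sum_i P_i^*\infty_{V_i^{\boxtimes m}}$, and that $D_{\sum m}$ is additive in $D$, so that the decomposition holds as divisor classes.
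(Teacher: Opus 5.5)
Your proposal is correct and follows the paper's proof: the paper uses exactly the decomposition $D_{\sum m,(V_i)_{i \in I}} = \sum_{i \in I} P_i^*\big((D_i)_{\sum m, V_i}\big) + \pi^*\big((D - \sum_{i \in I} D_i)_{\sum m}\big)$. It then concludes nefness of each summand from Corollary \ref{corollarynefsum} and from the nefness of $D - \sum_{i \in I} D_i$.
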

\begin{proof}
	Let $P_i : P_{(V_i)_{i \in I}} \to P_{V_i}$ be the natural projection map for each $i \in I$.
	Then
	\begin{equation}
		D_{\sum m, (V_i)_{i \in I}} = \sum_{i \in I} P_i^* ((D_i)_{\sum m, V_i}) + \pi_{P_{(V_i)_{i \in I}}}^*(D-\sum_{i \in I} D_i)_{\sum m}
	\end{equation}
	is a sum of nef divisors by Corollary \ref{corollarynefsum}.
\end{proof}

\begin{lemma} \label{lemmmavverylarge}
	Let $D$, $D'$ be divisors in $X$ so that $V^* \otimes \calO(D')$ and $\calO(D')$ are globally generated and $D - D'$ is ample.
	Then for all $m \in \bN$ we have $D_{\sum m,V}$
	is ample.
\end{lemma}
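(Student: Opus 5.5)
Split $D_{\sum m,V}$ as a globally generated piece plus the pullback of an ample class, and use that a globally generated divisor plus a divisor positive on the curves it contracts is ample. Concretely, write
\begin{equation}
	D_{\sum m, V} = D'_{\sum m, V} + \pi_{P_{V^{\boxtimes m}}}^*(D-D')_{\sum m}.
\end{equation}
By Corollary \ref{corollarynefsum} applied to $D'$, the first summand $D'_{\sum m,V}$ is globally generated. Since $D-D'$ is ample on $X$, the divisor $(D-D')_{\sum m} = \sum_i p_i^*(D-D')$ is ample on $X^m$, being a sum of pullbacks of an ample class under the projections of a product. So the second summand is the pullback of an ample class along the projective bundle map $\pi := \pi_{P_{V^{\boxtimes m}}} : P_{V^{\boxtimes m}} \to X^m$.

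The key step is to show that $D'_{\sum m,V}$ is relatively ample over $X^m$. Its restriction to a fiber $\pi^{-1}(x) \cong \bP(V^{\boxtimes m}|_x \oplus \bC)$ is the divisor $\infty$, since $\pi^* D'_{\sum m}$ is trivial on fibers. This is a hyperplane class, i.e. $\calO(1)$ on the fiber, which is ample. Equivalently, as in the proof of Corollary \ref{corollarytwistedgloballygenerated}, the corresponding line bundle is $\calO_{\bP(W)}(1)$ for $W = V^{\boxtimes m}\otimes\calO(-D'_{\sum m}) \oplus \calO(-D'_{\sum m})$, and it is globally generated.

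Now conclude by one of two routes.
\begin{itemize}
	\item \emph{Kleiman/Seshadri.} $D_{\sum m,V}$ is nef, being a sum of nef divisors. To see it is strictly positive on the closure of the cone of curves, take a nonzero class $\gamma$ in that closure. If $\pi_*\gamma \neq 0$, the pulled-back ample term is positive on $\gamma$. If $\pi_*\gamma = 0$, then $\gamma$ is a nonnegative multiple of the line class in a fiber, and $D'_{\sum m,V}$ has degree $1$ on a fiber line.
	\item \emph{Morphism route (simpler).} The complete linear system of the globally generated $D'_{\sum m,V}$ gives a morphism $\phi$ which is finite on fibers of $\pi$, since it restricts to $\calO(1)$ there. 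Then $\phi \times \pi$ is finite, and the pullback of an ample class under a finite morphism is ample. Since $D_{\sum m,V}$ is, up to taking multiples, the pullback of $\calO(1)\boxtimes(\text{ample})$ under $\phi\times\pi$, it is ample.
\end{itemize}

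The main obstacle is the Kleiman step: one must know that the closed cone of curves of a projective bundle is spanned by fiber lines together with classes mapping nontrivially to $X^m$. The morphism argument avoids this entirely, so I would use it. The only check needed there is finiteness of $\phi\times\pi$. This holds because a positive-dimensional fiber of $\phi \times \pi$ would lie inside one fiber of $\pi$ and be contracted by $\phi$, which is impossible since $\phi$ restricts to a linear embedding on each fiber of $\pi$.
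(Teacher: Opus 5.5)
Your proof is correct, and it uses the same decomposition as the paper, $D_{\sum m,V} = D'_{\sum m,V} + \pi_{P_{V^{\boxtimes m}}}^*(D-D')_{\sum m}$. Your first bullet is essentially the paper's argument. The paper applies Kleiman's criterion using three facts: $D'_{\sum m,V}$ is nef (Corollary \ref{corollarynefsum}), $D_{\sum m,V}$ restricts to a hyperplane on each fiber, and the pulled-back ample term is positive on curves not contained in a fiber.

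Your preferred morphism route differs in the final step. You use global generation of $D'_{\sum m,V}$, not just nefness, to get $\phi$. You then check that $\phi\times\pi$ is finite and pull back $\calO(1)\boxtimes\calO((D-D')_{\sum m})$. This frees you from any analysis of the closed cone of curves. Note that the paper's case split is phrased as if a class in $\overline{NE}$ were an honest curve, whereas Kleiman's criterion needs positivity on limits of curve classes. Your $\pi_*\gamma$ formulation handles that correctly. The paper's route, in turn, is shorter and needs only nefness plus relative ampleness.

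The obstacle you flag in the Kleiman route is mild. Since $N^1(P_{V^{\boxtimes m}}) = \pi^*N^1(X^m)\oplus\bR[\infty_{V^{\boxtimes m}}]$, the kernel of $\pi_*$ on $N_1$ is the line spanned by the fiber line class $\ell$. So $\pi_*\gamma=0$ forces $\gamma=t\ell$, where $t = D'_{\sum m,V}\cdot\gamma\geq 0$ by nefness, and $t\neq 0$ because $\gamma\neq 0$. No description of the cone as spanned by particular classes is needed.
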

\begin{proof}
	We will use Kleiman's criterion to prove this.
	We have $D'_{\sum m,V}$
	is nef by Corollary \ref{corollarynefsum}.
	Also,
	\begin{equation} \label{eqnsplitdivisor}
		D_{\sum m, V} = D'_{\sum m, V} + \pi_{P_{V^{\boxtimes m}}}^*((D-D')_{\sum m}).
	\end{equation}
	Now let $C$ be in the closure of the cone of curves in $N_1(P_{V^{\boxtimes m}})$.
	Then if $C$ is contained in a fiber of $\pi_{P_{V^{\boxtimes m}}}$ then it pairs positively with $D_{\sum m, V}$ since $D_{\sum m, V}$ restricted to this fiber  is a linear hypersurface which is ample.
	If $C$ is not contained in a fiber then
	\begin{equation} \label{eqndivisordifferencepositive}
		C \cdot (\pi_{P_{V^{\boxtimes m}}}^*(D-D')_{\sum m})
		= \pi_{P_{V^{\boxtimes m}}}(C) \cdot (D-D')_{\sum m} > 0
	\end{equation}
	since $(D-D')_{\sum m}$ is ample.
	So
	\begin{equation}
		C \cdot D_{\sum m, V}
		\overset{\textnormal{\eqref{eqnsplitdivisor}}}{=}
		C \cdot D'_{\sum m, V} + C \cdot \pi_{P_{V^{\boxtimes m}}}^*((D-D')_{\sum m})
		\geq
	\end{equation}
	\begin{equation}
		C \cdot \pi_{P_{V^{\boxtimes m}}}^*((D-D')_{\sum m}) \overset{\textnormal{\eqref{eqndivisordifferencepositive}}}{>}0.
	\end{equation}
\end{proof}

\begin{defn} \label{defnvolumeformultiplecones}
	Let $(V_i)_{i \in I}$ be a finite collection of vector bundles over $X$.
	Let
	\begin{equation}
		C^i_\bullet = (C^i_{m,d})_{m \in \bN, \ d \in H_2^+(X)}
	\end{equation}
	be a system of cones for $V_i$ as in Definition \ref{defnsystemofcones} for each $i \in I$.
	Suppose $D$ is a reduced divisor.
	Let $m \in \bN$ and $d \in H_2^+(X)$ and let $m' := m + D \cdot d$.
	Define
	\begin{equation} \label{eqnfiberproductofcones}
		\widetilde{\ev}(C^i_{m+D!,d})_{i \in I} := \widetilde{\ev}\left(\fprod{X_{m+D!,d}}_{i \in I} C^i_{m+D!,d}\right) \subset \ev^*\oplus_{i \in I} V_i^{\boxtimes m'}.
	\end{equation}
	We define
	\begin{equation}
		\ev^* P_{(V_i^{\boxtimes m'})_{i \in I}}
	\end{equation}
	to be the pullback of $P_{(V_i^{\boxtimes m'})_{i \in I}} \to X^{m'}$
	via the map $\ev : X_{m',d} \to X^{m'}$.
	Define
	\begin{equation} \label{eqnPev}
		P\ev : \ev^*P_{(V_i)_{i \in I}} \to P_{(V_i)_{i \in I}}
	\end{equation}
	to be the natural projection map.
	We define
	\begin{equation}
		\overline{\widetilde{\ev}((C^i_{m+D!,d})_{i \in I})}
	\end{equation}
	to be the closure of $\widetilde{\ev}(C^i_{m+D!,d})_{i \in I}$ inside $\ev^*P_{(V_i^{\boxtimes m'})_{i \in I}}$
	(Definition \ref{evaluationmap}).
	We let
	\begin{equation}
		P\ev_*\left[\overline{\widetilde{\ev}((C^i_{m+D!,d})_{i \in I})}\right]
	\end{equation}
	be the image of its Chow class under the map \eqref{eqnPev}.


	Define the \emph{$D$-volume} of $(C^i_{m,d})_{i \in I}$ to be
	\begin{equation}
		D\Vol(C^i_{m,d})_{i \in I} :=
		\frac{1}{m!(D \cdot d)!} \int_{P\ev_*\left[\overline{\widetilde{\ev}((C^i_{m+D!,d})_{i \in I})}\right]} \exp(D_{\sum m',(V_i)_{i \in I}}).
	\end{equation}
	If $I = \sqcup_{j = 0}^k I_j$ then we sometimes write
	\begin{equation}
		\ev^*P_{(V_j^{\boxtimes m'})_{j \in I_0},\cdots,(V_j^{\boxtimes m'})_{j \in I_k}} = \ev^*P_{(V_i^{\boxtimes m'})_{i \in I}},
	\end{equation}
	\begin{equation}
		\widetilde{\ev}((C^i_{m+D!,d})_{i \in I_0},\cdots,(C^i_{m+D!,d})_{i \in I_k}) =  \widetilde{\ev}((C^i_{m+D!,d})_{i \in I}),
	\end{equation}
	\begin{equation}
		\overline{\widetilde{\ev}((C^i_{m+D!,d})_{i \in I_0},\cdots,(C^i_{m+D!,d})_{i \in I_k})} =  \overline{\widetilde{\ev}((C^i_{m+D!,d})_{i \in I})}
	\end{equation}
	and
	\begin{equation}
		D\Vol((C^i_{m+D!,d})_{i \in I_0},\cdots,(C^i_{m+D!,d})_{i \in I_k})
		= D\Vol(C^i_{m,d})_{i \in I}.
	\end{equation}

\end{defn}

\subsection{\texorpdfstring{V}--Large Divisors} \label{subsectionVlargedivisors}

In order to compare $D$-volumes with Gromov-Witten counts, we need the divisor $D$ to satisfy special properties. These properties ensure that each irreducible component of each curve intersects the divisor in many points so that we can evaluate along enough of these points (Lemma \ref{lemmainjectivefromvlarge} is the crucial property that is needed here).
We fix a smooth projective variety $X$ and a vector bundle $\pi : V \to X$ over it.

\begin{defn} \label{defncompactieab}
	A divisor $D \subset X$ is \emph{simple $V$-large} if it is reduced and
	for each non-constant $u : \bP^1 \to X$,
	\begin{enumerate}
		\item the image of $u$ is not contained in $D$
		\item \label{itemprop5}
		      and the number of points in $u^{-1}(D)$ is greater than or equal to $1 + \int_{\bP^1} u^*(c_1(V))$.
	\end{enumerate}
	We say that $D \subset X$ is \emph{$V$-large} if there exists vector bundles $(V_i)_{i \in I}$ over $X$ so that
	\begin{enumerate}
		\item $D$ is simple $V_i$-large for each $i \in I$ and
		\item $V \cong \bigoplus_{i \in I} V_i$.
	\end{enumerate}
\end{defn}

\begin{lemma} \label{lemmavlargeexist}
	Let $\iota_X : X \hookrightarrow \bP^N$ be a closed immersion and let $(V_i)_{i \in I}$ be vector bundles over $X$ satisfying $V \cong \oplus_{i \in I} V_i$.
	Let $\widehat{D}$ be a very general smooth divisor in $\bP^N$
	of degree $\delta + 2N$ so that
	$(\delta-1) \deg(C) \geq c_1(V_i)(C)$ for each $i \in I$ and each rational curve $C$ in $X$.
	Then $D = \widehat{D} \cap X$ is $V$-large.
\end{lemma}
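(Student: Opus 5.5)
We must show that $D=\widehat{D}\cap X$ is reduced, that no non-constant rational curve $u:\bP^1\to X$ has image inside $D$, and that $\#u^{-1}(D)\geq 1+\int_{\bP^1}u^*c_1(V_i)$ for each $i\in I$. Then $D$ is simple $V_i$-large for every $i$, hence $V$-large. The plan is to reduce all three conditions to properties of a very general hypersurface relative to the countably many families of rational curves in $X$.

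\textbf{Step 1 (reducedness).} By Bertini, a general member of $|\calO_{\bP^N}(\delta+2N)|$ is smooth and meets $X$ transversally, so $D$ is a smooth, in particular reduced, divisor in $X$.

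\textbf{Step 2 (reduction to irreducible families).} The non-constant maps $\bP^1\to X$ form countably many irreducible quasi-projective families $M_\beta$, indexed by components of the Hom-schemes $\Hom_e(\bP^1,X)$ for each degree $e$. For each such family I will remove from the parameter space $|\calO(\delta+2N)|$ a proper closed subset, namely the locus where the desired condition fails for some member of $M_\beta$. The very general $\widehat{D}$ avoids this countable union. Since there are countably many $\beta$, it suffices to show that for each fixed irreducible family the bad locus is a proper closed subset.

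\textbf{Step 3 (the key estimate).} Fix $u$ of degree $e=\deg(u_*[\bP^1])\geq 1$ in $\bP^N$. The pullback $u^*\widehat{D}$ is a divisor of degree $(\delta+2N)e$ on $\bP^1$. We need $u^*\widehat{D}$ to be defined, that is $u(\bP^1)\not\subset \widehat D$, and to have at least $(\delta-1)e+1$ distinct points, since $\int u^*c_1(V_i)\leq(\delta-1)e$ by hypothesis. Equivalently, the multiplicities must not be concentrated. A pullback of degree $(\delta+2N)e$ with at most $(\delta-1)e$ distinct points has total excess multiplicity at least $(2N+1)e$.

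I will do an incidence dimension count on $\Phi=\{(u,\widehat{D}): u^*\widehat D \text{ has at most } (\delta-1)e \text{ distinct points or } u(\bP^1)\subset\widehat D\}$. Restriction $H^0(\bP^N,\calO(k))\to H^0(\bP^1,\calO(ke))$ for a map of degree $e$ is not necessarily surjective, so I will argue as follows. First fix $u$ and the positions and multiplicities of the points on $\bP^1$. Imposing tangency conditions of total order $\sum(\mu_j-1)\geq (2N+1)e$ on $\widehat D$ along $u$ cuts the space of hypersurfaces by roughly that many independent conditions. Independence holds because the degree $\delta+2N\geq 2N$ is large enough to separate jets at distinct points of the curve, using products of linear forms, as long as the total jet order is at most the degree times $e$ plus one.

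The dimension of the space of $u$'s up to reparametrization, together with the point configuration, is bounded by $\dim\Hom_e(\bP^1,\bP^N)-3+(\delta-1)e\leq (N+1)e+N-3+(\delta-1)e$. The number of independent conditions, about $(2N+1)e+$(the $(\delta-1)e$ point choices absorbed), exceeds this, so $\Phi$ does not dominate $|\calO(\delta+2N)|$.

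The containment case $u(\bP^1)\subset\widehat D$ imposes $(\delta+2N)e+1$ conditions, which also exceeds $\dim$ of the space of rational curves, so it too does not dominate. This gives condition (1) and condition (2).

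\textbf{Main obstacle.} The hard part is Step 3: proving independence of the jet conditions for multiple-cover or non-birational $u$, and making the count uniform. I would first reduce to $u$ birational onto its image. For a $k$-fold cover, both the number of preimage points and $\int u^*c_1(V_i)$ scale by $k$ in a compatible way, since the point count multiplies by $k$ apart from branch points. If this is not compatible, I would use the slack $2N$ in the degree to absorb the deficit.
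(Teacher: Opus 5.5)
Your Steps 1 and 2 are fine, but Step 3, which is the entire content of the lemma, has a genuine gap: the claimed independence of conditions is false. For fixed $k=\delta+2N$, the image of the restriction map $H^0(\bP^N,\calO(k))\to H^0(\bP^1,\calO(ke))$ has dimension at most $\binom{N+k}{N}$, whatever $e$ is. So once $(2N+1)e$ (or $ke+1$ in the containment case) exceeds this number, the conditions cannot be independent. Meanwhile, the family of rational curves you must control has dimension growing linearly in $e$.

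Your justification was that products of linear forms separate jets up to total order $ke+1$. That argument does not give this. A product of $k$ linear forms lets you prescribe vanishing at only about $k$ points of the curve, since each hyperplane also meets the curve in $e-1$ further points you do not control. So the argument yields independence only for total jet order about $k+1$.

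The naive count in fact gives false conclusions. For quartic surfaces in $\bP^3$ it sets $4e+1$ containment conditions against a $4e$-dimensional family of degree-$e$ rational curves. It would therefore "show" that a very general quartic contains no rational curves, contradicting Bogomolov--Mumford (every K3 surface contains rational curves). Concretely, a trinodal plane section is a rational quartic $u$ for which the restriction map has rank at most $14<17$. So no incidence count of this type handles curves of unbounded degree, and your ``main obstacle'' paragraph offers no substitute.

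The missing input is exactly what the paper takes from the literature. First, Voisin's solution of Clemens' conjecture \cite{voisin1996conjecture}: a very general hypersurface of this degree contains no rational curves, which gives condition (1). Second, the algebraic hyperbolicity bound of \cite[Theorem 1.7]{chen2004algebraic} or \cite[Corollary 4]{PacienzaRousseau}. For very general $\widehat D$ of degree $\delta+2N$ and any curve $C\not\subset\widehat D$ whose normalization $\nu$ has genus $g$, it gives $2g-2+\#\nu^{-1}(\widehat D)\geq \delta\deg C$. Both results rest on Voisin's variational method (global generation of twisted tangent bundles of the universal hypersurface), not on counting conditions.

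With these in hand, the multiple-cover issue you leave open is elementary. Write $u=\phi\circ e$ with $\phi$ birational onto its image and $e$ of degree $k$. Then $\#\phi^{-1}(D)\geq 2+\delta\deg\phi$. Riemann--Hurwitz gives $\#u^{-1}(D)\geq k\,\#\phi^{-1}(D)-(2k-2)\geq 2+k\delta\deg\phi$. On the other side, $\int_{\bP^1}u^*c_1(V_i)=k\,c_1(V_i)(\phi_*[\bP^1])\leq k(\delta-1)\deg\phi$. So it is the $-2$ in $2g-2$, not the slack $2N$, that absorbs the branching.
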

\begin{proof}
	Let $u : \bP^1 \to X$ be a non-constant morphism.
	By
	\cite{voisin1996conjecture}, the image of $u$ is not contained in $D$.
	Now write $u$ as a composition:
	\begin{equation}
		\bP^1 \lra{e} \bP^1 \lra{\phi} X
	\end{equation}
	where $\phi$ is a somewhere injective morphism.
	By
	\cite[Theorem 1.7]{chen2004algebraic} or \cite[Corollary 4]{PacienzaRousseau},
	we get that the number of points in $\phi^{-1}(D)$ is greater than $1+\delta \deg(\phi)$, which in turn is $\geq 1 + \int_{\bP^1} \phi^*(c_1(V_i))+\deg(\phi)$ for each $i \in I$.
	Since the sum of the multiplicities of the branch points of $e$ is at most $\deg(u)$, our result follows.
\end{proof}

\begin{defn} \label{defnconvex}
	A vector bundle $W \to X$ is \emph{convex} if for each $m \in \bN$, $d \in H_2^+(X)$ and each curve $u$ corresponding to a point $\Spec(\bC) \to X_{m,d}$ we have $H^1(u^*W) = 0$.
	We say that $W$ is \emph{strongly convex} if it is convex and if $u^*W$ is globally generated for each such $u$.
\end{defn}

If $W$ is convex then $W_{m,d}$ is a vector bundle over $X_{m,d}$ by Grothendieck's cohomology and base change theorem.
%
The following lemma will be useful later on in Section \ref{sectionmainargument}.
\begin{lemma} \label{lemmaconvexquotient}
	Let $r : V \twoheadrightarrow W$ be a surjection of vector bundles.
	If $V$ is (strongly) convex then $W$ is (strongly) convex.
\end{lemma}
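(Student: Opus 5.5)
Fix $m \in \bN$, $d \in H_2^+(X)$ and a curve $u : \Sigma \to X$ representing a $\bC$-point of $X_{m,d}$, with $\Sigma$ a genus zero nodal curve. Pulling back the surjection $r$ along $u$ gives a surjection $u^*V \twoheadrightarrow u^*W$, since pullback is right exact. Let $K := \ker(r)$; it is a vector bundle on $X$ because $r$ is a surjection of vector bundles. So we obtain a short exact sequence of locally free sheaves on $\Sigma$:
\begin{equation}
	0 \to u^*K \to u^*V \to u^*W \to 0.
\end{equation}

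\textbf{Convexity.} Take the long exact cohomology sequence on $\Sigma$. It contains the segment $H^1(u^*V) \to H^1(u^*W) \to H^2(u^*K)$. Since $\Sigma$ is a projective curve, $H^2(u^*K) = 0$, so $H^1(u^*V) \to H^1(u^*W)$ is surjective. If $V$ is convex then $H^1(u^*V) = 0$, hence $H^1(u^*W) = 0$. As $u$ was an arbitrary point of an arbitrary $X_{m,d}$, $W$ is convex.

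\textbf{Strong convexity.} Suppose in addition that $u^*V$ is globally generated for every such $u$. Then the evaluation map $H^0(u^*V) \otimes \calO_\Sigma \to u^*V$ is surjective. Composing with the surjection $u^*V \to u^*W$ gives a surjection $H^0(u^*V)\otimes \calO_\Sigma \to u^*W$. This composite factors through $H^0(u^*W) \otimes \calO_\Sigma \to u^*W$ via the induced map on global sections. Hence the evaluation map of $u^*W$ is surjective, i.e. $u^*W$ is globally generated. Together with convexity this shows $W$ is strongly convex.

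The only mild point is using that $H^2$ vanishes on a curve, including nodal ones, which holds for coherent sheaves on any one-dimensional projective scheme. Apart from this the argument is formal.
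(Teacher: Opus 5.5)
Your proof is correct and follows the same route as the paper: you pull the surjection back along $u$, use the vanishing of $H^2$ on a curve to get that $H^1(u^*V) \to H^1(u^*W)$ is surjective, and observe that global generation passes to quotients. You simply fill in details the paper leaves implicit, such as the kernel bundle and the factorization of the evaluation map of $u^*W$.
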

\begin{proof}
	Let $u$ be a curve corresponding to a point $\Spec(\bC) \to X_{m,d}$.
	Then we have an exact sequence:
	\begin{equation}
		H^1(u^*V) \to H^1(u^*W) \to 0.
	\end{equation}
	Since $H^1(u^*V) = 0$, we get
	$H^1(u^*W) = 0$ which implies that $W$ is convex.
	Also, $u^*W$ is globally generated if $u^*V$ is.
\end{proof}

\begin{corollary} \label{corolarysumconvex}
	Let $(V_i)_{i \in I}$ be a finite collection of vector bundles over $X$.
	If $\bigoplus_{i \in I} V_i$ is strongly convex, then $V_i$ is strongly convex for each $i \in I$.
\end{corollary}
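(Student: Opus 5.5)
The plan is to deduce this directly from Lemma \ref{lemmaconvexquotient}, since each summand is a quotient of the whole sum. Fix $i \in I$ and let
\[
r_i : \bigoplus_{j \in I} V_j \twoheadrightarrow V_i
\]
be the projection onto the $i$th summand. This is a surjective morphism of vector bundles over $X$, because it is split by the inclusion of $V_i$ as a direct summand.

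Applying Lemma \ref{lemmaconvexquotient} to $r_i$, with $V = \bigoplus_{j \in I} V_j$ and $W = V_i$, shows that $V_i$ is convex and that $u^*V_i$ is globally generated for each curve $u$ corresponding to a point $\Spec(\bC) \to X_{m,d}$. That is precisely strong convexity in the sense of Definition \ref{defnconvex}.

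If one wants to avoid quoting the lemma, it is equally direct to argue by hand. Pullback commutes with direct sums, so $u^*(\bigoplus_j V_j) = \bigoplus_j u^*V_j$, and cohomology commutes with finite direct sums. Hence
\[
H^1\Big(u^*\bigoplus_{j} V_j\Big) = \bigoplus_j H^1(u^*V_j),
\]
and this vanishes if and only if each $H^1(u^*V_j)$ vanishes. Global generation of the sum likewise implies global generation of each summand: compose a generating family of sections with the projection $r_i$.

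There is no real obstacle here. The only point to check is that the projection onto a summand is a surjection of vector bundles, so that Lemma \ref{lemmaconvexquotient} applies.
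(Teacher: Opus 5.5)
Your proof is correct and is exactly the intended argument: the paper gives no separate proof, stating this as an immediate corollary of Lemma \ref{lemmaconvexquotient} applied to the split surjection $\bigoplus_{j \in I} V_j \twoheadrightarrow V_i$. Your by-hand check (additivity of $H^1$ over direct sums, and projecting generating sections onto the summand) simply reproduces that lemma's proof in this special case.
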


\begin{lemma} \label{lemmainjectivefromvlarge}
	Let $D$ be a $V$-large divisor.
	Suppose also that $V|_U$ is strongly convex for some open subset $U \subset X$.
	For each $m \in \bN$, $d \in H_2^+(X)-0$ and for each curve $u$ corresponding to a point $\Spec(\bC) \to X_{m+D!,d}$ with image in $U$, the map
	\begin{equation} \label{evaevaeev}
		\widetilde{\ev}|_u : V_{m+D!,d}|_u \to \ev^*(V^{\boxplus m+D\cdot d})|_u
	\end{equation}
	from \eqref{evvdevinition} is injective.
\end{lemma}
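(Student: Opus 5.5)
Since $V$ is convex over $U$, the fiber $V_{m+D!,d}|_u$ is $H^0(\Sigma, u^*V)$, where $u : \Sigma \to X$ is the underlying stable map. Under this identification, $\widetilde{\ev}|_u$ evaluates a section $s \in H^0(\Sigma,u^*V)$ at the $m + D\cdot d$ marked points. So it suffices to show that a section of $u^*V$ vanishing at all marked points, and in particular at the last $D \cdot d$ of them, is identically zero. Because $D$ is $V$-large we have $V \cong \bigoplus_{i \in I} V_i$ with $D$ simple $V_i$-large for each $i$. By Corollary \ref{corolarysumconvex} applied over $U$, each $V_i|_U$ is strongly convex. The evaluation map respects the direct sum decomposition, so we may assume $D$ is simple $V$-large.

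We first treat $u \in \mathring{X}_{m+D!,d}$. Let $\Sigma_1,\dots,\Sigma_k$ be the irreducible components, each isomorphic to $\bP^1$, with $u_j := u|_{\Sigma_j}$. Since $u^*V|_{\Sigma_j}$ is globally generated, it splits as $\bigoplus \calO(a_\ell)$ with all $a_\ell \geq 0$ and $\sum_\ell a_\ell = \int u_j^* c_1(V)$. Take a nonconstant component $\Sigma_j$. By transversality and the definition of $\mathring{X}_{m+D!,d}$, the set $u_j^{-1}(D)$ consists of distinct marked points, and simple $V$-largeness gives $|u_j^{-1}(D)| \geq 1 + \int u_j^*c_1(V) \geq 1 + a_\ell$ for each $\ell$. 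A section of $\calO(a_\ell)$ vanishing at more than $a_\ell$ points is zero, so $s|_{\Sigma_j} = 0$. On contracted components $u^*V$ is trivial, so $s$ is constant there. Every contracted component lies in a tree whose leaves are either nonconstant components or carry marked points: stability forces at least three special points on a contracted component, and following branches ends at marked points or at nonconstant components. Hence $s$ vanishes on contracted components as well, by propagating the value through nodes. This shows $s \equiv 0$.

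For general $u$ in the closure $X_{m+D!,d}$, transversality fails, and this is the main obstacle. I would argue by the same counting but with multiplicities. Write $u = \lim u_t$ with $u_t \in \mathring{X}_{m+D!,d}$. The limit of the divisor $u_t^*D$ (as $\sum$ of the last marked points) restricted to each nonconstant component $\Sigma_j$ dominates $u_j^*D$ away from the nodes. The subtle point is that $D$-points may collide with nodes or with each other, so that marked points bubble off onto contracted components. A section vanishing at marked points lying on a contracted tree attached to $\Sigma_j$ at a node $q$ vanishes at $q$. So the count on $\Sigma_j$ becomes the number of distinct points of $\Sigma_j$ that are either marked points or nodes leading to trees containing $D$-marked points.

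I expect this number is still $\geq |u_j^{-1}(D)|_{\mathrm{set}} \geq 1 + \int u_j^*c_1(V)$. Every point of $u_j^{-1}(D)$ is a limit of $D$-marked points. These land either at that point of $\Sigma_j$ or on a bubble tree attached there, and a marked point cannot sit at a node. Simple $V$-largeness counts points without multiplicity, which is exactly what is needed. Condition (1), that no rational curve lies in $D$, guarantees $u_j^{-1}(D)$ is finite. The rest of the argument then proceeds as in the transverse case.
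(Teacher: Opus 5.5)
Your proposal is correct and is essentially the paper's argument. Both reduce to the simple $V$-large case and kill a kernel element on each non-constant component $\Sigma'$. This uses a marked point in the contracted part of the fiber through every $p \in \Sigma' \cap u^{-1}(D)$, followed by the degree count, and then propagation to the contracted components by connectedness. The only difference is that the paper does not treat $\mathring{X}_{m+D!,d}$ separately. It works directly with an arbitrary $u$ in the closure, letting $\Sigma_p$ be the connected component of $u^{-1}(u(p))$ containing $p$ and asserting, with the same one-line limiting justification you give, that $\Sigma_p$ contains a marked point.
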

\begin{proof}
	By Corollary \ref{corolarysumconvex}, it is sufficient for us to prove this when $D$ is simple $V$-large. Therefore, let us assume that $D$ is simple $V$-large.
	Let $u : \Sigma \to U$ be a curve in $X_{m+D!,d}$ as above and let $v \in V_{m+D!,d}|_u = H^0(u^* V)$ be an element of the corresponding fiber.
	Suppose that the image $\widetilde{\ev}(v)$ of $v$ in $\ev^*(V^{\boxtimes m + D\cdot d})|_u$ is zero.
	Let  $\Sigma' \subset \Sigma$ be an irreducible component so that $u|_{\Sigma'}$ is non-constant.
	Let $P \subset \Sigma'$ be the subset of points $p$ satisfying $u(p) \in D$.
	For each $p \in P$, let $\Sigma_p \subset \Sigma$ be the connected component of $u^{-1}(u(p))$ containing $p$.
	Then $\Sigma_p$ contains a marked point for each $p \in P$ since $u$ doesn't have image in $D$ and $u$ is a limit of curves $v$ transverse to $D$ with $v^{-1}(D)$ a union of marked points.
	Since $u|_{\Sigma_p}$ is constant with image in $D$, we get that $v(p) = 0$.
	Hence, $v|_{\Sigma'}$ vanishes on each point $p \in \Sigma'$.

	Also, since $D$ is simple $V$-large, we have
	\begin{equation} \label{eqnfe94g}
		|P| \geq 1+\int_{\Sigma'} u^*(c_1(V)).
	\end{equation}
	Since $V$ is strongly convex, we get that $u^*V$ is a sum of line bundles of non-negative degree and hence
	\eqref{eqnfe94g} implies $v|_{\Sigma'} = 0$ for degree reasons.
	Hence, $v$ vanishes on each non-constant irreducible component of $\Sigma$.
	Since $\Sigma$ is connected and has at least one non-constant irreducible component due to the fact that $d \neq 0$, we get $v = 0$.
\end{proof}

\begin{defn} \label{defnrestrictionofcones}
	Let $U \subset X$ be subvariety, let $m \in \bN$ and $d \in H_2^+(X)$.
	We define  $X_{m,d}|_U \subset X_{m,d}$ to be the subspace parameterizing families of curves with image in $U$.
	Now let $V \to X$ be a vector bundle.
	If $C \subset V_{m,d}$ is a subcone, then we define $C|_U \subset V_{m,d}$ to be the subspace parameterizing families of curves mapping to $V$ that project to $X_{m,d}|_U$.
	We think of this as a cone over $X_{m,d}$ (rather than $X_{m,d}|_U$).
	If $C_\bullet = (C_{m,d})_{m \in \bN, \ d \in H_2^+(X)}$ is a system of cones for $V$ (Definition \ref{defnsystemofcones}) then we define the following system of cones for $V$:
	\begin{equation}
		C_\bullet|_U := (C_{m,d}|_U)_{m \in \bN, \ d \in H_2^+(X)}.
	\end{equation}

	Now let $D \subset X$ be a reduced divisor. We define $X_{m+D!,d}|_U := X_{m+D!,d} \cap X_{m+D \cdot d,d}|_U$.
	If $C' \subset V_{m+D!,d}$ is a subcone then we define $C'|_U := C' \cap (V_{m+D \cdot d,d}|_U)$, thought of as a cone over $X_{m+D!,d}$.
\end{defn}

\begin{defn} \label{defnprojectiveevaluatoin}
	Let $D$ be a $V$-large divisor and suppose $V$ is strongly convex.
	We define
	\begin{equation} \label{eqnaefaefPev}
		P\widetilde{\ev} : P_{V_{m+D!,d}} \to \ev^*P_{(V)^{\boxtimes m + D \cdot d}}
	\end{equation}
	to be the extension
	of the map $\widetilde{\ev}$ to the projective compactification of these vector bundles (which exists by Lemma \ref{lemmainjectivefromvlarge}).
\end{defn}


\section{Some Bounds Involving \texorpdfstring{$D$}--Volumes} \label{sectionsomeboundsfordvolumes}

\subsection{Segre Classes Coming from Systems of Cones} \label{subsectionsegreclassesfromsystemsofcones}

In Section \ref{vfcconnection} we compute the virtual fundamental class of the moduli space of stable maps from Segre classes of cones. Therefore, we need to bound Segre class counts in terms of $D$-volumes both from above and below. We will do this in this section.
We let $X$ be a smooth projective variety and $\pi : V \to X$ a vector bundle.

\begin{defn} \label{defnsegrefromsystemsofcones}
	Let $C_\bullet = (C_{m,d})_{m \in \bN, \ d \in H_2^+(X)}$
	be a system of cones for $V$ as in Definition \ref{defnsystemofcones}.
	For each $m \in \bN$ and $d \in H_2^+(X)$, we define $s(C_{m,d})$ to be the Segre class of the closure of $C_{m,d}$ inside $V_{m,d}$.
	We define the system of classes
	(Definition \ref{defnalternativevfc})
	\begin{equation}
		s(C_\bullet) := (s(C_{m,d}))_{m \in \bN, \ d \in H_2^+(X)}.
	\end{equation}
	We define $\overline{C_{m,d}}$ to be the closure of $C_{m,d}$ inside $P_{V_{m,d}}$
	and $\overline{C_{m+D!,d}}$ the closure of $C_{m+D!,d}$ inside $P_{V_{m+D!,d}}$
	for each $m \in \bN$, $d \in H_2^+(X)$ and any smooth divisor $D$ in $X$.
\end{defn}

Now, we wish to bound Gromov-Witten counts coming from Segre classes as in Definition \ref{defnsefsef}
in terms of $D$-volume under certain conditions.

\begin{lemma} \label{lemmadimensionofX}
	Suppose that $X$ admits a closed immersion into $\bP^N$ for some $N \in \bN$.
	Then
	\begin{equation} \label{eqndimensio}
		\dim(X_{m,d}) \leq (N+1)\deg(d) + N-3+m.
	\end{equation}
\end{lemma}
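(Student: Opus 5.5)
The plan is to embed $X_{m,d}$ as a closed substack of the moduli space of stable maps to projective space, and then use the fact that the latter is smooth of the expected dimension. Let $\iota_X : X \hookrightarrow \bP^N$ be the closed immersion and put $e := \deg(d) = (\iota_X)_*d \in H_2(\bP^N;\bZ) \cong \bZ$. Post-composing a family of stable maps to $X$ with $\iota_X$ gives a morphism
\[
X_{m,d} \to \bP^N_{m,e}.
\]
Stability is preserved: a component contracted by $u$ is still contracted by $\iota_X \circ u$, and $\iota_X \circ u$ has the same marked and nodal points as $u$.

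I expect this morphism to be a closed immersion, or at least finite and injective on points. The argument runs as follows. Since $\iota_X$ is a closed immersion, a family of maps $\Sigma \to \bP^N$ factors through $X$ in at most one way. Such a factorization exists exactly when the pullback of the ideal sheaf of $X$ to $\Sigma$ vanishes, and this is a closed condition on the base. Hence $X_{m,d}$ is identified with a closed substack of $\bP^N_{m,e}$, more precisely with a union of connected components of such a closed substack indexed by the classes $d'$ with $(\iota_X)_*d' = e$. In either reading we obtain $\dim X_{m,d} \leq \dim \bP^N_{m,e}$, whether dimension is taken for the stack or for its coarse space; these agree because the automorphism groups are finite.

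The remaining step computes $\dim \bP^N_{m,e}$. For $e>0$ I will use the standard fact that $T\bP^N$ is convex: $T\bP^N$ is a quotient of $\calO(1)^{N+1}$, so $u^*T\bP^N$ is globally generated on every genus zero nodal curve and $H^1(u^*T\bP^N)=0$. Lemma \ref{lemmaconvexquotient} applied to $\calO(1)^{N+1}$ gives this directly. Consequently the deformation theory of stable maps is unobstructed, and $\bP^N_{m,e}$ is a smooth Deligne--Mumford stack of the expected dimension
\[
\dim \bP^N + \int_e c_1(T\bP^N) + m - 3 = N + (N+1)e + m - 3 .
\]
For $d = 0$ (so $e = 0$) the space is $X \times \overline{M}_{0,m}$, which is empty unless $m \geq 3$, and its dimension $\dim X + m - 3 \leq N + m - 3$ again gives \eqref{eqndimensio}.

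The main obstacle is only the bookkeeping in the first step, namely checking that passing from $X$ to $\bP^N$ does not increase dimension. For this it suffices that the morphism $X_{m,d} \to \bP^N_{m,e}$ has finite fibers, which is clear on $\bC$-points because a map to $X$ is determined by its composite with $\iota_X$. The dimension count for $\bP^N_{m,e}$ is standard, following Kontsevich and Fulton--Pandharipande.
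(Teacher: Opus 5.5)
Your proposal is correct and follows the same route as the paper: regard $X_{m,d}$ as a substack of $\bP^N_{m,(\iota_X)_*d}$, then use convexity of $T\bP^N$ to see that this space is smooth of dimension $(N+1)\deg(d)+N-3+m$. Your verification that $X_{m,d}\to\bP^N_{m,(\iota_X)_*d}$ is injective on points, and your separate treatment of $d=0$, only spell out details the paper leaves implicit.
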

\begin{proof}
	Let $\iota_X : X \hookrightarrow \bP^N$ be our closed immersion.
	Then $X_{m,d} \subset \bP^N_{m,(\iota_X)_*d}$ and the dimension of $\bP^N_{m,(\iota_X)_*d}$ is the right-hand side of \eqref{eqndimensio} since $T\bP^N$ is convex and $c_1(T\bP^N|_X)(d) = (N+1)\deg(d)$.
	Hence, \eqref{eqndimensio} is our upper bound.
\end{proof}

To compare $D$-volumes with Gromov-Witten counts, we need the following computation of the pushforward of the Segre class of a cone. Here the $V$-large property is essential.

\begin{lemma} \label{lemmasegre}
	Suppose that $V$ is strongly convex and that $X$ admits a closed immersion into $\bP^N$ for some $N \in \bN$.
	Let $D$ be a $V$-large divisor which is generically transverse to some system of cones $C_\bullet = (C_{m,d})_{m \in \bN, \ d \in H_2^+(X)}$.
	Then
	\begin{equation} \label{eqnformulaforsegre}
		\ev_*(s(C_{m,d})) = \frac{1}{(D \cdot d)!}\sum_{r = 0}^{f_d} F_*(\pi_{P_{V^{\boxtimes m + D \cdot d}}})_*\left(
		P\ev_*[\overline{\widetilde{\ev}(C_{m+D!,d})}] \cap (\infty_{V^{\boxtimes m + D \cdot d}})^r
		\right)
	\end{equation}
	(Definition \ref{defnvolumeformultiplecones})
	where
	\begin{equation} \label{eqnforF}
		F : X^{m+D \cdot d} \to X^m
	\end{equation}
	is the projection map to the first $m$ factors and $f_d \in \bN$ satisfies
	\begin{equation} \label{eqnfhd}
		f_d \geq  (N+1)\deg(d) + N-3 + c_1(V)(d) + \dim(V)
	\end{equation}
	for each $m \in \bN$ and $d \in H_2^+(X)-0$.
\end{lemma}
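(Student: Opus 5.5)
We compute $\ev_*(s(C_{m,d}))$ by passing through the intermediate space $X_{m+D!,d}$ and then into the bundle $\ev^*V^{\boxtimes m+D\cdot d}$, where the Segre class becomes a sum of powers of the divisor at infinity. Throughout, write $m' = m+D\cdot d$ and let $\rho : X_{m+D!,d} \to X_{m,d}$ be the map forgetting the last $D\cdot d$ marked points.

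\emph{Step 1: reduce to $X_{m+D!,d}$.} We first show that $\rho$ is generically finite onto $X_{m,d}$, or onto the relevant components supporting $C_{m,d}$. Generic transversality of $D$ to $C_\bullet$ gives that, over a dense open subset of the support of $C_{m,d}$, each curve meets $D$ transversally in exactly $D\cdot d$ points. The fiber of $\rho$ over such a curve is then the set of orderings of these points, so $\rho$ has degree $(D\cdot d)!$ there. Since $C_{m+D!,d} = \rho^*C_{m,d}$ and Segre classes commute with flat pullback and proper pushforward of this kind, we get
\[
\rho_*s(C_{m+D!,d}) = (D\cdot d)!\, s(C_{m,d}).
\]
The delicate point is that $\rho$ need not be flat. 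We would handle this by working with the closure of $\mathring{X}_{m+D!,d}$ over the open set where $\rho$ is étale, and then using that the Segre class of a cone is determined by the closure of its restriction to a dense open set of each component. Concretely, $s(C)$ is the pushforward of $\sum_r \infty^r$-type classes from $\overline{C}\subset P_{V}$, and both sides are computed from the same closures. Since $\ev\circ\rho = F\circ\ev$, it remains to compute $\ev_*s(C_{m+D!,d})$ and push forward by $F$.

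\emph{Step 2: Segre class as a sum over $\overline{C}$.} By the standard description, for a cone $C$ in a vector bundle $W$ over $M$ with closure $\overline{C}\subset P_W$ and projection $q$,
\[
s(C) = q_*\Big(\sum_{r\ge 0} \infty_W^r \cap [\overline{C}]\Big).
\]
This series terminates at $r=\dim \overline{C}$. Now Lemma \ref{lemmainjectivefromvlarge} applies, using that $V$ is strongly convex (with $U=X$) and $d\neq 0$. It shows that $P\widetilde{\ev} : P_{V_{m+D!,d}} \to \ev^*P_{V^{\boxtimes m'}}$ from Definition \ref{defnprojectiveevaluatoin} is a fiberwise linear closed embedding. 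It sends $\overline{C_{m+D!,d}}$ isomorphically onto $\overline{\widetilde{\ev}(C_{m+D!,d})}$ and pulls $\infty_{V^{\boxtimes m'}}$ back to $\infty_{V_{m+D!,d}}$; here we use $\calO(1)$ compatibility, since the linear embedding $V_{m+D!,d}\oplus\bC\hookrightarrow \ev^*V^{\boxtimes m'}\oplus \bC$ preserves the last coordinate. The projection formula then gives
\[
\ev_*s(C_{m+D!,d}) = \ev_*(\pi)_*\sum_r \infty^r\cap[\overline{\widetilde{\ev}(C_{m+D!,d})}].
\]
Since $\ev^*P_{V^{\boxtimes m'}}$ is a pullback, we have $\ev\circ\pi = \pi_{P_{V^{\boxtimes m'}}}\circ P\ev$. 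Moreover, $P\ev^*\infty_{V^{\boxtimes m'}} = \infty$, so the projection formula converts this into $(\pi_{P_{V^{\boxtimes m'}}})_*\big(P\ev_*[\overline{\widetilde{\ev}(C_{m+D!,d})}]\cap \infty^r\big)$.

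\emph{Step 3: truncation.} We need $\dim \overline{C_{m+D!,d}} \le f_d$. This follows from $\dim X_{m+D!,d} \le \dim X_{m,d} \le (N+1)\deg(d)+N-3+m$ by Lemma \ref{lemmadimensionofX}. We add the rank of $V_{m,d}$, which is $c_1(V)(d)+\dim V$ by Riemann--Roch, since $V$ is convex. This total exceeds $f_d$ only by the $m$ term. Since $F$ pushes forward classes of dimension below $\dim$-relevant degrees correctly in either case, we handle this by noting that the terms with $r>f_d$ are still absorbed, or else we adjust by counting $\dim$ relative to the $m$ marked points. We expect the main obstacle to be Step 1: justifying rigorously that Segre classes behave well under the non-flat, generically finite map $\rho$ and that no extra components of $X_{m+D!,d}$ contribute.
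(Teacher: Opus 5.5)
Your Steps 1 and 2 follow the paper's proof. There too, generic \'{e}taleness of order $(D\cdot d)!$ gives $P\pi_*[\overline{C_{m+D!,d}}]=(D\cdot d)!\,[\overline{C_{m,d}}]$. This is combined with $\infty_{V_{m+D!,d}}=P\pi^*\infty_{V_{m,d}}$ and the projection formula to get $\pi_*s(C_{m+D!,d})=(D\cdot d)!\,s(C_{m,d})$; here $\pi$ is your $\rho$. Lemma \ref{lemmainjectivefromvlarge} is then used to push $[\overline{C_{m+D!,d}}]$ forward along $P\widetilde{\ev}$ while pulling $\infty_{V^{\boxtimes m'}}$ back to $\infty_{V_{m+D!,d}}$, exactly as you sketch.

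The genuine gap is Step 3, the truncation at $r=f_d$.
\begin{itemize}
\item Your count bounds $\dim\overline{C_{m+D!,d}}$ and $\dim\overline{C_{m,d}}$ only by $f_d+m$, so it kills only the terms with $r>f_d+m$.
\item Nothing you say shows that the terms with $f_d<r\le f_d+m$ vanish, either in $s(C_{m,d})$ or on the right-hand side. The lemma needs a bound $f_d$ independent of $m$.
\item The phrases ``still absorbed'' and ``counting relative to the $m$ marked points'' are not arguments, and $F_*$ does not kill these terms on dimension grounds.
\end{itemize}

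The missing idea is that everything is pulled back from $X_{0,d}$. Let $\sigma$ be the map forgetting all marked points. Then $V_{m+D!,d}\cong\sigma^*V_{0,d}$, and likewise for $V_{m,d}$. So there is an induced map $P_{V_{m+D!,d}}\to P_{V_{0,d}}$ pulling $\infty_{V_{0,d}}$ back to $\infty_{V_{m+D!,d}}$. By Lemma \ref{lemmadimensionofX} with $m=0$, and Riemann--Roch together with convexity for the rank,
\[
\dim P_{V_{0,d}}=\dim X_{0,d}+\mathrm{rank}\,V_{0,d}\le (N+1)\deg(d)+N-3+c_1(V)(d)+\dim V\le f_d .
\]
Hence the class $\infty^r$ vanishes for $r>f_d$ on $P_{V_{m+D!,d}}$, and similarly on $P_{V_{m,d}}$. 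This is what justifies writing both Segre classes as sums over $0\le r\le f_d$. With it, the rest of your argument goes through.
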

\begin{proof}
	Let $\underline{\rho} : X_{m+D!,d} \to X_{0,d}$ be the map forgetting all the marked points.
	Since $V_{m+D!,d} \cong \underline{\rho}^*V_{0,d}$, we get an induced map
	$\rho : P_{V_{m+D!,d}} \to P_{V_{0,d}}$.
	We have that
	\begin{equation}
		\infty_{V_{m+D!,d}}^r = \rho^*(\infty_{V_{0,d}})^r = 0
	\end{equation}
	if $r > f_d$ since $\dim(P_{V_{0,d}}) \leq f_d$ by Lemma \ref{lemmadimensionofX} and the fact that $V$ is convex.
	Similarly, $\infty_{V_{m,d}}^r = 0$ for $r > f_d$.
	Hence,
	\begin{equation} \label{eqnsegreformula}
		\begin{aligned}
			s(C_{m,d})    & = \sum_{r = 0}^{f_d} (\pi_{P_{V_{m,d}}})_*([\overline{C_{m,d}} \cap \infty_{V_{m,d}}^r]),          \\
			s(C_{m+D!,d}) & = \sum_{r = 0}^{f_d} (\pi_{P_{V_{m+D!,d}}})_*([\overline{C_{m+D!,d}} \cap \infty_{V_{m+D!,d}}^r]).
		\end{aligned}
	\end{equation}

	Let $m' := m + D \cdot d$.
	By Lemma \ref{lemmainjectivefromvlarge}, the map $\widetilde{\ev}$ is fiberwise injective.
	In particular,
	\begin{equation} \label{eqnpulbackofo1r}
		P\widetilde{\ev}^*P\ev^*\calO_{P_{V^{\boxtimes m'}}}(1) = \calO_{P_{V_{m+D!,d}}}(1)
	\end{equation}
	and
	\begin{equation} \label{eqnpushforwardclosurerelation}
		P{\widetilde{\ev}}_*[\overline{C_{m+D!,d}}] = [\overline{\widetilde{\ev}(C_{m+D!,d})}]
	\end{equation}
	(Definition \ref{defnprojectiveevaluatoin}).
	Hence,
	\begin{equation} \label{eqnevstarpistar}
		\begin{aligned}
			\ev_*(s(C_{m+D!,d}))
			 & \overset{\textnormal{\eqref{eqnsegreformula}}}{=} \sum_{r=0}^{f_d} \ev_*((\pi_{P_{V_{m+D!,d}}})_* ([\overline{C_{m+D!,d}}] \cap (\infty_{V_{m+D!,d}})^r))
			\\
			 & \overset{\textnormal{\eqref{eqnpulbackofo1r}}}{=} \sum_{r=0}^{f_d} \ev_*((\pi_{P_{V_{m+D!,d}}})_* ([\overline{C_{m+D!,d}}] \cap P\widetilde{\ev}^*P\ev^*(\infty_{V^{\boxtimes m'}}^r)))
			\\
			 & = \sum_{r=0}^{f_d} (\pi_{P_{V^{\boxtimes m'}}})_* P\ev_*P{\widetilde{\ev}}_* ([\overline{C_{m+D!,d}}] \cap P\widetilde{\ev}^*P\ev^*(\infty_{V^{\boxtimes m'}}^r))
			\\
			 & = \sum_{r=0}^{f_d} (\pi_{P_{V^{\boxtimes m'}}})_* (P\ev \circ P\widetilde{\ev})_* ([\overline{C_{m+D!,d}}]) \cap (P\ev \circ P\widetilde{\ev})^*(\infty_{V^{\boxtimes m'}}^r)
			\\
			 & = \sum_{r=0}^{f_d} (\pi_{P_{V^{\boxtimes m'}}})_* (P\ev \circ P\widetilde{\ev})_* ([\overline{C_{m+D!,d}}]) \cap \infty_{V^{\boxtimes m'}}^r
			\\
			 & = \sum_{r=0}^{f_d} (\pi_{P_{V^{\boxtimes m'}}})_* P\ev_* P\widetilde{\ev}_* ([\overline{C_{m+D!,d}}]) \cap \infty_{V^{\boxtimes m'}}^r
			\\
			 & \overset{\textnormal{\eqref{eqnpushforwardclosurerelation}}}{=} \sum_{r=0}^{f_d} (\pi_{P_{V^{\boxtimes m'}}})_*\left(
			P\ev_*[\overline{\widetilde{\ev}(C_{m+D!,d})}] \cap \infty_{V^{\boxtimes m'}}^r
			\right).
		\end{aligned}
	\end{equation}
	Let $\pi : X_{m+D!,d} \to X_{m,d}$ be the map forgetting the last $D \cdot d$ marked points.
	Let $P\pi : P_{V_{m+D!,d}} \to P_{V_{m,d}}$
	be the map induced by this forgetful map.
	This map satisfies:
	\begin{equation} \label{eqnpullbackinfinitybypi}
		\infty_{V_{m+D!,d}} = P\pi^*\infty_{V_{m,d}}.
	\end{equation}
	Since $D$ is generically transverse to $C_\bullet$, we have that
	$P\pi|_{C_{0,m+D!,d}} : C_{0,m+D!,d} \to C_{0,m,d}$ is a generically \'{e}tale morphism of order $(D \cdot d)!$.
	Hence,
	\begin{equation} \label{eqnpushfowardidentity}
		P\pi_*([\overline{C_{m+D!,d}}]) = (D \cdot d)![\overline{C_{m,d}}].
	\end{equation}
	We have
	\begin{equation}
		\begin{aligned}
			\pi_*(s(C_{m+D!,d})) & \overset{\textnormal{\eqref{eqnsegreformula}}}{=}
			\sum_{r=0}^{f_d} \pi_*((\pi_{P_{V_{m+D!,d}}})_* ([\overline{C_{m+D!,d}}] \cap (\infty_{V_{m+D!,d}})^r))
			\\
			                     & = \sum_{r=0}^{f_d} (\pi_{P_{V_{m,d}}})_* P\pi_*( ([\overline{C_{m+D!,d}}] \cap (\infty_{V_{m+D!,d}})^r))
			\\
			                     & \overset{\eqref{eqnpullbackinfinitybypi}}{=} \sum_{r=0}^{f_d} (\pi_{P_{V_{m,d}}})_* P\pi_*( ([\overline{C_{m+D!,d}}] \cap P\pi^*(\infty_{V_{m,d}})^r))
			\\
			                     & = \sum_{r=0}^{f_d} (\pi_{P_{V_{m,d}}})_* (P\pi_*([\overline{C_{m+D!,d}}]) \cap (\infty_{V_{m,d}})^r)
			\\
			                     &
			\overset{\eqref{eqnpushfowardidentity}}{=} \sum_{r=0}^{f_d} (\pi_{P_{V_{m,d}}})_* ((D \cdot d)![\overline{C_{m,d}}] \cap (\infty_{V_{m,d}})^r)
			\\
			                     & \overset{\textnormal{\eqref{eqnsegreformula}}}{=} (D \cdot d)! s(C_{m,d}).
		\end{aligned}
	\end{equation}
	Combining this with Equation \eqref{eqnevstarpistar} together with the fact that $\ev \circ \pi = F \circ \ev$ gives Equation \eqref{eqnformulaforsegre}.
\end{proof}

\begin{defn} \label{defnc0norm}
	Let $V$ be a finite dimensional complex vector space with a Hermitian metric.
	Let $\omega_V \in \wedge^2V^*$ be the corresponding imaginary part of the Hermitian form.
	Then we have an induced metric on $\wedge^k V^*$ where an orthonormal basis $\eta_1,\cdots,\eta_h$ of $V^*$
	gives an orthonormal basis $\eta_{i_1} \wedge \cdots \wedge \eta_{i_k}$, $1 \leq i_1 < \cdots < i_k \leq h$
	for $\wedge^k V^*$.
	We write $| \cdot |_{\omega_V}$ for the corresponding norm on $\wedge^k V^*$.

	Let $\omega$ be a K\"{a}hler form on a smooth variety $Y$.
	If $\alpha$ is a $k$-form on $Y$ then the \emph{$C^0$-norm of $\alpha$ with respect to $\omega$}, denoted by $|\alpha|_{C^0,\omega}$, is the supremum of the $| \cdot |_{\omega|_x}$ norm of $\alpha|_x \in \wedge^k T^*_xM$ over all $x \in M$.

\end{defn}

\begin{remark} \label{remarkcomparisonofnorms}
	If $M$ is a smooth complex manifold, $\eta \in \Omega^k(M)$ and $\omega$ is a K\"{a}hler form on $M$ then
	\begin{equation} \label{eqnintcomparisonofnorms}
		\left| \int_M \eta \right| \leq |\eta|_{C^0,\omega} \int_M \exp(\omega).
	\end{equation}
\end{remark}

\begin{lemma} \label{lemmaproductnorms}
	Let $\pi : Y \to X$ be a morphism of smooth varieties and let $\omega$ be a K\"{a}hler form on $Y$.
	Let $n = \dim(X)$.
	Also, let $\alpha$ be a $C^\infty$ differential form on $Y$ and $\beta$ a $C^\infty$ differential form on $X$.
	Then
	\begin{equation} \label{eqnwedgeproductinequality}
		|\alpha \wedge \pi^*\beta|_{C^0,\omega} \leq 2^{2n} |\alpha|_{C^0,\omega} |\pi^*\beta|_{C^0,\omega}.
	\end{equation}
	If $\beta$ is homogenous of degree $j$, then
	\begin{equation} \label{eqnwedgeproductinequalityhomogenous}
		|\alpha \wedge \pi^*\beta|_{C^0,\omega} \leq {2n \choose j} |\alpha|_{C^0,\omega} |\pi^*\beta|_{C^0,\omega}.
	\end{equation}
\end{lemma}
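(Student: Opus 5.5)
The plan is to prove both inequalities pointwise and then take the supremum over $y \in Y$; everything reduces to linear algebra on a single cotangent space. Fix $y \in Y$, put $x = \pi(y)$, and let $E := T^*_yY$ with the inner product induced by $\omega|_y$. Extend this inner product to $\wedge^\bullet E$ (tensored with $\bC$ if the forms are complex valued) as in Definition \ref{defnc0norm}, so that forms of different degrees are orthogonal. Let $W := (d\pi_y)^*(T^*_xX) \subset E$. This is a real subspace of real dimension $k \leq 2n$. The key observation is that $(\pi^*\beta)|_y = \wedge^\bullet (d\pi_y)^*(\beta|_x)$ lies in the subalgebra $\wedge^\bullet W \subset \wedge^\bullet E$.

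I will prove the homogeneous inequality \eqref{eqnwedgeproductinequalityhomogenous} first. Choose an orthonormal basis $e_1,\cdots,e_h$ of $E$ whose first $k$ vectors span $W$. Since $\beta$ has degree $j$, we can write
\[
	(\pi^*\beta)|_y = \sum_{I \subset \{1,\cdots,k\}, \ |I| = j} b_I e_I .
\]
There are at most ${k \choose j} \leq {2n \choose j}$ terms, and each coefficient satisfies $|b_I| \leq |(\pi^*\beta)|_y| \leq |\pi^*\beta|_{C^0,\omega}$, because the $e_I$ are orthonormal. Next I check that wedging with a unit decomposable form $e_I$ does not increase norms. Expand $\alpha|_y = \sum_J a_J e_J$. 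Then $\alpha|_y \wedge e_I = \sum_{J \cap I = \emptyset} \pm a_J e_{J \cup I}$. The map $J \mapsto J \cup I$ is injective on the sets $J$ disjoint from $I$, so the $e_{J \cup I}$ are distinct orthonormal basis elements. Hence $|\alpha|_y \wedge e_I| \leq |\alpha|_y| \leq |\alpha|_{C^0,\omega}$. Combining these with the triangle inequality gives
\[
	|\alpha|_y \wedge (\pi^*\beta)|_y| \leq \sum_I |b_I| \, |\alpha|_y \wedge e_I| \leq {2n \choose j} |\alpha|_{C^0,\omega} |\pi^*\beta|_{C^0,\omega}.
\]
Taking the supremum over $y$ gives \eqref{eqnwedgeproductinequalityhomogenous}.

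For the general inequality \eqref{eqnwedgeproductinequality}, decompose $\beta = \sum_{j=0}^{2n} \beta_j$ into homogeneous components. Since pullback preserves degree and forms of different degrees are orthogonal, we have $|(\pi^*\beta_j)|_y| \leq |(\pi^*\beta)|_y|$ for each $j$. Applying the homogeneous case to each $\beta_j$ and summing, using $\sum_{j=0}^{2n} {2n \choose j} = 2^{2n}$, gives \eqref{eqnwedgeproductinequality}.

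There is no real obstacle in this argument. The only point needing care is that the norm of $\pi^*\beta$ must be measured on $Y$, not on $X$. This is exactly why the basis is adapted to the image $W$ of $d\pi_y^*$ rather than to $T^*_xX$. That choice also handles the case where $d\pi_y$ is not injective on cotangent vectors: then $k < 2n$, and the count of terms is only smaller.
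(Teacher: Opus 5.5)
Your proposal is correct and follows essentially the same route as the paper. The paper picks an orthonormal basis of $T_yY$ adapted to $\ker(d\pi_y)^\perp$, whose dual covectors span exactly your subspace $W=(d\pi_y)^*(T^*_xX)$; it expands $\pi^*\beta$ in at most ${2n \choose j}$ unit wedge monomials, applies the triangle inequality, and then sums over homogeneous components to get $2^{2n}$. Your write-up is slightly more careful: you explicitly check that wedging with a unit monomial $e_I$ does not increase the norm, which the paper uses without comment, and you count ${k \choose j}$ terms with $k \leq 2n$ the real rank, which avoids the paper's ${2k \choose j}$ slip.
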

\begin{proof}
	Let us first assume that $\beta$ is homogenous of degree $j$.
	Suppose $y \in Y$ and let $V \subset T_y Y$ be the kernel of $d\pi|_{T_yY} : T_y Y \to T_{\pi(y)}X$.
	Let $V^\perp \subset T_yY$ be the $\omega$-orthogonal subspace of $V$
	and suppose $\Omega := \omega|_{T_yY}$.
	Choose an orthonormal basis $v_1,\cdots,v_h$ of $T_yY$ so that $v_1,\cdots,v_k$ is an orthonormal basis for $V^\perp$ and $v_{k+1},\cdots,v_n$ is an orthonormal basis for $V$. Since $\dim X = n$, we have $k \leq 2n$.
	Let $v^*_1,\cdots,v_n^*$ be the corresponding dual basis.
	For each $I \subset \{1,\cdots,h\}$, write $v^*_I := v^*_{i_1} \wedge \cdots \wedge v^*_{i_k}$ where $I = \{i_1,\cdots,i_k\}$ and $i_1 < \cdots < i_k$.
	Then $\pi^*\beta|_{T_y Y} = \sum_{\underset{I \subset \{1,\cdots,k\}}{|I|=j}} b_I v^*_I$ for some $b_I \in \bR$, $I \subset \{1,\cdots,k\}$, $|I|=j$.
	So,
	\begin{equation}
		\alpha \wedge \pi^*\beta|_{T_yY} = \sum_{\underset{I \subset \{1,\cdots,k\}}{|I|=j}} b_I \alpha|_{T_yY} \wedge v^*_I.
	\end{equation}
	Hence, by the triangle inequality, we have
	\begin{equation}
		\big|\alpha \wedge \pi^*\beta|_{T_yY}\big|_{C^0,\Omega}
		\leq \sum_{\underset{I \subset \{1,\cdots,k\}}{|I|=j}} |b_I| \big|\alpha|_{T_yY}\big|_{C^0,\Omega}
	\end{equation}
	\begin{equation}
		\leq \sum_{\underset{I \subset \{1,\cdots,k\}}{|I|=j}} |\pi^*\beta|_{T_yY}|_{C^0,\Omega} \big|\alpha|_{T_yY}\big|_{C^0,\Omega} = {2k \choose j} |\pi^*\beta|_{T_yY}|_{C^0,\Omega} \big|\alpha|_{T_yY}\big|_{C^0,\Omega}
	\end{equation}
	\begin{equation}
		\leq {2n \choose j} |\pi^*\beta|_{T_yY}|_{C^0,\Omega} \big|\alpha|_{T_yY}\big|_{C^0,\Omega}.
	\end{equation}
	Hence,
	\begin{equation}
		\big| \alpha \wedge \pi^*\beta \big|_{C^0,\omega} = \sup_{y \in Y} \big| \alpha \wedge \pi^*\beta|_{T_yY } \big|_{C^0,\omega|_{T_yY}}
	\end{equation}
	\begin{equation}
		\leq \sup_{y \in Y} {2n \choose j} |\pi^*\beta|_{C^0,\omega|_{T_yY}} \big|\alpha|_{T_yY}\big|_{C^0,\omega|_{T_yY}}
	\end{equation}
	\begin{equation}
		\leq \sup_{y \in Y} {2n \choose j} |\pi^*\beta|_{T_yY}|_{C^0,\omega|_{T_yY}} \sup_{y' \in Y} \big|\alpha|_{T_{y'}Y}\big|_{C^0,\omega|_{T_{y'}Y}}
			= {2n \choose j} |\alpha|_{C^0,\omega} |\pi^*\beta|_{C^0,\omega}.
	\end{equation}

	Now suppose that $\beta$ is not necessarily homogenous. Then we can write $\beta = \sum_{j=0}^{2n} \beta_j$ where $\beta_j$ is homogenous of degree $j$ for each $j=0,\cdots,2n$.
	Then
	\begin{equation}
		|\alpha \wedge \pi^* \beta|_{C^0,\omega} \leq \sum_{j=0}^{2n} |\alpha \wedge \pi^*\beta_j|_{C^0,\omega}
		\leq \sum_{j=0}^{2n} {2n \choose j} |\alpha|_{C^0,\omega} |\pi^*\beta_j|_{C^0,\omega}
	\end{equation}
	\begin{equation}
		\leq \sum_{j=0}^{2n} {2n \choose j} |\alpha|_{C^0,\omega} |\pi^*\beta|_{C^0,\omega}
			= 2^{2n} |\alpha|_{C^0,\omega} |\pi^*\beta|_{C^0,\omega}.
	\end{equation}
\end{proof}

\begin{lemma} \label{lemmaaboutpositivedefiniteforms}
	Let $\Omega_1, \Omega_2 \in \wedge^2 V^*$ be two imaginary parts of Hermitian forms on a finite dimensional vector space $V$ where the first one is positive definite and the second is positive semidefinite.
	Let $\Gamma \in \wedge^k V^*$.
	Then
	\begin{equation} \label{eqncomparisonofnorms}
		|\Gamma|_{\Omega_1+\Omega_2} \leq |\Gamma|_{\Omega_1},
	\end{equation}
	\begin{equation} \label{eqnomegaonenorm}
		|\wedge^j \Omega_1|_{\Omega_1+\Omega_2} \leq j! {\dim(V) \choose j},
	\end{equation}
	\begin{equation} \label{eqnomegaonenorm2}
		|\wedge^j \Omega_2|_{\Omega_1+\Omega_2} \leq j! {\dim(V) \choose j}, \quad \forall \ j = 0,\cdots,\dim(V).
	\end{equation}
\end{lemma}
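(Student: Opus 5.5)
The plan is to put $\Omega_1$ and $\Omega_2$ into a common normal form and then compare orthonormal bases, so that all three inequalities reduce to elementary estimates on coefficients. Let $n$ be the complex dimension of $V$. Since $\Omega_1$ comes from a positive definite Hermitian form $h_1$ and $\Omega_2$ from a positive semidefinite Hermitian form $h_2$, I will simultaneously diagonalize them. That is, I pick a complex basis $f_1,\cdots,f_n$ of $V$ which is $h_1$-unitary and in which $h_2$ is diagonal with eigenvalues $\lambda_1,\cdots,\lambda_n \geq 0$. Writing $x_i, y_i$ for the real and imaginary coordinates with respect to $f_i$, this gives
\begin{equation*}
	\Omega_1 = \sum_{i=1}^n dx_i \wedge dy_i, \qquad \Omega_2 = \sum_{i=1}^n \lambda_i \, dx_i \wedge dy_i .
\end{equation*}
So $dx_1,dy_1,\cdots,dx_n,dy_n$ is an orthonormal basis of $V^*$ for the metric induced by $\Omega_1$. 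For $\Omega_1+\Omega_2$ the corresponding orthonormal coframe is $e_i := \sqrt{1+\lambda_i}\,dx_i$, $e'_i := \sqrt{1+\lambda_i}\,dy_i$. By Definition \ref{defnc0norm}, the wedge monomials in each coframe are orthonormal for the respective norm on $\wedge^k V^*$.

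For \eqref{eqncomparisonofnorms}, I expand $\Gamma = \sum_I c_I \, dz_I$, where $dz_I$ runs over the wedge monomials in the $dx_i, dy_i$. Substituting $dx_i = e_i/\sqrt{1+\lambda_i}$ and $dy_i = e'_i/\sqrt{1+\lambda_i}$, each monomial $dz_I$ becomes the corresponding monomial in the $e_i, e'_i$ multiplied by a product of factors $(1+\lambda_i)^{-1/2} \leq 1$. Hence every coefficient of $\Gamma$ in the orthonormal basis for $\Omega_1+\Omega_2$ has absolute value at most $|c_I|$. Taking the $\ell^2$ norm of the coefficients gives $|\Gamma|_{\Omega_1+\Omega_2} \leq |\Gamma|_{\Omega_1}$.

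For \eqref{eqnomegaonenorm} and \eqref{eqnomegaonenorm2}, I expand the wedge powers directly:
\begin{equation*}
	\wedge^j \Omega_1 = j! \sum_{|S|=j} \prod_{i \in S} dx_i\wedge dy_i = j! \sum_{|S|=j} \prod_{i\in S}(1+\lambda_i)^{-1}\, e_i\wedge e'_i,
\end{equation*}
and similarly for $\wedge^j\Omega_2$, with coefficients $j!\prod_{i\in S}\lambda_i/(1+\lambda_i)$. All of these coefficients lie in $[0,j!]$, and there are ${n\choose j}$ orthonormal monomials. Therefore each norm is at most
\begin{equation*}
	j!\sqrt{{n \choose j}} \leq j!{n\choose j} \leq j!{\dim(V)\choose j},
\end{equation*}
and the last inequality holds whether $\dim(V)$ is read as the complex or the real dimension.

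The only step that needs genuine care is the simultaneous diagonalization together with the identification of orthonormal coframes with the conventions of Definition \ref{defnc0norm}. This is the standard fact that a positive semidefinite Hermitian form can be diagonalized in a basis that is unitary for a positive definite one. Everything after that is bookkeeping of coefficients.
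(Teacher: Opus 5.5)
Your proof is correct. For \eqref{eqncomparisonofnorms} it matches the paper: simultaneous diagonalization shows that each $\Omega_1$-orthonormal monomial has $(\Omega_1+\Omega_2)$-norm at most $1$.

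For \eqref{eqnomegaonenorm} and \eqref{eqnomegaonenorm2} you take a different and more direct route. The paper does not compute in the common frame. Instead it does the following:
\begin{enumerate}
\item It perturbs $\Omega_2$ to positive definite forms $\Omega_{2,i}\to\Omega_2$ and sets $\Omega_{1,i}:=\Omega_1+\Omega_2-\Omega_{2,i}$.
\item It writes each $\Omega_{k,i}$ in its own orthonormal coframe as a sum of standard terms.
\item It applies \eqref{eqncomparisonofnorms}, with $\Omega_{k,i}$ in the role of the positive definite form and the complementary form in the role of the semidefinite one. This shows every monomial of $\wedge^j\Omega_{k,i}$ has $(\Omega_1+\Omega_2)$-norm at most $1$.
\item The triangle inequality then gives $j!\binom{\dim V}{j}$, and it passes to the limit $i\to\infty$.
\end{enumerate}
The perturbation is needed only because $|\cdot|_{\Omega_2}$ is not defined when $\Omega_2$ is degenerate.

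You instead read off the coefficients $j!\prod_{s\in S}(1+\lambda_s)^{-1}$ and $j!\prod_{s\in S}\lambda_s/(1+\lambda_s)$ directly in the $(\Omega_1+\Omega_2)$-orthonormal basis. Semidefiniteness is absorbed into $\lambda_s\geq 0$, so no approximation or limit is needed. Using orthogonality of the monomials rather than the triangle inequality also gives the sharper bound $j!\sqrt{\binom{n}{j}}$.

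The paper's argument treats $\Omega_1$ and $\Omega_2$ symmetrically and derives the wedge-power bounds purely from the monotonicity statement \eqref{eqncomparisonofnorms}. Yours is shorter, avoids the limiting step, and makes the constants explicit.
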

\begin{proof}
	We can choose a basis so that the metrics associated to $\Omega_1$ and $\Omega_1 + \Omega_2$ are both diagonal with the first one being the identity matrix.
	The diagonal entries of the second matrix are $\geq 1$ since $\Omega_2$ has positive semidefinite associated Hermitian form. So, if we write $\Gamma$ as a linear combination of wedge products of elements of the corresponding dual basis, then its $|\cdot|_{\Omega_1}$-norm squared is the sum of the squares of the corresponding coefficients because the norms of these wedge products are $1$, and they are orthogonal to each other. However, the $|\cdot|_{\Omega_1+\Omega_2}$-norm of each such wedge product of basis elements is $\leq 1$ because the diagonal entries of the second matrix are $\geq 1$
	and so Equation \eqref{eqncomparisonofnorms} holds.

	Since $\Omega_2$ has positive semidefinite associated Hermitian form, we can approximate it by a sequence $\Omega_{2,i}$, $i \in \bN$ whose associated Hermitian forms are positive definite.
	Let $\Omega_{1,i} = \Omega_1 + \Omega_2 - \Omega_{2,i}$ for each $i \in \bN$. Then $\Omega_{1,i}$ has positive definite associated Hermitian form for each sufficiently large $i \in \bN$. Therefore, after passing to a subsequence, we can assume $\Omega_{1,i}$ has positive definite associated Hermitian form for each $i \in \bN$ too.
	Since $\Omega_{k,i}$ has a positive definite associated Hermitian form, it is equal to $\frac{i}{2}\sum_{j=1}^{\dim V} z_j^* \wedge \overline{z}_j^*$ with respect to some orthonormal basis $z_1^*,\cdots,z_{\dim(V)}^*$ of $V^*$ with respect to the Hermitian metric coming from $\Omega_{k,i}$ for each $k=1,2$, $i \in \bN$.
	Hence, by the first part of this lemma with $\Omega_1$ and $\Omega_2$ replaced with $\Omega_{1,i}$ and $\Omega_{2,i}$ respectively, each of the forms $z_j^* \wedge \overline{z}_j^*$ has $|\cdot|_{\Omega_1+\Omega_2}=|\cdot|_{\Omega_{1,i}+\Omega_{2,i}}$ norm at most $1$.
	Also, the wedge product of $z_j^* \wedge \overline{z}_j^*$ with itself is zero.
	Hence:
	\begin{equation}
		|\wedge^j \Omega_{k,i}|_{\Omega_1+\Omega_2} \leq j! {\dim(V) \choose j}, \quad \forall \ j = 0,\cdots,\dim(V), \ k=1,2
	\end{equation}
	for each $i \in \bN$.
	Taking the limit as $i$ goes to infinity gives Equations \eqref{eqnomegaonenorm} and \eqref{eqnomegaonenorm2}.
\end{proof}

\begin{lemma} \label{lemmasizeofintegral}
	Let $\pi : Y \to Z$ be a morphism between smooth projective varieties.
	Let $\omega_Y$ be a K\"{a}hler form on $Y$ and $\omega_Z$ a K\"{a}hler form on $Z$.
	Define $\omega = \omega_Y + \pi^*\omega_Z$.
	Let $T$ be a Chow cycle on $Y$ of dimension at most $h \in \bN$ and which is equal to a positive sum of closed integral subschemes and suppose the scheme theoretic image of each such subscheme in $Z$ has dimension at most $b \in \bN$.
	Let $p_i : Y \to Z_i$, $i=1,\cdots,m$ be morphisms to smooth projective varieties of dimension at most $n$.
	Also, let $\alpha_1,\cdots,\alpha_m$ be differential forms on $Z_1,\cdots,Z_m$ respectively and let $k \in \bN$.

	Then
	\begin{equation} \label{eqnchowcyclemsegs}
		\left| \int_T \left(\cup_{j=1}^m p_j^*\alpha_j\right) \cup (\omega_Y - \pi^*\omega_Z)^k \right| \leq  (4b+2 m n+4)! h^{2b}  h! \prod_{j=1}^m |p_j^*\alpha_j|_{C^0,\omega}  \int_T \exp(\omega).
	\end{equation}
\end{lemma}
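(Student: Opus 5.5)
We will follow the model computation in the introduction. The plan is to expand $(\omega_Y - \pi^*\omega_Z)^k$ binomially, as
\[
\sum_{l=0}^{k} {k \choose l} (-1)^l \, \omega_Y^{k-l} \wedge \pi^*\omega_Z^l .
\]
We will then use the dimension constraint on $\pi(T)$ to kill most terms, and bound the surviving terms by Remark \ref{remarkcomparisonofnorms} together with Lemmas \ref{lemmaproductnorms} and \ref{lemmaaboutpositivedefiniteforms}.

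\textbf{Step 1: reduction to an integral subscheme.} Since both sides of \eqref{eqnchowcyclemsegs} are additive in positive sums of integral cycles, we will reduce to $T$ a single closed integral subscheme. We pass to a resolution $\widetilde{T} \to T$, so that the integral becomes an integral of pulled-back smooth forms over a compact complex manifold. The composite $\widetilde{T} \to Z$ has image of dimension at most $b$. Hence $\pi^*\omega_Z^l$ vanishes on $\widetilde{T}$ for $l > b$, because its rank is at most $b$ pointwise on the generic locus, and therefore everywhere by continuity. So only the terms with $l \le b$ survive. We also only need $k - l \le h$, since otherwise the degree exceeds $2\dim T$. Consequently
\[
{k \choose l} \le {h+b \choose l} \le (h+b)^{b},
\]
which is bounded by a constant times $h^{2b}$ (up to factorials absorbed in $(4b+\cdots)!$). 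Note that $k$ itself may be large; then every term vanishes and the bound is trivial.

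\textbf{Step 2: bounding each term.} For each surviving $l$ we apply Remark \ref{remarkcomparisonofnorms} on $\widetilde{T}$ with Kähler-type form $\omega = \omega_Y + \pi^*\omega_Z$. Strictly, $\omega$ pulls back only semipositive; we handle this by adding $\epsilon\,\omega_{\widetilde{T}}$ and letting $\epsilon \to 0$, or by restricting to the smooth locus. This gives
\[
\left|\int_T \left(\cup_j p_j^*\alpha_j\right) \wedge \omega_Y^{k-l} \wedge \pi^*\omega_Z^l\right| \le \left|\left(\cup_j p_j^*\alpha_j\right) \wedge \omega_Y^{k-l} \wedge \pi^*\omega_Z^l\right|_{C^0,\omega} \int_T \exp(\omega).
\]
To bound the pointwise norm, we use Lemma \ref{lemmaproductnorms} repeatedly, iterating over $j$ and taking $\pi = p_j$ with $\dim Z_j \le n$. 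This gives a factor of at most $2^{2n}$ per insertion, hence $2^{2mn}$ in total, which is at most $(2mn)!$-type factors. The pullback $\pi^*\omega_Z^l$ costs ${2b \choose 2l}$ or $2^{2\dim}$; here we must use the dimension of the image $b$ rather than $\dim Z$. Finally, Lemma \ref{lemmaaboutpositivedefiniteforms} gives
\[
|\omega_Y^{k-l}|_{\omega} \le (k-l)!\,{h \choose k-l} \le h!\cdot 2^h
\]
and $|\pi^*\omega_Z^l|_\omega \le l!\,{h \choose l} \le b!\,h^b$. The factor $2^h$ must be avoided, because the claimed bound has only $h!\,h^{2b}$. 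The way around this is to note that $(k-l)!\,{h \choose k-l} = h!/(h-k+l)! \le h!$ directly.

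\textbf{Step 3: assembling the constant.} Summing over at most $b+1$ values of $l$ and multiplying out gives a bound of the form $(\text{factorials in } b,\, m,\, n)\cdot h^{2b}\,h!$. The crude estimates $2^{2mn}\,(b+1)\,b!\,(2b)!\cdots \le (4b+2mn+4)!$ then absorb everything.

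\textbf{Main obstacle.} The main obstacle is Lemma \ref{lemmaproductnorms}, which is stated for smooth $Y$ with a genuine Kähler form, while $T$ is singular. We need to justify working on a resolution, or on the smooth locus, with a degenerate form and with the image dimension $b$ in place of $\dim Z$. The first thing to try is the pointwise argument of Lemma \ref{lemmaproductnorms} on the open dense locus where $\widetilde{T} \to \pi(T)$ is a smooth morphism, with the image having dimension at most $b$. There the horizontal space has real dimension at most $2b$, which gives the binomial ${2b \choose \cdot}$, and the integral over the complement is zero.
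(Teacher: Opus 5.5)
Your proposal is correct and follows the paper's proof essentially step for step. Both reduce to an integral $T$ and work on a dense smooth open $T^0\subset T$ with $\pi(T^0)$ smooth of dimension at most $b$, where $\iota^*\omega$ is genuinely K\"{a}hler. Both expand binomially, drop the terms with more than $b$ factors of $\pi^*\omega_Z$, and bound the survivors with Remark \ref{remarkcomparisonofnorms} and Lemmas \ref{lemmaproductnorms} and \ref{lemmaaboutpositivedefiniteforms}, using $(k-l)!{h\choose k-l}\le h!$.

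The one point to tidy is the power of $h$. As written, $\binom{k}{l}\le C\,h^{2b}$ together with $|\pi^*\omega_Z^l|\le b!\,h^b$ would give $h^{3b}$. The two powers combine to $h^{2b}$ once you note $k\le h$ (the whole integrand has degree at least $2k$, so otherwise it vanishes), as the paper does, or use $(h+b)^b\le (b+1)^b h^b$.
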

\begin{proof}
	By linearity combined with the triangle inequality and the fact that $T$ is a positive sum of integral subschemes it is sufficient for us to prove this when $T \subset X$ is an integral subscheme.
	So, let us assume that this is the case.

	There exists a dense open subspace $T^0 \subset T$ with the property that both $T^0$ and $\pi(T_0)$ are smooth.
	Define $\pi_0 := \pi|_{T^0} : T^0 \to \pi(T^0)$.
	Let $\iota : T^0 \hookrightarrow Y$ and $\underline{\iota} : \pi(T^0) \to Z$ be the natural inclusion maps.

	Since $\underline{\iota}^*\omega_Z^j = 0$ for $j > b$, we have
	\begin{equation}
		\left| \int_T \left(\cup_{j=1}^m p_j^*\alpha_j\right) \cup (\omega_Y - \pi^*\omega_Z)^k \right|
		=\left| \int_{T^0} \left(\cup_{j=1}^m p_j^*\alpha_j\right) \cup (\omega_Y - \pi^*\omega_Z)^k \right|
	\end{equation}
	\begin{equation}
		\overset{\textnormal{Remark \ref{remarkcomparisonofnorms}}}{\leq} \left| \left(\cup_{j=1}^m \iota^*p_j^*\alpha_j\right) \cup (\iota^*\omega_Y - \iota^*\pi^*\omega_Z)^k \right|_{C^0,\iota^*\omega} \int_T \exp(\iota^*\omega)
	\end{equation}
	\begin{equation}
		= \left| \left(\cup_{j=1}^m \iota^*p_j^*\alpha_j\right) \cup (\iota^*\omega_Y - \pi_0^*\underline{\iota}^*\omega_Z)^k \right|_{C^0,\iota^*\omega} \int_T \exp(\iota^*\omega)
	\end{equation}
	\begin{equation}
		= \left| \sum_{j=0}^b (-1)^j {k \choose j} \left(\cup_{j=1}^m \iota^*p_j^*\alpha_j\right) \cup \iota^*\omega_Y^{k-j} \cup \pi_0^*\underline{\iota}^*\omega_Z^j \right|_{C^0,\iota^*\omega} \int_T \exp(\iota^*\omega)
	\end{equation}
	\begin{equation}
		\leq \sum_{j=0}^b {k \choose j} \left| \left(\cup_{j=1}^m \iota^*p_j^*\alpha_j\right) \cup (\iota^*\omega_Y)^{k-j} \cup (\pi_0^*\underline{\iota}^*\omega_Z)^j \right|_{C^0,\iota^*\omega} \int_T \exp(\omega)
	\end{equation}
	\begin{equation}
		\overset{\textnormal{Lemma \ref{lemmaproductnorms}}}{\leq}
		\sum_{j=0}^b {k \choose j} 2^{2 m n} {2b \choose 2}^j \left| (\iota^*\omega_Y)^{k-j} \right|_{C^0,\iota^*\omega} \left| (\pi_0^*\underline{\iota}^*\omega_Z) \right|^j_{C^0,\iota^*\omega}  \prod_{j=1}^m \left|\iota^*p_j^*\alpha_j \right|_{C^0,\iota^*\omega} \int_T \exp(\omega)
	\end{equation}
	\begin{equation}
		\overset{\textnormal{Lemma \ref{lemmaaboutpositivedefiniteforms}}}{\leq}
		\sum_{j=0}^b {k \choose j} 2^{2 m n} {2b \choose 2}^j (k-j)!{h \choose k-j} h^j  \prod_{j=1}^m \left|\iota^*p_j^*\alpha_j \right|_{C^0,\iota^*\omega} \int_T \exp(\omega)
	\end{equation}
	\begin{equation}
		\leq \sum_{j=0}^b k^j2^{2 m n} {2b \choose 2}^j h! h^b  \prod_{j=1}^m \left|\iota^*p_j^*\alpha_j \right|_{C^0,\iota^*\omega} \int_T \exp(\omega)
	\end{equation}
	\begin{equation}
		\leq (b+1) (kh)^b 2^{2 m n} ((2b)^2)^b  h!  \prod_{j=1}^m \left|\iota^*p_j^*\alpha_j \right|_{C^0,\iota^*\omega} \int_T \exp(\omega)
	\end{equation}
	\begin{equation}
		\leq (b+1) h^{2b} 2^{2 m n} (2b)^{2b} h!  \prod_{j=1}^m \left|\iota^*p_j^*\alpha_j \right|_{C^0,\iota^*\omega} \int_T \exp(\omega)
	\end{equation}
	\begin{equation} \label{eqnstuffleq}
		\leq h^{2b} 2^{2 m n} (2b+1)^{2b+1} h!  \prod_{j=1}^m \left|\iota^*p_j^*\alpha_j \right|_{C^0,\iota^*\omega} \int_T \exp(\omega)
	\end{equation}
	since $k \leq h$.
	Combining this with the following inequality:
	\begin{equation} \label{eqnreallreallyterribleinequality}
		\prod_{i=1}^{\ell} g_i! \prod_{j = 1}^{k'} e_j^{f_j} \leq (\sum_i g_i + \sum_j e_j+f_j)!, \quad \forall \ e_1,\cdots,e_{k'},f_1,\cdots,f_{k'}, g_1,\cdots,g_\ell \in \bN,
	\end{equation}
	we get \eqref{eqnstuffleq}
	\begin{equation}
		\leq (2+2 m n+2b+1+2b+1)! h^{2b} h!  \prod_{j=1}^m \left|\iota^*p_j^*\alpha_j \right|_{C^0,\iota^*\omega} \int_T \exp(\omega)
	\end{equation}
	\begin{equation}
		= (4b+2 m n+4)! h^{2b} h!  \prod_{j=1}^m \left|\iota^*p_j^*\alpha_j \right|_{C^0,\iota^*\omega} \int_T \exp(\omega).
	\end{equation}

\end{proof}

\begin{prop} \label{propboundsforsegreclass}
	Suppose $X$ is a subvariety of $\bP^N$.
	Suppose that $V$ is a strongly  convex vector bundle over $X$.
	Let $D, D'$ be divisors on $X$ so that:
	\begin{enumerate}
		\item $V^* \otimes \calO(D')$ and $\calO(D')$ are globally generated,
		\item $D$ is linearly equivalent to $4D'$,
		\item $D'$ is ample,
		\item $D$ is smooth, $V$-large and generically transverse to some system of cones \begin{equation}
			      C_\bullet = (C_{m,d})_{m \in \bN, \ d \in H_2^+(X)}
		      \end{equation} for $V$.
	\end{enumerate}
	Let $\omega$ be a K\"{a}hler form Poincar\'{e} dual to $D'$.
	Also, let $m \in \bN$, $d \in H_2^+(X)$ and let
	$\alpha_1,\cdots,\alpha_m$ be closed de Rham forms on $X$.
	Then
	\begin{equation}
		\left|
		\langle [\alpha_1],\cdots,[\alpha_m]
		\rangle_{m,d}^{s(C_\bullet)}
		\right|
		\leq (9e_d + 2 m N + 6m +9)!((\rho_d+1))^{4(e_d+m+1)}\rho_d! D\Vol(C_{m,d}) \prod_{j=1}^m |\alpha_j|_{C^0,\omega}
	\end{equation}
	where
	\begin{equation} \label{eqnedeqn}
		e_d := (N+1)\deg(d) + N,
	\end{equation}
	\begin{equation} \label{eqnedeqn2}
		\rho_d := c_1(V)(d) + \dim(V).
	\end{equation}
\end{prop}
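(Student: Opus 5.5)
Following the sketch in the introduction, I will first push the integral forward to $X^m$ and rewrite it with Lemma \ref{lemmasegre}. Set $m' := m + D\cdot d$ and $T := P\ev_*[\overline{\widetilde{\ev}(C_{m+D!,d})}]$, a positive cycle on $P_{V^{\boxtimes m'}}$.

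\textbf{Step 1 (reduction to $T$).} By the projection formula, $\langle [\alpha_1],\cdots,[\alpha_m]\rangle^{s(C_\bullet)}_{m,d} = \int_{\ev_*s(C_{m,d})} \prod_j p_j^*\alpha_j$. Lemma \ref{lemmasegre} rewrites this as
\[
\frac{1}{(D\cdot d)!}\sum_{r=0}^{f_d}\int_T \Big(\prod_{j=1}^m q_j^*\alpha_j\Big)\cup \infty_{V^{\boxtimes m'}}^r ,
\]
where $q_j := p_j\circ\pi_{P_{V^{\boxtimes m'}}}$ and $f_d := e_d-3+\rho_d$. The case $d=0$ is handled separately or trivially.

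\textbf{Step 2 (choice of forms).} By Corollary \ref{corollarynefsum}, $D'_{\sum m',V}$ is globally generated. I therefore pick a semipositive form $\omega_Y$ representing $2D'_{\sum m',V}$, perturbed to be K\"ahler if needed via Lemma \ref{lemmmavverylarge} applied to $2D'$. I take $\omega_Z := \omega_{\sum m'}$ on $Z := X^{m'}$, with $\pi := \pi_{P_{V^{\boxtimes m'}}}$. Then
\[
\infty = \tfrac12\omega_Y - \pi^*\omega_Z
\]
in cohomology, and $\omega := \omega_Y + 2\pi^*\omega_Z$ represents $2\infty + 4\pi^*D'_{\sum m'}$. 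This is at most $D_{\sum m',V} = \infty + 4\pi^* D'_{\sum m'}$ plus $\infty$, so up to a factor $2^{\dim T}$ we get $\int_T \exp(\omega) \lesssim \int_T\exp(D_{\sum m',V})$. To avoid the extra power of $2$, I would rescale instead: use $\omega_Y \sim D'_{\sum m',V}$ and $\omega_Z$ for $D'$ with the appropriate weights, so that $\omega$ is exactly Poincar\'e dual to $D_{\sum m',V}$ (namely $\infty + \pi^*D'_{\sum}$ plus $3\pi^*D'_{\sum}$). The intermediate comparisons are exact in cohomology, so the integral of $\exp(\omega)$ over $T$ equals $m!(D\cdot d)!\,D\Vol(C_{m,d})$.

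\textbf{Step 3 (apply Lemma \ref{lemmasizeofintegral}).} I expand $\infty^r$ as $(\omega_Y-\pi^*\omega_Z)^r$ (rescaled). The parameters are as follows. $h = \dim T \leq \dim X_{m+D!,d}+\dim V$-fiber data, which is at most $e_d - 3 + m + \rho_d$, using $\dim V_{0,d}\le \rho_d$ for convex $V$ and Lemma \ref{lemmadimensionofX}. The quantity $b$ bounds the dimension of the image in $X^{m'}$, which is at most $\dim X_{m+D!,d}\le e_d+m$. Each $Z_j = X$ has $n \leq N$. The lemma then bounds each summand by
\[
(4b+2mN+4)!\,h^{2b}\,h!\,\prod_j|q_j^*\alpha_j|_{C^0,\omega}\int_T\exp(\omega).
\]
Since $\omega \geq \pi^*(p_j^*\omega)$, Lemma \ref{lemmaaboutpositivedefiniteforms} (first inequality, after pulling back) gives $|q_j^*\alpha_j|_{C^0,\omega}\le|\alpha_j|_{C^0,\omega}$.

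\textbf{Step 4 (collect constants).} Summing over the $f_d+1$ values of $r$, I multiply by $m!$ and divide out $(D\cdot d)!$, which cancels the normalization of $D\Vol$. The bound $h\le e_d+m+\rho_d$ gives $h!\le (e_d+m)!\,(\rho_d+1)^{e_d+m}\rho_d!$ up to a binomial. I then absorb $h^{2b}$, the factor $f_d+1$, and these binomials into $(\rho_d+1)^{4(e_d+m+1)}$ and a larger factorial, using inequality \eqref{eqnreallreallyterribleinequality} and $m!\,(4b+2mN+4)!\,(e_d+m)!\cdots\le(9e_d+2mN+6m+9)!$.

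\textbf{Main obstacle.} The hardest step is Step 2: arranging the K\"ahler/semipositive representatives so that $\infty$ is exactly a difference $\omega_Y-\pi^*\omega_Z$ with $\omega_Y$ nonnegative, while $\omega_Y+\pi^*\omega_Z$ represents precisely $D_{\sum m',V}$ and dominates the pulled-back $\omega$. This is where $D\sim 4D'$ and the global generation of $V^*\otimes\calO(D')$ are used. The constant bookkeeping in Step 4 is tedious but routine.
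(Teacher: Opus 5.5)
Your route is the paper's: Lemma \ref{lemmasegre}, writing $\infty_{V^{\boxtimes m'}}$ as a difference $\omega_Y-\pi^*\omega_Z$, Lemma \ref{lemmasizeofintegral}, Lemma \ref{lemmaaboutpositivedefiniteforms} for the insertions, and recognizing $\int_T\exp(\omega)$ as $m!(D\cdot d)!\,D\Vol(C_{m,d})$. The gap is in Step 2, which you rightly call the crux: as written it does not work.

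Lemma \ref{lemmasizeofintegral} expands $(\omega_Y-\pi^*\omega_Z)^k$ against $\omega=\omega_Y+\pi^*\omega_Z$ with the \emph{same} $\omega_Z$, and it requires $\omega_Y$ to be K\"ahler. You need both $[\omega_Y]-\pi^*[\omega_Z]=\infty_{V^{\boxtimes m'}}$ and $[\omega_Y]+\pi^*[\omega_Z]=D_{\sum m',V}=\infty_{V^{\boxtimes m'}}+4\pi^*D'_{\sum m'}$. Together these force $[\omega_Y]=\infty_{V^{\boxtimes m'}}+2\pi^*D'_{\sum m'}$ and $[\omega_Z]=2D'_{\sum m'}$.

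Your final choice, $\omega_Y$ in the class $D'_{\sum m',V}=\infty_{V^{\boxtimes m'}}+\pi^*D'_{\sum m'}$ ``plus $3\pi^*D'_{\sum}$'', fails on two counts. Once $\omega_Z$ is the form that actually appears in $\omega$, the difference $\omega_Y-\pi^*\omega_Z$ represents $\infty_{V^{\boxtimes m'}}-2\pi^*D'_{\sum m'}$, not $\infty_{V^{\boxtimes m'}}$. Moreover, $D'_{\sum m',V}$ is only globally generated (Corollary \ref{corollarynefsum}), not ample (e.g.\ $V=\calO(D')$), so it need not carry a K\"ahler form. Your first attempt has a related confusion: it conflates $2\,D'_{\sum m',V}=2\infty_{V^{\boxtimes m'}}+2\pi^*D'_{\sum m'}$ with $(2D')_{\sum m',V}=\infty_{V^{\boxtimes m'}}+2\pi^*D'_{\sum m'}$. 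Only the latter is the class that Lemma \ref{lemmmavverylarge} makes ample.

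The missing observation is exactly the paper's choice. Let $\widetilde\omega$ be a K\"ahler form Poincar\'e dual to $(2D')_{\sum m',V}$; this class is ample by Lemma \ref{lemmmavverylarge}, since $2D'-D'=D'$ is ample. Take $\omega_Y=\widetilde\omega$ and $\omega_Z=2\omega_{\sum m'}$. Then $\infty_{V^{\boxtimes m'}}=[\widetilde\omega-2\pi^*\omega_{\sum m'}]$, while $\omega'=\widetilde\omega+2\pi^*\omega_{\sum m'}$ is Poincar\'e dual to $\infty_{V^{\boxtimes m'}}+4\pi^*D'_{\sum m'}=D_{\sum m',V}$ because $D\sim 4D'$. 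So Lemma \ref{lemmasizeofintegral} applies verbatim, and $\int_T\exp(\omega')=m!(D\cdot d)!\,D\Vol(C_{m,d})$ with no loss. This is precisely where the $4$ and the ampleness of $D'$ enter: one factor of $2$ makes $\omega_Y$ K\"ahler, the other makes the split $\pm\pi^*\omega_Z$ symmetric. Your mismatched weights could be rescued by reproving Lemma \ref{lemmasizeofintegral} for $\omega=\omega_Y+3\pi^*\omega_Z$ with $\omega_Y$ merely semipositive, but that is unnecessary.

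Two smaller points. In Step 3, with $\omega$ the form on $X$, $q_j^*\omega$ is degenerate along the fibers of $q_j$, so Lemma \ref{lemmaaboutpositivedefiniteforms} does not apply as stated; the paper compares with $q_j^*\omega+\epsilon\omega'$ and lets $\epsilon\to 0$. Also, if you insert $b=e_d+m$ consistently, Lemma \ref{lemmasizeofintegral} yields $(4e_d+4m+2m\dim X+4)!$, so recheck the $m$-coefficient of your final factorial. The paper's chain writes $(4e_d+2mN+4)!$ at that point.
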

\begin{proof}
	Let $m' = m + D \cdot d$.
	Let $p_j : X^{m'} \to X$ be the $j$th projection map for each $j=1,\cdots,m'$.
	Also, let $D'' := 2D'$.
	By Lemma \ref{lemmmavverylarge}, $D''_{\sum m', V}$ is ample and so there is a K\"{a}hler form $\widetilde{\omega}$ Poincar\'{e} dual to it.
	Let $f_d := e_d + \rho_d$.
	Define $\omega_{\sum m'} := \sum_{j=1}^{m'} p_j^* \omega$.

	%
	We have
	\begin{equation}
		\left|
		\langle [\alpha_1],\cdots,[\alpha_m]
		\rangle_{m,d}^{s(C_\bullet)}\right|
		= \left| \int_{\ev_* (s(C_{m,d}))} \prod_{i = 1}^m p_i^*\alpha_i
		\right|
	\end{equation}
	\begin{equation}
		\overset{\textnormal{Lemma \ref{lemmasegre}}}{=}
		\frac{1}{(D \cdot d)!}\left|
		\sum_{r=0}^{f_d}
		\int_{(\pi_{P_{V^{\boxtimes m'}}})_*(P\ev_*[\overline{\widetilde{\ev}(C_{m+D!,d})}] \cap \infty_{V^{\boxtimes m'}}^r)} \prod_{i = 1}^m p_i^*\alpha_i
		\right|
	\end{equation}
	\begin{equation}
		=
		\frac{1}{(D \cdot d)!}\left|
		\sum_{r=0}^{f_d}
		\int_{P\ev_*[\overline{\widetilde{\ev}(C_{m+D!,d})}] \cap \infty_{V^{\boxtimes m'}}^r} \prod_{i = 1}^m \pi_{P_{V^{\boxtimes m'}}}^*p_i^*\alpha_i
		\right|
	\end{equation}
	\begin{equation}
		= \frac{1}{(D \cdot d)!}\left| \sum_{r=0}^{f_d}
		\int_{P\ev_*[\overline{\widetilde{\ev}(C_{m+D!,d})}]} \prod_{i = 1}^m \pi_{P_{V^{\boxtimes m'}}}^*p_i^*\alpha_i \cup (D''_{\sum m',V} - \pi_{P_{V^{\boxtimes m'}}}^*D''_{\sum m'})^r
		\right|
	\end{equation}
	\begin{equation} \label{eqnlastieaeataety00}
		=
		\frac{1}{(D \cdot d)!}\left|\sum_{r=0}^{f_d}
		\int_{P\ev_*[\overline{\widetilde{\ev}(C_{m+D!,d})}]} \prod_{i = 1}^m \pi_{P_{V^{\boxtimes m'}}}^*p_i^*\alpha_i \cup (\widetilde{\omega} - 2\pi_{P_{V^{\boxtimes m'}}}^*\omega_{\sum m'}
		)^r
		\right|
	\end{equation}
	\begin{equation} \label{eqnlastieaeataety0}
		\leq
		\frac{1}{(D \cdot d)!}\sum_{r=0}^{f_d}\left|
		\int_{P\ev_*[\overline{\widetilde{\ev}(C_{m+D!,d})}]} \prod_{i = 1}^m \pi_{P_{V^{\boxtimes m'}}}^*p_i^*\alpha_i \cup (\widetilde{\omega} - 2\pi_{P_{V^{\boxtimes m'}}}^*\omega_{\sum m'}
		)^r
		\right|.
	\end{equation}
	Let $\omega' := 2\pi_{P_{V^{\boxtimes m'}}}^*\omega_{\sum m'} + \widetilde{\omega}$
	and
	\begin{equation}
		\omega^i_\epsilon = \pi_{P_{V^{\boxtimes m'}}}^* p_i^* \omega + \epsilon \omega'
	\end{equation}
	for each $i = 1,\cdots, m'$ and $\epsilon >0$.
	Then $\omega^i_\epsilon$ and $\omega' - \omega^i_\epsilon$ are both ample for $\epsilon > 0$ small.
	Hence, by Lemma \ref{lemmaaboutpositivedefiniteforms},
	\begin{equation} \label{eqninequalityforpullback}
		|\pi_{P_{V^{\boxtimes m'}}}^* p_i^*\alpha_i|_{C^0,\omega'} \leq \lim_{\epsilon \to 0} |\pi_{P_{V^{\boxtimes m'}}}^* p_i^*\alpha_i|_{C^0,\omega^i_\epsilon}
		= |\alpha_i|_{C^0,\omega}.
	\end{equation}
	We have that the dimension $X_{m+D!,d}$ is $\leq e_d + m$ by Lemma \ref{lemmadimensionofX}
	and the dimension of $\overline{\widetilde{\ev}(C_{m+D!,d})}$ is $\leq f_d+m$.
	Therefore, by Lemma \ref{lemmasizeofintegral} we have \eqref{eqnlastieaeataety0} is less than or equal to:
	\begin{equation}
		\frac{1}{(D \cdot d)!}\sum_{r=0}^{f_d} (4e_d+2 m N+4)! (f_d+m)^{2(e_d+m)} (f_d+m)!\prod_{j=1}^m |\pi_{P_{V^{\boxtimes m'}}}^* p_j^*\alpha_j|_{C^0,\omega'} \int_{P\ev_*[\overline{\widetilde{\ev}(C_{m+D!,d})}]} \exp(\omega')
	\end{equation}
	\begin{equation}
		\overset{\textnormal{\eqref{eqninequalityforpullback}}}{\leq} \frac{1}{(D \cdot d)!}\sum_{r=0}^{f_d} (4e_d + 2 m N+4)! (f_d+m)^{2(e_d+m)}(f_d+m)!\prod_{j=1}^m |\alpha_j|_{C^0,\omega} \int_{P\ev_*[\overline{\widetilde{\ev}(C_{m+D!,d})}]} \exp(\omega')
	\end{equation}
	\begin{equation}
		\overset{\textnormal{Definition \ref{defnvolumeformultiplecones}}}{=} \frac{1}{(D \cdot d)!}\sum_{r=0}^{f_d} (4e_d + 2 m N+4)! (f_d+m)^{2(e_d+m)}(f_d+m)!\prod_{j=1}^m |\alpha_j|_{C^0,\omega} m!(D\cdot d)!D\Vol(C_{m,d})
	\end{equation}
	\begin{equation}
		\leq (f_d+1) (4e_d + 2 m N+4)! (f_d+m)^{2(e_d+m)}(f_d+m)!m! D\Vol(C_{m,d}) \prod_{j=1}^m |\alpha_j|_{C^0,\omega}
	\end{equation}
	\begin{equation}
		\leq  (4e_d + 2 m N+4)! (f_d+m)^{2(e_d+m)}(e_d+\rho_d+m+1)!m! D\Vol(C_{m,d}) \prod_{j=1}^m |\alpha_j|_{C^0,\omega}
	\end{equation}
	\begin{equation}
		\leq (4e_d + 2 m N+4)!(f_d+m)^{2(e_d+m)} {f_d+m+1 \choose e_d + m + 1} (e_d+m+1)! \rho_d!m! D\Vol(C_{m,d}) \prod_{j=1}^m |\alpha_j|_{C^0,\omega}
	\end{equation}
	\begin{equation}
		\leq (4e_d + 2 m N+4)! (f_d+m)^{2(e_d+m)}(f_d+m+1)^{e_d+m+1} (e_d+m+1)! \rho_d!m! D\Vol(C_{m,d}) \prod_{j=1}^m |\alpha_j|_{C^0,\omega}
	\end{equation}
	\begin{equation}
		\leq (4e_d + 2 m N+4)!(f_d+m)^{2(e_d+m)} (f_d+m+1)^{2(e_d+m+1)} \rho_d!m! D\Vol(C_{m,d}) \prod_{j=1}^m |\alpha_j|_{C^0,\omega}
	\end{equation}
	\begin{equation}
		\leq (4e_d + 2 m N+4)!(f_d+m+1)^{4(e_d+m+1)}\rho_d!m! D\Vol(C_{m,d}) \prod_{j=1}^m |\alpha_j|_{C^0,\omega}
	\end{equation}
	\begin{equation}
		\overset{\textnormal{\eqref{eqnreallreallyterribleinequality}}}{\leq} (4e_d + 2 m N+ m +4)!(f_d+m+1)^{4(e_d+m+1)}\rho_d! D\Vol(C_{m,d}) \prod_{j=1}^m |\alpha_j|_{C^0,\omega}
	\end{equation}
	\begin{equation}
		= (4e_d + 2 m N+ m +4)!(\rho_d+e_d+m+1)^{4(e_d+m+1)}\rho_d! D\Vol(C_{m,d}) \prod_{j=1}^m |\alpha_j|_{C^0,\omega}
	\end{equation}
	\begin{equation}
		\leq (4e_d + 2 m N+ m +4)!((\rho_d+1)(e_d+m+1))^{4(e_d+m+1)}\rho_d! D\Vol(C_{m,d}) \prod_{j=1}^m |\alpha_j|_{C^0,\omega}
	\end{equation}
	\begin{equation}
		\overset{\textnormal{\eqref{eqnreallreallyterribleinequality}}}{\leq}
		(4e_d + 2 m N+ m +4+e_d+m+1+4(e_d+m+1))!(\rho_d+1)^{4(e_d+m+1)}\rho_d! D\Vol(C_{m,d}) \prod_{j=1}^m |\alpha_j|_{C^0,\omega}
	\end{equation}
	\begin{equation}
		=
		(9e_d + 2 m N + 6m +9)!((\rho_d+1))^{4(e_d+m+1)}\rho_d! D\Vol(C_{m,d}) \prod_{j=1}^m |\alpha_j|_{C^0,\omega}.
	\end{equation}

\end{proof}

We now wish to bound $D$-volume in terms of Segre counts.
Before we do this, we need a preliminary definition and some lemmas.

\begin{defn} \label{defndfactorialefactorial}
	Let $m \in \bN$ and $d \in H_2^+(X)$.
	Suppose $D, E$ are reduced divisors in $X$. Let $m' = m + D \cdot d$ and $m'' := m' + E \cdot d$.
	We define $X_{m+D!+E!,d}^\circ \subset X_{m'',d}$ be the subspace parameterizing families of curves $u$ where
	\begin{enumerate}
		\item the image of $u$ under the forgetful map forgetting the last $m''-m'$ marked points lies in $X_{m+D!,d}^\circ$
		\item and the image of $u$ under the forgetful map forgetting the marked points labelled by $m+1,\cdots,m'$ lies in $X_{m+E!,d}^\circ$.
	\end{enumerate}
	Define \begin{equation}
		X_{m+D!+E!,d} \subset X_{m'',d}
	\end{equation} to be its closure.
	Let $V \to X$ be a vector bundle and let $C_\bullet = (C_{\check{m},\check{d}})_{\check{m} \in \bN, \ \check{d} \in H_2^+(X)}$ be a system of cones for $V$.
	Define $C^o_{m+D!+E!,d} \subset V_{m'',d}$ to be the
	restriction of $C_{m'',d}$ to $X^o_{m+D!+E!,d} \subset X_{m'',d}$.
	Similarly, define $C_{m+D!+E!,d} \subset V_{m'',d}$ to be the restriction of $C_{m'',d} \subset V_{m'',d}$ to $X_{m+D!+E!,d}$.

	Let $(V_i)_{i \in I}$ be a finite collection of vector bundles over $X$.
	Let $C^i_\bullet = (C^i_{\check{m},\check{d}})_{\check{m} \in \bN, \ \check{d} \in H_2^+(X)}$ be a system of cones for $V_i$ for each $i \in I$.
	Define
	\begin{equation} \label{eqwnproductofconesDE}
		\widetilde{\ev}(C_{m+D!+E!,d}^i)_{i \in I} := \widetilde{\ev}\left(\fprod{X_{m+D!+E!,d}}_{i \in I} C_{m+D!+E!,d}^i\right).
	\end{equation}
	We let $\overline{\widetilde{\ev}(C_{m+D!+E!,d}^i)_{i \in I}}$ be the closure of \begin{equation}
		\widetilde{\ev}(C_{m+D!+E!,d}^i)_{i \in I}
	\end{equation} in $\ev^*P_{(V^{\boxtimes m''})_{i \in I}}$.
	If $I = \{\star\}$ then we sometimes just write:
	\begin{equation}
		\overline{\widetilde{\ev}(C_{m+D!+E!,d}^\star)} = \overline{\widetilde{\ev}(C_{m+D!+E!,d}^i)_{i \in I}}.
	\end{equation}
\end{defn}

The same definitions above hold when $X$ is replaced with a smooth quasi-projective variety.

\begin{lemma} \label{lemmaperturbingdivisors}
	Let $(V_i)_{i \in I}$ be a finite collection of vector bundles over $X$.
	Let $C^i_\bullet$ be a system of cones for $V_i$ for each $i \in I$.
	Suppose $D, D', E, E'$
	are reduced divisors generically transverse to $C^i_\bullet$ for each $i \in I$
	and so that $D \sim D'$ and $E \sim E'$.
	Let $d \in H^2_+(X)$, $m \in \bN$, $m'' := m + D \cdot d + E \cdot d$
	and let $\alpha$ be a closed differential form in $X^{m''}$.
	Then
	\begin{equation} \label{eqnequalityofintegralsaddingpoints}
		\begin{aligned}
			\int_{P\ev_*[\overline{\widetilde{\ev}(C_{m+D!+E!,d})}]} \alpha = \int_{P\ev_*[\overline{\widetilde{\ev}(C_{m+D'!+E'!,d})}]} \alpha.
		\end{aligned}
	\end{equation}
\end{lemma}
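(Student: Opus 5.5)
We reduce to the case where only one divisor changes at a time. It suffices to prove the equality when $E = E'$ and $D \sim D'$; the case $D = D'$, $E\sim E'$ is symmetric, because relabelling marked points identifies the roles of the two blocks. Composing the two cases gives the general statement. Linearly equivalent divisors lie in a common linear system, so $D$ and $D'$ are members of a pencil $\{D_t\}_{t \in \bP^1}$ spanned by sections $s_0,s_1$ of $\calO(D)$, with $D_0 = D$ and $D_1 = D'$.

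The main step is to build a family over the pencil and show that the cycle class is constant in $t$. Let $B \subset \bP^1$ be the dense open set of $t$ for which $D_t$ is reduced and generically transverse to each $C^i_\bullet$; this is open by upper semicontinuity of the bad loci. Then:
\begin{enumerate}
\item Form the relative moduli space $\mathcal{X} \subset X_{m'',d} \times \bP^1$. It is the closure of the locus of pairs $(u,t)$ such that $u$ lies in $X^\circ_{m+D_t!+E!,d}$.
\item Over $\mathcal{X}$, form the fiber product of cones $\widetilde{\ev}(C^i)$ inside $\ev^*P_{(V_i^{\boxtimes m''})_{i\in I}} \times \bP^1$ and take its closure $\mathcal{Z}$.
\item The natural map $P\ev\times\id$ sends $\mathcal{Z}$ to $P_{(V_i^{\boxtimes m''})} \times \bP^1$, and its pushforward is a cycle $\mathcal{W}$ flat over $\bP^1$, since every component dominates $\bP^1$.
\end{enumerate}
The fibers $\mathcal{W}_t$ are then rationally equivalent for all $t$, by the principle that fibers of a cycle over $\bP^1$ are rationally equivalent. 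Consequently $\int_{\mathcal{W}_t}\alpha$ is independent of $t$, since $\alpha$ pulled back to $P_{(V_i^{\boxtimes m''})}$ is closed.

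The obstacle is identifying the fibers at $t=0$ and $t=1$ with $P\ev_*[\overline{\widetilde{\ev}(C_{m+D!+E!,d})}]$ and $P\ev_*[\overline{\widetilde{\ev}(C_{m+D'!+E!,d})}]$ respectively; a priori the specialization of the closure could acquire extra components. My plan is as follows.
\begin{enumerate}
\item Over $B$, the open part $\mathcal{X}^\circ$ is the fiberwise open moduli space of curves transverse to $D_t$ with labelled intersection points. Choosing these labelled points is a finite étale condition, so $\mathcal{X}^\circ \to B\times X_{m+E!,d}^{\circ}$ is étale locally near generic points. Hence the fiber of the closure over a point $t\in B$ agrees generically with the closure of the fiber.
\item Both sides are cycles with the same generic points and with multiplicity one.
\item Any extra components lie over the boundary of $X^\circ_{m+D_t!+E!,d}$. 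To rule these out, I would note that $D$ and $D'$ are themselves assumed generically transverse, so $0,1 \in B$. I would then argue that the fiber of $\mathcal{Z}$ over $t$ has pure dimension equal to $\dim \overline{\widetilde{\ev}(C_{m+D_t!+E!,d})}$, and that its generic points lie in the transverse locus. The latter holds because the transverse locus is dense in each fiber for $t\in B$.
\end{enumerate}

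If this identification proves delicate, I would fall back on a homological argument. Over $B$ the family is a locally trivial fibration in the analytic topology, because the transverse moduli spaces vary smoothly. The homology classes of the closures are then locally constant, and $B$ is connected, so the classes at $t=0$ and $t=1$ agree. Either way, pairing with $\alpha$ yields equation \eqref{eqnequalityofintegralsaddingpoints}.
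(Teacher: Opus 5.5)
Your overall strategy matches the paper's: put $D$ and $D'$ in a pencil, close up the whole family over it, and use that fibres over two points are rationally equivalent. Your one-divisor-at-a-time reduction over $\bP^1$, versus the paper's simultaneous deformation of $D$ and $E$ over $\bC$ with $\Delta_t$ intersected before pushing forward, is cosmetic. The gap is exactly the identification you flag, and neither of your arguments establishes it.

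In step 3 you claim the generic points of $\mathcal{Z}\cap\Delta_0$ lie in the transverse locus ``because the transverse locus is dense in each fiber''. But generic transversality of $D_0$ only gives density in $\overline{\widetilde{\ev}(C_{m+D!+E!,d})}$, the closure of the fibre. What is needed is density in the fibre of the closure, which is the statement in question. Your steps 1--2 at most show that $[\mathcal{Z}\cap\Delta_0]$ equals $[\overline{\widetilde{\ev}(C_{m+D!+E!,d})}]$ plus an effective cycle supported over non-transverse curves. Pure-dimensionality does not remove that cycle: the non-transverse locus has codimension one in $\mathcal{Z}$, so it can contain whole components of the special fibre. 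The fallback fails for the same reason. $\mathcal{Z}\to\bP^1$ is topologically locally trivial only over some Zariski-open set of parameters not known to contain $0,1$, and smooth variation of the transverse parts says nothing about their closures.

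This cannot be repaired under the stated hypotheses, because the statement itself fails for merely reduced divisors. Take $X=\bP^2$, $d$ the line class, $m=0$, $C_\bullet$ the zero-section system of any $V$, and $E=E'$ a fixed general line. Let $D'$ be a smooth conic and $D=L_1+L_2$ a line pair; both are reduced and generically transverse to every $C_{m,d}$. Let $\alpha=p_1^*\omega\wedge p_2^*\omega$ with $\omega$ the Fubini--Study form.
\begin{itemize}
\item The $D'$ side equals $\deg(D')^2=4$, since $\ev$ maps $X_{0+D'!+E!,d}$ birationally onto a surface projecting birationally to $D'\times D'$.
\item For $D$, the transverse lines only produce $L_1\times L_2$ and $L_2\times L_1$, so that side equals $2$.
\end{itemize}
The missing $[L_1\times L_1]+[L_2\times L_2]$ is exactly an extra component of the special fibre. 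It consists of limits $(L_i,p,q)$ of secant lines of nearby smooth conics. Every member of a general pencil through $L_1+L_2$ is reduced and generically transverse, so your $B$ is all of $\bP^1$, yet the class jumps at the singular members; this also refutes the fallback.

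The paper's proof handles this step with a one-line assertion that $\widetilde{\ev}(C^o_t)$ is dense in $\overline{\widetilde{\ev}(C)}\cap\Delta_t$ for $t=0,1$, so it has the same hole. Your approach can deliver two weaker statements.
\begin{itemize}
\item Equality when $D$ and $D'$ avoid a proper closed subset of their linear system, namely where the fibre of the closure equals the closure of the fibre.
\item For integrands that are products of nef classes, the value at an arbitrary such $D$ is at most the value at a general member of $|D|$, as in the positivity argument around \eqref{eqnpositivesum}.
\end{itemize}
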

\begin{proof}
	Let $p : \bC \times X \to X$ be the natural projection map.
	Let $(D_t)_{t \in \bC}$ be the one dimensional linear family of divisors satisfying $D_0 = D$ and $D_1 = D'$.
	Similarly, let $(E_t)_{t \in \bC}$ be the one dimensional linear family of divisors satisfying $E_0 = E$ and $E_1 = E'$.
	Define
	\begin{equation}
		\widetilde{D} := \{(t,x) \in \bC \times X : x \in D_t\}, \quad \widetilde{E} := \{(t,x) \in \bC \times X : x \in E_t\}.
	\end{equation}
	Define $\widetilde{D}_t := \widetilde{D} \cap (\{t\} \times X)$ and $\widetilde{E}_t := \widetilde{E} \cap (\{t\} \times X)$ for each $t \in \bC$.
	Let $\widetilde{V}_i := p^* V_i$ and let
	$\widetilde{C}^i_{\check{m},(0,\check{d})}$ be the pullback of $C^i_{\check{m},\check{d}}$ to $(\widetilde{V}_i)_{\check{m},(0,\check{d})}$
	for each $\check{m} \in \bN$, $\check{d} \in H_2^+(X)$ and $i \in I$.
	Also, let $(C^i_t)_{\check{m},(0,\check{d})}$ be the restriction of $\widetilde{C}^i_{\check{m},(0,\check{d})}$ to $(\{t\} \times X)_{\check{m},\check{d}}$ for each $t \in \bC$, $\check{m} \in \bN$, $\check{d} \in H_2^+(X)$ and $i \in I$.
	Define
	\begin{equation}
		C := \fprod{(\bC \times X)_{m+\widetilde{D}!+\widetilde{E}!}}_{i \in I} \widetilde{C}^i_{m+\widetilde{D}!+\widetilde{E}!,(0,d)}, \quad C^o := \fprod{(\bC \times X)^o_{m+\widetilde{D}!+\widetilde{E}!}}_{i \in I}{C^i}^o_{m+\widetilde{D}!+\widetilde{E}!,(0,d)}
	\end{equation}
	and
	\begin{equation}
		C_t := \fprod{(\{t\} \times X)_{m+\widetilde{D}_t!+\widetilde{E}_t!}}_{i \in I} (C^i_t)_{m+\widetilde{D}_t!+\widetilde{E}_t!,(0,d)}, \quad C^o_t := \fprod{(\{t\} \times X)_{m+\widetilde{D}_t!+\widetilde{E}_t!}}_{i \in I} (C^i_t)^o_{m+\widetilde{D}_t!+\widetilde{E}_t!,(0,d)}
	\end{equation}
	for each $t \in \bC$.
	Let $\Delta_t$ be the divisor equal to the preimage of the divisor $\{t\} \subset \bC$ under the natural map:
	\begin{equation}
		\ev^* P_{(\widetilde{V_i}^{\boxtimes m''})_{i \in I}} \to (\bC \times X)_{m+\widetilde{D}!+\widetilde{E}!,(0,d)} \to \bC.
	\end{equation}
	We let $\overline{\widetilde{\ev}(C)}$, $\overline{\widetilde{\ev}(C_t)}$ be the closures of $\widetilde{\ev}(C)$, $\widetilde{\ev}(C_t)$ in $\ev^*P_{(\widetilde{V_i}^{\boxtimes m''})_{i \in I}}$ for each $t \in \bC$.
	Let $\widetilde{\alpha} := p^* \alpha$.

	Since $D$, $D'$, $E$, $E'$ are generically transverse to $C_\bullet$, we have that
	$\widetilde{\ev}(C^o_t) = \widetilde{\ev}(C^o) \cap \Delta_t$ is dense in $\overline{\widetilde{\ev}(C)} \cap \Delta_t$ for $t=0,1$.
	Hence, $\overline{\widetilde{\ev}(C)} \cap \Delta_t = \overline{\widetilde{\ev}(C_t)}$ for all $t=0,1$.
	Hence:
	\begin{equation} \label{eqnperturbingEstuff}
		\begin{aligned}
			 & \int_{P\ev_*[\overline{\widetilde{\ev}(C_{m+D!+E!,d})}]} \alpha =
			\int_{P\ev_*[\overline{\widetilde{\ev}(C_0)}]} \widetilde{\alpha} =
			\int_{P\ev_*([\overline{\widetilde{\ev}(C)}] \cap \Delta_0)} \widetilde{\alpha} = \\ &
			\int_{P\ev_*([\overline{\widetilde{\ev}(C)}] \cap \Delta_1)} \widetilde{\alpha} =
			\int_{P\ev_*([\overline{\widetilde{\ev}(C_1)}])} \widetilde{\alpha} =
			\int_{P\ev_*[\overline{\widetilde{\ev}(C_{m+D!+E!,d})}]} \alpha.
		\end{aligned}
	\end{equation}
\end{proof}

\begin{lemma} \label{lemmawelldefinedprojectionmap}
	Let $(V_i)_{i \in I}$ be a finite collection of vector bundles over $X$.
	Suppose $C^i_\bullet$ is a system of cones for $V_i$ for each $i \in I$.
	Let $D$ and $E$
	be reduced divisors generically transverse to $C^i_\bullet$ with $E \sim D$.
	Let $d \in H^2_+(X)$ and $m \in \bN$, $m' := m + D \cdot d$ and $m'' := m' + E \cdot d$.
	Define $F : X^{m''} \to X^{m'}$ to be the projection map to the first $m'$ factors.
	Let $H$ be a divisor on $X^{m'}$.
	Then
	\begin{equation} \label{eqnequalityofintegralsaddingpoints1}
		\begin{aligned}
			 & \int_{P\ev_*[\overline{\widetilde{\ev}(C_{m+D!,d})}]} \exp(\infty_{(V_i^{\boxtimes m'})_{i \in I}} + \pi_{P_{(V_i^{\boxtimes m'})_{i \in I}}}^* H)                                 \\
			 & = \frac{1}{(E\cdot d)!}\int_{P\ev_*[\overline{\widetilde{\ev}(C_{m+D!+E!,d})}]} \exp(\infty_{(V_i^{\boxtimes m''})_{i \in I}} + \pi_{P_{(V_i^{\boxtimes m''})_{i \in I}}}^* F^*H).
		\end{aligned}
	\end{equation}
	Also, if $E = D$ and $\infty_{(V_i^{\boxtimes m'})_{i \in I}} + \pi_{P_{(V_i^{\boxtimes m'})_{i \in I}}}^* H$ is nef then the restriction of
	\begin{equation}
		\infty_{(V_i^{\boxtimes m''})_{i \in I}} + \pi_{P_{(V_i^{\boxtimes m''})_{i \in I}}}^* F^*H
	\end{equation}
	to $\overline{\widetilde{\ev}(C_{m+D!+E!,d})}$ is also nef.
\end{lemma}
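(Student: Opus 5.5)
We compare the two closures through the map $\Phi$ forgetting the marked points labelled $m'+1,\dots,m''$, exactly as in the proof of Lemma \ref{lemmasegre}. Forgetting these points gives a morphism $X_{m+D!+E!,d} \to X_{m+D!,d}$. The systems of cones are compatible with forgetful maps, so $C^i_{m+D!+E!,d}$ is the pullback of $C^i_{m+D!,d}$. Composing with the projection $\ev^*\oplus_i V_i^{\boxtimes m''} \to \ev^*\oplus_i V_i^{\boxtimes m'}$, which discards the last $E\cdot d$ evaluation factors, yields a commutative square relating the two $\widetilde{\ev}$ maps. On the projective compactifications the projection $F_P : P_{(V_i^{\boxtimes m''})_{i\in I}} \to P_{(V_i^{\boxtimes m'})_{i\in I}}$ covering $F$ is only a rational map, because it comes from a linear projection on fibres. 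So the first step is to pass to the closure of the graph, or equivalently work with $P\ev_*$ of the closures.

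The key point is that on the locus $\widetilde{\ev}(C_{m+D!+E!,d})$ the image points are evaluations of sections of the pullback along a curve. By Lemma \ref{lemmainjectivefromvlarge}, applied in the version where $E$ also plays the role of the $V$-large divisor, the extra evaluations at the $E$-points are determined by the ones at the first $m'$ points. So the coordinates being discarded are linear functions of those kept, and no point of the closure has all retained coordinates zero with a nonzero discarded coordinate. Hence $F_P$ is a regular morphism on $\overline{\widetilde{\ev}(C_{m+D!+E!,d})}$, and the pullback of $\infty_{(V_i^{\boxtimes m'})}$ restricts there to $\infty_{(V_i^{\boxtimes m''})}$, as in \eqref{eqnpulbackofo1r}. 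Also $\pi^*F^*H = F_P^*\pi^*H$.

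Generic transversality of $E$ makes the forgetful map $X^\circ_{m+D!+E!,d} \to X^\circ_{m+D!,d}$ generically étale of degree $(E\cdot d)!$, by relabelling the $E\cdot d$ intersection points. Therefore, just as in \eqref{eqnpushfowardidentity}, $(F_P)_*P\ev_*[\overline{\widetilde{\ev}(C_{m+D!+E!,d})}] = (E\cdot d)!\,P\ev_*[\overline{\widetilde{\ev}(C_{m+D!,d})}]$. The projection formula applied to $\exp(\infty + \pi^*H)$ then gives \eqref{eqnequalityofintegralsaddingpoints1}.

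For the nefness statement we take $E=D$. The restricted class is the pullback $F_P^*$ of the nef class $\infty_{(V_i^{\boxtimes m'})}+\pi^*H$ under the regular map $F_P$ on the closure, and pullbacks of nef classes under morphisms are nef.

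I expect the main obstacle to be showing that $F_P$ extends regularly along the boundary of the closure, that is, over limit curves. Lemma \ref{lemmainjectivefromvlarge} is stated only where $V$ is strongly convex and for $D$ being $V$-large, but the present statement assumes neither. If this fails, I will resolve the rational map by a graph closure $\Gamma$. The degree computation survives, because the pushforward of $[\Gamma]$ to each side is the corresponding closure class, possibly with degree $(E\cdot d)!$. The difference between the two pullbacks of $\infty$ is an effective exceptional divisor supported where the retained part vanishes. I will then argue that this divisor does not meet $\Gamma$ in positive dimension over the open locus, or else use Lemma \ref{lemmaperturbingdivisors} to move $E$ so that it becomes generic.
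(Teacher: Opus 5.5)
Your key step is not available under the stated hypotheses. You need $F_P$ to be regular on $\overline{\widetilde{\ev}(C_{m+D!+E!,d})}$, both for the pullback of $\infty$ in the identity and for the nefness claim. You deduce this from Lemma \ref{lemmainjectivefromvlarge}, which requires the $V_i$ to be strongly convex and $D$ to be $V_i$-large. Neither is assumed, and your fallbacks (a graph closure whose exceptional contribution you hope vanishes, or perturbing $E$ into general position) do not replace it.

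The idea you are missing, which is the one the paper uses, is to treat $E=D$ first. On the dense open part of $X_{m+D!+D!,d}$, each of the last $D\cdot d$ marked points lies on a contracted component together with one of the points $m+1,\dots,m'$. A section of the pulled-back bundle is constant on a contracted connected subcurve, so on $\widetilde{\ev}(C^i_{m+D!+D!,d})_{i\in I}$ every discarded evaluation coordinate equals a retained one. This is a closed linear condition, so it persists on the closure. Hence no point of the closure has, in any factor $P_{V_i^{\boxtimes m''}}$, both its retained coordinates and its $\bC$-coordinate equal to zero. So the projection extends to a morphism $\overline{\pi}$, generically \'etale of degree $(D\cdot d)!$, with $\overline{\pi}^*\infty_{(V_i^{\boxtimes m'})_{i\in I}}=\infty_{(V_i^{\boxtimes m''})_{i\in I}}$ on the closure. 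This gives the identity for $E=D$, and the nefness statement (which is only asserted for $E=D$), with no convexity or largeness assumption at all. The paper then passes from $D+D$ to $D+E$ via Lemma \ref{lemmaperturbingdivisors}. That is, it moves $E$ into the most special position rather than a generic one; genericity of $E$ is exactly what destroys the linear dependence you need.

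You should also know that for $E\neq D$ the identity genuinely needs extra hypotheses, so neither your argument nor that reduction can prove it as stated. Take $X=\bP^1$, $V=\calO(1)$, $C_\bullet=V_\bullet$, $m=0$, $d=[\bP^1]$, $D=\{p\}$, $E=\{q\}$ with $p\neq q$, and $H=0$.
\begin{itemize}
\item $X_{0+D!,d}$ and $X_{0+D!+E!,d}$ are single points.
\item Evaluation at $p$ maps $H^0(\calO(1))$ onto $V|_p$, so the left side is the integral of $\exp(\infty_V)$ over the fibre $\bP^1$ of $P_V$ over $p$, namely $1$.
\item Evaluation at $p,q$ is an isomorphism onto $V|_p\oplus V|_q$, so the closure is all of $\bP^2$ and the right side is $\int_{\bP^2}\exp(h)=\tfrac12$, with $h$ the hyperplane class.
\end{itemize}
This $\bP^2$ contains the indeterminacy point of $F_P$, which is precisely the failure you were worried about. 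For $E=D$ the closure is a line in $\bP^2$ and the value is $1$. So it is the step through Lemma \ref{lemmaperturbingdivisors} that breaks here: the flat limit as $E_t\to D$ is all of $\bP^2$, not the line.

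Conversely, when $D$ satisfies the hypotheses of Lemma \ref{lemmainjectivefromvlarge}, as in the application in Lemma \ref{lemmaDDDfactorialinequality}, your direct argument is a correct proof for arbitrary $E$. The first $m'$ evaluations are injective on every fibre, so the image is the graph of a bundle map and the projection is regular on the closure. In that case Lemma \ref{lemmaperturbingdivisors} is not needed at all.
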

\begin{proof}

	Let
	\begin{equation} \label{eqnprojectionmapcones1}
		\pi : {\widetilde{\ev}(C^i_{m+D!+D!,d})_{i \in I}} \to {\widetilde{\ev}(C^i_{m+D!,d})_{i \in I}}
	\end{equation}
	be the projection map projecting out the last $m''-m'$ factors of the product bundle $\ev^* (\oplus_{i \in I} V_i)^{\boxtimes m''}$ and then forgetting the last $m''-m'$ marked points.
	The map \eqref{eqnprojectionmapcones1} extends to a rational map:
	\begin{equation} \label{eqnprojec2}
		\overline{\pi} : \overline{{\widetilde{\ev}(C^i_{m+D!+D!,d})_{i \in I}}} \dashrightarrow \overline{\widetilde{\ev}(C^i_{m+D!,d})_{i \in I}}.
	\end{equation}
	This rational map is ill-defined in the region:
	\begin{equation} \label{eqnilldefinedregion}
		\overline{{\widetilde{\ev}(C^i_{m+D!+D!,d})_{i \in I}}} \cap (\ev^*P_{(V_i^{\boxtimes m''})_{i \in I}} - \ev^*\bigoplus_{i \in I} V_i^{\boxtimes m''}) \cap \ev^*(0 \times P_{(V_i^{\boxtimes m''-m'})_{i \in I}})
	\end{equation}
	which is empty.
	Hence, the map \eqref{eqnprojectionmapcones1} extends to a generically \'etale map:
	\begin{equation} \label{eqnprojec1}
		\overline{\pi} : \overline{{\widetilde{\ev}(C^i_{m+D!+D!,d})_{i \in I}}} \to \overline{\widetilde{\ev}(C^i_{m+D!,d})_{i \in I}}
	\end{equation}
	of degree $(D \cdot d)!$
	with the additional property that the restriction of $P\ev^*\infty_{(V_i^{\boxtimes m'})_{i \in I}}$ to the codomain of $\overline{\pi}$ pulls back to the restriction of $P\ev^*\infty_{(V_i^{\boxtimes m''})_{i \in I}}$ to the domain of $\overline{\pi}$.
	Hence:
	\begin{equation}
		\begin{aligned}
			\int_{P\ev_*[\overline{\widetilde{\ev}(C_{m+D!,d})}]} \exp(\infty_{(V_i^{\boxtimes m'})_{i \in I}} + \pi_{P_{(V_i^{\boxtimes m'})_{i \in I}}}^* H)                                                            \\
			= \frac{1}{(D\cdot d)!}\int_{P\ev_*[\overline{\widetilde{\ev}(C_{m+D!+D!,d})}]} \exp(\overline{\pi}^*\infty_{(V_i^{\boxtimes m'})_{i \in I}} + \overline{\pi}^* \pi_{P_{(V_i^{\boxtimes m'})_{i \in I}}}^* H) \\
			= \frac{1}{(D\cdot d)!}\int_{P\ev_*[\overline{\widetilde{\ev}(C_{m+D!+D!,d})}]} \exp(\infty_{(V_i^{\boxtimes m''})_{i \in I}} + \pi_{P_{(V_i^{\boxtimes m''})_{i \in I}}}^* F^*H)                             \\
			\overset{\textnormal{Lemma \ref{lemmaperturbingdivisors}}}{=}
			\frac{1}{(E\cdot d)!}\int_{P\ev_*[\overline{\widetilde{\ev}(C_{m+D!+E!,d})}]} \exp(\infty_{(V_i^{\boxtimes m''})_{i \in I}} + \pi_{P_{(V_i^{\boxtimes m''})_{i \in I}}}^* F^*H).
		\end{aligned}
	\end{equation}

	Now suppose that $\infty_{(V_i^{\boxtimes m'})_{i \in I}} + \pi_{P_{(V_i^{\boxtimes m'})_{i \in I}}}^* H$ is nef.
	Then $\overline{\pi}^*\infty_{(V_i^{\boxtimes m'})_{i \in I}} + \overline{\pi}^* \pi_{P_{(V_i^{\boxtimes m'})_{i \in I}}}^* H$ is nef
	and hence the restriction of $\infty_{(V_i^{\boxtimes m''})_{i \in I}} + \pi_{P_{(V_i^{\boxtimes m''})_{i \in I}}}^* F^*H$ to $\overline{\widetilde{\ev}(C_{m+D!+D!,d})}$ is also nef.
\end{proof}


\begin{lemma} \label{lemmaDDDfactorialinequality}
	Let $C_\bullet = (C_{m,d})_{m \in \bN, \ d \in H_2^+(X)}$ be a system of cones for $V$
	and suppose that $V$ is strongly convex.
	Let $D$, $E$ be smooth and generically transverse to $C_\bullet$ and so that $\calO(D)$ and $V^* \otimes \calO(D)$ are globally generated and $E \sim D$.
	Suppose $D$ is also $V$-large.
	Let $m \in \bN$ and $d \in H_2^+(X)$ and define $m' = m+ D \cdot d$ and $m'' = m' + E \cdot d$.
	Define $p_j : X^{m''} \to X$ to be the $j$th projection map for each $j=1,\cdots,m''$.
	Let $\widehat{D} = \sum_{j=m+1}^{m'} p_j^* D$.
	Then
	\begin{equation} \label{eqncapproductrhs}
		D\Vol(C_{m,d}) \leq \frac{1}{m! (D \cdot d)!(E \cdot d)!} \int_{P\ev_*[\overline{\widetilde{\ev}(C_{m'+E!,d})}] \cap \pi_{P_{V^{\boxtimes m''}}}^*\widehat{D}} \exp(\infty_{V^{\boxtimes m''}} + \sum_{j=1}^{m'}\pi_{P_{V^{\boxtimes m''}}}^* p_j^* D).
	\end{equation}
\end{lemma}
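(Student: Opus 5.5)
We first rewrite $D\Vol(C_{m,d})$ using Lemma \ref{lemmawelldefinedprojectionmap}, applied with $I=\{\star\}$, $H := D_{\sum m'}$ on $X^{m'}$ and $F : X^{m''} \to X^{m'}$ the projection to the first $m'$ factors. This gives
\begin{equation*}
	D\Vol(C_{m,d}) = \frac{1}{m!(D\cdot d)!(E\cdot d)!}\int_{P\ev_*[\overline{\widetilde{\ev}(C_{m+D!+E!,d})}]} \exp\Big(\infty_{V^{\boxtimes m''}} + \sum_{j=1}^{m'}\pi_{P_{V^{\boxtimes m''}}}^*p_j^*D\Big).
\end{equation*}
Write $\Theta := \infty_{V^{\boxtimes m''}} + \sum_{j=1}^{m'}\pi_{P_{V^{\boxtimes m''}}}^*p_j^*D$. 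By Corollary \ref{corollarynefsum}, the divisor $D_{\sum m',V}$ is nef, since $\calO(D)$ and $V^*\otimes\calO(D)$ are globally generated. The second half of Lemma \ref{lemmawelldefinedprojectionmap} then shows that $\Theta$ restricted to the cycle $Z := \overline{\widetilde{\ev}(C_{m+D!+E!,d})}$ is nef. Since $P\ev$ is proper, integrals against the pushforward can be computed on $Z$ itself.

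Next we compare $Z$ with the cycle appearing on the right-hand side, namely $Z' := \overline{\widetilde{\ev}(C_{m'+E!,d})}$ cut by $\pi^*\widehat D$. The key geometric point is that $Z$ is contained in $Z'\cap \pi^*\widehat D$. The containment in $Z'$ holds because a curve in $X^\circ_{m+D!+E!,d}$ is in particular a curve with $m'$ marked points whose last $E\cdot d$ extra marked points are its transverse intersections with $E$. The containment in $\pi^*\widehat D$ holds because its marked points $m+1,\dots,m'$ map into $D$, so each $p_j^*D$ for $m<j\le m'$ vanishes along $Z$.

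We then compare dimensions. The space $X_{m'+E!,d}$ has $D\cdot d$ more free marked points than $X_{m+D!+E!,d}$. Those $D\cdot d$ points, each constrained to lie on $D$, account for exactly the $D\cdot d$ divisors $p_j^*D$. Hence $Z$ should appear as a component of the intersection $Z'\cap \pi^*\widehat D$, interpreted as the iterated intersection of $Z'$ with the $D\cdot d$ Cartier divisors $\pi^*p_j^*D$. This should hold at least away from the locus where these divisors fail to meet properly.

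Finally we use positivity. The intersection class $[Z']\cap\pi^*\widehat D$ is represented by a positive combination of cycles. One of these is $[Z]$ with multiplicity $\ge 1$, and the rest are effective, possibly after excess-intersection contributions. To make this rigorous we intend to use that each $p_j^*D$ is basepoint free, since $\calO(D)$ is globally generated. Moving each $D$-factor to a general member of $|D|$ is allowed by Lemma \ref{lemmaperturbingdivisors}, and it makes the successive intersections proper, so every term is an honest effective cycle. Because $\Theta$ is nef on all of these cycles (again by the nefness argument above, applied on $\ev^*P_{V^{\boxtimes m''}}$), integrating $\exp(\Theta)$ gives a nonnegative number on each extra component. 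Dropping those components yields the inequality \eqref{eqncapproductrhs}.

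The main obstacle is the properness and positivity step: showing that $[Z']\cap\pi^*\widehat D$ is an effective cycle containing $[Z]$ with coefficient at least one. This needs the generic choice of divisors in $|D|$ together with the generic transversality of $D$ and $E$ to $C_\bullet$, to rule out excess components on which the degree of $\Theta$ might be negative. Identifying $Z$ as a genuine component rather than a limit of something smaller also uses that $D$ is $V$-large.
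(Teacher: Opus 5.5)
Your first step, rewriting $D\Vol(C_{m,d})$ as an integral over $Z=\overline{\widetilde{\ev}(C_{m+D!+E!,d})}$ via Lemma~\ref{lemmawelldefinedprojectionmap}, is exactly what the paper does. The step you flag as the main obstacle, however, is a genuine gap, and your proposed fix does not close it.

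\textbf{Why moving the divisors fails.} If you replace each $p_j^*D$ by $p_j^*D_j$ with $D_j\in|D|$ general, the intersection with $P\ev_*[Z']$ does become proper and effective. But it no longer contains $Z$, since a general point of $Z$ has its $j$th marked point on $D$, not on $D_j$.

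\textbf{Why not moving fails.} If instead you cap with $\widehat D$ itself, $Z$ is a component, but the intersection is not proper. For example, take the locus where all marked points $m+1,\dots,m'$ lie on one ghost bubble attached at a single point of $u^{-1}(D)$. It lies in $Z'\cap\pi^*\widehat D$, and its dimension exceeds that of $Z$ by $D\cdot d-2$. So without further work the cap product is not an effective cycle having $[Z]$ as a summand. Lemma~\ref{lemmaperturbingdivisors} does not help: it perturbs the divisors defining the moduli spaces, not the divisors you cap with.

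\textbf{The nefness claim is also unsupported.} The second half of Lemma~\ref{lemmawelldefinedprojectionmap} is stated only for $E=D$. More importantly, it gives nefness of $\Theta$ only on $Z$, because it needs evaluation at the first $m'$ points to be fiberwise injective, so that $\Theta$ is a pullback of $D_{\sum m',V}$. This injectivity fails on configurations where some points of $u^{-1}(D)$ are unmarked. Nor is $\Theta$ nef on the ambient $\ev^*P_{V^{\boxtimes m''}}$ in general, since it carries no $D$-twist on the last $E\cdot d$ factors.

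\textbf{The missing idea.} The paper interpolates between these two extremes.
\begin{enumerate}
\item Choose divisors $D_{m+1},\dots,D_{m'}$ on $\bC\times X$, linearly equivalent to $D$, with $D_j|_{0\times X}=D$ and $D_j|_{1\times X}$ general.
\item Let $\mathcal Z$ be the closure of the evaluation images of the cones over curves whose $j$th marked point lies on $D_j$.
\item Cut $\mathcal Z$ by the fibers $\Delta_t$. These are Cartier divisors pulled back from the base $\bC$, so the cuts are automatically effective.
\item At $t=1$, $P\ev_*[\mathcal Z\cap\Delta_1]$ represents $P\ev_*[Z']\cap\pi^*\widehat D$.
\item At $t=0$, $\mathcal Z\cap\Delta_0$ is $[Z]$ plus an effective remainder. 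This holds because a dense open part of $C_{m+D!+E!,d}$ is open in the central fiber, which is where the choice of $E$ with $u^{-1}(D\cap E)=\emptyset$ generically is used.
\item The paper asserts nefness of $\Theta$ on all of $\mathcal Z$, not just on $Z$. It does this by extending the projection that forgets the last $E\cdot d$ coordinates to a morphism $\overline\pi$, and pulling back the nef divisor of Corollary~\ref{corollarynefsum}.
\item Rational equivalence of $\Delta_0$ and $\Delta_1$ then gives the inequality.
\end{enumerate}
This avoids excess intersection entirely. Your route could in principle be repaired with Fulton's positivity theorem for refined intersections with a globally generated normal bundle (Intersection Theory, Theorem 12.2). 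You would still need an independent argument that $\Theta$ is nef on the supports of the excess contributions.
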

\begin{proof}
	Since $\calO(D)$ is globally generated and $D \sim E$, we can choose $E$ so that
	\begin{enumerate}[label=\alph*)]
		\item $E$ is still smooth and generically transverse to $C_\bullet$,
		\item $E$ is a small perturbation of $D$,
		\item the right-hand side of \eqref{eqncapproductrhs} is unchanged by Lemma \ref{lemmaperturbingdivisors} and
		\item \label{item:emptyintersection} there is an open dense subset $\nu$ of $C_{m,d}$ so that each curve $u$ in the image of $\nu$ in $X_{m,d}$ has the property that $u^{-1}(D \cap E)$ is empty.
	\end{enumerate}

	Let $D', E'$ be the pullbacks of $D$ and $E$ respectively to $\bC \times X$ via the natural projection map to $X$.
	Suppose $D'|_t := D'|_{t \times X}$ and $E'|_t := E'|_{t \times X}$ for each $t \in \bC$.
	Let $(D_j)_{j=m+1,\cdots,m'}$ be generic among tuples of smooth divisors in $\bC \times X$ linearly equivalent to $D$, each of which are transverse to $0 \times X$, and whose restriction to $0 \times X$ is $D$.
	Suppose $\widetilde{p}_j : (\bC \times X)^{m''} \to \bC \times X$ is the $j$th projection map for each $j=1,\cdots,m''$.
	Let $\widetilde{D} := \sum_{j=m+1}^{m'} \widetilde{p}_j^* D_j$.

	We let $\widetilde{U}$
	be the subspace of curves $u$ in $(\bC \times X)_{m'+E'!,(0,d)}$
	so that
	\begin{enumerate}
		\item the $j$th marked point maps to $D_j$ for each $j=m+1,\cdots,m'$ and
		\item if we remove the last $m'' - m$ marked points of $u$ then the resulting map is still stable (I.e. no irreducible components of the domain curve are contracted under the forgetful map).
	\end{enumerate}
	By \ref{item:emptyintersection} combined with the fact that $D$ and $E$ are generically transverse to $C_\bullet$, we have that there is a neighborhood $N \subset \widetilde{U}$ of the image of an open dense subset of $C_{m+D!+E!,d}$ in $X_{m+D!+E!,d} = X_{m+D'|_0!+E'|_0!,(0,d)}$.

	Let $(\bC \times X)_{m+\widetilde{D}^{\widetilde{D}}+E!,(0,d)}$ be the closure of $\widetilde{U}$
	inside $(\bC \times X)_{m'+E'!,(0,d)}$.
	Let $\widetilde{V}$ be the pullback of $V$ to $\bC \times X$ via the natural projection map
	and $C_{m,(0,d)} \subset \widetilde{V}_{m,(0,d)}$ the pullback of $C_{m,d}$ via the map taking a curve
	in $(\bC \times X)_{m,(0,d)}$
	and post composing it with the projection map to $X$.
	Also, let $C_{m+\widetilde{D}^{\widetilde{D}}+E'!,(0,d)} \subset \widetilde{V}_{m'',(0,d)}$ be the pullback
	of $C_{m,(0,d)}$ to $(\bC \times X)_{m+\widetilde{D}^{\widetilde{D}}+E'!,(0,d)}$ via the natural projection map
	forgetting the last $m'' - m$ marked points.
	Define $\overline{\widetilde{\ev}(C_{m+\widetilde{D}^{\widetilde{D}}+E'!,(0,d)})}$
	to be the closure of
	$\widetilde{\ev}(C_{m+\widetilde{D}^{\widetilde{D}}+E'!,(0,d)})$
	in $\ev^*P_{\widetilde{V}^{\boxtimes m''}}$.
	Let \begin{equation}
		(C_{m+\widetilde{D}^{\widetilde{D}}+E'!,(0,d)})_t \subset C_{m+\widetilde{D}^{\widetilde{D}}+E'!,(0,d)}
	\end{equation}
	be the restriction of $C_{m+\widetilde{D}^{\widetilde{D}}+E'!,(0,d)}$ to
	\begin{equation} \label{eqnttimesXspace}
		(\{t\} \times X)_{m'+E'|_t!,(0,d)} \cap (\bC \times X)_{m + \widetilde{D}^{\widetilde{D}}+E'!,(0,d)}.
	\end{equation}
	Let $\Delta_t$ be the Cartier divisor in $\ev^* P_{\widetilde{V}^{\boxtimes m''}}$ given by the preimage
	of the Cartier divisor $(\{t\} \times X)_{m'',(0,d)}$ inside $(\bC \times X)_{m''!,(0,d)}$ for each $t \in \bC$.
	Also, let $\overline{\widetilde{\ev}(C_{m+\widetilde{D}^{\widetilde{D}}+E'!,(0,d)})}_t$ be the closure of
	$\widetilde{\ev}(C_{m+\widetilde{D}^{\widetilde{D}}+E'!,(0,d)})_t$ in $\ev^* P_{\widetilde{V}^{\boxtimes m''}}$ for each $t \in \bC$.
	By Lemma \ref{lemmainjectivefromvlarge} this is isomorphic to the restriction of $\overline{\widetilde{\ev}(C_{m+\widetilde{D}^{\widetilde{D}}+E'!,(0,d)})}$
	to $\ev^* P_{\widetilde{V}^{\boxtimes m''}}|_{\textnormal{\eqref{eqnttimesXspace}}}$
	for each $t \in \bC$.
	Therefore, we have
	\begin{equation} \label{eqneffectiveintersection}
		[\overline{\widetilde{\ev}(C_{m+\widetilde{D}^{\widetilde{D}}+E'!,(0,d)})}] \cap \Delta_t = [\overline{\widetilde{\ev}(C_{m+\widetilde{D}^{\widetilde{D}}+E'!,(0,d)})_t}]
	\end{equation}
	for each $t \in \bC$ since any defining function for $\{t\} \subset \bC$ pulls back to a regular one on
	\begin{equation}
		\overline{\widetilde{\ev}(C_{m+\widetilde{D}^{\widetilde{D}}+E'!,(0,d)})}.
	\end{equation}

	We will identify $X_{m'+E!,d}$ with $(\{t\} \times X)_{m'+E'|_t!,(0,d)}$ for each $t \in \bC$
	via the natural identification $X = \{t\} \times X$.
	Let \begin{equation}
		C_{m+\widetilde{D}^{\widetilde{D}}+E'!,(0,d)}|_{\widetilde{U}} \subset C_{m+\widetilde{D}^{\widetilde{D}}+E'!,(0,d)}
	\end{equation}
	be the corresponding pullback to $\widetilde{U}$.
	Since $\widetilde{U}$ contains $N$, we have that a dense open subset of $C_{m+D!+E!,d}$ is also open in $(C_{m+\widetilde{D}^{\widetilde{D}}+E'!,(0,d)})_0$.
	Therefore, by \eqref{eqneffectiveintersection},
	we have that
	\begin{equation} \label{eqnpositivesum}
		[\overline{\widetilde{\ev}(C_{m+\widetilde{D}^{\widetilde{D}}+E'!,(0,d)})}] \cap \Delta_0 - [C_{m+D!+E!,d}]
	\end{equation}
	is represented by a positive sum of integral substacks
	in $CH_*(\ev^* P_{\widetilde{V}^{\boxtimes m''}})$.

	Since $V$ is strongly convex and $D$ is $V$-large, we have by Lemma \ref{lemmainjectivefromvlarge} that
	\begin{equation}
		\widetilde{\ev} : \widetilde{V}_{m' + D'!} \to \ev^* \widetilde{V}^{\boxtimes m''}
	\end{equation}
	is injective.
	Since $E$ is a small perturbation of $D$, the map
	\begin{equation}
		\widetilde{\ev} : \widetilde{V}_{m' + E'!} \to \ev^* \widetilde{V}^{\boxtimes m''}
	\end{equation}
	is also injective.
	Hence, the map
	\begin{equation} \label{eqnprojectionmapDD}
		\pi : \widetilde{\ev}(C_{m+\widetilde{D}^{\widetilde{D}} + E'!,d}) \to \ev^*\widetilde{V}^{\boxtimes m'}
	\end{equation}
	given by projecting out the last $m'' - m'$ factors and then forgetting the last $m''-m'$ marked points
	extends to a map
	\begin{equation} \label{eqnprojectionmapPDD}
		\overline{\pi} : \overline{\widetilde{\ev}(C_{m+\widetilde{D}^{\widetilde{D}} + E'!,d})} \to \ev^*P_{\widetilde{V}^{\boxtimes m'}}
	\end{equation}
	with the property that $\overline{\pi}^* \infty_{\widetilde{V}^{\boxtimes m'}}$ is equal to the restriction of $\infty_{\widetilde{V}^{\boxtimes m''}}$ to the domain of $\overline{\pi}$.
	Therefore, since $D_{\sum m',V}$ is nef by Corollary \ref{corollarynefsum},
	\begin{equation}
		P\ev^*(\infty_{\widetilde{V}^{\boxtimes m''}} + \sum_{j=1}^{m'} \pi^*_{\widetilde{V}^{\boxtimes m''}} \widetilde{p}_j^* D_j)|_{\overline{\widetilde{\ev}(C_{m+\widetilde{D}^{\widetilde{D}} + E'!,d})}} = \overline{\pi^*} D'_{\sum m', \widetilde{V}}
	\end{equation}
	is also nef.
	Combining this with the above stated fact that \eqref{eqnpositivesum} is represented by a positive sum of integral substacks in $\ev^* P_{\widetilde{V}^{\boxtimes m''}}$, we have
	\begin{equation} \label{eqndifficultpartDD}
		\begin{aligned}
			 & \int_{P\ev_*[\overline{\widetilde{\ev}(C_{m+D!,d})}]} \exp(D_{\sum m',V})
			\\& \overset{\textnormal{Lemma \ref{lemmawelldefinedprojectionmap}}}{=} \frac{1}{(E \cdot d)!}\int_{P\ev_*[\overline{\widetilde{\ev}(C_{m+D!+E!,d})}]} \exp(\infty_{V^{\boxtimes m''}} + \sum_{j=1}^{m'}\pi_{P_{V^{\boxtimes m''}}}^* p_j^* D)
			\\ & \leq \frac{1}{(E \cdot d)!}\int_{P\ev_*[\overline{\widetilde{\ev}(C_{m+\widetilde{D}^{\widetilde{D}}+
								E'!,(0,d)})}] \cap \Delta_0}
			\exp(\infty_{V^{\boxtimes m''}} + \sum_{j=1}^{m'}\pi_{P_{V^{\boxtimes m''}}}^* \widetilde{p}_j^* D_j).
		\end{aligned}
	\end{equation}

	Hence,
	\begin{equation}
		D\Vol(C_{m,d}) \overset{\textnormal{Definition \ref{defnvolumeformultiplecones}}}{=} \frac{1}{m! (D \cdot d)!} \int_{P\ev_*[\overline{\widetilde{\ev}(C_{m+D!,d})}]} \exp(D_{\sum m',V})
	\end{equation}
	\begin{equation}
		\overset{\textnormal{\eqref{eqndifficultpartDD}}}{\leq}
		\frac{1}{m! (D \cdot d)!(E \cdot d)!} \int_{P\ev_*[\overline{\widetilde{\ev}(C_{m+\widetilde{D}^{\widetilde{D}}+E'!,(0,d)})}] \cap \Delta_0} \exp(\infty_{\widetilde{V}^{\boxtimes m''}} + \sum_{j=1}^{m'}\pi_{P_{V^{\boxtimes m''}}}^* \widetilde{p}_j^* D_j)
	\end{equation}
	\begin{equation}  \label{eqnprecapproduct}
		= \frac{1}{m! (D \cdot d)!(E \cdot d)!} \int_{P\ev_*[\overline{\widetilde{\ev}(C_{m+\widetilde{D}^{\widetilde{D}}+E'!,d})}] \cap \Delta_1} \exp(\infty_{\widetilde{V}^{\boxtimes m''}} + \sum_{j=1}^{m'}\pi_{P_{V^{\boxtimes m''}}}^* \widetilde{p}_j^* D_j).
	\end{equation}
	Since $D_j|_{\{1\} \times X}$, $j=1,\cdots,m$ is a generic tuple of smooth divisors in the linear system containing $D$,
	we have the following equality of Chow classes:
	\begin{equation} \label{eqndivisorequant}
		\begin{aligned}
			P\ev_*([\overline{\widetilde{\ev}(C_{m+\widetilde{D}^{\widetilde{D}}+E'!,d})}] \cap \Delta_1 )
			 & \overset{\textnormal{\eqref{eqneffectiveintersection}}}{=} P\ev_*[\overline{\widetilde{\ev}(C_{m+\widetilde{D}^{\widetilde{D}}+E'!,d})_1}]
			\\ & = P\ev_*[\overline{\widetilde{\ev}(C_{m'+E'|_1!,d})}] \cap \pi_{P_{V^{\boxtimes m''}}}^*\widetilde{D}|_{(\{1\} \times X)^{m''}}
			\\ & = P\ev_*[\overline{\widetilde{\ev}(C_{m'+E!,d})}] \cap \pi_{P_{V^{\boxtimes m''}}}^*\widehat{D}
		\end{aligned}
	\end{equation}
	inside $P_{\widehat{V}^{\boxtimes m''}}$.
	Hence, \eqref{eqnprecapproduct} is equal to the right-hand side of \eqref{eqncapproductrhs}.


\end{proof}

\begin{prop} \label{propsegreboundsforDvolume}
	Let $C_\bullet = (C_{m,d})_{m \in \bN, \ d \in H_2^+(X)}$ be a system of cones for $V$ and suppose that $V$ is strongly convex.
	Let $D$ be $V$-large, generically transverse to $C_\bullet$ and so that $\calO(D)$ and $V^* \otimes \calO(D)$ are globally generated.
	Also, let $m \in \bN$ and $d \in H_2^+(X)$ and define $m' = m + D \cdot d$.
	Suppose that $C_{0,d} \to X_{0,d}$ has pure relative dimension $h$ and that $X_{0,d}$ is of pure dimension too.
	Then
	\begin{equation} \label{eqndvolumebound}
		D\Vol(C_{m,d}) \leq \frac{1}{h! m! (D\cdot d)!} \sum_{i_1,\cdots,i_{m'}=0}^{\dim X} \left|\langle D^{i_1},\cdots,D^{i_{m'}} \rangle_{m',d}^{s(C_\bullet)}\right|.
	\end{equation}
\end{prop}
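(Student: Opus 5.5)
We would start from Lemma \ref{lemmaDDDfactorialinequality}, choosing $E \sim D$ smooth and generically transverse to $C_\bullet$. This bounds $D\Vol(C_{m,d})$ by
\[
\frac{1}{m!(D\cdot d)!(E\cdot d)!}\int_{P\ev_*[\overline{\widetilde{\ev}(C_{m'+E!,d})}] \cap \pi^*\widehat{D}} \exp\Big(\infty_{V^{\boxtimes m''}} + \sum_{j=1}^{m'} \pi^* p_j^*D\Big),
\]
where $\pi = \pi_{P_{V^{\boxtimes m''}}}$. The point of passing to the space with the extra $E!$ marked points is that the first $m'$ marked points are now "free" insertions. The remaining $E\cdot d$ points are the ones guaranteeing injectivity of $\widetilde{\ev}$ through Lemma \ref{lemmainjectivefromvlarge}.

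Next we expand the exponential. The integrand becomes a sum over $r$ and over multi-indices $(i_1,\dots,i_{m'})$ of
\[
\frac{1}{r!\, i_1!\cdots i_{m'}!}\,\infty_{V^{\boxtimes m''}}^r \cup \prod_{j=1}^{m'} \pi^*p_j^*D^{i_j}.
\]
Each $D^{i_j}$ vanishes for $i_j>\dim X$, so the indices range over $0,\dots,\dim X$. The extra factor $\widehat D$ can be absorbed by shifting $i_j \mapsto i_j+1$ for $j>m$; this only helps, since the resulting coefficients are at most $1$.

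For each fixed multi-index we then push forward along $\pi$. Lemma \ref{lemmasegre}, applied to $C_{m'+E!,d}$ viewed as $(C_{m',d})_{m'+E!}$ with $m'$ in place of $m$, says that $\frac{1}{(E\cdot d)!}F_*\pi_*\sum_r (\cdots\cap\infty^r)$ equals $\ev_* s(C_{m',d})$. Here $F : X^{m''}\to X^{m'}$ is the projection to the first $m'$ factors. Since the insertions $p_j^*D^{i_j}$ only involve the first $m'$ factors, the projection formula turns each term into a Segre count $\langle D^{i_1},\dots,D^{i_{m'}}\rangle^{s(C_\bullet)}_{m',d}$.

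The main obstacle is the weight $1/r!$ in front of $\infty^r$: Lemma \ref{lemmasegre} sums over all $r$ with weight $1$, not $1/r!$. For a fixed multi-index, only one value of $r$ contributes, determined by degree: $r = \dim \overline{\widetilde{\ev}(C_{m'+E!,d})} - \sum_j i_j$ (with the shift from $\widehat D$). The Segre class $s(C_{m',d})$ then picks out the homogeneous piece of that degree. We need $r \geq h$ whenever the term is nonzero. This follows from pure dimensionality: $C_{m',d}\to X_{m',d}$ has relative dimension $h$, so $\infty^r$ capped with $\overline{C}$ pushes forward to zero for $r<h$ by the standard Segre class vanishing. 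Hence $1/r! \le 1/h!$. Finally we take absolute values, use the triangle inequality, and bound the remaining factorials $i_j!$ by $1$, which gives \eqref{eqndvolumebound}. The term-by-term comparison in this last step needs care, since the integrals being summed need not be positive individually. We would handle this by applying the absolute values only after identifying each fixed-multi-index term with a single Segre count, which is exactly what the right-hand side of \eqref{eqndvolumebound} allows.
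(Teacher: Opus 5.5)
Your proposal is correct and follows essentially the paper's route: Lemma \ref{lemmaDDDfactorialinequality}, expansion of the exponential, absorbing $\widehat{D}$ by shifting indices, the bound $1/k!\le 1/h!$ from the vanishing of the pushforward of $\infty^k\cap[\overline{C}]$ for $k<h$, and purity of dimension to collapse the sum over $k$ into a single Segre count per multi-index. The only difference is organizational. You cite Lemma \ref{lemmasegre} (applied with $m'$ in place of $m$ and with $E$, which should be taken as a small generic perturbation of $D$ so that it is still $V$-large). The paper instead re-derives that identity inline, via \eqref{eqnpushforwardclosurerelation}, \eqref{eqnpulbackofo1r} and the generically \'etale forgetful pushforward of degree $(E\cdot d)!$.
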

\begin{proof}
	Let $m'' := m' + D \cdot d$.
	Define $p_j : X^{m''} \to X$ to be the $j$th projection map for each $j=1,\cdots,m''$.
	We also let $E$ be a generic linearly equivalent perturbation of $D$.
	Let $\widehat{D} = \sum_{j=m+1}^{m'} p_j^* D$.

	We have
	\begin{equation}
		D\Vol(C_{m,d})
	\end{equation}
	\begin{equation}
		\overset{\textnormal{Lemma \ref{lemmaDDDfactorialinequality}}}{\leq} \frac{1}{m! ((D \cdot d)!)^2} \int_{P\ev_*[\overline{\widetilde{\ev}(C_{m'+E!,d})}] \cap \pi_{P_{V^{\boxtimes m''}}}^*\widehat{D}} \exp(\infty_{V^{\boxtimes m''}} + \sum_{j=1}^{m'}\pi_{P_{V^{\boxtimes m''}}}^* p_j^* D)
	\end{equation}
	\begin{equation}
		=\frac{1}{m! ((D \cdot d)!)^2} \int_{P\ev_*[\overline{\widetilde{\ev}(C_{m'+E!,d})}] \cap \pi_{P_{V^{\boxtimes m''}}}^*\widehat{D}} \sum_{k = 0}^\infty \sum_{i_1,\cdots,i_{m'} = 0}^{\dim(X)} \frac{(\infty_{V^{\boxtimes m''}})^k}{k!} \pi_{P_{V^{\boxtimes m''}}}^*\left(\prod_{j=1}^{m'}p_j^*\frac{D^{i_j}}{i_j!}\right)
	\end{equation}
	\begin{equation}
		=\frac{1}{m! ((D \cdot d)!)^2} \int_{P\ev_*[\overline{\widetilde{\ev}(C_{m'+E!,d})}]} \sum_{k = 0}^\infty \sum_{i_1,\cdots,i_{m'} = 0}^{\dim(X)} \frac{(\infty_{V^{\boxtimes m''}})^k}{k!} \pi_{P_{V^{\boxtimes m''}}}^*\left(\prod_{j=1}^hp_j^*\frac{D^{i_j}}{i_j!}
		\prod_{j=m+1}^{m'} p_j^* \frac{D^{i_j+1}}{i_j!}\right)
	\end{equation}
	\begin{equation}
		\begin{aligned}
			 & \leq \frac{1}{m! ((D \cdot d)!)^2}  \sum_{k = 0}^\infty
			\sum_{i_1,\cdots,i_{m'} = 0}^{\dim(X)}                     \\ & \frac{1}{k! \prod_{j=1}^{m'} i_j!}
			\left| \int_{P\ev_*[\overline{\widetilde{\ev}(C_{m'+E!,d})}]}(\infty_{V^{\boxtimes m''}})^k \pi_{P_{V^{\boxtimes m''}}}^*\left(\prod_{j=1}^hp_j^*D^{i_j}
			\prod_{j=m+1}^{m'} p_j^* D^{i_j+1}\right) \right|
		\end{aligned}
	\end{equation}
	\begin{equation}
		\begin{aligned}
			 & \leq \frac{1}{m! ((D \cdot d)!)^2}  \sum_{k = 0}^\infty
			\sum_{i_1,\cdots,i_{m'} = 0}^{\dim(X)}                     \\ & \frac{1}{k!} \left| \int_{P\ev_*[\overline{\widetilde{\ev}(C_{m'+E!,d})}]}(\infty_{V^{\boxtimes m''}})^k \pi_{P_{V^{\boxtimes m''}}}^*\left(\prod_{j=1}^hp_j^*D^{i_j}
			\prod_{j=m+1}^{m'} p_j^* D^{i_j+1}\right) \right|
		\end{aligned}
	\end{equation}
	\begin{equation}
		\begin{aligned}
			 & \leq \frac{1}{m! ((D \cdot d)!)^2}  \sum_{k = 0}^\infty
			\sum_{i_1,\cdots,i_{m'} = 0}^{\dim(X)}                     \\ & \frac{1}{k!} \left| \int_{P\ev_*[\overline{\widetilde{\ev}(C_{m'+E!,d})}]}(\infty_{V^{\boxtimes m''}})^k \pi_{P_{V^{\boxtimes m''}}}^*\left(\prod_{j=1}^{m'}p_j^*D^{i_j}\right) \right|
		\end{aligned}
	\end{equation}
	\begin{equation}
		\begin{aligned}
			 & \overset{\textnormal{\eqref{eqnpushforwardclosurerelation}}}{\leq} \frac{1}{m! ((D \cdot d)!)^2}  \sum_{k = 0}^\infty
			\sum_{i_1,\cdots,i_{m'} = 0}^{\dim(X)}                                                                                   \\ & \frac{1}{k!} \left| \int_{P\ev_*P\widetilde{\ev}_*[\overline{(C_{m'+E!,d})}]}(\infty_{V^{\boxtimes m''}})^k \pi_{P_{V^{\boxtimes m''}}}^*\left(\prod_{j=1}^{m'}p_j^*D^{i_j}\right) \right|
		\end{aligned}
	\end{equation}
	\begin{equation}
		\begin{aligned}
			 & =  \frac{1}{m! ((D \cdot d)!)^2}  \sum_{k = 0}^\infty
			\sum_{i_1,\cdots,i_{m'} = 0}^{\dim(X)}                   \\ & \frac{1}{k!} \left| \int_{[\overline{C_{m'+E!,d}}]}P\widetilde{\ev}^*P\ev^*(\infty_{V^{\boxtimes m''}})^k P\widetilde{\ev}^*P\ev^*\pi_{P_{V^{\boxtimes m''}}}^*\left(\prod_{j=1}^{m'}p_j^*D^{i_j}\right) \right|
		\end{aligned}
	\end{equation}
	\begin{equation}
		\begin{aligned}
			\overset{\textnormal{\eqref{eqnpulbackofo1r}}}{=} \frac{1}{m! ((D \cdot d)!)^2}  \sum_{k = 0}^\infty
			\sum_{i_1,\cdots,i_{m'} = 0}^{\dim(X)} \\ \frac{1}{k!} \left| \int_{[\overline{C_{m'+E!,d}}]}\infty_{V_{m'+E!,d}}^k \pi_{P_{V_{m+E!,d}}}^*\left(\prod_{j=1}^{m'}\ev_j^*D^{i_j}\right) \right|
		\end{aligned}
	\end{equation}
	\begin{equation}
		= \frac{1}{m! ((D \cdot d)!)^2}  \sum_{k = h}^\infty
		\sum_{i_1,\cdots,i_{m'} = 0}^{\dim(X)} \frac{1}{k!} \left| \int_{[\overline{C_{m'+E!,d}}]}\infty_{V_{m'+E!,d}}^k \pi_{P_{V_{m+E!,d}}}^*\left(\prod_{j=1}^{m'}\ev_j^*D^{i_j}\right) \right|
	\end{equation}
	\begin{equation}
		\leq \frac{1}{h! m! ((D \cdot d)!)^2}  \sum_{k = h}^\infty
		\sum_{i_1,\cdots,i_{m'} = 0}^{\dim(X)} \left| \int_{[\overline{C_{m'+E!,d}}]}\infty_{V_{m'+E!,d}}^k \pi_{P_{V_{m+E!,d}}}^*\left(\prod_{j=1}^{m'}\ev_j^*D^{i_j}\right) \right|
	\end{equation}
	\begin{equation}
		\overset{\textnormal{Definition \ref{defnsegrefromsystemsofcones}}}{=} \frac{1}{h! m! (D \cdot d)!}  \sum_{k = h}^\infty
		\sum_{i_1,\cdots,i_{m'} = 0}^{\dim(X)} \left| \int_{[\overline{C_{m',d}}]}(\infty_{V_{m',d}})^k \pi_{P_{V_{m',d}}}^*\prod_{j=1}^{m'}\ev_j^*D^{i_j} \right|
	\end{equation}
	\begin{equation}
		\overset{\textnormal{Definitions \ref{defnsefsef} and \ref{defnsegrefromsystemsofcones}}}{=} \frac{1}{h! m! (D \cdot d)!}
		\sum_{i_1,\cdots,i_{m'} = 0}^{\dim(X)} \left| \left< D^{i_1},\cdots,D^{i_{m'}} \right>^{s(C_\bullet)} \right|.
	\end{equation}
	The last equality also holds because $C_{m',d}$ has pure dimension.

\end{proof}

\subsection{Bounds for Summands} \label{subsectionboundsforothersummands}

We need to compute how $D$-volume changes when we remove a cone (See Proposition \ref{propboundsforsummands}). This is important in the proof of the main theorem in Section \ref{sectionmainargument}.
Let $X$ be a smooth projective variety.

\begin{lemma} \label{lemmaineqaulityrestriction}
	Let $U \subset X$ be open and $I$ a finite set.
	Suppose $(D'_i)_{i \in I}$, $D$ are divisors on $X$ so that $D$ is smooth and let
	$(V_i)_{i \in I}$ be a finite collection of vector bundles over $X$.
	Also, let $C^i_\bullet$ be a system of cones for $V_i$ for each $i \in I$.
	Suppose that $\calO(D'_i)$ and $V_i^* \otimes \calO(D'_i)$ are globally generated for each $i \in I$
	and that $D - \sum_{i \in I} D'_i$ is nef.
	Then
	\begin{equation}
		D\Vol(C^i_{m,d})_{i \in I} \geq D\Vol(C^i_{m,d}|_U)_{i \in I}
	\end{equation}
	for each $m \in \bN$, $d \in H_2^+(X)$ (Definition \ref{defnrestrictionofcones}).
\end{lemma}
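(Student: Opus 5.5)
The idea is that restricting to $U$ only removes components from an effective cycle, and the divisor being integrated is nef. So I would show that the difference of the two pushed-forward cycles is effective and then apply nefness.

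First, unwind Definition \ref{defnrestrictionofcones}. Set $m' = m + D\cdot d$. The restricted cone $C^i_{m+D!,d}|_U$ is the intersection of $C^i_{m+D!,d}$ with the locus of curves whose image lies in $U$. That locus is open in $X_{m+D!,d}$: it is the complement of the image of the universal curve meeting $X - U$, which is closed because the universal curve is proper. Consequently the fiber product $\fprod{X_{m+D!,d}}_{i \in I} C^i_{m+D!,d}|_U$ is an open subset of $\fprod{X_{m+D!,d}}_{i \in I} C^i_{m+D!,d}$. Its image under $\widetilde{\ev}$ is therefore contained in $\widetilde{\ev}(C^i_{m+D!,d})_{i \in I}$. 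Taking closures in $\ev^*P_{(V_i^{\boxtimes m'})_{i \in I}}$, the closure $\overline{\widetilde{\ev}((C^i_{m+D!,d}|_U)_{i\in I})}$ is a union of some of the irreducible components of $\overline{\widetilde{\ev}((C^i_{m+D!,d})_{i\in I})}$. Each such component appears with multiplicity at most its multiplicity in the larger cycle; every component not meeting $U$ is dropped and the rest are kept. Hence
\[
P\ev_*\left[\overline{\widetilde{\ev}((C^i_{m+D!,d})_{i\in I})}\right] - P\ev_*\left[\overline{\widetilde{\ev}((C^i_{m+D!,d}|_U)_{i\in I})}\right]
\]
is represented by a nonnegative rational combination of integral closed substacks of $P_{(V_i^{\boxtimes m'})_{i \in I}}$. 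Since $P\ev_*$ preserves effectivity, I would do this comparison upstairs before pushing forward.

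Second, apply Corollary \ref{corollarynefsum2}. By hypothesis $\calO(D'_i)$ and $V_i^*\otimes\calO(D'_i)$ are globally generated and $D - \sum_i D'_i$ is nef, so $D_{\sum m',(V_i)_{i\in I}}$ is nef on $P_{(V_i^{\boxtimes m'})_{i\in I}}$. Expand $\exp(D_{\sum m',(V_i)_{i \in I}})$ into its homogeneous pieces $D_{\sum m',(V_i)_{i \in I}}^k/k!$. By Kleiman's criterion, in the form that the top self-intersection of a nef divisor on any subvariety is $\geq 0$, each piece integrates nonnegatively against each integral substack of the right dimension. Therefore its integral against the effective difference above is $\geq 0$.

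Dividing by the common factor $m!(D\cdot d)!$ then gives the inequality. The only genuinely delicate step is the first one: checking that the restricted closure is a sub-cycle, with multiplicities bounded componentwise, of the unrestricted closure. This requires the closures to be taken inside the same ambient space $\ev^*P$. It also requires "restriction to $U$" to be an open condition on the fiber product, so that no new components can appear. If openness failed, for instance because $U$ is only locally closed, I would instead argue directly that every point of the restricted cone already lies in the unrestricted cone. That gives set-theoretic containment of the closures, and containment of reduced closures suffices because these are reduced images.
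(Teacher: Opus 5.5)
Your proof is correct and follows the paper's argument. The paper uses nefness of $D_{\sum m',(V_i)_{i \in I}}$ from Corollary \ref{corollarynefsum2}, together with the fact that $\overline{\widetilde{\ev}((C^i_{m+D!,d}|_U)_{i \in I})}$ is the union of those irreducible components of $\overline{\widetilde{\ev}((C^i_{m+D!,d})_{i \in I})}$ that meet the preimage of the open locus of curves with image in $U$ (true because $\widetilde{\ev}$ is a morphism over the moduli space); the paper leaves that second step implicit and you supply it.

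Your closing fallback for non-open $U$ is unnecessary here, and it would fail as stated. Set-theoretic containment of closures does not make the difference of cycles effective when the smaller closure has lower-dimensional components.
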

\begin{proof}
	Let $d \in H_2^+(X)$ and $m \in \bN$.
	Let $m' = m + D \cdot d$.
	By Corollary \ref{corollarynefsum2} we have that
	$D_{\sum m', (V_i)_{i \in I}}$ is nef.
	Hence,
	\begin{equation}
		D\Vol(C^i_{m,d})_{i \in I} \overset{\textnormal{Definition \ref{defnvolumeformultiplecones}}}{=}
		\frac{1}{m!(D \cdot d)!} \int_{P\ev_*\left[\overline{\widetilde{\ev}((C^i_{m+D!,d})_{i \in I})}\right]} \exp(D_{\sum m',(V_i)_{i \in I}})
	\end{equation}
	\begin{equation}
		\geq \frac{1}{m!(D \cdot d)!} \int_{P\ev_*\left[\overline{\widetilde{\ev}((C^i_{m+D!,d}|_U)_{i \in I})}\right]} \exp(D_{\sum m',(V_i)_{i \in I}})
		\overset{\textnormal{Definition \ref{defnvolumeformultiplecones}}}{=}  D\Vol(C^i_{m,d}|_U)_{i \in I}.
	\end{equation}
\end{proof}

\begin{lemma} \label{lemmaboundDvolbyanother}
	Let $(V_i)_{i \in I}$ be a finite collection of vector bundles over $X$.
	Let $C_\bullet^i$ be a system of cones for $V_i$ for each $i \in I$.
	Also, let $D'_i$ be a divisor on $X$ so that $\mathcal{O}(D'_i)$ and $V_i^* \otimes \mathcal{O}(D'_i)$ are globally generated for each $i \in I$. Let $D, F$ be smooth divisors on $X$ so that
	\begin{enumerate}
		\item they are generically transverse to $C_ \bullet^i$ for each $i \in I$,
		\item $\calO(D)$ is globally generated,
		\item  $D-\sum_{i \in I} D'_i$ is nef
		\item and $F \sim 2D$.
	\end{enumerate}
	Then
	$$D\Vol(C_{m,d}^i)_{i \in I} \leq \frac{1}{m!(F \cdot d)!}\int_{P\ev_*[\overline{\widetilde{\ev}(C_{m+F!,d})}]} \exp(D_{\sum m'', (V_i)_{i \in I}})$$
	for each $d \in H_2(X)^+$ and $m \in \mathbb{N}$ where $m'' = m + F \cdot d$.
\end{lemma}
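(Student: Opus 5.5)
The plan is to mimic the argument of Lemma \ref{lemmaDDDfactorialinequality}: degenerate $F$ into a union $D \cup E$, with $E$ a generic perturbation of $D$, and compare the two sides through a flat family. Let $m' = m + D\cdot d$, and note that $F \cdot d = D\cdot d + E \cdot d$, so $m'' = m' + E\cdot d$.

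First, by Lemma \ref{lemmawelldefinedprojectionmap} with $H = D_{\sum m'}$, rewrite the left-hand side as
\begin{equation*}
\frac{1}{m!(D\cdot d)!(E\cdot d)!}\int_{P\ev_*[\overline{\widetilde{\ev}(C^i_{m+D!+E!,d})_{i\in I}}]} \exp\Big(\infty_{(V_i^{\boxtimes m''})} + \pi^*\sum_{j=1}^{m'} p_j^*D\Big).
\end{equation*}
Since $D$ is effective and $\calO(D)$ is globally generated, the exponent is dominated by $D_{\sum m'',(V_i)_{i\in I}}$, which is nef by Corollary \ref{corollarynefsum2}. Expanding the exponential, every term involving the extra $p_j^*D$ for $j>m'$ is a product of nef classes, so the integrals can only increase. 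The left-hand side is therefore at most the same expression with exponent $D_{\sum m'',(V_i)_{i\in I}}$.

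Next, I would choose a pencil $(F_t)_{t\in\bC}$ with $F_1 = F$ generic and $F_0 = D + E$; this is possible because $F \sim 2D \sim D+E$. Take the total space over $\bC\times X$ as in Lemma \ref{lemmaperturbingdivisors}, and let $\Delta_t$ be the fibre divisors. The closure of $\widetilde{\ev}$ of the family of cones $C^i_{m+F_t!,d}$ is then flat over $\bC$. Its fibre over $t=1$ is $\overline{\widetilde{\ev}(C^i_{m+F!,d})}$. Its fibre over $t=0$ contains, as one component, the closure of the curves transverse to $D \cup E$ whose intersections with $D\cup E$ are marked. This component is a union over the choice of which $D\cdot d$ of the last $F\cdot d$ markings land on $D$. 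There are ${F\cdot d \choose D\cdot d}$ such choices, and each is isomorphic, by relabelling, to $\overline{\widetilde{\ev}(C_{m+D!+E!,d})}$. Writing $[\Delta_0\text{-fibre}] = {F\cdot d\choose D\cdot d}[\cdots] + (\text{positive cycle})$ and using nefness of $D_{\sum m'',(V_i)}$ once more gives
\begin{equation*}
{F\cdot d\choose D\cdot d}\int_{D+E\text{ part}} \leq \int_{\overline{\widetilde{\ev}(C_{m+F!,d})}}.
\end{equation*}
Dividing by $m!(F\cdot d)!$ turns the factor $\frac{1}{(D\cdot d)!(E\cdot d)!}$ into exactly $\frac{{F\cdot d\choose D\cdot d}}{(F\cdot d)!}$, which yields the claim.

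The main obstacle is the degeneration step. I need to check that the $t=0$ fibre really contains the labelled $D+E$ moduli closures, which requires genericity of $E$ so that curves avoid $D\cap E$ generically, as in item \ref{item:emptyintersection} of Lemma \ref{lemmaDDDfactorialinequality}. I also need to check that the defining function of $t$ is regular on the closure, so that the fibre equals the intersection with $\Delta_0$ as an effective cycle. I would handle this exactly as in \eqref{eqneffectiveintersection} and \eqref{eqnpositivesum}.
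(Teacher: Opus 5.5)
Your outline matches the paper's: add a second block of markings with Lemma \ref{lemmawelldefinedprojectionmap}, enlarge the exponent by positivity, then pass from $D$ and $E$ to $F$. The gap is in the enlargement step, which is not justified as written. Put $A=\infty_{(V_i^{\boxtimes m''})_{i\in I}}+\pi^*\sum_{j\leq m'}p_j^*D$ and $B=\pi^*\sum_{j>m'}p_j^*D$. Since $\exp(A+B)-\exp(A)=\sum_{b\geq 1}A^aB^b/(a!\,b!)$, you need $A$ itself to be nef on $\overline{\widetilde{\ev}(C^i_{m+D!+E!,d})_{i\in I}}$.

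Nefness of $A+B=D_{\sum m'',(V_i)_{i\in I}}$ (Corollary \ref{corollarynefsum2}) together with nefness of $B$ is not enough. For instance, on a surface with $L$ ample, take $A=(\epsilon-1)L$ and $B=L$ with $0<\epsilon<1/2$; then $\int\exp(A)>\int\exp(A+B)$. Nor is $A$ nef on the ambient $P_{(V_i^{\boxtimes m''})_{i\in I}}$. Its $\infty$ part involves the last $E\cdot d$ factors, which receive no compensating $p_j^*D$ twist, and $V_i^*$ need not be globally generated.

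The paper obtains nefness of $A$ only on the cycle, from the second half of Lemma \ref{lemmawelldefinedprojectionmap}, and that requires $E=D$. In that case the last block of evaluations is a permutation of the block $m+1,\dots,m'$. So forgetting it is a morphism to $\overline{\widetilde{\ev}(C^i_{m+D!,d})_{i\in I}}$ that pulls $D_{\sum m',(V_i)_{i\in I}}$ back to $A$. For $E\neq D$ this projection may have indeterminacy, since no $V$-largeness is assumed in this lemma. The fix is to do the monotonicity on $\overline{\widetilde{\ev}(C^i_{m+D!+D!,d})_{i\in I}}$, where $\int A^aB^b\geq 0$. Only afterwards replace the second $D$ by $E$ using Lemma \ref{lemmaperturbingdivisors}; this is the order the paper uses.

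Once that is repaired, your final step is sound and differs from the paper's. The paper first rewrites the $D!+E!$ integral exactly as a relabelled $(D+E)!$ integral, using $\frac{1}{(D\cdot d)!\,(E\cdot d)!}=\binom{F\cdot d}{D\cdot d}\frac{1}{(F\cdot d)!}$. It then asserts an \emph{equality} with the $F!$ integral via Lemma \ref{lemmaperturbingdivisors}. That lemma rests on the special fibre of the closure being exactly the closure of the special fibre.

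You only need an inequality, for which the following suffice:
\begin{itemize}
\item the $t=0$ fibre contains the $\binom{F\cdot d}{D\cdot d}$ relabelled copies with positive multiplicity, and the remainder is an effective cycle;
\item $D_{\sum m'',(V_i)_{i\in I}}$ is nef on the ambient space;
\item the integrand is symmetric in the last $F\cdot d$ markings.
\end{itemize}
This is more robust at $t=0$. At $t=1$ you still need $F$ to be generic in the pencil, since extra components there would push the inequality the wrong way; Lemma \ref{lemmaperturbingdivisors} lets you assume this.
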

\begin{proof}
	Since $\calO(D)$ is globally generated, we can choose a smooth linearly equivalent divisor $E$
	so that $D + E$ is reduced and both $E$ and $D + E$ are generically transverse to $C_\bullet^i$ for each $i \in I$.
	Let $d \in H_2(X)^+$ and $m \in \mathbb{N}$.
	Define $m' = m + D \cdot d$.
	Let $F : X^{m''} \to X^{m''-m'}$ be the projection map to the last $m''-m'$ factors.
	We have
	\begin{equation}
		D\Vol(C_{m,d}^i)_{i \in I} \overset{\textnormal{Definition \ref{defnvolumeformultiplecones}}}{=} \frac{1}{m!(D \cdot d)!} \int_{P\ev_*[\overline{\widetilde{\ev}(C^i_{m+D!,d})_{i\in I}}]} \exp(D_{\sum m', (V_i)_{i \in I}})
	\end{equation}
	\begin{equation} \label{eqnDDequationlast}
		\overset{\textnormal{Lemma \ref{lemmawelldefinedprojectionmap}}}{=} \frac{1}{m!(D \cdot d)!(D \cdot d)!} \int_{P\ev_*[\overline{\widetilde{\ev}(C^i_{m+D!+D!,d})_{i \in I}}]} \exp(D_{\sum m'', (V_i)_{i \in I}} -
		\pi_{P_{(V_i^{\boxtimes m''})_{i \in I}}}^*F^*D_{\sum m''-m'})
	\end{equation}
	(See Definition \ref{defndfactorialefactorial}).
	By Lemma \ref{lemmawelldefinedprojectionmap} combined with Corollary \ref{corollarynefsum2}, we have that the restriction of
	$D_{\sum m'', (V_i)_{i \in I}} -
		\pi_{P_{(V_i^{\boxtimes m''})_{i \in I}}}^*F^*D_{\sum m''-m'}$ to $\overline{\widetilde{\ev}(C^i_{m+D!+D!,d})}$ is nef. Also, $\pi_{P_{(V_i^{\boxtimes m''})_{i \in I}}}^*F^*D_{\sum m''-m'}$ is nef.
	Since the integral over a positive sum of integral substacks of a wedge product of nef classes is non-negative, we get that \eqref{eqnDDequationlast} is less than or equal to:
	\begin{equation}
		\frac{1}{m!(D \cdot d)!(D \cdot d)!} \int_{P\ev_*[\overline{\widetilde{\ev}(C^i_{m+D!+D!,d})_{i \in I}}]} \exp(D_{\sum m'', (V_i)_{i \in I}})
	\end{equation}
	\begin{equation}
		\overset{\textnormal{Lemma \ref{lemmaperturbingdivisors}}}{=}
		\frac{1}{m!(D \cdot d)!(E \cdot d)!} \int_{P\ev_*[\overline{\widetilde{\ev}(C^i_{m+D!+E!,d})_{i \in I}}]} \exp(D_{\sum m'', (V_i)_{i \in I}})
	\end{equation}
	\begin{equation}
		= \frac{1}{m!(D \cdot d + E \cdot d)!} \int_{P\ev_*[\overline{\widetilde{\ev}(C^i_{m+(D+E)!,d})_{i \in I}}]} \exp(D_{\sum m'', (V_i)_{i \in I}})
	\end{equation}
	\begin{equation}
		\overset{\textnormal{Lemma \ref{lemmaperturbingdivisors}}}{=} \frac{1}{m!(F \cdot d)!} \int_{P\ev_*[\overline{\widetilde{\ev}(C^i_{m+F!,d})_{i \in I}}]} \exp(D_{\sum m'', (V_i)_{i \in I}}).
	\end{equation}
\end{proof}

\begin{prop} \label{propboundsforsummands}
	Let $(V_i)_{i \in I}$ be a finite collection of vector bundles over $X$ and let $\star \in I$.
	Suppose that $V_\star|_U$ is strongly convex for some dense open subset $U \subset X$.
	Let $C^i_\bullet$ be a system of cones for $V_i$ for each $i \in I$ and suppose
	$C^\star_\bullet|_U = (V_\star)_\bullet|_U$.
	Suppose that $D'_i$ is a divisor in $X$ so that $\calO(D'_i)$ and $V_i^* \otimes \calO(D'_i)$ are globally generated for each $i \in I$.
	Suppose that $D$ and $F$ are smooth divisors on $X$ satisfying
	\begin{itemize}
		\item $D - \sum_{i \in I}D'_i$ is nef,
		\item
		      $F$ is $V_\star$-large,
		      (Definition \ref{defncompactieab}),
		\item $F \sim 2D$,
		\item and $D$ and $F$
		      are generically transverse to $(C^i_\bullet)_{i \in I}$ (Definition \ref{defngenericallytransverse}).
	\end{itemize}
	Then
	\begin{equation}
		D\Vol(C^i_{m,d}|_U)_{i \in I-\star} \leq g_d!F\Vol(C^i_{m,d})_{i \in I}
	\end{equation}
	where
	\begin{equation}
		g_d =
		c_1(V_\star)(d) + \dim(V_\star)
	\end{equation}
	for each $m \in \bN$ and $d \in H_2^+(X)-0$.
\end{prop}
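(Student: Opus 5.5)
We compare the two volumes by passing through the intermediate quantity of Lemma \ref{lemmaboundDvolbyanother}, which replaces $D$-marked points by $F$-marked points. First apply Lemma \ref{lemmaineqaulityrestriction} and Lemma \ref{lemmaboundDvolbyanother} to the collection $(C^i_\bullet|_U)_{i \in I-\star}$. This gives
\begin{equation*}
D\Vol(C^i_{m,d}|_U)_{i \in I-\star} \leq \frac{1}{m!(F\cdot d)!}\int_{P\ev_*[\overline{\widetilde{\ev}((C^i_{m+F!,d}|_U)_{i\in I-\star})}]} \exp(D_{\sum m'',(V_i)_{i\in I-\star}}),
\end{equation*}
where $m''=m+F\cdot d$. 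The hypotheses of Lemma \ref{lemmaboundDvolbyanother} are met, since $D-\sum_{i\in I-\star}D'_i$ is nef ($D'_\star$ is nef, being globally generated).

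The key step is to compare this integral with the one defining $F\Vol(C^i_{m,d})_{i\in I}$, which lives on the fiber product with the extra factor $P_{V_\star^{\boxtimes m''}}$. Since $C^\star_\bullet|_U=(V_\star)_\bullet|_U$, the restriction of the cone $\widetilde{\ev}((C^i_{m+F!,d})_{i\in I})$ over $U$ is a vector bundle over $\widetilde{\ev}((C^i_{m+F!,d}|_U)_{i\in I-\star})$. Namely, it is the image of $(V_\star)_{m+F!,d}$, whose rank at a curve $u$ is $h^0(u^*V_\star)=g_d$ by convexity and Riemann--Roch (genus zero, so $\chi = c_1(V_\star)(d)+\dim V_\star$). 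This image embeds injectively into $\ev^*V_\star^{\boxtimes m''}$ by Lemma \ref{lemmainjectivefromvlarge}, which applies because $F$ is $V_\star$-large and $V_\star|_U$ is strongly convex. Let $Z$ be the closure of the $(I-\star)$ part and $\overline{W}$ the closure of the full cone. Then $\overline{W}$ contains the closure of a $\bP^{g_d}$-bundle $P_{W_\star}$ over an open dense subset of $Z$, with $\infty_{V_\star^{\boxtimes m''}}$ restricting to $\calO(1)$ on its fibers.

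Write $A := D_{\sum m'',(V_i)_{i\in I-\star}}$, pulled back to $\overline{W}$. We have
\begin{equation*}
F_{\sum m'',(V_i)_{i\in I}} = A + \bigl(\infty_{V_\star^{\boxtimes m''}} + \pi^*D_{\sum m''}\bigr),
\end{equation*}
using $F\sim 2D$. The second summand is nef by Corollary \ref{corollarynefsum}, since $D - D'_\star$ is nef and $\calO(D'_\star)$, $V_\star^*\otimes\calO(D'_\star)$ are globally generated; so $\calO(D)$ and $V_\star^*\otimes\calO(D)$ are globally generated up to a nef twist. Likewise $A$ is nef by Corollary \ref{corollarynefsum2}. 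Expanding $\exp$ and discarding all terms except those of degree exactly $g_d$ in the second summand, which is allowed because all mixed products of nef classes integrate nonnegatively over an effective cycle, we get
\begin{equation*}
\int_{\overline{W}}\exp(F_{\sum}) \geq \frac{1}{g_d!}\int_{\overline{W}} \exp(A)\cdot \bigl(\infty_{V_\star}+\pi^*D_{\sum}\bigr)^{g_d}.
\end{equation*}
For the $\exp(A)$-terms of top degree on $Z$, the factor $\infty^{g_d}$ pushes forward to the fundamental class of $Z$, because the fibers are $\bP^{g_d}$ with $\calO(1)$. The remaining terms involving $\pi^*D_\sum$ are nonnegative by nefness. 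Hence the right-hand side is at least $\frac{1}{g_d!}\int_Z\exp(A)$, and multiplying by $\frac{1}{m!(F\cdot d)!}$ gives the claim.

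The main obstacle is this pushforward step. We must justify that $\overline{W}\to Z$ has generic fiber exactly $\bP^{g_d}$ with the correct multiplicity, including that $P\ev$ is compatible with the fiber product and does not lose degree. We must also check that the boundary components of $\overline{W}$ over $Z\setminus(\text{open part})$ contribute nonnegatively, which follows from effectivity plus nefness. I would handle this via the projection formula on a resolution of the rational map $\overline{W}\dashrightarrow Z$, arguing as in the proof of Lemma \ref{lemmawelldefinedprojectionmap}.
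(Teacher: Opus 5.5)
Your proposal is the paper's proof, read from the $F$-volume side: Lemma \ref{lemmaboundDvolbyanother} to pass to $F$-marked points, and the $\bP^{g_d}$-fibration of the $V_\star$-factor over $U$ from convexity and Lemma \ref{lemmainjectivefromvlarge}. It then uses the splitting $F_{\sum m'',(V_i)_{i\in I}}\sim A+(\infty_{V_\star^{\boxtimes m''}}+\pi^*D_{\sum m''})$ with the other nef terms of $\exp$ discarded, and Lemma \ref{lemmaineqaulityrestriction} to drop the restriction to $U$.

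One justification needs fixing. The mixed terms $\infty_{V_\star^{\boxtimes m''}}^{g_d-k}(\pi^*D_{\sum m''})^k\exp(A)$ with $k\geq 1$ are not nonnegative ``by nefness'', because $\infty_{V_\star^{\boxtimes m''}}$ alone is not nef. They in fact vanish on the closure $\overline{W}_U$ of the restricted cone: $\pi^*D_{\sum m''}$ and $A$ are pulled back along $P_\perp$, while $P_{\perp*}([\overline{W}_U]\cap\infty_{V_\star^{\boxtimes m''}}^{g_d-k})$ has dimension $\dim Z+k>\dim Z$ and is therefore zero. This is exactly why the paper can replace $\infty^{g_d}$ by $(P_\star^*D_{\sum m'',V_\star})^{g_d}$ with an equality; on any extra components of the unrestricted closure, keep the power unexpanded and use nefness.
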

\begin{proof}
	Let $d \in H_2^+(X)-0$ and $m \in \bN$.
	Let $m'' = m + F \cdot d$.
	Also, let
	\begin{equation}
		P_\star : P_{(V_i^{\boxtimes m''})_{i \in I}} \to P_{V_\star^{\boxtimes m''}}
	\end{equation}
	\begin{equation}
		P_\perp : P_{(V_i^{\boxtimes m''})_{i \in I}} \to P_{(V_i^{\boxtimes m''})_{i \in I-\star}}
	\end{equation}
	be the natural projection maps.
	We have
	\begin{equation} \label{eqndvolume2}
		D\Vol(C^i_{m,d}|_U)_{i \in I-\star}
	\end{equation}
	\begin{equation} \label{eqndvolume22}
		\overset{\textnormal{Lemma \ref{lemmaboundDvolbyanother}}}{\leq}
		\frac{1}{m!(F \cdot d)!} \int_{P\ev_*\left[\overline{\widetilde{\ev}((C^i_{m+F!,d}|_U)_{i \in I-\star})}\right]} \exp(D_{\sum m'', (V_i)_{i \in I-\star}}).
	\end{equation}
	Since $V_\star|_U$ is convex, we get that \begin{equation}
		(V_\star)_{m+F!,d}|_U = (C^\star)_{m+F!,d}|_U
	\end{equation} is a vector bundle over $X_{m+F!,d}|_U$ of dimension $g_d$. Combining this with Lemma \ref{lemmainjectivefromvlarge} and the fact that $F$ is $V_\star$-large, the map $P_\perp|_{\ev(\widetilde{\ev}((C^i_{m+F!,d}|_U))_{i \in I})}$
	has fibers equal to projective space of dimension $g_d$.
	Therefore,
	\eqref{eqndvolume22} is equal to
	\begin{equation} \label{eqnintegratefibers}
		\frac{1}{m!(F \cdot d)!} \int_{P\ev_*\left[\overline{\widetilde{\ev}((C^i_{m+F!,d}|_U)_{i \in I})}\right]} ((P_\star)^*\infty_{V_\star^{\boxtimes m''}})^{g_d} \cdot P_\perp^*\exp(D_{\sum m'', (V_i)_{i \in I-\star}})
	\end{equation}
	\begin{equation}
		=\frac{1}{m!(F \cdot d)!} \int_{P\ev_*\left[\overline{\widetilde{\ev}((C^i_{m+F!,d}|_U)_{i \in I})}\right]} ((P_\star)^*D_{\sum m'', V_\star})^{g_d} \cdot P_\perp^*\exp(D_{\sum m'', (V_i)_{i \in I-\star}})
	\end{equation}
	\begin{equation}
		=\frac{g_d!}{m!(F \cdot d)!} \int_{P\ev_*\left[\overline{\widetilde{\ev}((C^i_{m+F!,d}|_U)_{i \in I})}\right]} \frac{((P_\star)^*D_{\sum m'', V_\star})^{g_d}}{g_d!} \cdot P_\perp^*\exp(D_{\sum m'', (V_i)_{i \in I-\star}}).
	\end{equation}
	Since the integral of a wedge product of nef classes is non-negative, we get by Corollary \ref{corollarynefsum2} that this is less than or equal to
	\begin{equation}
		\leq \frac{g_d!}{m!(F \cdot d)!} \sum_{r \geq 0} \int_{P\ev_*\left[\overline{\widetilde{\ev}((C^i_{m+F!,d}|_U)_{i \in I})}\right]} \frac{((P_\star)^*D_{\sum m'', V_\star})^{r}}{r!} \cdot P_\perp^*\exp(D_{\sum m'', (V_i)_{i \in I-\star}})
	\end{equation}
	\begin{equation}
		\leq \frac{g_d!}{m!(F \cdot d)!}\int_{P\ev_*\left[\overline{\widetilde{\ev}((C^i_{m+F!,d}|_U)_{i \in I})}\right]} \exp((P_\star)^*D_{\sum m'', V_\star}) \cdot \exp(P_\perp^*D_{\sum m'', (V_i)_{i \in I-\star}})
	\end{equation}
	\begin{equation}
		= \frac{g_d!}{m!(F \cdot d)!}\int_{P\ev_*\left[\overline{\widetilde{\ev}((C^i_{m+F!,d}|_U)_{i \in I})}\right]} \exp((P_\star)^*D_{\sum m'', V_\star} + P_\perp^*D_{\sum m'', (V_i)_{i \in I-\star}})
	\end{equation}
	\begin{equation}
		= \frac{g_d!}{m!(F \cdot d)!} \int_{P\ev_*\left[\overline{\widetilde{\ev}((C^i_{m+F!,d}|_U)_{i \in I})}\right]} \exp(F_{\sum m'', (V_i)_{i \in I}})
	\end{equation}
	\begin{equation}
		\overset{\textnormal{Definition \ref{defnvolumeformultiplecones}}}{=} g_d!F\Vol(C^i_{m,d}|_U)_{i \in I}
		\overset{\textnormal{Lemma \ref{lemmaineqaulityrestriction}}}{\leq} g_d!F\Vol(C^i_{m,d})_{i \in I}.
	\end{equation}

\end{proof}

\subsection{Curves in Compactifications of Vector Bundles} \label{curvesincompatificationsofconvexvecctorbundles}

We need to compare the $E$-volume of the projective compactification of a vector bundle with the $D$-volume of its base for appropriate $D$ and $E$.
We will use the following well known lemma.

\begin{lemma} \label{lemmaequivariantpositiity1}
	Let $Z$ be a proper variety with a $(\bC^*)^k$-action and let $L \to Z$ be a line bundle.
	Then $L$ is nef if and only if
	$c_1(L) \cdot C \geq 0$ for each $(\bC^*)^k$-equivariant proper curve $C$ in $Z$.
\end{lemma}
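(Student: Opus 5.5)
The forward direction is immediate: a proper curve $C \subset Z$ is in particular a proper curve, so nefness of $L$ gives $c_1(L)\cdot C \geq 0$ for every equivariant one. The content is the converse. Assume $c_1(L)\cdot C \geq 0$ for every $(\bC^*)^k$-invariant proper curve $C$, and let $C \subset Z$ be an arbitrary integral proper curve. The plan is to degenerate $C$, through the torus action, to an effective cycle supported on invariant curves without changing its numerical class. Since $Z$ is proper, intersection numbers with $c_1(L)$ depend only on the rational equivalence class of the $1$-cycle, so this will show $c_1(L)\cdot C \geq 0$.

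The degeneration goes one $\bC^*$ factor at a time, by induction on $k$. Fix a one-parameter subgroup $\lambda : \bC^* \to (\bC^*)^k$ (a coordinate factor) and consider the orbit map $\bC^* \times C \to Z$, $(t,x) \mapsto \lambda(t)\cdot x$. Its graph closure gives a family of cycles $[\lambda(t)C]$ over $\bC^*$. Let $\Gamma \subset \bP^1 \times Z$ be the closure of $\{(t,\lambda(t)x)\}$; this is proper because $Z$ is proper. Then $\Gamma$ is flat over $\bP^1$, since it is integral and dominates a smooth curve, so the fibres over $1$ and over $0$ are rationally equivalent $1$-cycles. The fibre over $1$ is $[C]$. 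The fibre over $0$ is an effective cycle $C_0 = \lim_{t\to 0}\lambda(t)C$, and it is $\lambda(\bC^*)$-invariant as a cycle: for $s \in \bC^*$ the limit commutes with translation by $\lambda(s)$, so $\lambda(s)C_0 = C_0$. Because the group is connected, each irreducible component of $C_0$ is itself invariant.

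For the induction step, suppose $C$ is already invariant under a subtorus $T' \subset (\bC^*)^k$, and let $\lambda$ be a coordinate factor not contained in $T'$. Since $T'$ commutes with $\lambda$, every translate $\lambda(t)C$ is $T'$-invariant. Hence the total space $\Gamma$ is $T'$-invariant, and so is its special fibre $C_0$. Each component of $C_0$ is therefore invariant under $T'\cdot\lambda(\bC^*)$. Applying the degeneration to every component in turn, for each coordinate factor in turn, after $k$ steps we obtain $[C] \sim \sum_j a_j [C_j]$ with $a_j > 0$ and each $C_j$ a $(\bC^*)^k$-invariant proper curve. Then $c_1(L)\cdot C = \sum_j a_j\, c_1(L)\cdot C_j \geq 0$, which is what we need.

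The main obstacle is the flat-limit step: the limit cycle has to be effective and rationally equivalent to $[C]$, and it has to inherit the invariance already achieved. I would handle this with the explicit graph closure $\Gamma$ described above, pushing forward the specialization relation on $\bP^1$ to get the rational equivalence, rather than appealing to the Chow variety. A minor point is curves $C$ that are already $\lambda$-invariant; for these the family is constant, so $C_0 = C$ and the step is trivial.
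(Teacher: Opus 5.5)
Your argument is correct, but it follows a different route from the paper.

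\textbf{The paper's route.} The paper degenerates in the moduli space of stable maps. It regards the normalization $f:\widetilde{C}\to Z$ as a point of $\underline{Z_{g,0,d}}$. It then picks a single one-parameter subgroup $\bC^*\subset(\bC^*)^k$ whose fixed locus on $\underline{Z_{g,0,d}}$ equals the $(\bC^*)^k$-fixed locus, and uses properness of $\underline{Z_{g,0,d}}$ to extend the orbit of $[f]$ over $\bP^1$. The limiting stable map is torus-fixed, so its image is a union of invariant curves, and it is homologous to $C$.

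\textbf{Your route.} You take the limit at the level of cycles, via the graph closure $\Gamma\subset\bP^1\times Z$ for one coordinate subgroup. Full $(\bC^*)^k$-invariance is recovered by induction over the coordinate factors, using that a commuting subtorus preserves the invariance already achieved. That induction replaces the paper's choice of a generic one-parameter subgroup.

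\textbf{What each buys.} Your version uses only properness of $Z$ and Fulton's description of rational equivalence, so it gives a relation in $A_1(Z)$ rather than just in homology. It needs neither existence and properness of the coarse moduli space of stable maps, nor the genericity statement about one-parameter subgroups. The paper takes both as standard; they hold comfortably when $Z$ is projective, which is the only case used later. The paper's version achieves the limit in a single degeneration, and the limit keeps the structure of a map from a nodal genus-$g$ curve, which matches the moduli-theoretic language of the rest of the paper. That extra structure is not needed for the inequality itself.
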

\begin{proof}
	If $L$ is nef then $c_1(L) \cdot C \geq 0$ for each $(\bC^*)^k$-equivariant proper curve $C$ in $Z$ by definition.

	Now suppose that $c_1(L) \cdot C \geq 0$ for each $(\bC^*)^k$-equivariant proper curve $C$ in $Z$.
	It is sufficient for us to show that
	$c_1(L) \cdot C \geq 0$ for each proper irreducible curve $C$ in $Z$.
	Let us fix such a proper irreducible curve $C$ in $Z$.

	Let $d \in H_2^+(Z)$ be its homology class and $g$ the genus of its normalization.
	Suppose $f : \widetilde{C} \to Z$ is the corresponding map from its normalization to $Z$.
	Also, let $Z_{g,0,d}$ be the moduli space of stable genus $g$ nodal curves mapping to $Z$ representing $d$.
	Recall that $\underline{Z_{g,0,d}}$ is its coarse moduli space.
	Then there is a one parameter subgroup $\bC^* \subset (\bC^*)^k$
	so that its fixed point locus on $\underline{Z_{g,0,d}}$ is the same as the corresponding $(\bC^*)^k$ fixed point locus.
	Let us fix this $\bC^*$ action on $\underline{Z_{g,0,d}}$.

	Then $f$ corresponds to an element $[f]$ of $\underline{Z_{g,0,d}}$. Let $\widehat{f} : \bC^* \to \underline{Z_{g,0,d}}$ be the $\bC^*$ orbit satisfying $\widetilde{f}(0) = [f]$.
	Since $\underline{Z_{g,0,d}}$ is proper, this extends to a $\bC^*$-equivariant map $F : \bP^1 \to \underline{Z_{g,0,d}}$.
	Any curve $u : C' \to Z$ in $Z_{g,0,d}$ mapping to $F(\infty)$ is invariant under the $\bC^*$-action.
	Therefore, $c_1(L) \cdot u_*(C') \geq 0$. Since $u$ is homologous to $C$, we get $c_1(L) \cdot C = c_1(L) \cdot u_*(C') \geq 0$.

\end{proof}

\begin{lemma} \label{lemmmasmoothingsums}
	Let $X$ be a smooth projective variety.
	Let $(V_i)_{i \in I}$ be a finite collection of vector bundles over $X$ and let $\check{I} \subset I$.
	Define $V_{\check{I}} = \bigoplus_{i \in \check{I}} V_i$.
	Let $D'_i \subset X$ be a reduced divisor so that $V_i^* \otimes \calO(D'_i)$ and $\calO(D'_i)$ are globally generated for each $i \in I$ and let $D$ be a reduced divisor so that $D - \sum_{i \in I} D'_i$ is nef.
	Let
	\begin{equation}
		P_0 := P_{(V_i)_{i \in (I-I') \sqcup \{\check{I}\}}}, \quad P_1 := P_{(V_i)_{i \in I}}
	\end{equation}
	and $\pi_j : P_j \to X$, $j=0,1$ the natural projection maps.
	Suppose $f : \chi \to X$ is a proper morphism from a stack $\chi$.
	Let $C \subset f^*\oplus_{i \in I} V_i$ be a substack that is invariant under the $\bC^* \times \bC^*$ action given by
	\begin{equation} \label{eqnactiononvectorbundle}
		\begin{aligned}
			(\bC^* \times \bC^*) \times f^*\oplus_{i \in I} V_i \to f^*\oplus_{i \in I} V_i \\
			(\xi_1,\xi_2) \cdot (v_i)_{i \in I} \to ((\xi_1 v_i)_{i \in I-\check{I}}, (\xi_2 v_i)_{i \in \check{I}}).
		\end{aligned}
	\end{equation}
	We let $C_0$ be its closure inside $f^*P_0$
	and $C_1$ its closure inside $f^*P_1$.
	Let $Pf_* : f^*P_j \to P_j$ be the natural map covering $f$ for $j=0,1$.
	Then
	\begin{equation} \label{inequalitysmoothedvspresmoothed}
		\int_{Pf_*[C_0]} \exp(\pi_0^*D + \infty_{(V_i)_{i \in (I-\check{I})\sqcup \{\check{I}\}}}) \leq \int_{Pf_*[C_1]} \exp(\pi_1^*D + \infty_{(V_i)_{i \in I}}).
	\end{equation}
\end{lemma}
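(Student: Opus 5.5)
The plan is to compare the two closures on a common birational model and to show that the divisor used on the $P_1$ side is the divisor used on the $P_0$ side plus an effective correction. (I read the $I'$ in the definition of $P_0$ as $\check{I}$.)

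\textbf{Common model.} Both $f^*P_0$ and $f^*P_1$ contain the vector bundle $f^*\oplus_{i\in I}V_i$ as a dense open subset. The identity of this open subset therefore extends to a birational correspondence
\[
	f^*P_0 \dashleftarrow f^*\oplus_{i\in I} V_i \dashrightarrow f^*P_1 .
\]
Let $\Gamma$ be the closure of $C$ inside $f^*P_0\times_{\chi} f^*P_1$, and let $q_j:\Gamma\to f^*P_j$ be the two projections. Each $q_j$ is proper and birational onto $C_j$, since $C$ is dense in $\Gamma$ and in each $C_j$. Hence $(q_j)_*[\Gamma]=[C_j]$, and both sides of \eqref{inequalitysmoothedvspresmoothed} can be computed on $\Gamma$ (pushed forward by $Pf$) using the pulled-back classes
\[
	A_0:=q_0^*(\pi_0^*D+\infty_{(V_i)_{i\in(I-\check I)\sqcup\{\check I\}}}),\qquad A_1:=q_1^*(\pi_1^*D+\infty_{(V_i)_{i\in I}}).
\]

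\textbf{Nefness.} Both $A_0$ and $A_1$ are nef. The divisor $\pi_1^*D+\infty_{(V_i)_{i\in I}}$ is nef on $P_1$ by the argument of Corollary \ref{corollarynefsum2} with $m=1$. On $P_0$ the same argument applies, using that $V_{\check I}^*\otimes\calO(\sum_{i\in\check I}D'_i)$ is globally generated, which follows from the hypotheses on the $D'_i$. Nefness is preserved under pullback by $q_j$ and $Pf$.

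\textbf{The correction divisor.} The key claim is that $E:=A_1-A_0=q_1^*\sum_{i\in\check I}P_i^*\infty_{V_i}-q_0^*\infty_{V_{\check I}}$ is linearly equivalent to an effective Cartier divisor on $\Gamma$, supported away from the dense open $C$. Locally, $\infty_{V_{\check I}}$ is cut out by the single coordinate $t$ of $\bP(V_{\check I}\oplus\bC)$. The divisor $\sum_{i\in\check I}\infty_{V_i}$ is cut out by the product $\prod_{i\in\check I}t_i$ of the coordinates of the factors $\bP(V_i\oplus\bC)$. On the vector bundle both divisors are trivial. Near infinity, writing points as $[v:t]$ and $[v_i:t_i]$, the ratio
\[
	\frac{\prod_{i\in\check I} t_i}{t}
\]
should extend to a regular section on $\Gamma$. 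Heuristically, $t\sim 1/\max_i|v_i|$ while $\prod_i t_i\sim\prod_i 1/(1+|v_i|)$, and the second is the smaller. The $\bC^*\times\bC^*$-invariance of $C$ enters here: it makes $\Gamma$ invariant under the action \eqref{eqnactiononvectorbundle}. The first $\bC^*$ only rescales the $V_i$ with $i\notin\check I$, and the $\check I$-coordinates are scaled uniformly by $\xi_2$. This lets me check the regularity of the ratio, and if necessary the inequality $A_1\cdot\gamma\geq A_0\cdot\gamma$, only on invariant curves, via Lemma \ref{lemmaequivariantpositiity1} applied to the line bundle $\calO(E)$ twisted appropriately.

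\textbf{Conclusion.} With $A_1=A_0+E$, expand
\[
	e^{A_1}-e^{A_0}=\sum_n\frac{1}{n!}\,E\cdot\sum_{a+b=n-1}A_1^aA_0^b .
\]
The integral of each term over $Pf_*[\Gamma]$ is an integral of a product of nef classes over the effective cycle $E\cap[\Gamma]$. Such integrals are non-negative, which gives \eqref{inequalitysmoothedvspresmoothed}.

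\textbf{Main obstacle.} I expect the hard step to be the effectivity of $E$ on $\Gamma$, i.e. controlling the mixed boundary strata where some $v_i$ with $i\in\check I$ go to infinity and others do not. My first attempt is the direct local coordinate computation on $f^*P_0\times_\chi f^*P_1$. If that fails, I would fall back on the torus-equivariant reduction: by Lemma \ref{lemmaequivariantpositiity1}, the needed positivity only has to be checked on $(\bC^*)^2$-invariant curves, whose limits can be computed explicitly along orbits.
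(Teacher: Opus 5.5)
Your argument is correct, and it takes a genuinely different route from the paper's. The paper reduces by linearity to $\chi=X$ and then runs a deformation to the normal cone:
\begin{itemize}
\item it blows up $0\times P'$ in $\bP^1\times P_1$, where $P'=P_{(V_i)_{i\in I-\check I}}$ is embedded via the zero sections of the $V_i$, $i\in\check I$, so the exceptional divisor is $P_0$;
\item it shows, via Lemma \ref{lemmaequivariantpositiity1} and a case analysis of invariant curves, that the pullback of $\pi_1^*D+\infty_{(V_i)_{i\in I}}$, plus the fibre over $\infty$, minus the exceptional divisor, is nef;
\item it compares the special fibre, which is $C_0$ plus an effective cycle, with the general fibre $C_1$.
\end{itemize}
The invariance of $C$ under scaling of the $\check I$-directions is what makes the normal cone of $C_1\cap P'$ in $C_1$ equal to $C$, so that the exceptional part of the special fibre is exactly $C_0$. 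Your graph-closure argument avoids the blowup and the equivariant nefness check. It also never needs the torus invariance, so it proves the inequality for an arbitrary substack $C$. In exchange, the paper's proof fits the same degeneration template as Lemma \ref{lemmaDDDfactorialinequality} and Proposition \ref{propositionboundsfornormalcones}.

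There are two points to tighten.

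First, the effectivity of $E$, which you only justify heuristically, has a short rigorous proof. Do it on the normalization $\widetilde\Gamma$, which is still proper and birational onto each $C_j$; on a non-normal $\Gamma$, non-negative orders along prime divisors do not by themselves give effectivity. Take a prime divisor of $\widetilde\Gamma$ with valuation $\nu$. Trivialize the $V_i$ near the image of its centre, with fibre coordinates $v_{i,k}$; these are rational functions on $\widetilde\Gamma$ because $C$ is dense. Reading off local equations of the divisors at infinity in the standard charts gives
\[
	\mathrm{ord}_\nu(q_1^*\infty_{V_i})=\max\bigl(0,-\min_k\nu(v_{i,k})\bigr),\qquad \mathrm{ord}_\nu(q_0^*\infty_{V_{\check I}})=\max_{i\in\check I}\,\mathrm{ord}_\nu(q_1^*\infty_{V_i}).
\]
So $\mathrm{ord}_\nu(E)$ is a sum of non-negative numbers minus their maximum, hence $\geq 0$, and $E$ is effective on $\widetilde\Gamma$. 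This is exactly your heuristic comparing $\prod_{i\in\check I}t_i$ with $t$.

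Second, drop the fallback through Lemma \ref{lemmaequivariantpositiity1}; it cannot work. That lemma detects nefness, and checking $A_1\cdot\gamma\geq A_0\cdot\gamma$ on curves amounts to $E$ being nef, which is false in general. For example, take $X$ a point, $I=\check I=\{1,2\}$, $V_1=V_2=\bC$ and $C=\bC^2$. Then $\Gamma$ is $\bP^2$ blown up at two points of the line at infinity, and $E$ is the proper transform of that line, a $(-1)$-curve. What your expansion actually needs is effectivity of $E$ together with nefness of $A_0$ and $A_1$, and the valuation computation supplies exactly that.
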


\begin{remark}
	Note that this inequality is very poor in some cases. For instance, if $X$ is a point, $V_i = \bC$ for each $i \in I$, $f$ is the identity map and $C = \bigoplus_{i \in I} V_i$, then the left-hand side of \eqref{inequalitysmoothedvspresmoothed} is the volume of $\bP^{|I|}$ and the right-hand side of \eqref{inequalitysmoothedvspresmoothed} is the volume of $(\bP^1)^{|I|}$ which is $|I|!$ times bigger.
\end{remark}

\begin{proof}[Proof of Lemma \ref{lemmmasmoothingsums}]
	We have that $Pf_*[C_0]$ and $Pf_*[C_1]$ are sums $\sum_{j=0}^k \alpha_j [\scrC^0_j]$ and $\sum_{j=0}^k \alpha_j [\scrC^1_j]$
	respectively where $\alpha_1,\cdots,\alpha_k \geq 0$ and $\scrC^0_j$ and $\scrC^1_j$ are closures of the same $\bC^* \times \bC^*$-invariant integral subscheme $\scrC_j \subset \oplus_{i \in I} V_i$ in $P_0$ and $P_1$ respectively for each $j=1,\cdots,k$ where $\bC^* \times \bC^*$ acts on \begin{equation}
		\oplus_{i \in I} V_i = \left(\oplus_{i \in I-\check{I}} V_i\right) \bigoplus \left(\oplus_{i \in \check{I}} V_i\right)
	\end{equation}
	by scaling the first summand by the first $\bC^*$ factor and scaling the second summand by the second $\bC^*$ factor.
	Therefore, by linearity, it is sufficient to prove that Equation \eqref{inequalitysmoothedvspresmoothed} holds where $\chi = X$ and $f$ is the identity map and hence $C \subset \oplus_{i \in I} V_i$, $C_0 \subset P_0$ and $C_1 \subset P_1$.

	Let \begin{equation}
		P' := P_{(V_i)_{i \in I - \check{I}}} \subset P_1.
	\end{equation}
	Let $B$ be the blowup of $0 \times P'$ inside $\bP^1 \times P_1$ and let $E \cong P_0$ be the exceptional divisor.
	Define $p : B \to \bP^1$ to be the composition
	\begin{equation}
		B \lra{\BL} \bP^1 \times P_1 \lra{} \bP^1
	\end{equation}
	where the first map is the blowdown map and the second map is the natural projection map.
	Define
	\begin{equation} \label{eqndefnDhat}
		\widehat{D} := \BL^*(\bP^1 \times (\pi_1^*D + \infty_{(V_i)_{i \in I}})) + p^{-1}\{\infty\} - E.
	\end{equation}
	We will now prove that $\widehat{D}$ is nef.
	We have a $\bC^* \times \bC^*$ action on $B$ extending the one on $(\bC-0) \times \oplus_{i \in I} V_i$ where $(\xi_1,\xi_2) \in \bC^* \times \bC^*$ sends $(z,(v_i)_{i \in I})$
	to $(\xi_1 z, (\xi_2 v)_{i \in I})$.
	By Lemma \ref{lemmaequivariantpositiity1},
	it is sufficient to check that $\widehat{D}$ pairs non-negatively with each $\bC^* \times \bC^*$-equivariant proper irreducible curve.

	Let us fix such a curve $C$.
	If $C$ is contained in $E$ then it pairs non-negatively with $\widehat{D}$ since the restriction of $\widetilde{D}$ to $E \cong P_0$ is $\pi_0^*D + \infty_{(V_i)_{i \in (I-\check{I})\sqcup \{\check{I}\}}}$ which is nef by Corollary \ref{corollarynefsum2}.
	If $C$ is disjoint from $E$ then it
	is contained in $(\bP^1-0) \times P_1$
	and since $\widehat{D} = (\bP^1 - 0) \times (\pi_1^*D + D_{(V_i)_{i \in I}}) + p^{-1}(\{\infty\})$ in this region we get it pairs non-negatively with $\widehat{D}$
	by Corollary \ref{corollarynefsum2} too.
	If $C$ is contained in the proper transform of $\bP^1 \times x$ for some $x \in X$ then $\widehat{D} \cdot C = p^{-1}(\infty) \cdot C - E \cdot C \geq 0$ since $p^{-1}(\infty) \cdot C = 1$ and $E \cdot C \leq 1$.
	Also, if $C$ is any other $(\bC^* \times \bC^*)$-equivariant curve then it is the closure of the proper transform of $0 \times L$ in $\bP^1 \times \oplus_{i \in I} V_i$ where $L$ is a line in a fiber of $\oplus_{i \in I} V_i$.
	Such a line intersects $E$ transversely once, $\bP^1 \times \BL^*\infty_{(V_i)_{i \in I}}$ at least once and the remaining summands of \eqref{eqndefnDhat} zero times. Hence, $\widehat{C} \cdot C \geq 0$ too.
	Hence, $\widehat{D}$ is nef.

	Let $\widehat{C}$ be the closure of the proper transform of $\bP^1 \times C_1$ in $B$.
	Since $C$ is invariant under the action \eqref{eqnactiononvectorbundle} and since $\widehat{D}$ is nef, we have that
	we have
	\begin{equation}
		\int_{[C_0]} \exp(\pi_0^*D + \infty_{(V_i)_{i \in (I-\check{I})\sqcup \{\check{I}\}}}) =
		\int_{[\widehat{C}] \cap [E]} \exp(\widehat{D})
	\end{equation}
	\begin{equation}
		\leq \int_{[\widehat{C}] \cap [p^{-1}(0)]} \exp(\widehat{D})
	\end{equation}
	\begin{equation}
		=\int_{[\widehat{C}] \cap [p^{-1}(1)]} \exp(\widehat{D})
		= \int_{[C_1]} \exp(\pi_1^*D + \infty_{(V_i)_{i \in I}}).
	\end{equation}
\end{proof}

\begin{prop} \label{propcurvesincompatificationsofconvexvecctorbundles}
	Let $V$ be a strongly convex vector bundle over a smooth projective variety $X$.
	Suppose $(V_i)_{i \in I}$ is a finite collection of vector bundles over $P_V$ so that $V_i = \pi_{P_V}^*(V_i|_X)$ for each $i \in I$.
	Let $m \in \bN$ and $d \in H_2^+(X)$ and let $\delta \in H_2^+(P_V)$ be the image of $d$ induced by the natural inclusion map.
	Let $C^i_\bullet$ be a system  of cones for $V_i$ and $\underline{C}^i_\bullet$ a system of cones for $V_i|_X$ for each $i \in I$. Suppose that $C^i_{m,\delta}|_{V_{m,d}} = \Pi_i^*\underline{C}^i_{m,d}$ via the natual identification $\Pi_i^*(V_i|_X)_{m,d} = (V_i)_{m,d}$ where $\Pi_i : V_{m,d} \to X_{m,d}$ is the natural projection map for each $i \in I$.
	Define $\underline{C}^V_\bullet := V_\bullet$.

	Let $D$ be a smooth divisor in $X$ so that $D-\sum_{i \in I} D'_i - D'$ is nef where $\calO(D'_i)$ and $V_i^*|_X \otimes \calO(D'_i)$ are globally generated for some divisor $D'_i$ in $X$ for each $i \in I$ and $V^* \otimes \calO(D')$ is globally generated for some divisor $D'$ on $X$.
	Let $E$ be a smooth divisor on $P_V$ which is generically transverse to $(C^i_\bullet)_{i \in I}$ so that $E - \pi_{P_V}^*D - \infty_V$ is nef
	and $E - \sum_{i \in I} E'_i$ is nef with $\calO(E'_i)$ and $V_i^* \otimes \calO(E'_i)$ globally generated for some divisor $E'_i$ for each $i \in I$.
	Then
	\begin{equation}
		E\Vol(C^i_{m,\delta})_{i \in I} \geq D\Vol(\underline{C}^i_{m,d})_{i \in I \sqcup \{V\}}.
	\end{equation}
\end{prop}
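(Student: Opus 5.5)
The idea is to realise the data on the right-hand side as a piece of the data on the left-hand side. A stable map $u : \Sigma \to V$ whose projection represents $d$ is the same thing as a stable map to $P_V$ representing $\delta$ that misses $\infty_V$. So $V_{m,d}$, and likewise $V_{m+D!,d}$ after marked points are added, sits as an open substack of $(P_V)_{m,\delta}$. By the hypothesis $C^i_{m,\delta}|_{V_{m,d}} = \Pi_i^*\underline{C}^i_{m,d}$, the fiber product $\fprod{X_{m,d}}_{i \in I \sqcup \{V\}} \underline{C}^i_{m,d}$ (with $\underline{C}^V = V_{m,d}$) is identified with the restriction of $\fprod{(P_V)_{m,\delta}}_{i\in I} C^i_{m,\delta}$ to this open piece. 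The $V$-factor is the map itself, and the $V_i$-factors are sections of the pulled back $V_i = \pi_{P_V}^*(V_i|_X)$.

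The first step is to compare the marked points. Since the curves in question avoid $\infty_V$, we have $E\cdot\delta = (\pi_{P_V}^*D + (E - \pi_{P_V}^*D - \infty_V))\cdot \delta \geq D \cdot d$. Using Lemma \ref{lemmaperturbingdivisors}, I would replace $E$ by a generic member of its linear system of the form $\pi_{P_V}^*D_0 + G$ (or a small perturbation of it), where $D_0 \sim D$ and $G$ is generic. A curve in $V$ transverse to this divisor then has $D\cdot d$ intersection points coming from $D_0$ and $E\cdot\delta - D\cdot d$ coming from $G$. This produces a map from (a dense open part of) $X_{m+D!,d}$ decorated by the extra $G$-points into $(P_V)_{m+E!,\delta}$, with generic degree accounting for the labellings. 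The two counts of labellings cancel against the factor $\frac{1}{(E\cdot\delta)!}$ once Lemma \ref{lemmawelldefinedprojectionmap} is used to trade the extra points on the $D$-side: that lemma adds $G$-type points at the cost of $\frac{1}{(\cdot)!}$ and, with the nefness statement, keeps the relevant divisor nef.

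Second, compare the ambient compactifications. On the $D$-side, the evaluation image of the cone for $i \in I\sqcup\{V\}$ lies in $\ev^*P_{(V_i|_X^{\boxtimes m'})_{i\in I}, V^{\boxtimes m'}}$ over $X^{m'}$. On the $E$-side, its closure lies in the fiber product of the $P_{V_i^{\boxtimes m''}}$ over $P_V^{m''} \supset V^{\boxtimes m''}$. The $V$-coordinates are compactified jointly ($P_{V^{\boxtimes m'}}$) on the first side and factorwise ($(P_V)^{m'}$) on the second. The cones are invariant under scaling the $V$-coordinates separately from the $V_i$-coordinates, because $V_\bullet$ is a cone and the $C^i$ are pulled back from $X$. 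So Lemma \ref{lemmmasmoothingsums}, with $\check I$ the $V$-factors, gives exactly the inequality in the right direction: the integral against the joint compactification is at most the one against the factorwise compactification. Here $\pi^*D_{\sum} + \infty$ on the joint side corresponds to $\sum p_j^*(\pi_{P_V}^*D + \infty_V) + \infty_{(V_i)}$ on the factorwise side, and the global generation hypotheses (on $D'_i$, $D'$) supply the nefness that Lemma \ref{lemmmasmoothingsums} requires.

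Finally, pass from the divisor $\sum_j p_j^*(\pi_{P_V}^*D + \infty_V) + \infty$ to $E_{\sum m'', (V_i)_{i\in I}}$. Their difference is a pullback of $\sum_j p_j^*(E - \pi_{P_V}^*D - \infty_V)$, which is nef. Both summands are nef on the relevant cycle, by Corollary \ref{corollarynefsum2} applied with $E'_i$. Expanding the exponential, the integral of $\exp(A+B)$ over a positive cycle is at least that of $\exp(A)$, and the closure of the restricted cone is a sub-cycle of the full closure, so the bound from Lemma \ref{lemmaineqaulityrestriction} gives $E\Vol(C^i_{m,\delta})_{i\in I} \geq D\Vol(\underline{C}^i_{m,d})_{i\in I\sqcup\{V\}}$.

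I expect the main obstacle to be the first step. One has to check that the closure of $\widetilde{\ev}$ of the $D$-side moduli (with extra $G$-points) pushes forward to a positive sub-cycle of the $E$-side closure, with the correct multiplicity, including the boundary where curves degenerate towards $\infty_V$. I would handle this as in Lemma \ref{lemmaDDDfactorialinequality}: a one-parameter degeneration $E_t \to \pi_{P_V}^*D + G$, with the limit cycle containing the desired one plus an effective remainder, which is discarded using nefness.
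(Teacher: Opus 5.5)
Your outline matches the paper's proof in its three main moves. You restrict to curves in $V$ via Lemma~\ref{lemmaineqaulityrestriction}. You use nefness of $E-\pi_{P_V}^*D-\infty_V$ to replace $E_{\sum,(V_i)_{i\in I}}$ by $\sum_j p_j^*(\pi_{P_V}^*D+\infty_V)+\infty$. You then apply Lemma~\ref{lemmmasmoothingsums}, with $\check I$ the $V$-factors, to pass from the factorwise compactification $(P_V)^{m'}$ to $P_{V^{\boxtimes m'}}$.

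Where you differ is your first step, and you are right that something is needed there. The paper's written proof never compares $m+E\cdot\delta$ with $m+D\cdot d$ marked points. It writes $E\Vol(C^i_{m,\delta}|_V)$ directly as an integral over the closure $C_1$ built from $X_{m+D!,d}$, using $m'=m+D\cdot d$ points. It then replaces $\frac{1}{(E\cdot\delta)!}$ by $\frac{1}{(D\cdot d)!}$. Since $E\cdot\delta\geq D\cdot d$, as you observe, that replacement goes the wrong way unless equality holds, and in the paper's application the inequality is strict.

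The justification you give for this step does not work as stated, though. Lemma~\ref{lemmawelldefinedprojectionmap} requires the added divisor to be linearly equivalent to $D$ on the same base. Its proof uses the $D!+D!$ configuration to show the coordinate projection has no indeterminacy. Your $G$ lives on $P_V$ in the class of $E-\pi_{P_V}^*D$, its intersection points depend on the section of $u^*V$, and the indeterminacy locus need not be empty.

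Do the step in the factorwise picture instead.
\begin{itemize}
\item Take curves in $P_V$ missing $\infty_V$, with $\pi_{P_V}^*D_0$-points and $G$-points, and evaluate into $P_{(V_i^{\boxtimes m''})_{i\in I}}$ over $P_V^{m''}$.
\item Forgetting the $G$-points is a morphism on the base $P_V^{m''}\to P_V^{m'}$, but only rational on the $V_i$-fibers.
\item On a blowup of the closure along the indeterminacy you get $\BL^*\infty_{(V_i^{\boxtimes m''})}=\widetilde\pi^*\infty_{(V_i^{\boxtimes m'})}+\mathcal{E}$ with $\mathcal{E}$ effective.
\item Hence $\BL^*E_{\sum m'',(V_i)}$ equals $\widetilde\pi^*\bigl(\sum_{j\leq m'}p_j^*(\pi_{P_V}^*D+\infty_V)+\infty_{(V_i^{\boxtimes m'})}\bigr)$ plus $\mathcal{E}$ plus a nef class. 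Both classes are nef and their difference is effective plus nef, so the integral of the exponential only decreases.
\item Combined with your labelling count $\frac{1}{(E\cdot\delta)!}\binom{E\cdot\delta}{D\cdot d}=\frac{1}{(D\cdot d)!\,(G\cdot\delta)!}$, push forward along the generically $(G\cdot\delta)!$-to-one forgetful map. This lands on $P\ev_*[C_1]$ with the factor $\frac{1}{(D\cdot d)!}$.
\end{itemize}
You get an inequality rather than the equality you claimed, but it is the direction you need.

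Finally, both your main route and your degeneration fallback need a reduced, generically transverse $G\in|E-\pi_{P_V}^*D|$. This does not follow from nefness of $E-\pi_{P_V}^*D-\infty_V$. It does hold in the paper's application, where $E|_{P_W}-\pi_{P_W}^*(F|_X)=\calO_{P_W}(36\mu^4-12\mu^3-24\mu^3r,\,36\mu^4)$ is globally generated. You should add it as a hypothesis.
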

\begin{proof}
	Let $m' := m + D \cdot d$.
	Let
	$p_i : X^{m'} \to X$
	be the $i$th projection map for each
	$i=1,\cdots,m'$.
	Also, let
	\begin{equation}
		\begin{aligned}
			P_0       & \equiv P_{V^{\boxtimes m'}} \times_{X^{m'}} P_{((V_i|_X)^{\boxtimes m'})_{i \in I}} = P_{V^{\boxtimes m'},  ((V_i|_X)^{\boxtimes m'})_{i \in I}},
			\\
			\quad P_1 & \equiv (P_V^{m'}) \times_{X^{m'}} P_{((V_i|_X)^{\boxtimes m'})_{i \in I}} = P_{((p_i^*V)_{i \in \{1,\cdots,m'\}},(V_i|_X)^{\boxtimes m'})_{i \in I}}
			= P_{(V_i^{\boxtimes m'})_{i \in I}}
		\end{aligned}
	\end{equation}
	be two different compactifications of $V^{\boxtimes m'} \oplus \bigoplus_{i \in I} (V_i|_X)^{\boxtimes m'}$.
	Suppose
	\begin{equation}
		\begin{aligned}
			 & \pi_j : P_j \to X^{m'},  \quad j=1,2,               \\
			 & p : P_1 \to (P_V)^{m'},                             \\
			 & P : P_1 \to P_{((V_i|_X)^{\boxtimes m'})_{i \in I}}
		\end{aligned}
	\end{equation}
	are the natural projection maps.

	Let $C_0$ and $C_1$ be the closures of \begin{equation}
		\widetilde{\ev}(V_{m+D!,d},(\underline{C}^i_{m+D!,d})_{i \in I})
	\end{equation}
	(Definition \ref{defnvolumeformultiplecones}) inside $\ev^*P_0$ and  $\ev^*P_1$
	respectively.
	Since $E - \pi_{P_V}^*D - \infty_V$ is nef,
	we get that
	\begin{equation}
		p^* E_{\sum m'} + P^*\infty_{(V_i|_X)_{i \in I}} - (\pi_1^*D_{\sum m'} + \infty_{(p_i^*V)_{i \in \{1,\cdots,m'\}},((V_i|_X)^{\boxtimes m'})_{i \in I}})
	\end{equation}
	\begin{equation}
		= p^*\left((E - \pi_{P_V}^*D - \infty_V)_{\sum m'} \right)
	\end{equation}
	is nef (see Definition \ref{defnprojectivecompactification}).
	Combining this with the fact that $E - \sum_{i \in I}E'_i$ is nef we get
	\begin{equation}
		E\Vol(C^i_{m,\delta})_{i \in I}
		\overset{\textnormal{Lemma \ref{lemmaineqaulityrestriction}}}{\geq}
		E\Vol(C^i_{m,\delta}|_V)_{i \in I}
	\end{equation}
	\begin{equation}
		\overset{\textnormal{Definition } \ref{defnvolumeformultiplecones}}{=}
		\frac{1}{m!(E \cdot \delta)!}\int_{P\ev_*[\overline{\widetilde{\ev}((C^i_{m+D!,d}|_V)_{i \in I})}]} \exp(E_{\sum m', (V_i)_{i \in I}})
	\end{equation}
	\begin{equation}
		=
		\frac{1}{m!(E \cdot \delta)!}\int_{P\ev_*[C_1]} \exp(p^* E_{\sum m'} + \infty_{(\pi_V^*(V_i|_X))_{i \in I}})
	\end{equation}
	\begin{equation}
		\geq
		\frac{1}{m!(E \cdot \delta)!} \int_{P\ev_*[C_1]} \exp(\pi_1^*D_{\sum m'} + \infty_{(p_i^*V)_{i \in \{1,\cdots,m'\}},((V_i|_X)^{\boxtimes m'})_{i \in I}})
	\end{equation}
	\begin{equation} \label{eqnintegraloverC1}
		\geq
		\frac{1}{m!(D \cdot d)!} \int_{P\ev_*[C_1]} \exp(\pi_1^*D_{\sum m'} + \infty_{(p_i^*V)_{i \in \{1,\cdots,m'\}},((V_i|_X)^{\boxtimes m'})_{i \in I}}).
	\end{equation}
	By Lemma \ref{lemmmasmoothingsums}, we have \eqref{eqnintegraloverC1} is greater than or equal to
	\begin{equation}
		\frac{1}{m!(D \cdot d)!}\int_{P\ev_*[C_0]} \exp(\pi_0^*D_{\sum m'} + \infty_{V^{\boxtimes m'},((V_i|_X)^{\boxtimes m'})_{i \in I}})
		= D\Vol(\underline{C}^i_{m,d})_{i \in I \sqcup \{V\}}.
	\end{equation}

\end{proof}

\subsection{From Many Cones to One Cone} \label{subsectionfrommanyconestoonecone}

The aim of this section is to bound the $D$-volume of a fiber product of cones. This is because in Section \ref{subsectionvfcfromcones} we have many cones that need to be consolidated into one cone in order to give an upper bound for a Gromov-Witten count.
We fix a smooth projective variety $X$.

\begin{prop} \label{propmallestcompactificationinequality}
	Let $(V_i)_{i \in I}$ be a finite collection of vector bundles over $X$.
	Let $C^i_\bullet$ be a system of cones for $V_i$ (Definition \ref{defnsystemofcones}) for each $i \in I$.
	Suppose that there is a divisor $D'_i$ in $X$ so that $\calO(D'_i)$ and $V_i^* \otimes \calO(D'_i)$ are globally generated for each $i \in I$.
	Let $D$ be a divisor in $X$
	so that $D - \sum_{i \in I}D'_i$ is nef.
	Then for each $m \in \bN$ and $d \in H_2^+(X)$,
	\begin{equation}
		D\Vol((C^i_{m,d})_{i \in I}) \geq D\Vol\left(\fprod{X_{m,d}}_{i \in I}C^i_{m,d}\right).
	\end{equation}
\end{prop}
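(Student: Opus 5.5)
This is a direct application of Lemma \ref{lemmmasmoothingsums}, applied once with $\check{I} = I$. The left-hand side compactifies $\bigoplus_{i \in I} V_i^{\boxtimes m'}$ fiberwise as a product of projective bundles. The right-hand side treats $\fprod{X_{m,d}}_{i \in I}C^i_{m,d}$ as a single cone in the single bundle $V_I := \bigoplus_{i\in I} V_i$, compactified as one projective bundle. The lemma compares exactly these two compactifications.

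Fix $m$, $d$ and set $m' := m + D\cdot d$. I will use the following data.
\begin{itemize}
\item The space $X^{m'}$ in place of $X$.
\item The bundles $(V_i^{\boxtimes m'})_{i \in I}$.
\item The subset $\check{I} := I$, so that $V_{\check I} = V_I^{\boxtimes m'}$.
\item The divisors $D'_i{}_{\sum m'}$, which satisfy the global generation hypotheses because sums of pullbacks of globally generated sheaves are globally generated.
\item The divisor $D_{\sum m'}$, for which $D_{\sum m'} - \sum_i (D'_i)_{\sum m'}$ is nef.
\item The map $f := \ev : X_{m+D!,d} \to X^{m'}$, which is proper.
\item The substack $C := \widetilde{\ev}\bigl(\fprod{X_{m+D!,d}}_{i\in I} C^i_{m+D!,d}\bigr) \subset \ev^*\bigoplus_i V_i^{\boxtimes m'}$.
\end{itemize}
Then $P_1 = P_{(V_i^{\boxtimes m'})_{i\in I}}$ and $P_0 = P_{V_I^{\boxtimes m'}}$. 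The divisor $\infty$ on $P_1$ is $\infty_{(V_i^{\boxtimes m'})_{i \in I}}$, and on $P_0$ it is $\infty_{V_I^{\boxtimes m'}}$. So $\pi_1^*D_{\sum m'} + \infty$ is $D_{\sum m',(V_i)_{i\in I}}$, and $\pi_0^*D_{\sum m'}+\infty$ is $D_{\sum m', V_I}$.

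The invariance hypothesis needs checking. With $\check I = I$, the first $\bC^*$ factor acts trivially, so we only need $C$ to be invariant under the diagonal scaling on $\bigoplus_i V_i^{\boxtimes m'}$. Each $C^i_{m+D!,d}$ is a cone, so the fiber product is invariant under simultaneous scaling of all factors. The map $\widetilde{\ev}$ is linear on fibers and hence equivariant, so its image $C$ is invariant.

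Next I identify the two sides. The closure $C_1$ of $C$ in $\ev^*P_1$ is by definition $\overline{\widetilde{\ev}((C^i_{m+D!,d})_{i\in I})}$, so the right-hand side of \eqref{inequalitysmoothedvspresmoothed} equals $m!(D\cdot d)!\,D\Vol((C^i_{m,d})_{i\in I})$.

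For the other side, I need $\fprod{X_{m+D!,d}}_{i} C^i_{m+D!,d}$ to be the $D!$-version of the cone $\fprod{X_{m,d}}_i C^i_{m,d}$ in $(V_I)_{m,d} = \fprod{X_{m,d}}_i (V_i)_{m,d}$. This holds because pullback along the forgetful map commutes with fiber products. It also shows that this family is a system of cones for $V_I$. The evaluation map for $V_I$ is the direct sum of the evaluation maps for the $V_i$, and $V_I^{\boxtimes m'} = \bigoplus_i V_i^{\boxtimes m'}$. Hence $C_0$ is the closure defining $D\Vol\bigl(\fprod{X_{m,d}}_{i\in I}C^i_{m,d}\bigr)$, and the left-hand side of \eqref{inequalitysmoothedvspresmoothed} equals $m!(D\cdot d)!$ times that volume.

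Dividing both sides by $m!(D\cdot d)!$ gives the claim. The main obstacle is bookkeeping: checking that the nefness hypotheses of Lemma \ref{lemmmasmoothingsums} hold for the product space $X^{m'}$. In particular, $\calO(D'_{i,\sum m'})$ and $(V_i^{\boxtimes m'})^*\otimes\calO(D'_{i,\sum m'})$ are globally generated, exactly as in the proof of Corollary \ref{corollarynefsum}. One also has to accept that the lemma is applied on the stack $X_{m+D!,d}$ via $f$, which the lemma explicitly allows.
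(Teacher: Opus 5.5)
Your proposal is correct and is essentially the paper's argument: the paper also obtains the inequality in one step from Lemma~\ref{lemmmasmoothingsums} with $\check{I} = I$. It compares the closure of $\widetilde{\ev}\bigl(\fprod{X_{m+D!,d}}_{i \in I} C^i_{m+D!,d}\bigr)$ in $\ev^*P_{(V_i^{\boxtimes m'})_{i \in I}}$ with its closure in $\ev^*P_{(\oplus_{i \in I} V_i)^{\boxtimes m'}}$, and your checks of the hypotheses on $X^{m'}$, the diagonal $\bC^*$-invariance and the $m!(D\cdot d)!$ normalization are more careful than the paper's two-line write-up.
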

\begin{proof}
	Let $C_0$ be the closure of $\widetilde{\ev}((C^i_{m,d})_{i \in I})$ inside $\ev^*P_{(V_i^{\boxtimes m'})_{i \in I}}$
	and $C_1$ its closure in
	\begin{equation}
		\ev^*P_{\bigoplus_{i \in I} V_i^{\boxtimes m'}} = \ev^*P_{(\oplus_{i \in I} V_i)^{\boxtimes m'}}.
	\end{equation}
	We have
	\begin{equation}
		D\Vol((V_i)_{i \in I}) = \int_{P\ev_*[C_0]} \exp(D_{\sum m', (V_i)_{i \in I}})
	\end{equation}
	\begin{equation}
		\overset{\textnormal{Lemma \ref{lemmmasmoothingsums}}}{\geq}
		\int_{P\ev_*[C_1]} \exp(D_{\sum m', \bigoplus_{i \in I} V_i})
		= D\Vol\left(\fprod{X_{m,d}}_{i \in I}C^i_{m,d}\right).
	\end{equation}

\end{proof}

\section{Bounds for Normal Cones.} \label{boundsfornormalcones}

In this section we wish to bound the $D$-volume of the normal cone of a moduli space of curves inside a larger one. We will need this in the proof of the main theorem in Section \ref{sectionmainargument} since normal cones are needed when computing the Virtual fundamental class (See Section \ref{sectioncomprisinvfc}).

\subsection{Divisors Compatible with Blowups} \label{sectiondivisorscompatiblewithblowups}

Normal cones are connected with blowups, so we need a blowup compatibility condition for divisors.
Let $X$ be a smooth projective subvariety of a smooth projective variety $Y$ and let $\iota_X : X \hookrightarrow Y$ be the corresponding inclusion map.
We let $\pi : V \to X$ be the normal bundle of $X$ inside $Y$.

\begin{defn} \label{defneac}
	A divisor $D$ on $Y$ is \emph{$\BL_XY$-compatible}
	if $\BL^*D - E$ is nef on the blowup $\BL_XY$ of $Y$ along $X$ where $\BL : \BL_XY \to Y$ is the blowdown map.
\end{defn}

We let $z$ be the coordinate on $\bP^1$ after fixing an identification $\bP^1 = \bC \cup \{\infty\}$.

\begin{lemma} \label{lemmacompaet}
	If $D$ is a $\BL_XY$ compatible nef divisor
	then $p_Y^*(D) + p_{\bP^1}^*\{\infty\}$ is $\BL_{0 \times X} (\bP^1 \times Y)$-compatible
	where $p_Y : \bP^1 \times Y \to Y$ and $p_{\bP^1} : \bP^1 \times Y \to \bP^1$ are the natural projection maps.
\end{lemma}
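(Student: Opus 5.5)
The plan is to verify nefness directly, curve by curve. Write $B := \BL_{0 \times X}(\bP^1 \times Y)$, let $\BL_B : B \to \bP^1 \times Y$ be the blowdown map and $E'$ its exceptional divisor, and set
\[
\widehat{D} := \BL_B^*\big(p_Y^*D + p_{\bP^1}^*\{\infty\}\big) - E'.
\]
We must show $\widehat{D} \cdot C \geq 0$ for every proper irreducible curve $C \subset B$. We split into the cases $C \not\subset E'$ and $C \subset E'$.

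\emph{Case $C \not\subset E'$.} Normalize $C$ to $\nu : \widetilde{C} \to B$. Since $C \not\subset E'$, we can compute $E' \cdot C$ locally: it equals $\sum_{q \in \widetilde{C}} \min\big(\operatorname{ord}_q(\nu^*z), \operatorname{ord}_q(\nu^*\scrI_X)\big)$, because the ideal of $0 \times X$ is $(z) + p_Y^{-1}\scrI_X$. We then use whichever term of the minimum is convenient.
\begin{itemize}
\item If the image of $C$ in $Y$ is not contained in $X$, the composite $\widetilde{C} \to Y$ lifts to $\BL_X Y$. The sum $\sum_q \operatorname{ord}_q(\nu^*\scrI_X)$ is exactly $E \cdot (\text{lift})$. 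So $E' \cdot C \leq E \cdot (\text{lift}) \leq \BL^*D \cdot (\text{lift}) = \BL_B^* p_Y^* D \cdot C$, using that $\BL^*D - E$ is nef. Since $p_{\bP^1}^*\{\infty\} \cdot C \geq 0$, we get $\widehat{D} \cdot C \geq 0$.
\item If the image of $C$ in $Y$ lies in $X$ but the image of $C$ in $\bP^1$ is not a point, we use the other term of the minimum. This gives $E' \cdot C \leq \sum_q \operatorname{ord}_q(\nu^* z) = \deg(C \to \bP^1) = p_{\bP^1}^*\{\infty\} \cdot C$. Since $D$ is nef, $p_Y^*D \cdot C \geq 0$, and the claim follows.
\item If the image of $C$ in $\bP^1$ is a point $c$, then $c \neq 0$, since otherwise $C$ would meet $0 \times X$ in an open set and so lie in $E'$ or map into $0 \times Y$ away from $E'$. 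Hence $E' \cdot C = 0$ and $\widehat{D} \cdot C = D \cdot (\text{image}) \geq 0$.
\end{itemize}

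\emph{Case $C \subset E'$.} The normal bundle of $0 \times X$ in $\bP^1 \times Y$ is $V \oplus \bC$, with the $\bC$ factor coming from $z$. So $E' \cong \bP(V \oplus \bC) = P_V$ and $\calO(-E')|_{E'} = \calO_{P_V}(1) = \calO(\infty_V)$. The summand $p_{\bP^1}^*\{\infty\}$ restricts trivially to $E'$. Hence $\widehat{D}|_{E'} = \pi_{P_V}^*(D|_X) + \infty_V$, and it suffices to show this divisor is nef on $P_V$. We apply Lemma \ref{lemmaequivariantpositiity1} to the fibrewise scaling $\bC^*$-action on $V \oplus \bC$, i.e.\ scaling the $V$ factor. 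The fixed locus is the zero section $X$ together with $\infty_V = \bP(V)$. Every invariant irreducible curve is therefore one of the following.
\begin{itemize}
\item A curve in the zero section. There $\infty_V$ restricts trivially and $D|_X$ is nef.
\item A curve in $\bP(V)$. There the divisor restricts to $D|_X + \calO_{\bP(V)}(1)$, which is the restriction of $-(\BL^*D - E)$ twisted appropriately. Precisely, it is $(\BL^*D - E)|_{E}$ on the exceptional divisor $E = \bP(V)$ of $\BL_X Y$, hence nef by hypothesis.
\item The closure of an orbit, which is a line in a fibre joining the zero section to $\infty_V$. Such a line has degree $1$ against $\infty_V$ and $0$ against $\pi_{P_V}^*D$.
\end{itemize}
Thus the divisor is nef on $P_V$.

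The main obstacle is the first case. The key is to use the local description of $E' \cdot C$ as a sum of minima of orders of $z$ and of $\scrI_X$, so that each curve can be bounded either via the nefness of $\BL^*D - E$ on $\BL_X Y$ or via the $p_{\bP^1}^*\{\infty\}$ term. One must take care with the sign conventions identifying $\calO(-E)|_E$ with $\calO(1)$, where $\bP$ denotes lines as in Section \ref{sectionconventions}.
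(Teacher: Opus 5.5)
Your proof is correct, but it is organized differently from the paper's. The paper applies Lemma \ref{lemmaequivariantpositiity1} once, on all of $\BL_{0\times X}(\bP^1\times Y)$ with the $\bC^*$-action scaling $z$, and then checks the invariant curves:
\begin{itemize}
\item curves in the proper transform $\BL_XY$ of $0\times Y$, using that $\BL^*D-E$ is nef;
\item curves in $\infty\times Y$, using that $D$ is nef;
\item curves in fibres of the blowdown, using $\calO(-E')|_{E'}=\calO(1)$;
\item proper transforms of $\bP^1\times\{y\}$, which meet $E'$ at most once and $p_{\bP^1}^*\{\infty\}$ exactly once.
\end{itemize}
You instead handle every curve not contained in $E'$ directly. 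You compute $E'\cdot C$ on the normalization from the ideal $(z)+p_Y^{-1}\scrI_X$. You then bound it either by the $\scrI_X$-orders, i.e.\ by $E$ against the lift to $\BL_XY$, or by the $z$-orders, i.e.\ by $p_{\bP^1}^*\{\infty\}\cdot C$. The torus lemma is used only on $E'\cong P_V$, with the action scaling $V$.

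The paper's route is shorter and uniform. Yours avoids degenerating curves outside $E'$ to invariant ones. It also treats explicitly the curves lying in the zero section $\bP(0\oplus\bC)\cong X$ of $E'$. These are pointwise fixed by the $z$-scaling action but belong to none of the families the paper lists. The check is trivial there, since the divisor restricts to $D|_X$, which is your first bullet for $C\subset E'$.

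One wording fix concerns your third bullet for $C\not\subset E'$. The claim $c\neq 0$ holds only because the first bullet has already removed curves whose image in $Y$ is not contained in $X$. For instance, curves in the proper transform of $0\times Y$ not contained in $E'$ do lie over $c=0$. Once the image in $Y$ lies in $X$, $c=0$ would give $C\subset \BL_B^{-1}(0\times X)=E'$. State it that way rather than with the phrase about mapping into $0\times Y$ away from $E'$.
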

\begin{proof}
	We have a $\bC^*$ action on $\bP^1 \times Y$ given by sending $(z,y) \in \bP^1 \times Y$ to $(\xi z, y)$ for each $\xi \in \bC^*$.
	This lifts uniquely to a $\bC^*$-action on $\overline{\chi} := \BL_{0 \times X} (\bP^1 \times Y)$.
	Let $E'$ be the exceptional divisor of
	$\overline{\chi}$ and $E$ the exceptional divisor of $\BL_XY$.
	Let $\BL : \overline{\chi} \to \bP^1 \times X$ be the blowdown map.

	We need to show that $D' := \BL^*(p_Y^*D + p_{\bP^1}^*\{\infty\}) - E'$ is nef.
	By Lemma \ref{lemmaequivariantpositiity1} it is sufficient to check that this divisor is non-negative on $\bC^*$-equivariant proper irreducible curves.
	If such a curve $C$ is contained $\BL_XY = \BL_{0 \times X} (0 \times Y) \subset \BL_{0 \times X} (\bP^1 \times Y)$, then $D' \cdot C = (\BL_X^*D-E) \cdot C \geq 0$.
	If it is a different curve then it is either
	\begin{enumerate}
		\item \label{item1} contained in $\{\infty\} \times Y$ or
		\item \label{item2} is in a fiber of $\BL$ or
		\item \label{item3} is the proper transform of $\bP^1 \times \{y\}$  for some $y \in Y$.
	\end{enumerate}
	In case \eqref{item1} $D' \cdot C \geq 0$ since $D$ is nef. In case \eqref{item2} $D' \cdot C \geq 0$  holds due to the fact that the dual of the normal bundle of $E'$ is $\calO_{\bP(V \oplus \bC)}(1)$.
	Also, in case \eqref{item3} $D' \cdot C \geq 0$ since the curve intersects  $E'$ at most once, but it intersects $\BL^*(p_{\bP^1}^*\{\infty\})$ exactly once and has intersection number zero with $\BL^*(p_Y^*D)$.
	Hence, $D'$ is nef.
\end{proof}

The following lemma gives a sufficient condition for $D$ to be $\BL_XY$-compatible. We will use this in Section \ref{sectionmainargument}.

\begin{lemma} \label{lemmanicesufficientcondtionforblxycompatible}
	Let $D$ be a divisor in $Y$ so that $\calO(D)$ and $\calO(D) \otimes \frI_X$ are globally generated where $\frI_X$ is the ideal sheaf of $X$ in $Y$.
	Then $D$ is $\BL_XY$-compatible.
	Also, $\iota_X^*\calO(D) \otimes V^*$ is globally generated.
\end{lemma}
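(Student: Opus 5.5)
The plan rests on the standard description of the blowup as the closure of the graph of the rational map defined by generators of the ideal. Choose sections $s_0,\dots,s_k \in H^0(Y,\calO(D)\otimes\frI_X)$ that generate $\calO(D)\otimes\frI_X$. Viewed as sections of $\calO(D)$ vanishing on $X$, they give a morphism
\[
\phi : Y - X \to \bP^k .
\]
Because they generate $\calO(D) \otimes \frI_X$, the ideal they generate locally is exactly $\frI_X$ (after trivialising $\calO(D)$). By the universal property of blowing up, $\BL^{-1}(\frI_X)\cdot\calO_{\BL_XY} = \calO(-E)$ is invertible. The pulled back sections $\BL^*s_j$ therefore factor as $t_j \cdot \sigma_E$, where $\sigma_E$ is the canonical section of $\calO(E)$ and the $t_j$ are sections of $\BL^*\calO(D)\otimes\calO(-E)$.

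These $t_j$ have no common zero: locally $\BL^*\frI_X = (\sigma_E)$, and the $\BL^*s_j$ generate this ideal, so the quotients $t_j$ generate the unit ideal. Hence $\BL^*\calO(D)\otimes\calO(-E)$ is globally generated, so it is nef, which means $D$ is $\BL_XY$-compatible. The main point to check here is that "generate $\frI_X\otimes\calO(D)$" passes to "generate $\BL^{-1}\frI_X\cdot\calO$". This holds because the inverse image ideal sheaf is the image of $\BL^*\frI_X \to \calO_{\BL_XY}$, and surjectivity of $\calO^{k+1}\to \frI_X(D)$ survives pullback followed by taking the image. I do not expect this step to need separate use of the hypothesis that $\calO(D)$ itself is globally generated.

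For the second claim, use the conormal sequence
\[
\frI_X \to \frI_X/\frI_X^2 \cong V^* .
\]
Tensoring with $\calO(D)$ gives a surjection $\frI_X\otimes\calO(D) \twoheadrightarrow V^*\otimes\iota_X^*\calO(D)$. This is surjective because $\frI_X\to\frI_X/\frI_X^2$ is, $\calO(D)$ is locally free, and $X$ is smooth so that $\frI_X/\frI_X^2$ is the locally free conormal bundle $V^*$. Composing the surjection $\calO_Y^{k+1}\to\frI_X\otimes\calO(D)$ with this map and restricting to $X$ shows that the images of the $s_j$ generate $\iota_X^*\calO(D)\otimes V^*$. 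So it is globally generated. Alternatively, one can observe that $E=\bP(V)$, that $\calO(-E)|_E=\calO_{\bP(V)}(1)$, and that the $t_j|_E$ generate $\calO_{\bP(V)}(1)\otimes\pi^*\iota_X^*\calO(D)$, then apply Corollary \ref{corollarytwistedgloballygenerated}-style reasoning as in Lemma \ref{lemagloballygenerated}.
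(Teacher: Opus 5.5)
Your proof is correct. For the first claim it takes a different route from the paper; for the second it is essentially the paper's argument.

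\textbf{First claim.} You prove the stronger statement that $\BL^*\calO(D)\otimes\calO(-E)$ is globally generated on all of $\BL_XY$. You do this by dividing the pulled-back generators $\BL^*s_j$ by the equation of $E$, and using that these pullbacks generate $\BL^{-1}\frI_X\cdot\calO_{\BL_XY}=\calO(-E)$.

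The paper instead checks nefness curve by curve:
\begin{enumerate}
\item For a curve $C\not\subset E$, it picks a section of $\calO(D)$ vanishing on $X$ but nonzero at a point of $C-E$. This is the part of your argument away from $E$.
\item For $C\subset E\cong\bP(V)$, it identifies $\calO(\BL^*D-E)|_E$ with $q^*\calO(D|_X)\otimes\calO_{\bP(V)}(1)$. It then deduces nefness from the second claim via Lemma \ref{lemagloballygenerated}.
\end{enumerate}

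So in the paper the second claim is proved first and feeds into the first. Your local-algebra argument handles the points of $E$ directly and makes the two claims independent. It also gives global generation rather than mere nefness. It confirms, as you suspected, that global generation of $\calO(D)$ is not needed; the paper uses that hypothesis only to move $D$ so that $D|_X$ is a well-defined divisor.

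The cost of your route is the inverse-image-ideal bookkeeping, which you handle correctly, including the right-exactness point. The paper avoids this but relies on the standard identification $\calO(-E)|_E=\calO_{\bP(V)}(1)$.

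\textbf{Second claim.} The paper pulls back a surjection $\calO_Y(-D)^{a}\to\frI_X$ along $\iota_X$ and uses $\iota_X^*\frI_X=\frI_X/\frI_X^2=V^*$. This is exactly your conormal surjection tensored with $\calO(D)$.
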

\begin{proof}
	Since $\calO(D)$ is globally generated, we can perturb $D$ so that its support does not contain any irreducible component of $X$, allowing us to define $D|_X := \iota_X^* D$.
	We have an exact sequence:
	\begin{equation}
		\calO_Y^a(-D) \to \frI_X \to 0
	\end{equation}
	for some $a \in \bN$
	since  $\calO(D) \otimes \frI_X$ is globally generated.
	Since taking pullbacks is right exact and since $\iota_X^*\frI_X = V^*$, we get an exact sequence
	\begin{equation}
		\calO_X(-D|_X) \to V^* \to 0.
	\end{equation}
	Hence, $V^* \otimes \calO(D|_X) = \iota_X^*\calO(D) \otimes V^*$ is globally generated.

	Let $E$ be the exceptional divisor of the blowdown map $\BL : \BL_XY \to Y$.
	Let $C$ be a proper irreducible curve in $\BL_XY$.
	Suppose that $C$ is not contained in $E$ and let $p \in C - E$.
	Then since $\calO(D) \otimes \frI_X$ is globally generated, there exists an element $s$ of $H^0(\calO(D))$ which vanishes along $X$ and which is non-zero at $p$.
	Since $\BL^*s$ vanishes along $E$, we get that $\BL^*s$ corresponds to a section $\widetilde{s}$ of $\calO(\BL^*D - E)$ which is non-zero at $p$.
	Then $(\BL^*D - E) \cdot C = \widetilde{s}^{-1}(0) \cdot C \geq 0$.

	Now suppose that $C$ is contained in $E \cong \bP(V)$.
	The restriction of $\calO(\BL^*D - E)$ to $E \cong \bP(V)$
	is $F := q^*(\calO(D|_X)) \otimes \calO_{\bP(V)}(1)$ where $q : \bP(V) \to X$ is the natural projection map.
	By Lemma \ref{lemagloballygenerated} combined with the fact that $V^* \otimes \calO(D|_X)$ is globally generated we get that $F$ is nef and hence its first Chern class pairs non-negatively with $C$.
\end{proof}

\subsection{Bounds for Normal Cones} \label{subsectionboundsfornormalcones}

In this section we give bounds for the $D$-volume of the normal cone of a moduli space in a larger one.
Let $X \subset Y$ be a smooth projective subvariety of a smooth projective variety and $\iota_X : X \hookrightarrow Y$ the natural inclusion map.
Let $\frI_X$ be the ideal sheaf defining $X$ in $Y$.
Let $\pi : V \to X$ be the normal bundle of $X$.
We let $C_{m,d}$ be the normal cone of $X_{m,d}$ inside $Y_{m,(\iota_X)_*d}$ for each $m \in \bN$ and $d \in H_2^+(X)$.
We have the following commutative diagram:
\begin{equation} \label{eqncommdiagram}
	\begin{tikzcd}
		{Y_{m,(\iota_X)_*d}} & {Y_{m+1,(\iota_X)_*d}} & Y \\
		{X_{m,d}} & {X_{m+1,d}} & X
		\arrow["{\pi_{m+1}}"', from=1-2, to=1-1]
		\arrow["{\ev_{m+1}}", from=1-2, to=1-3]
		\arrow[hook, from=2-1, to=1-1]
		\arrow[hook, from=2-2, to=1-2]
		\arrow["{\pi_{m+1}}", from=2-2, to=2-1]
		\arrow["{\ev_{m+1}}"', from=2-2, to=2-3]
		\arrow["{\iota_X}"', hook, from=2-3, to=1-3]
	\end{tikzcd}
\end{equation}
which induces the following diagram of normal cones:
\[\begin{tikzcd}
		{\pi_{m+1}^*C_{m,d}} & {C_{m+1,d}} & V \\
		& {C_{m,d}}
		\arrow[Rightarrow, no head, from=1-1, to=1-2]
		\arrow["{\ev_{m+1}}", from=1-2, to=1-3]
		\arrow["{\pi_{m+1}}"', from=1-2, to=2-2]
	\end{tikzcd}\]
for each $m \in \bN$ and $d \in H_2^+(X)$.
This represents a flat family of stable genus zero curves parameterized by $C_{m,d}$ mapping to $V$
since the corresponding curves projected to $X$ are stable.
So, by the universal property of moduli spaces, we get an induced map
\begin{equation} \label{eqnmapfocones}
	\iota_C : C_{m,d} \to V_{m,d}
\end{equation}
for each $m \in \bN$ and $d \in H_2^+(X)$.

\begin{defn} \label{defnsystemofnormalcones}
	The \emph{system of normal cones}
	\begin{equation}
		C^{X/Y}_\bullet = (C^{X/Y}_{m,d})_{m \in \bN, \ d \in H_2^+(X)}
	\end{equation}
	for $X/Y$ is defined as follows:
	We define $C^{X/Y}_{m,d} \subset V_{m,d}$
	to be the image of $\iota_C$ inside $V_{m,d}$ for each $m \in \bN$ and $d \in H_2^+(X)$.
\end{defn}

\begin{prop} \label{propositionboundsfornormalcones}
	Let $m \in \bN$ and $d \in H_2^+(X)$.
	Let $U \subset X$ be an open subset.
	Suppose $(V_i)_{i \in I}$ is a finite collection of vector bundles over $Y$. Let $C^i_\bullet$ be a system of cones for $V_i$ for each $i \in I$.
	Let $D \subset Y$ be a smooth divisor so that
	\begin{enumerate}
		\item $D-\sum_{i \in I} D'_i$
		      and $(D-\sum_{i \in I} D'_i) \otimes \frI_X$
		      are globally generated
		      where $D_i$ is a divisor in $Y$ so that $\calO(D'_i)$ and $V_i^* \otimes \calO(D'_i)$ are globally generated
		      for each $i \in I$,
		\item $D$ is generically transverse to $C^i_\bullet$ for each $i \in I$,
		\item $D$ is transverse to $X$,
		\item $D|_X := D \cap X$ is generically transverse to $C^{X/Y}_\bullet$ and $C^i_\bullet|_X$ for each $i \in I$.
	\end{enumerate}
	Then for each $m \in \bN$ and $d \in H_2^+(X)$,
	\begin{equation} \label{eqnkeyvoleumnormalinequality}
		D\Vol(C^i_{m,(\iota_X)_*d})_{i \in I} \geq (D|_X)\Vol(\check{C}^i_{m,d})_{i \in  I \sqcup \{X/Y\}}
	\end{equation}
	where
	$\check{C}^i_\bullet = C^i_\bullet|_U$ for each $i \in I$ and $\check{C}^{X/Y}_\bullet = C^{X/Y}_\bullet|_U$.
\end{prop}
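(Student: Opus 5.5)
We will prove \eqref{eqnkeyvoleumnormalinequality} by a deformation to the normal cone, following the toy example in the introduction. Let $\chi := \BL_{0 \times X}(\bP^1 \times Y)$ with exceptional divisor $E \cong P_V$ and blowdown map $\BL$, and let $p : \chi \to \bP^1$ be the projection. Set $\widehat{D} := \BL^*(p_Y^*D + p_{\bP^1}^*\{\infty\}) - E$. By Lemma \ref{lemmanicesufficientcondtionforblxycompatible}, applied to $D - \sum_i D'_i$ (both it and its twist by $\frI_X$ are globally generated), this divisor is $\BL_XY$-compatible. Moreover $\iota_X^*\calO(D-\sum_i D'_i)\otimes V^*$ is globally generated, so $D|_X$ dominates $\sum_i D'_i|_X$ plus a divisor making $V$ satisfy the hypothesis of Corollary \ref{corollarynefsum2}. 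Lemma \ref{lemmacompaet} then shows that the relevant divisor $\widehat{D}$ on $\chi$ is nef, at least after adding the pullbacks of the $D'_i$-parts. The generic fibre $p^{-1}(1)$ is $Y$, and $\widehat{D}$ restricts there to $D$. The special fibre is $\BL_XY \cup E$, and on $E = P_V$ the divisor $\widehat{D}$ restricts to $\pi_{P_V}^*D|_X + \infty_V$.

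Next I would form the family of moduli spaces. Consider genus zero curves in $\chi$ of class $(0,(\iota_X)_*d)$, with marked points forced to hit the proper transform $\widetilde{D}$ of $\bP^1 \times D$, and with the cones $\BL^*$-pulled back from the $C^i_\bullet$. Over $t=1$ the evaluation images are exactly $\overline{\widetilde{\ev}((C^i_{m+D!,(\iota_X)_*d})_{i\in I})}$ inside $\ev^*P_{(V_i^{\boxtimes m'})}$. Over $t = 0$, curves converging to $X$ become curves in $E = P_V$, i.e.\ points of the normal cone sitting inside $V_{m,d}$ via $\iota_C$. Their marked points land in $\widetilde{D}\cap E$, which is $D|_X$ pulled back to $P_V$ because $D$ is transverse to $X$. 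The key claim is that the limit cycle at $t=0$, restricted to curves lying in $E$ over $U$ with all marked points off the diagonal in the normal directions, contains the evaluation image of $(\check{C}^i_{m+D|_X!,d})_{i \in I\sqcup\{X/Y\}}$ with positive multiplicity. Here $C^{X/Y}$ appears as the $V$-coordinates of the evaluation and the $C^i|_X$ as the $V_i$-coordinates. This uses the generic transversality hypotheses, and the fact that $D\cdot(\iota_X)_*d = D|_X\cdot d$, so the factorials match.

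The comparison then follows the chain displayed in the introduction. Integrate $\exp$ of the nef class (the sum of $\widehat{D}$ over the $m'$ factors plus the $\infty$ divisors of the $V_i$) over the flat family. Flatness over $\bP^1$ gives equality of the integrals over the fibres at $t=0$ and $t=1$. The fibre at $1$ gives $m!(D\cdot d)!\,D\Vol(C^i_{m,(\iota_X)_*d})_{i\in I}$. Nefness, together with the positivity of the residual cycle at $0$ (as in \eqref{eqnpositivesum}), bounds this from below by the integral over the $E$-component. That integral equals $\int \exp(D|_X{}_{\sum m'} + \infty_{(p_j^*V)_j} + \infty_{V_i})$ over the closure in $P_1 = P_V^{m'}\times_{X^{m'}}\cdots$. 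Finally Lemma \ref{lemmmasmoothingsums}, with $\check I$ the $m'$ copies of $V$ and using the scaling invariance of the normal cone, passes from $P_1$ to $P_{V^{\boxtimes m'},(V_i^{\boxtimes m'})}$, giving $(D|_X)\Vol$.

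The main obstacle is making the degeneration rigorous: constructing the family of moduli spaces over $\bP^1$ so that the $t=0$ limit really contains the normal cone $C^{X/Y}_{m+D|_X!,d}$ and is a Cartier restriction. I would try the deformation-to-the-normal-cone identification $C_{X_{m,d}}Y_{m,(\iota_X)_*d}$ as the special fibre of $\BL_{0\times X_{m,d}}(\bP^1\times Y_{m,\cdot})$ minus the proper transform, mapped to curves in $\chi$. Restricting to $U$ avoids boundary pathologies. It would also use Lemma \ref{lemmainjectivefromvlarge}-type injectivity to identify the closures.
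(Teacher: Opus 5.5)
Your proposal is correct and follows essentially the same route as the paper: the deformation $\BL_{0\times X}(\bP^1\times Y)$, nefness of $\BL^*D-E$ via Lemmas~\ref{lemmacompaet} and~\ref{lemmanicesufficientcondtionforblxycompatible} (your extra $p_{\bP^1}^*\{\infty\}$ makes this hold globally rather than only on the central fibre, where the paper uses it), equality of the fibre integrals over $0$ and $1$, positivity of the residual cycle in $\widetilde{\Pi}^{-1}(0)$, and Lemma~\ref{lemmmasmoothingsums} to merge the $m'$ copies of $P_V$ into $P_{V^{\boxtimes m'}}$. The obstacle you flag is resolved in the paper exactly as you suggest: it takes the deformation to the normal cone $\Delta_{m+D!,d}$ of $X_{m+(D|_X)!,d}$ inside $Y_{m+D!,\delta}$ and maps it to $\overline{\chi}_{m',\delta''}$ by the universal property, then identifies the two closures over $V^{\boxtimes m'}$ directly via \eqref{eqnintersectionofC0withC1}; since $D$-volumes are defined through images $\widetilde{\ev}(\cdot)$, no injectivity statement like Lemma~\ref{lemmainjectivefromvlarge} is needed, and its hypotheses (largeness, strong convexity) are not available here anyway.
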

\begin{proof}
	We will prove this proposition by a deformation to the normal cone argument, essentially comparing integrals associated with the central fiber and a generic one.
	By Lemma \ref{lemmaineqaulityrestriction}, we can assume that $C^i_\bullet = C^i_\bullet|_{\widetilde{U}}$ for each $i \in I$ where $\widetilde{U} \subset Y$ is an open subset satisfying $\widetilde{U} \cap X = U$.
	Define $\overline{\chi} := \BL_{0 \times X} (\bP^1 \times Y)$ and let $\pi_{\overline{\chi}} : \overline{\chi} \to \bP^1$ be the composition of the blowdown map with the natural projection map $\bP^1 \times Y \to \bP^1$.
	Let $\chi \subset \overline{\chi}$ be the complement of the proper transform of $0 \times Y$
	and $\pi_\chi := \pi_{\overline{\chi}}|_{\chi}$.
	Then $\pi_\chi$ is the deformation to the normal cone of $X$. Its central fiber is the normal bundle $V$ of $X$ in $Y$.
	Let $m \in  \bN$, $d \in H_2^+(X)$, $m' := m + (D|_X) \cdot d$. and $\delta = (\iota_X)_*d$.
	We let
	\begin{equation}
		\Pi_\Delta : \Delta_{m+D!,d} \to \bP^1
	\end{equation}
	be the deformation to the normal cone of $X_{m+(D|_X)!,d}$ inside $Y_{m+D!,\delta}$.
	Similarly, let
	\begin{equation}
		\Pi'_\Delta : \Delta_{m+1+D!,d} \to \bP^1
	\end{equation}
	be the deformation to the normal cone of $X_{m+1+(D|_X)!,d}$ inside $Y_{m+1+D!,\delta}$.
	The diagram
	\[\begin{tikzcd}
			{Y_{m+D!,\delta}} & {Y_{m+1+D!,\delta}} & Y \\
			{X_{m+(D|_X)!,d}} & {X_{m+1+(D|_X)!,d}} & X
			\arrow["{\pi_{m'+1}}"', from=1-2, to=1-1]
			\arrow["{\ev_{m'+1}}", from=1-2, to=1-3]
			\arrow[hook, from=2-1, to=1-1]
			\arrow[hook, from=2-2, to=1-2]
			\arrow["{\pi_{m'+1}}", from=2-2, to=2-1]
			\arrow["{\ev_{m'+1}}"', from=2-2, to=2-3]
			\arrow["{\iota_X}"', hook, from=2-3, to=1-3]
		\end{tikzcd}\]
	induces a diagram:

	\[\begin{tikzcd}
			{\Delta_{0,h+1+D!,d}} & \chi \\
			{\Delta_{0,h+D!,d}}
			\arrow["{\ev_{h+1}}", from=1-1, to=1-2]
			\arrow["{\pi_{h+1}}"', from=1-1, to=2-1]
		\end{tikzcd}\]
	which is a flat family of curves mapping to $\chi$.
	Hence, we have an induced map:
	\begin{equation}
		\iota_\Delta : \Delta_{m+D!,d} \to \chi_{m',\delta'}
	\end{equation}
	where $\delta'$ is the image of $d$ in $\chi$ under pushforward by the natural inclusion map
	\begin{equation}
		X = 0 \times X \hookrightarrow \bP^1 \times X  \hookrightarrow \chi
	\end{equation}
	where $\bP^1 \times X \subset \bP^1 \times Y$ is the proper transform of $\bP^1 \times X$ in $\chi$.


	Let $\delta''$ be the image of $\delta'$ under the natural inclusion map $\chi \hookrightarrow \overline{\chi}$. Let $i : \chi_{m',\delta'} \hookrightarrow \overline{\chi}_{m',\delta''}$ be the natural inclusion map
	and define
	\begin{equation}
		\overline{\iota}_{\Delta} := i \circ \iota_\Delta.
	\end{equation}
	This fits into a commutative diagram:
	\[\begin{tikzcd}
			{\Delta_{m+D!,d}} & {\overline{\chi}_{m',\delta''}} & {\overline{\chi}^{m'}} \\
			& {Y_{m+D!,\delta}} & {Y^{m'}}
			\arrow["{\overline{\iota}_\Delta}", from=1-1, to=1-2]
			\arrow["{F_{Y_{m+D!,\delta}}}"', from=1-1, to=2-2]
			\arrow["\ev", from=1-2, to=1-3]
			\arrow[from=1-2, to=2-2]
			\arrow["{(p_Y \circ \BL)^{m'}}", from=1-3, to=2-3]
			\arrow["\ev", from=2-2, to=2-3]
		\end{tikzcd}\]
	where $\BL : \overline{\chi} \to \bP^1 \times Y$ is the blowdown map and $p_Y : \bP^1 \times Y \to Y$ is the natural projection map and
	$F_{Y_{m+D!,\delta}}$
	is the natural projection map coming from the deformation to the normal cone construction.

	Now let $\widetilde{V}_i$ be the pullback of $V_i$ to $\overline{\chi}$
	via the map $\overline{\chi} \lra{\BL} \bP^1 \times Y \lra{p_Y} Y$ for each $i \in I$.
	We have the following evaluation maps
	\begin{equation}
		\widetilde{\ev}_{\widetilde{V}_i} : F_{Y_{m+D!,\delta}}^*((V_i)_{m,\delta}) \to \ev^*\widetilde{V}_i^{\boxtimes m'}, \quad i \in I
	\end{equation}
	covering $\overline{\iota}_\Delta$
	and hence an evaluation map:
	\begin{equation}
		\widetilde{\ev} := \bigoplus_{i \in I} \widetilde{\ev}_{\widetilde{V}_i}
		: \oplus_{i \in I} F_{Y_{m+D!,\delta}}^*((V_i)_{m,\delta}) \to \oplus_{i \in I} \ev^*\widetilde{V}_i^{\boxtimes m'}.
	\end{equation}

	Let $\widetilde{C}^i_{m+D!,\delta} \subset F_{Y_{m+D!,d}}^*((V_i)_{m,\delta})$
	be the pullback of the subcone $C^i_{m+D!,\delta} \subset (V_i)_{m+D!,\delta}$ for each $i \in I$.
	Define $\overline{C}$ to be the closure inside $\ev^*P_{(\widetilde{V}_i^{\boxtimes m'})_{i \in I}}$ of
	\begin{equation}
		\widetilde{\ev}\left(\fprod{\Delta_{m+D!,\delta}}_{i \in I} \widetilde{C}^i_{m+D!,\delta}\right).
	\end{equation}
	%
	%
	%
	%
	%
	%
	%
	%
	%
	%
	Let $\overline{C}_0$ be the closure of
	\begin{equation}
		\widetilde{\ev}((\check{C}^i_{m+(D|_X)!,d})_{i \in I \sqcup \{X/Y\}})
	\end{equation}
	inside $\ev^*P_{((V_i|_X)^{\boxtimes m'})_{i \in I \sqcup \{X/Y\}}}$ where we define $V_{X/Y}|_X := V$.
	Let
	\begin{equation}
		\nu : P_{(\widetilde{V}_i^{\boxtimes m'})_{i \in I}} \to \overline{\chi}^{m'}, \quad \mu : P_{((V_i|_X)^{\boxtimes m'})_{i \in I \sqcup \{X/Y\}}} \to P_{V^{\boxtimes m'}}
	\end{equation}
	be the natural projection maps.
	The composition
	\begin{equation}
		\overline{C} \to \ev^*P_{(\widetilde{V}_i^{\boxtimes m'})_{i \in I}} \lra{P\ev} P_{(\widetilde{V}_i^{\boxtimes m'})_{i \in I}} \lra{\nu} \overline{\chi}^{m'} \lra{(\pi_{\overline{\chi}})^{m'}} (\bP^1)^{m'},
	\end{equation}
	where the first map is the natural inclusion map,
	factors through the small diagonal $\bP^1 \subset (\bP^1)^{m'}$ because curves in $\overline{\chi}_{m',\delta''}$ become constant curves in $\bP^1$ after composition with $\pi_{\overline{\chi}}$
	and so let
	\begin{equation}
		\widetilde{\Pi} : \overline{C} \to \bP^1
	\end{equation}
	be the corresponding map.
	Let $\widetilde{D}$ be the proper transform of $\bP^1 \times D$ inside $\overline{\chi}$.
	Then
	\begin{equation} \label{eqnwidetildeformula}
		\BL^*D \cong \widetilde{D},
	\end{equation}
	where $\BL : \overline{\chi} \to \bP^1 \times Y$ is the blowdown map
	since $D$ is transverse to $X$.
	Also,
	\begin{equation} \label{eqnblowupmunuseprime}
		(\BL^*D - E')|_{E'} \cong \pi_{P_V}^*(D|_X) + \infty_V
	\end{equation}
	via the identification $E' \cong P_V$ where $E'$ is the exceptional divisor of $\BL$.
	Combining this with \eqref{eqnwidetildeformula}, we get
	\begin{equation} \label{eqnblowupmunuseprime2}
		(\widetilde{D} - E')|_{E'} \cong \pi_{P_V}^*(D|_X) + \infty_V.
	\end{equation}

	Since $V \cong \pi_{\chi}^{-1}(0) \subset \overline{\chi}$ we have a natural subspace $V^{\boxtimes m'} \subset \overline{\chi}^{m'}$.
	We also have a natural subspace $V^{\boxtimes m'} \subset P_{V^{\boxtimes m'}}$.
	We also have natural identifications:
	\begin{equation}
		\nu^{-1}(V^{\boxtimes m'}) \cong \mu^{-1}(V^{\boxtimes m'}) \cong \fprod{X}_{i \in I} P_{(V_i|_X)^{\boxtimes m'}} \times_{X^{m'}} V^{\boxtimes m'}.
	\end{equation}
	Under these identifications, we have
	\begin{equation} \label{eqnintersectionofC0withC1}
		P\ev^{-1}\nu^{-1}(V^{\boxtimes m'}) \cap \overline{C} = P\ev^{-1}\mu^{-1}(V^{\boxtimes m'}) \cap \overline{C}_0.
	\end{equation}
	Also, $P\ev^{-1}\nu^{-1}((E')^{m'}) \cap \overline{C}$ is a codimension $0$ substack of $\widetilde{\Pi}^{-1}(0)$
	because all curves in $\overline{\chi}_{m',\delta''}$
	with image intersecting $V \subset E'$ are contained in $E'$
	and so
	\begin{equation} \label{equationsumofclasses}
		[\widetilde{\Pi}^{-1}(0)] =
		[P\ev^{-1}\nu^{-1}((E')^{m'}) \cap \overline{C}] + C'
	\end{equation}
	inside $CH_*(\ev^*P_{(\widetilde{V}_i^{\boxtimes m'})_{i \in I}})$ where $C'$ is a positive sum of integral closed substacks of $\ev^*P_{(\widetilde{V}_i^{\boxtimes m'})_{i \in I}}$.
	We have a natural identification:
	\begin{equation} \label{eqncopiesofv}
		P_{(p_i^*V)_{i \in \{1,\cdots,m'\}},(V_i|_X)_{i \in I}} \cong \nu^{-1}((E')^{m'})
	\end{equation}
	where $p_i : X^{m'} \to X$ is the $i$th projection map for each $i=1,\cdots,m'$.
	By Equation \eqref{eqnblowupmunuseprime2},
	we get
	\begin{equation} \label{eqnproductrestrictioneqn}
		(D|_X)_{\sum m', (p_i^*V)_{i=1}^{m'},(V_i|_X)_{i \in I}} \cong (\widetilde{D}_{\sum m', (\widetilde{V}_i)_{i \in I}}-\pi_{P_{(\widetilde{V}_i^{\boxtimes m'})_{i \in I}}}^*(E')_{\sum m'})|_{\nu^{-1}((E')^{m'})}
	\end{equation}
	under the identification \eqref{eqncopiesofv}.
	We have
	\begin{equation}
		(D|_X)\Vol(\check{C}^i_{m,\delta})_{i \in  I \sqcup \{X/Y\}}
		= \frac{1}{m!(D|_X \cdot d)!}\int_{P\ev_*[\overline{C}_0]} \exp((D|_X)_{\sum m', (V_i|_X)_{i \in I \sqcup \{X/Y\}}})
	\end{equation}
	\begin{equation}
		=\frac{1}{m!(D \cdot \delta)!} \int_{P\ev_*\overline{P\ev^{-1}\mu^{-1}(V^{\boxtimes m'}) \cap \overline{C}_0}} \exp((D|_X)_{\sum m', (V_i|_X)_{i \in I \sqcup \{X/Y\}}})
	\end{equation}
	\begin{equation} \label{eqnlastpartae}
		\overset{\textnormal{\eqref{eqnintersectionofC0withC1}}}{=}
		\frac{1}{m!(D \cdot \delta)!}\int_{P\ev_*\overline{P\ev^{-1}\nu^{-1}(V^{\boxtimes m'}) \cap \overline{C}}} \exp((D|_X)_{\sum m', (V_i|_X)_{i \in I\sqcup \{X/Y\}}})
	\end{equation}
	where $\overline{P\ev^{-1}\mu^{-1}(V^{\boxtimes m'}) \cap \overline{C}_0}$ and $\overline{P\ev^{-1}\nu^{-1}(V^{\boxtimes m'}) \cap \overline{C}}$ is the closure of $P\ev^{-1}\mu^{-1}(V^{\boxtimes m'}) \cap \overline{C}_0$ and $P\ev^{-1}\nu^{-1}(V^{\boxtimes m'}) \cap \overline{C}$ respectively inside $\ev^*P_{((V_i|_X)^{\boxtimes m'})_{i \in I \sqcup \{X/Y\}}}$.

	By
	Lemma \ref{lemmmasmoothingsums}
	applied to the intersection of
	$\overline{P\ev^{-1}\nu^{-1}(V^{\boxtimes m'}) \cap \overline{C}}$
	with
	\begin{equation}
		\ev^*\oplus_{i=1}^{m'} p_i^* V \bigoplus \oplus_{i \in I} (V_i|_X)^{\boxtimes m'}
	\end{equation}
	compactifying to
	$\ev^*P_{((V_i|_X)^{\boxtimes m'})_{i \in I \sqcup \{X/Y\}}}$
	and the pullback via $\ev$ of
	\eqref{eqncopiesofv}  respectively
	we get
	\eqref{eqnlastpartae} is less than or equal to
	\begin{equation} \label{eqnaefaeeytaaioegjaio0}
		\frac{1}{m!(D \cdot \delta)!} \int_{P\ev_*[P\ev^{-1}\nu^{-1}({E'}^{m'}) \cap \overline{C}]} \exp((D|_X)_{\sum m', (p_i^*V)_{i \in \{1,\cdots,m'\}},(V_i|_X)_{i \in I}})
	\end{equation}
	\begin{equation} \label{eqnaefaeeytaaioegjaio}
		\overset{\textnormal{\eqref{eqnproductrestrictioneqn}}}{=} \frac{1}{m!(D \cdot \delta)!}\int_{P\ev_*[P\ev^{-1}\nu^{-1}({E'}^{m'}) \cap \overline{C}]} \exp(\widetilde{D}_{\sum m', (\widetilde{V}_i)_{i \in I}}-\pi_{P_{(\widetilde{V}_i)_{i \in I}}}^*(E')_{\sum m'}).
	\end{equation}

	Let $\widetilde{D'_i}$ be
	the pullback of $D'_i$ via the projection map $\overline{\chi} \to Y$ for each $i \in I$.
	Let \begin{equation}
		P : P_{(\widetilde{V}_i^{\boxtimes m'})_{i \in I}} \to P_{(V_i^{\boxtimes m'})_{i \in I}}
	\end{equation}
	be the natural projection map.
	We have by Corollary \ref{corollarynefsum2} that $(\sum_{i \in I} D'_i)_{\sum m',(V_i)_{i \in I}}$ is nef.
	Also, by Lemmas \ref{lemmacompaet} and \ref{lemmanicesufficientcondtionforblxycompatible} and Equation \eqref{eqnwidetildeformula}, we have that $\widetilde{D} - \sum_{i \in I}\widetilde{D}'_i - E'$ is nef for each $i \in I$.
	Therefore,
	\begin{equation} \label{eqnthingisnef}
		\widetilde{D}_{\sum m', (\widetilde{V}_i)_{i \in I}}-\pi_{P_{(\widetilde{V}_i)_{i \in I}}}^*(E')_{\sum m'}
		= P^*((\sum_{i \in I}D_i)_{\sum m',(V_i)_{i \in I}}) + \pi_{P_{(\widetilde{V}_i)_{i \in I}}}^*((\widetilde{D}-\sum_{i \in I} \widetilde{D}'_i-E')_{\sum m'})
	\end{equation}
	is nef.
	Combining this fact with Equation \eqref{equationsumofclasses} gives us that \eqref{eqnaefaeeytaaioegjaio}
	is less than or equal to:
	\begin{equation}
		\frac{1}{m!(D \cdot \delta)!}\int_{P\ev_*[\widetilde{\Pi}^{-1}(0)]} \exp(\widetilde{D}_{\sum m', (\widetilde{V}_i)_{i \in I}}-\pi_{P_{(\widetilde{V}_i)_{i \in I}}}^*(E')_{\sum m'})
	\end{equation}
	\begin{equation}
		= \frac{1}{m!(D \cdot \delta)!}\int_{P\ev_*[\widetilde{\Pi}^{-1}(1)]} \exp(\widetilde{D}_{\sum m', (\widetilde{V}_i)_{i \in I}}-\pi_{P_{(\widetilde{V}_i)_{i \in I}}}^*(E')_{\sum m'})
	\end{equation}
	\begin{equation}
		= \frac{1}{m!(D \cdot \delta)!}\int_{P\ev_*[\widetilde{\Pi}^{-1}(1)]} \exp(\widetilde{D}_{\sum m', (\widetilde{V}_i)_{i \in I}})
		= D\Vol(C^i_{m,d})_{i \in I}.
	\end{equation}

\end{proof}

\section{Connection with Virtual Fundamental Class} \label{vfcconnection}

\subsection{Siebert's description of the Virtual Fundamental Class} \label{sectioncomprisinvfc}

In this section we give Siebert's description of the virtual fundamental class of the moduli space of stable genus zero curves in terms of the normal cone from \cite{siebert2004virtual}.
Let $\iota_X : X \hookrightarrow \bP^N$ be a closed immersion from a smooth projective variety $X$ and $V \to X$ the normal bundle of this closed immersion.
We define $\frM_{m}$ to be the moduli stack of genus zero pre-stable curves with $m$ marked points with the fppf topology.


We have a perfect obstruction theory
\begin{equation} \label{eqnobstructiontheory}
	((R\pi_{m+1})_*\ev_{m+1}^*(T_X))^\vee \to L^\bullet_{X_{m,d}/\frM_{m}}
\end{equation}
defining the virtual fundamental class on $X_{m,d}$
where $L^\bullet_{X_{m,d}/\frM_{m}}$ is the cotangent complex
(See \cite[Proposition 6.2]{BehrendFantechinormalcone} and \cite{BehrendGW}).
We can truncate these complexes giving a map:
\begin{equation} \label{eqnobstructiontheorytr}
	((R\pi_{m+1})_*\ev_{m+1}^*(T_X))^\vee \to \tau_{\geq -1}L^\bullet_{X_{m,d}/\frM_{m}}.
\end{equation}
By \cite[Chapitre III, Corollaire 3.1.3]{illusie2006complexe},
this corresponds to a commutative diagram:
\begin{equation} \label{eqndiagramnonsese2}
	\begin{tikzcd}
		{((\pi_{m+1})_*\ev_{m+1}^* {\mathcal N^*}^\vee_{X/\bP^N})^\vee} & {{\mathcal N^*}_{X_{m,d}/\bP^N_{m,\delta}} } \\
		{((\pi_{m+1})_*\ev_{m+1}^*(\Omega^\vee_{\bP^N}|_X))^\vee} & {\Omega_{\bP^N_{m,\delta}/\frM_{m}}|_{X_{m,d}}}
		\arrow[from=1-1, to=1-2]
		\arrow[from=1-1, to=2-1]
		\arrow[from=1-2, to=2-2]
		\arrow["\cong", from=2-1, to=2-2]
	\end{tikzcd}
\end{equation}
where ${\mathcal N}^*_{A/B}$ is the conormal sheaf of $A$ in $B$ for any closed substack $A \subset B$.
The bottom map in this diagram is an isomorphism since $T\bP^N$ is convex.
The top map in \eqref{eqndiagramnonsese2}  is surjective since it comes from a perfect obstruction theory.
The following proposition allows us to compute the virtual fundamental classes from the data above.
The top map in Equation \eqref{eqndiagramnonsese2} gives us a morphism of cones
\begin{equation} \label{eqnmorphismofcones1}
	C_{m,d} \to V_{m,d}
\end{equation}
where $C_{m,d}$ is the normal cone of $X_{m,d}$ inside $\bP^N_{m,\delta}$.
(See \cite[Section 3]{siebert2004virtual}).

\begin{prop} \label{propcomparisonwithvfc} (see \cite{siebert2004virtual}).
	Let
	\begin{equation}
		[]^{s(C_\bullet) \cap c(V_\bullet)} := ([X_{m,d}]^{s(C_\bullet) \cap c(V_\bullet)})_{m \in \bN, \ d \in H_2^+(X)}
	\end{equation}
	where \begin{equation}
		[X_{m,d}]^{s(C_\bullet) \cap c(V_\bullet)} := s(C_{m,d}) \cap c(V_{m,d}), \quad \forall \ m \in \bN, \ d \in H_2^+(X).
	\end{equation}
	Then
	\begin{equation}
		[X_{m,d}]^{\virt} = \{ [X_{m,d}]^{s(C_\bullet) \cap c(V_\bullet)} \}_{\dim=c_1(TX)(d) + n-3 + m}, \ \forall \ m \in \bN, \ d \in H_2^+(X)
	\end{equation}
	where $c$ is the total Chern class and $\{-\}_{\dim=h}$ means that we only consider the dimension $h$ component of such a class for any $h \in \bN$.
\end{prop}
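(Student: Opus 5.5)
The plan is to realize the virtual class as a refined intersection with the zero section of the vector bundle $V_{m,d}$, using the global resolution of the obstruction theory coming from the embedding $X_{m,d}\subset \bP^N_{m,\delta}$. Then Fulton's formula for intersecting a cone with the zero section of a vector bundle converts this into $\{s(C_{m,d})\cap c(V_{m,d})\}$. The argument is essentially that of \cite{siebert2004virtual}, and I would follow it in three steps.

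\emph{Step 1: the resolution.} The normal bundle $V$ is a quotient of $T\bP^N|_X$, which is convex, so $V$ is convex by Lemma \ref{lemmaconvexquotient}. Hence $V_{m,d}=(\pi_{m+1})_*\ev_{m+1}^*V$ is a vector bundle on $X_{m,d}$ of rank $c_1(V)(d)+(N-n)$. Because $T\bP^N$ is convex, the morphism $\bP^N_{m,\delta}\to\frM_m$ is smooth, with relative tangent bundle $T:=(\pi_{m+1})_*\ev_{m+1}^*T\bP^N$; restricted to $X_{m,d}$ this gives the bottom isomorphism in \eqref{eqndiagramnonsese2}.

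Apply $R(\pi_{m+1})_*\ev_{m+1}^*$ to $0\to TX\to T\bP^N|_X\to V\to 0$. The higher direct images of the last two terms vanish, so the dual of the obstruction theory \eqref{eqnobstructiontheory} is quasi-isomorphic to the two-term complex of vector bundles $[T|_{X_{m,d}}\to V_{m,d}]$ in degrees $0,1$. By \eqref{eqndiagramnonsese2}, this resolution is compatible with the standard presentation $[\mathcal N^*_{X_{m,d}/\bP^N_{m,\delta}}\to \Omega_{\bP^N_{m,\delta}/\frM_m}|_{X_{m,d}}]$ of $\tau_{\geq -1}L^\bullet_{X_{m,d}/\frM_m}$.

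\emph{Step 2: the cone.} By Behrend--Fantechi, for a closed embedding into a stack smooth over $\frM_m$, the intrinsic normal cone is $[C_{m,d}/T|_{X_{m,d}}]$. The obstruction theory embeds it into $h^1/h^0=[V_{m,d}/T|_{X_{m,d}}]$, and this embedding is induced by the cone morphism \eqref{eqnmorphismofcones1}. The top map of \eqref{eqndiagramnonsese2} is surjective, so \eqref{eqnmorphismofcones1} is a closed embedding of cones. Then, by the definition of the virtual class together with the compatibility of $0^!$ with quotients by $T$ (smooth pullback along $V_{m,d}\to[V_{m,d}/T]$),
\[
[X_{m,d}]^{\virt}=0^!_{[V_{m,d}/T]}[\mathfrak C]=0^!_{V_{m,d}}[C_{m,d}].
\]

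\emph{Step 3: Fulton's formula.} For a cone $C$ of pure dimension $k$ inside a vector bundle $E$ of rank $e$, one has $0_E^![C]=\{c(E)\cap s(C)\}_{k-e}$ (Fulton, Example 4.1.8 / Proposition 6.1). Here $C_{m,d}$ has pure dimension $\dim \bP^N_{m,\delta}=(N+1)\deg(d)+N-3+m$, since a normal cone has the dimension of the ambient space. The rank of $V_{m,d}$ is $c_1(V)(d)+N-n$, and $c_1(V)=c_1(T\bP^N|_X)-c_1(TX)$. The difference of these is therefore $c_1(TX)(d)+n-3+m$, which is the claimed dimension.

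I expect the main obstacle to be Step 2: the precise identification of the virtual class relative to the Artin stack $\frM_m$ with $0^!_{V_{m,d}}[C_{m,d}]$. This requires checking that the Behrend--Fantechi construction with a global two-term resolution reduces to the honest normal cone in $V_{m,d}$. I would cite \cite[Section 3]{siebert2004virtual} and \cite[Proposition 5.3]{BehrendFantechinormalcone} for this rather than reprove it.
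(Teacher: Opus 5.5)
Your proposal is correct and is essentially the argument the paper relies on. The paper gives no independent proof: it sets up exactly the ingredients you use (the obstruction theory relative to $\frM_m$, diagram \eqref{eqndiagramnonsese2} with its bottom isomorphism coming from convexity of $T\bP^N$ and its surjective top map, and the induced cone morphism \eqref{eqnmorphismofcones1}) and then cites \cite{siebert2004virtual}, whose global-resolution reduction to $0^!_{V_{m,d}}[C_{m,d}]$ followed by Fulton's formula is precisely your Steps 2 and 3, with your dimension count $\dim \bP^N_{m,\delta}-\operatorname{rk}V_{m,d}=c_1(TX)(d)+n-3+m$ also correct.
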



\subsection{Embeddings of Normal Cones} \label{sectionnormalconeembedding}

In Section \ref{subsectionboundsfornormalcones}, we gave bounds for the $D$-volume of the image of the normal cone of a moduli space of curves in another one under the evaluation map. However, we really wish to use the normal cone since Siebert's formula describing the virtual fundamental class from Section \ref{sectioncomprisinvfc} uses the normal cone rather than its image.
Therefore, we need to show such cones are isomorphic under certain conditions.
This is the purpose of this section.

Let $Y$ be a smooth (not necessarily projective) variety and $X \subset Y$ a smooth closed subvariety where $\iota_X : X \hookrightarrow Y$ is the natural inclusion map.
Suppose that $TY$ is convex.
Let $\pi : V \to X$ be the normal bundle of $X$ in $Y$.
%
%
%


Now let us fix $m \in \bN$ and $d \in H_2^+(X)$.
We wish to show $\iota_C$ from Equation \eqref{eqnmapfocones} is an immersion.
Before we do this we need a preliminary lemma and some notation.
Let $\delta = (\iota_X)_*d$.
We let $\scrI_X$ be the ideal sheaf defining $X$ in $Y$.
We let $\iota_{m,d} : X_{m,d} \hookrightarrow Y_{m,\delta}$
and $\iota_{m+1,d} : X_{m+1,d} \hookrightarrow Y_{m+1,\delta}$
be the natural inclusion maps.
Let $A := \iota_{m+1,d}^*\ev_{m+1}^*\scrI_X \in \Coh(X_{m+1,d})$.

\begin{lemma} \label{lemmaunversalrpero}
	Consider the two functors:
	\begin{equation}
		F_1 : \Coh(X_{m,d}) \to \CVect, \quad F_1(B) = \Hom(A,\pi_{m+1}^*B)
	\end{equation}
	\begin{equation} \label{eqnaefaef}
		\begin{aligned}
			F_2 : \Coh(X_{m,d}) \to \CVect, \quad F_2(B) = & \Hom(((\pi_{m+1})_*A^\vee)^\vee,B)                        \\
			=                                              & \Hom((\pi_{m+1}^*(\pi_{m+1})_*A^\vee)^\vee,\pi_{m+1}^*B).
		\end{aligned}
	\end{equation}
	where $\CVect$ is the category of $\bC$ vector spaces.
	Then composing with the natural map \begin{equation}
		A \to A^{\vee \vee} \to (\pi_{m+1}^*(\pi_{m+1})_*(A^\vee))^\vee
	\end{equation} induces a natural isomorphism $F_2 \lra{\cong} F_1$.
\end{lemma}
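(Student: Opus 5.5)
The plan is to obtain $F_2 \cong F_1$ by composing three standard adjunction and duality isomorphisms. We then check that the composite equals precomposition with the stated map $A \to A^{\vee\vee} \to (\pi_{m+1}^*(\pi_{m+1})_*A^\vee)^\vee$.

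First we record what $A$ is. Over $X_{m+1,d}$, the sheaf $A = \iota_{m+1,d}^*\ev_{m+1}^*\scrI_X$ is the pullback of $\scrI_X|_X = V^*$ under $\ev_{m+1}$, since $\iota_X^*\scrI_X = \scrI_X/\scrI_X^2$ is the conormal bundle. Hence $A = \ev_{m+1}^*V^*$ is locally free, and $A \to A^{\vee\vee}$ is an isomorphism. Also $A^\vee = \ev_{m+1}^*V$, and $\pi_{m+1}$ is the universal curve over $X_{m,d}$: flat, proper, with connected genus zero fibers, so $(\pi_{m+1})_*\calO = \calO$.

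The key steps are as follows.

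\textbf{Step 1.} The second equality in \eqref{eqnaefaef} holds because the pullback of a dual is the dual of the pullback. This is fine as long as $(\pi_{m+1})_*A^\vee$ is locally free, which is where convexity of $V$ enters; in our application, $TY$ convex implies $V$ convex by Lemma \ref{lemmaconvexquotient}. Alternatively, one can use $\pi_{m+1}^*$-adjunction: $\Hom(\pi_{m+1}^*G,\pi_{m+1}^*B) = \Hom(G,(\pi_{m+1})_*\pi_{m+1}^*B) = \Hom(G,B)$, by the projection formula together with $(\pi_{m+1})_*\calO=\calO$.

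\textbf{Step 2.} Using local freeness of $A$,
\[
\Hom(A,\pi_{m+1}^*B) = H^0(X_{m+1,d}, A^\vee \otimes \pi_{m+1}^*B) = H^0(X_{m,d}, (\pi_{m+1})_*(A^\vee)\otimes B),
\]
again by the projection formula. Symmetrically,
\[
\Hom(((\pi_{m+1})_*A^\vee)^\vee, B) = H^0(X_{m,d}, (\pi_{m+1})_*A^\vee \otimes B),
\]
using that $(\pi_{m+1})_*A^\vee$ is locally free, so that its double dual is itself.

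\textbf{Step 3.} These two groups coincide, naturally in $B$. It remains to trace the identification and confirm that it is induced by composing with the canonical map. That map is the dual of the counit $\pi_{m+1}^*(\pi_{m+1})_*A^\vee \to A^\vee$, precomposed with $A \to A^{\vee\vee}$. This is a diagram chase using the unit/counit triangle identities.

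The main obstacle I expect is justifying local freeness and base change for $(\pi_{m+1})_*A^\vee$ on the stack. My first attempt would be convexity, giving $H^1=0$ fiberwise, followed by cohomology and base change. If that fails, I would work with the functor-level adjunction in Step 1, which avoids needing $B$ to be locally free, and then verify naturality directly.
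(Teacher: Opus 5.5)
Your argument is correct, but it follows a different route from the paper.

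\textbf{The paper's route.} The paper translates everything into abelian cones via the anti-equivalence $\Coh \to \AbC$. Then $F_1$ and $F_2$ become $\Hom(\pi_{m+1}^*B',\ev_{m+1}^*V)$ and $\Hom(B',V_{m,d})$. The bijection is read off from the existence and uniqueness parts of the universal property of the moduli space $V_{m,d}$: a map $\pi_{m+1}^*B'\to\ev_{m+1}^*V$ is a family of lifts of the universal curve into $V$ over $B'$, hence the same as a map $B'\to V_{m,d}$. Composing with $\ev_{m+1}:V_{m+1,d}\to\ev_{m+1}^*V$ is, under the anti-equivalence, precomposition with the map in the statement.

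\textbf{Your route.} You stay in coherent sheaves. You use that $A=\ev_{m+1}^*V^*$ is locally free and that $(\pi_{m+1})_*A^\vee$ is locally free by convexity, and identify both functors with $H^0(X_{m,d},(\pi_{m+1})_*A^\vee\otimes B)$. Your ``diagram chase'' is then immediate. The projection-formula morphism is by definition adjoint to $\epsilon\otimes 1:\pi_{m+1}^*(\pi_{m+1})_*A^\vee\otimes\pi_{m+1}^*B\to A^\vee\otimes\pi_{m+1}^*B$, where $\epsilon$ is the counit. This $\epsilon$ is precisely the dual of the map in the statement.

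\textbf{Comparison.} Your version makes explicit where convexity enters and why the isomorphism is induced by the stated map. The paper's version is shorter and ties the lemma to the moduli interpretation of $V_{m,d}$ used in Lemma \ref{lemmaembedding}. It leaves implicit the identification $V_{m,d}=\Spec\Sym(((\pi_{m+1})_*A^\vee)^\vee)$ and the linearity of the maps produced by the universal property.

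\textbf{One point to tighten.} $B$ is an arbitrary coherent sheaf; in the application it is the conormal sheaf of $X_{m,d}$, typically not locally free. So the classical projection formula does not apply as you invoke it in Steps 1 and 2. What you need is the cohomology-and-base-change version. Since $A^\vee$ and $\calO$ are flat over $X_{m,d}$ with vanishing fiberwise $H^1$ (by convexity of $V$, respectively genus zero), one has $(\pi_{m+1})_*(A^\vee\otimes\pi_{m+1}^*B)\cong(\pi_{m+1})_*A^\vee\otimes B$ and $(\pi_{m+1})_*\pi_{m+1}^*B\cong B$ for every quasi-coherent $B$. Your fallback in the last paragraph is precisely this, so it is not a real gap.
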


Note that the second equality in \eqref{eqnaefaef} holds since $(\pi_{m+1})_*A^\vee$ is locally free by Lemma \ref{lemmaconvexquotient}
combined with the fact that the map $B \to (\pi_{m+1})_*\pi_{m+1}^*B$ is an isomorphism by \cite[0E0L, 07US]{stacks-project}.



\begin{proof}
	Instead of working with the category of coherent sheaves, we will work with the category of Abelian cones (See \cite[Section 1]{BehrendFantechinormalcone}, \cite[Section 1.7]{EGA2}).
	In other words, for any stack $Z$, the category $\AbC(Z)$ whose objects are stacks over $Z$ of the form $\Spec(\Sym_{\calO_Z} \scrF)$
	where $\Sym_{\calO_Z} \scrF$ is the free symmetric algebra generated by a coherent sheaf $\scrF$ and where morphisms between such stacks are the ones induced by morphism between sheaves (or, equivalently, morphisms respecting the linear structure on these Abelian cones).

	Consider the functors:
	\begin{equation}
		F'_1 : \AbC(X_{m,d}) \to \CVect, \quad F'_1(B') = \Hom_{\AbC(X_{m+1,d})}(\pi_{m+1}^*B',\ev_{m+1}^*V),
	\end{equation}
	\begin{equation}
		\begin{aligned}
			F'_2 : \AbC(X_{m,d}) \to \CVect, \quad F'_2(B') =
			  & \Hom_{\AbC(X_{m,d})}(B',V_{m,d})                 \\
			= & \Hom_{\AbC(X_{m+1,d})}(\pi_{m+1}^*B',V_{m+1,d}).
		\end{aligned}
	\end{equation}
	Then composing each element of $F'_2(B')$ with the evaluation map \begin{equation}\label{eqnevsheaf}
		\ev_{m+1} : V_{m+1,d} \to \ev_{m+1}^*V
	\end{equation} for each $B' \in \AbC(X_{m,d})$
	induces a natural transformation $\check{\eta} : F'_2 \lra{} F'_1$.

	Since the map $\Coh(Y) \to \AbC(Y)$ sending $\scrF$ to $\Spec(\Sym_{\calO_Y} \scrF)$ is an anti-equivalence for any stack $Y$ (\cite[Section 1.7]{EGA2}), it is sufficient to prove that $\check{\eta}$ is a natural isomorphism.
	%
	By definition, the following universal property is satisfied:
	\[\begin{tikzcd}
			{\pi_{m+1}^*B'} & {\ev_{m+1}^*V} & {V_{m+1,d}} \\
			{B'} && {V_{m,d}}
			\arrow["\forall"{description}, from=1-1, to=1-2]
			\arrow["{\exists !}"', curve={height=18pt}, dashed, from=1-1, to=1-3]
			\arrow["{\pi_{m+1}}"', from=1-1, to=2-1]
			\arrow["{\ev_{m+1}}"', from=1-3, to=1-2]
			\arrow["{\pi_{m+1}}", from=1-3, to=2-3]
			\arrow["{\exists !}"', dashed, from=2-1, to=2-3]
		\end{tikzcd}\]
	By examining this diagram we see that $\check{\eta}$ is a natural isomorphism.
	The uniqueness property ensures that $\check{\eta}(B')$ is injective, and the existence property ensures that it is surjective.
\end{proof}

\begin{lemma} \label{lemmaembedding}
	The map $\iota_C$ from Equation \eqref{eqnmapfocones} is a closed immersion.
\end{lemma}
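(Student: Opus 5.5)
The plan is to translate the question into one about sheaves, using Lemma \ref{lemmaunversalrpero}. Write the normal cone as $C_{m,d} = \Spec(\bigoplus_k \scrI^k/\scrI^{k+1})$, where $\scrI$ is the ideal of $X_{m,d}$ in $Y_{m,\delta}$. Then $V_{m,d} = \Spec(\Sym((\pi_{m+1})_*A^\vee)^\vee)$, since $V_{m,d}$ is the total space of the locally free sheaf $(\pi_{m+1})_*\ev_{m+1}^*V$ and $A^\vee = \ev_{m+1}^*\scrN_{X/Y}$. A closed immersion of cones is the same as a surjection of the corresponding graded algebras. So it suffices to show that the induced map of graded algebras
\[
\Sym\big(((\pi_{m+1})_*A^\vee)^\vee\big) \to \bigoplus_k \scrI^k/\scrI^{k+1}
\]
is surjective. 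The target is generated in degree one by $\scrI/\scrI^2 = \scrN^*_{X_{m,d}/Y_{m,\delta}}$, so surjectivity reduces to degree one. There we need the top horizontal map of \eqref{eqndiagramnonsese2}, namely $((\pi_{m+1})_*A^\vee)^\vee \to \scrN^*_{X_{m,d}/Y_{m,\delta}}$ (with $Y$ in place of $\bP^N$), to be surjective.

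First I will check that the degree-one part of the algebra map is this map. The map $\iota_C$ was built from the universal property: the family $C_{m+1,d}=\pi_{m+1}^*C_{m,d} \to V$ is given by $\ev_{m+1}$. Pulled back along $\iota_{m+1,d}$, the map $\ev_{m+1}^*\scrI_X \to \scrI_{X_{m+1,d}}$ gives $A \to \pi_{m+1}^*(\scrI/\scrI^2)$, using $\scrI_{X_{m+1,d}}/\scrI_{X_{m+1,d}}^2 \cong \pi_{m+1}^*(\scrI/\scrI^2)$ because $\pi_{m+1}$ is flat and the square \eqref{eqncommdiagram} is cartesian. This is an element of $F_1(\scrI/\scrI^2)$. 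Lemma \ref{lemmaunversalrpero} turns it into the unique element of $F_2(\scrI/\scrI^2)$, which is the linear map $((\pi_{m+1})_*A^\vee)^\vee \to \scrI/\scrI^2$. Tracing through the universal diagram in that lemma's proof shows that this map is the degree-one piece of $\iota_C$.

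Next I will prove that this map is surjective. I would use diagram \eqref{eqndiagramnonsese2}: its bottom arrow is an isomorphism because $TY$ is convex. The right vertical map $\scrN^*_{X_{m,d}/Y_{m,\delta}} \to \Omega_{Y_{m,\delta}/\frM_m}|_{X_{m,d}}$ and the top map come from truncating the perfect obstruction theory \eqref{eqnobstructiontheory} relative to $\frM_m$, which gives $h^{-1}$-surjectivity. Alternatively, I can argue directly: $X_{m,d}\subset Y_{m,\delta}$ is cut out locally as the vanishing locus of the section of $(\pi_{m+1})_*\ev_{m+1}^*\scrN_{X/Y}$-type data given by pulling back local equations of $X$. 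Locally on $Y_{m,\delta}$, the equations $u^*f$ for $f \in \scrI_X$, taken as functions via pushforward along the universal curve, generate $\scrI$ because a map lies in $X$ exactly when every pulled-back local equation vanishes on the curve. Their classes in $\scrI/\scrI^2$ are the images of sections of $(\pi_{m+1})_*A^\vee$ dualized, which gives surjectivity. Once degree one is surjective, the full algebra map is surjective, and $\iota_C$ is a closed immersion.

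The main obstacle is the generation claim $\scrI = $ ideal generated by the pulled-back equations of $X$. The pushforward along the universal curve is not a pointwise evaluation, and its interplay with $\scrI^2$ needs care. My first attempt would be to verify it on a smooth atlas, using that $X_{m,d} = Y_{m,\delta}\times_{\mathrm{Hom}(\Sigma, Y)}\mathrm{Hom}(\Sigma,X)$ locally over $\frM_m$. In that setting the conormal computation for mapping spaces, $\pi_*$ of $\ev^*\scrN^*$ surjecting onto the conormal sheaf, is standard, and base change holds by convexity.
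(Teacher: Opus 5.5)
Your reduction to degree one, using that $\bigoplus_k\scrI^k/\scrI^{k+1}$ is generated by $\scrI/\scrI^2$, is the paper's factorization of $\iota_C$ through the closed immersion $C_{m,d}\hookrightarrow N_{m,d}$. Your identification of the degree-one piece with $\nu^{-1}(\ev_{m+1}^*)$, where $\nu\colon F_2\cong F_1$ is the isomorphism of Lemma \ref{lemmaunversalrpero}, also matches the paper. The gap is the step after that. You invoke $h^{-1}$-surjectivity of the obstruction theory as if $\nu^{-1}(\ev_{m+1}^*)$ were the top arrow of \eqref{eqndiagramnonsese2}, but you never show this. That arrow is not defined through the universal property of $V_{m,d}$. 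It is read off, via Illusie, from the truncated Behrend--Fantechi map $((R\pi_{m+1})_*\ev_{m+1}^*T_X)^\vee\to L^\bullet_{X_{m,d}/\frM_m}$, which is built from the evaluation map through Grothendieck duality. Your ``tracing through the universal diagram'' only involves $\nu$. So, as written, the surjectivity you invoke is a statement about a possibly different map.

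The paper bridges this without comparing the two constructions directly. It builds a second natural isomorphism $\nu'\colon F_1\cong F_2$ from the chain of duality isomorphisms \eqref{eqnsequenceofnt}. It uses the discussion preceding Proposition 6.2 of \cite{BehrendFantechinormalcone} to see that $\nu'(\ev_{m+1}^*)$ is the top arrow. It then notes that $\nu'\circ\nu$ is a natural automorphism of the representable functor $F_2=\Hom(((\pi_{m+1})_*A^\vee)^\vee,-)$. By Yoneda this is precomposition with an automorphism $\Phi$ of $((\pi_{m+1})_*A^\vee)^\vee$. Hence $\nu^{-1}(\ev_{m+1}^*)$ and $\nu'(\ev_{m+1}^*)$ differ by $\Phi$, and surjectivity transfers. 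You need this argument, or an explicit check that $\nu'=\nu^{-1}$.

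Your fallback argument does not fill the hole. $X$ is only locally a zero locus in $Y$, while the curves are not local on $Y$, so the generation claim for $\scrI$ remains unproved. The ``standard'' conormal computation for mapping spaces that you would cite is essentially the lemma itself. Also, the source there should be $((\pi_{m+1})_*\ev_{m+1}^*V)^\vee$, not $(\pi_{m+1})_*\ev_{m+1}^*V^*$.
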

\begin{proof}
	Let $F_1$, $F_2$ be as in Lemma \ref{lemmaunversalrpero} above
	and let $\nu : F_2 \lra{\cong} F_1$ be the natural isomorphism from the same Lemma.
	Let $m \in \bN$, $d \in H_2^+(X)$ and $\delta = (\iota_X)_*d$.
	Also, let $\scrI_{X_{m,d}}$ be the ideal sheaf in $\calO_{Y_{m,\delta}}$
	defining $X_{m,d}$.

	We let $N_{\check{m},d}$ be the normal bundle of $X_{\check{m},d}$ inside $Y_{\check{m},\delta}$ for each $\check{m} \in \bN$.
	The commutative diagram \eqref{eqncommdiagram} gives us the following diagram of normal bundles:
	\[\begin{tikzcd}
			{\pi_{m+1}^*N_{m,d}} & {N_{m+1,d}} & V \\
			& {N_{m,d}}
			\arrow[Rightarrow, no head, from=1-1, to=1-2]
			\arrow["{\ev_{m+1}}", from=1-2, to=1-3]
			\arrow["{\pi_{m+1}}"', from=1-2, to=2-2]
		\end{tikzcd}\]
	Since this represents a flat family of curves mapping to $V$
	we get an induced map
	\begin{equation} \label{eqnnormaltov0hd}
		\iota_N : N_{m,d} \to V_{m,d}.
	\end{equation}
	The map of cones $\iota_C$ factors as:
	\begin{equation} \label{eqnfactorization}
		C_{m,d} \hookrightarrow N_{m,d} \lra{\iota_N} V_{m,d}.
	\end{equation}
	where  $C_{m,d} \hookrightarrow N_{m,d}$ is the natural closed immersion from the normal cone to the normal bundle.
	Therefore, it is sufficient to prove that $\iota_N$ is a closed immersion.
	Equivalently, it is sufficient to show that the corresponding map of coherent sheaves:
	\begin{equation} \label{eqnmapidealshweaf}
		((\pi_{m+1})_*A^\vee)^\vee \to \iota_{m,d}^*\scrI_{X_{m,d}}
	\end{equation}
	is surjective.
	Now the map \eqref{eqnmapidealshweaf} is equal to
	$\nu^{-1}(\ev_{m+1}^*)$ where
	\begin{equation} \label{eqnevaev}
		\ev_{m+1}^* : A \to \iota_{m+1,d}^*\scrI_{X_{m+1,d}} = \pi_{m+1}^*\iota_{m,d}^*\scrI_{X_{m,d}}
	\end{equation}
	is induced by the evaluation map.
	Here $\ev_{m+1}^* \in F_2(B)$ where $B = \iota_{m,d}^*\scrI_{X_{m,d}}$.

	We let $\omega = \omega_{X_{m+1,d}/X_{m,d}}$ be the relative dualizing sheaf.
	Consider the sequence of natural isomorphisms in the derived category $D^b(\mod{\calO_{X_{m,d}}})$:
	\begin{equation} \label{eqnsequenceofnt}
		\begin{aligned}
			H^0(\Hom(A,\pi_{m+1}^*(-))) \lra{\cong} & H^0(\RHom(A,\pi_{m+1}^*(-)))
			\\  \lra{\cong} & H^0(\RHom(A \otimes \omega, \pi_{m+1}^*(-) \otimes \omega))                        \\
			\lra{\cong}                             & H^0(\RHom(A \otimes \omega, \pi_{m+1}^!(-)[-1]))
			\\
			\lra{\cong}                             & H^0 (\RHom((R\pi_{m+1})_*(A \otimes \omega [1]), (-)))                              \\
			\lra{\cong}                             & H^0 (\RHom((R\pi_{m+1})_*((A^\vee)^\vee \otimes \omega [1]), (-)))                  \\
			\lra{\cong}                             & H^0 (\RHom((R\pi_{m+1})_*((A^\vee)^\vee \otimes \pi^!_{m+1} \calO_{X_{m,d}}), (-))) \\
			\lra{\cong}                             & H^0 (\RHom((R\pi_{m+1})_*(\RHom(A^\vee, \pi^!_{m+1} \calO_{X_{m,d}})), (-)))        \\
			\lra{\cong}                             & H^0 (\RHom((R\pi_{m+1})_*(\RHom((R\pi_{m+1})_* A^\vee, \calO_{X_{m,d}}))), (-))     \\
			\lra{\cong}                             & H^0(\RHom((R\pi_{m+1})_*(A^\vee)^\vee, (-)))                                        \\
			\lra{\cong}                             & H^0(\Hom((\pi_{m+1})_*(A^\vee)^\vee, (-))).
		\end{aligned}
	\end{equation}
	We have \begin{equation}
		H^0(\Hom(A,\pi_{m+1}^*(B))) = F_1(B)
	\end{equation} for each coherent sheaf $B$. Similarly,
	we get \begin{equation}
		H^0(\Hom((\pi_{m+1})_*((A^\vee))^\vee, B)) = F_2(B)
	\end{equation} for each coherent sheaf $B$.
	Therefore, \eqref{eqnsequenceofnt} induces a natural isomorphism $\nu'$ between $F_1$ and $F_2$.
	Hence, $\nu' \circ \nu$ induces an automorphism $\Phi$ of $(\pi_{m+1})_*((A^\vee))^\vee$ in $\Coh(X_{m,d})$ by the Yoneda lemma.

	The map $\nu'(\ev_{m+1}^*)$ is equal to the top map in \eqref{eqndiagramnonsese2}
	by the discussion before Proposition 6.2 in \cite{BehrendFantechinormalcone}.
	Also, the top map in  \eqref{eqndiagramnonsese2} is surjective since the diagram  \eqref{eqndiagramnonsese2}  comes from a perfect obstruction theory.
	Hence, $\nu'(\ev_{m+1}^*)$ is surjective.
	Since $\Phi^{-1} \circ \nu'(\ev_{m+1}^*) = \nu^{-1}(\ev_{m+1}^*)$ and since $\Phi$ comes from an automorphism of $(\pi_{m+1})_*((A^\vee))^\vee$,
	we get that $\nu^{-1}((\ev_{m+1})^*)$ is also surjective.
	Therefore, since \eqref{eqnmapidealshweaf} is equal to $\nu^{-1}((\ev_{m+1})^*)$ we get that $\iota_N$ is a closed immersion.
	Hence, $\iota_C$ is a closed immersion.
\end{proof}

\begin{corollary} \label{corollarynormalconenonsence}
	Let $\iota_X : X \hookrightarrow Y$ be a closed immersion from a projective variety $X$ into a smooth variety $Y$ with $TY$ convex
	(Definition \ref{defnconvex}).
	Let
	\begin{equation}
		C^{X/Y}_\bullet = (C^{X/Y}_{m,d})_{m \in \bN, \ d \in H_2^+(X)}
	\end{equation}
	be a system of normal cones for $X/Y$ as in Definition \ref{defnsystemofnormalcones}.
	Suppose $m \in \bN$ and $d \in H_2^+(X)$.
	Also, let $C_{m,d}$ be the normal cone of $X_{m,d}$ inside $Y_{m,(\iota_X)_*d}$.
	Then $C_{m,d} \cong C^{X/Y}_{m,d}$.
\end{corollary}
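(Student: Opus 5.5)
The plan is to repeat the argument of Lemma \ref{lemmaembedding}, but to use nothing that needs $X$ to be smooth. That is, I will work throughout with Abelian cones and coherent sheaves, not with the vector bundle $V$. Concretely, let $\scrI_X$ be the ideal sheaf of $X$ in $Y$. In place of $V$ I take the Abelian cone $N_{X/Y} := \Spec(\Sym(\scrI_X/\scrI_X^2))$ over $X$; when $X$ is smooth this is the normal bundle. In place of $V_{m,d}$ I take the Abelian cone over $X_{m,d}$ representing the functor $B' \mapsto \Hom_{\AbC(X_{m+1,d})}(\pi_{m+1}^*B', \ev_{m+1}^*N_{X/Y})$. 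By the universal-property argument in the proof of Lemma \ref{lemmaunversalrpero}, this cone is $\Spec(\Sym(((\pi_{m+1})_*A^\vee)^\vee))$, where $A = \iota_{m+1,d}^*\ev_{m+1}^*\scrI_X$. The map $\iota_C$ of \eqref{eqnmapfocones}, and hence $C^{X/Y}_{m,d}$, are then defined word for word as before. Since $\iota_C$ factors as in \eqref{eqnfactorization} through the closed immersion $C_{m,d} \hookrightarrow N_{m,d}$, the corollary reduces to two claims. First, $\iota_N$ is a closed immersion. Second, the image of $\iota_C$ is $C^{X/Y}_{m,d}$; this holds by definition. Together they give $C_{m,d} \cong C^{X/Y}_{m,d}$.

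For the first claim it suffices, exactly as in Lemma \ref{lemmaembedding}, to show that the sheaf map \eqref{eqnmapidealshweaf}, $((\pi_{m+1})_*A^\vee)^\vee \to \iota_{m,d}^*\scrI_{X_{m,d}}$, is surjective. Lemma \ref{lemmaunversalrpero} identifies this map with $\nu^{-1}(\ev_{m+1}^*)$. Its proof is formal (the universal property, plus $B \cong (\pi_{m+1})_*\pi_{m+1}^*B$) and uses nothing about $X$ beyond the cone structure, so it carries over. Local freeness of $(\pi_{m+1})_*A^\vee$ was needed only to rewrite $F_2$ as in \eqref{eqnaefaef}. I will avoid it by working with $F_2(B) = \Hom(((\pi_{m+1})_*A^\vee)^\vee, B)$ directly, which is all the Yoneda argument needs.

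The main obstacle is surjectivity. In Lemma \ref{lemmaembedding} it came from the perfect obstruction theory of $X_{m,d}$ together with the duality chain \eqref{eqnsequenceofnt}, and for singular $X$ no such theory is available. I will instead argue directly from the definition of $X_{m,d}$ as a closed substack of $Y_{m,\delta}$. Namely, $X_{m,d}$ is the locus over which the universal map $\ev_{m+1}$ factors through $X$, i.e.\ where $\ev_{m+1}^*f = 0$ for all local sections $f$ of $\scrI_X$. Hence $\scrI_{X_{m,d}}$ is generated by the ``coefficients'' of the sections $\ev_{m+1}^*f$ along the fibers of $\pi_{m+1}$. These coefficients are the images of $\ev_{m+1}^*f$ under the pairings with local sections of $(\pi_{m+1})_*A^\vee$. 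This pairing is exactly the map \eqref{eqnmapidealshweaf}, read through the adjunction of Lemma \ref{lemmaunversalrpero}. I will check the claim étale-locally on $Y_{m,\delta}$. Over such a chart I cover the universal curve by affines, write $\scrI_X$ by finitely many local generators, and observe that the vanishing ideal of a family of functions on a proper flat curve is generated by the images of all $\calO$-linear functionals on the relevant pushforward. Convexity of $TY$ makes $Y_{m,\delta}$ smooth over $\frM_m$ and gives base change, so these functionals are exactly the sections of $(\pi_{m+1})_*A^\vee$ after restriction to $X_{m,d}$. This yields surjectivity, so $\iota_N$, and hence $\iota_C$, is a closed immersion. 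This proves the corollary for arbitrary projective $X$.
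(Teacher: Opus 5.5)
Your reduction (factor $\iota_C$ through $C_{m,d}\hookrightarrow N_{m,d}$, then prove \eqref{eqnmapidealshweaf} surjective) is exactly Lemma \ref{lemmaembedding}. The paper gets the corollary immediately from that lemma, under the section's standing assumption that $X$ is smooth; Definition \ref{defnsystemofnormalcones} needs the normal bundle $V$. The gap is your extension to singular $X$.

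Lemma \ref{lemmaunversalrpero} is not formal. Its proof identifies $F'_2$ with $F_2$ because $V_{m,d}$ is the total space of the vector bundle $(\pi_{m+1})_*A^\vee=(\pi_{m+1})_*\ev_{m+1}^*V$, and this uses that $A=\ev_{m+1}^*V^*$ is locally free. Write $\pi:=\pi_{m+1}$ and let $\omega_\pi$ be its relative dualizing sheaf. For arbitrary coherent $A$, the first four isomorphisms of \eqref{eqnsequenceofnt} give $\Hom(A,\pi^*B)\cong\Hom(R^1\pi_*(A\otimes\omega_\pi),B)$. So the cone representing your functor is $\Spec\Sym R^1\pi_*(A\otimes\omega_\pi)$, not $\Spec\Sym(((\pi_*A^\vee)^\vee)$. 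The later steps of that chain, which replace $A$ by $(A^\vee)^\vee$, are exactly where local freeness is used.

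A concrete failure: take $d=0$ and $m=3$. Then $X_{3,0}=X$, and $\pi_4$ and $\ev_4$ are both the projection $p:X\times\overline{M}_{0,4}\to X$. We have $A=p^*\mathcal{C}$ with $\mathcal{C}=\scrI_X/\scrI_X^2$, and $\ev_4^*$ is the identity of $p^*\mathcal{C}$. So $F_1(B)=\Hom(\mathcal{C},B)$ while $F_2(B)=\Hom(\mathcal{C}^{\vee\vee},B)$. Then $\nu^{-1}(\ev_4^*)$ would be a map $\mathcal{C}^{\vee\vee}\to\mathcal{C}$ splitting $\mathcal{C}\to\mathcal{C}^{\vee\vee}$. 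No such map exists when $\mathcal{C}$ has torsion, for instance for three non-coplanar concurrent lines in $\bP^3$, where $xyz$ gives a nonzero torsion class at the common point. So the map whose surjectivity you set out to prove need not even be defined.

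The surjectivity step is also unjustified as written, even for smooth $X$. The principle that a vanishing ideal is generated by images of $\calO$-linear functionals on a pushforward holds only when that pushforward is locally free and commutes with base change. Here the relevant map $\ev_{m+1}^*\scrI_X\to\calO_{Y_{m+1,\delta}}$ lives on the universal curve $\pi:Y_{m+1,\delta}\to Y_{m,\delta}$, where $\ev_{m+1}^*\scrI_X$ is in general not locally free. Smoothness of $Y_{m,\delta}$ over $\frM_m$ plays no role, and for singular $X$ convexity of $TY$ gives no control over $A$ or $(\pi_{m+1})_*A^\vee$.

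What is true is the following. By the same duality, $\scrI_{X_{m,d}}$ is the image of the map $R^1\pi_*(\ev_{m+1}^*\scrI_X\otimes\omega_\pi)\to\calO_{Y_{m,\delta}}$ adjoint to $\ev_{m+1}^*\scrI_X\to\calO_{Y_{m+1,\delta}}$. Top-degree cohomology commutes with base change, so tensoring with $\calO_{X_{m,d}}$ gives a surjection $R^1\pi_*(A\otimes\omega_\pi)\twoheadrightarrow\iota_{m,d}^*\scrI_{X_{m,d}}$ adjoint to $\ev_{m+1}^*$. To turn this into surjectivity of \eqref{eqnmapidealshweaf} you still need $R^1\pi_*(A\otimes\omega_\pi)\cong((\pi_*A^\vee)^\vee$. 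That is relative Serre duality for the locally free $A=\ev_{m+1}^*V^*$ with $V$ convex (Lemma \ref{lemmaconvexquotient}), and it is where smoothness of $X$ and convexity of $TY$ actually enter.

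Repaired this way, your direct argument is a legitimate alternative to the obstruction-theoretic proof of Lemma \ref{lemmaembedding}, bypassing \eqref{eqndiagramnonsese2}. With $\Spec\Sym R^1\pi_*(A\otimes\omega_\pi)$ as target it even yields a closed-immersion statement for singular $X$. As submitted, however, the transfer of Lemma \ref{lemmaunversalrpero} is false and the key surjectivity step rests on an incorrect justification.
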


\subsection{Virtual Fundamental Class in Terms of Segre Classes} \label{subsectionvfcfromcones}

In this Section we wish to compute the virtual fundamental class as well as give estimates of Gromov-Witten counts using Segre classes only and hence use the $D$-volume to bound such Gromov-Witten counts.
The other purpose of this section is to use an auxillary bundle $T$, which in the main theorem in Section \ref{sectionmainargument} is easier to bound than the normal bundle $V$ of a smooth projective variety in projective space.

\begin{lemma} \label{lemmasegreclasscomputation}
	Let $\pi : B' \to B''$ be a morphism of stacks and let $W''$ be a vector bundle over $B''$. Define $W' := \pi^*W''$.
	Suppose that the of dimension $B''$ is at most $\delta$.
	Let $C$ be any cone over $B'$.
	Then:
	\begin{equation}
		s(C) \cap c(W')
		= \sum_{\ell = 0}^\delta \sum_{k=0}^{\delta - \ell} (-1)^\ell {k+\ell \choose \ell} s(C \oplus (W')^\ell).
	\end{equation}
\end{lemma}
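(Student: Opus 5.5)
The plan is to reduce the statement to the multiplicativity of Segre classes under direct sums with vector bundles, together with a nilpotence statement for Chern class operators pulled back from $B''$. The identity I will use is Fulton's formula for the Segre class of a cone plus a vector bundle,
\[
s(C \oplus E) = s(E) \cap s(C),
\]
for any cone $C$ and vector bundle $E$ over $B'$. Here $s(E) = c(E)^{-1}$ is the (operational) Segre class of $E$ (Fulton, Example 4.1.5). Applying it $\ell$ times with $E = W'$ gives
\[
s(C \oplus (W')^\ell) = s(W')^\ell \cap s(C) = c(W')^{-\ell} \cap s(C).
\]
So the right-hand side of the claimed formula is $P(s(W')) \cap s(C)$, where
\[
P(t) := \sum_{\ell=0}^{\delta}\sum_{k=0}^{\delta-\ell}(-1)^\ell {k+\ell \choose \ell} t^\ell .
\]
It therefore suffices to show that $c(W') = P(s(W'))$ as operators on $CH_*(B')$.

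For this, write $s(W') = 1 - y$, where $y$ is a polynomial in the Chern classes of $W'$ with no degree-zero part. Since $W' = \pi^*W''$, we have $y = \pi^*\underline{y}$ for the corresponding class $\underline{y}$ built from $W''$. The key input is that $\underline{y}^{\delta+1} = 0$, because every monomial in $\underline{y}^{\delta+1}$ has operational degree $> \delta \geq \dim B''$. I will cite the vanishing of operational Chow classes of degree above the dimension (Fulton, Chapter 17), or alternatively reduce via the splitting principle to products of first Chern classes of line bundles on a flag bundle over $B''$. Pulling back along $\pi$ preserves products, so $y^{\delta+1} = \pi^*(\underline{y}^{\delta+1}) = 0$. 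Hence
\[
c(W') = (1-y)^{-1} = \sum_{k=0}^{\delta} y^k = \sum_{k=0}^{\delta}(1-s(W'))^k .
\]

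The rest is combinatorics. Expand by the binomial theorem:
\[
\sum_{k=0}^{\delta}\sum_{\ell=0}^{k}{k \choose \ell}(-1)^\ell s(W')^\ell .
\]
Reindex by $k = k' + \ell$, so $k'$ runs from $0$ to $\delta-\ell$ and $\ell$ from $0$ to $\delta$. This gives exactly $P(s(W'))$. Capping with $s(C)$ and using the first step gives the lemma.

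The only step needing care is the nilpotence. The base $B'$ can have arbitrarily large dimension, so the truncation genuinely relies on pulling back from the low-dimensional $B''$. The bound must be justified for operational classes on the possibly singular stack $B''$, with rational coefficients, and I expect the operational vanishing above dimension to handle this.
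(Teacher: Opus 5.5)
Your proposal is correct and follows essentially the paper's proof: the paper likewise writes $c(W') = \sum_{k=0}^{\delta}(-1)^k\bigl(c^{-1}(W')-1\bigr)^k$, expands binomially, reindexes, and uses $s(C)\cap c^{-1}\bigl((W')^\ell\bigr) = s\bigl(C\oplus (W')^\ell\bigr)$. The paper takes the truncation at $k=\delta$ for granted, so your explicit nilpotence argument via pullback from $B''$ fills a gap the paper leaves implicit. One caution if you justify it through the splitting principle instead of operational vanishing: the flag bundle over $B''$ has dimension larger than $\delta$, so you must account for the fiber dimension (for instance by writing the class as a pushforward of a product with a class of fiber degree) to get the bound $\delta$.
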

\begin{proof}
	Let $c^{-1}(W')$ be the inverse of the total Chern class $c(W')$ of $W'$.
	We have
	\begin{equation}
		c(W') = 1/(1 + (c^{-1}(W')-1)) = \sum_{k = 0}^\delta (-1)^k (c^{-1}(W')-1)^k
	\end{equation}
	\begin{equation}
		= \sum_{k = 0}^\delta \sum_{\ell=0}^k (-1)^{k+(k-\ell)} {k \choose \ell} c^{-1}(W')^\ell
		= \sum_{\ell=0}^\delta \sum_{k=\ell}^\delta (-1)^\ell {k \choose \ell} c^{-1}(W')^\ell
	\end{equation}
	\begin{equation}
		= \sum_{\ell = 0}^\delta \sum_{k=0}^{\delta - \ell} (-1)^\ell {k+\ell \choose \ell} c^{-1}(W')^\ell
		= \sum_{\ell = 0}^\delta \sum_{k=0}^{\delta - \ell} (-1)^\ell {k+\ell \choose \ell} c^{-1}((W')^\ell).
	\end{equation}
	Hence,
	\begin{equation} \label{eqncandwequantion}
		s(C) \cap c(W') = \sum_{\ell = 0}^\delta \sum_{k=0}^{\delta - \ell} (-1)^\ell {k+\ell \choose \ell} s(C) \cap c^{-1}((W')^\ell)
	\end{equation}
	\begin{equation}
		= \sum_{\ell = 0}^\delta \sum_{k=0}^{\delta - \ell} (-1)^\ell {k+\ell \choose \ell} s(C \oplus (W')^\ell).
	\end{equation}
\end{proof}

Let $\iota_X : X \hookrightarrow \bP^N$ be the inclusion map of a smooth projective variety $X$ into projective space
and let $V$ be its normal bundle.
Also, let \begin{equation}
	C^{X/\bP^N}_\bullet = (C^{X/\bP^N}_{m,d})_{m \in \bN, \ d \in H_2^+(X)}
\end{equation}
be a system of normal cones for $X/\bP^N$ as in Definition
\ref{defnsystemofnormalcones}.
Now let us suppose we have a short exact sequence of strongly convex vector bundles
\begin{equation} \label{eqnexactsequencesforV}
	0 \to V \to T \to W \to 0
\end{equation}
over $X$.

\begin{lemma} \label{lemmaesfsefesf}
	Let $m \in \bN$, $d \in H_2^+(X)$ and let $a_1,\cdots,a_m \in H^*(X)$ be cohomology classes of pure degree.
	Let \begin{equation} \label{eqnforhd}
		e_d \geq (N+1)\deg(d) + N-3
	\end{equation}
	and suppose that $\sum_{j=1}^m \deg(a_j) = 2\dim([X_{m,d}]^{\virt})$.
	Then
	\begin{equation} \label{eqnsegregw}
		\begin{aligned}
			 & \langle a_1,\cdots,a_m \rangle^X_{m,d} = \\ &
			\sum_{\ell = 0}^{e_d} \sum_{k=0}^{e_d-\ell} (-1)^\ell {k+\ell \choose \ell} \langle a_1,\cdots,a_m \rangle_{m,d}^{s(C^{X/\bP^N}_\bullet \oplus T^\ell_\bullet \oplus W_\bullet)}
		\end{aligned}
	\end{equation}
\end{lemma}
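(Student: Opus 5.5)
We combine Siebert's formula (Proposition \ref{propcomparisonwithvfc}) with the identification of normal cones (Corollary \ref{corollarynormalconenonsence}) and the Chern class expansion of Lemma \ref{lemmasegreclasscomputation}.

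\emph{Step 1.} By Proposition \ref{propcomparisonwithvfc}, $[X_{m,d}]^{\virt}$ is the component of dimension $c_1(TX)(d)+n-3+m$ of $s(C_{m,d}) \cap c(V_{m,d})$, where $C_{m,d}$ is the normal cone of $X_{m,d}$ in $\bP^N_{m,\delta}$. Since $T\bP^N$ is convex, Corollary \ref{corollarynormalconenonsence} identifies $C_{m,d}$ with $C^{X/\bP^N}_{m,d} \subset V_{m,d}$, so $s(C_{m,d}) = s(C^{X/\bP^N}_{m,d})$. Because $\sum_j \deg(a_j) = 2\dim[X_{m,d}]^{\virt}$, pairing $\prod_j \ev_j^*a_j$ against the full class $s(C^{X/\bP^N}_{m,d}) \cap c(V_{m,d})$ only sees the component of the right dimension. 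Hence $\langle a_1,\dots,a_m\rangle^X_{m,d} = \int_{s(C^{X/\bP^N}_{m,d}) \cap c(V_{m,d})} \prod_j \ev_j^* a_j$.

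\emph{Step 2.} Since $V$, $T$, $W$ are convex, $V_{m,d}$, $T_{m,d}$, $W_{m,d}$ are vector bundles. Taking $H^0$ of the pullback of \eqref{eqnexactsequencesforV} along each curve gives a short exact sequence
\[
0 \to V_{m,d} \to T_{m,d} \to W_{m,d} \to 0,
\]
using $H^1(u^*V)=0$. Therefore $c(V_{m,d}) = c(T_{m,d})\, c(W_{m,d})^{-1}$, and
\[
s(C) \cap c(V_{m,d}) = s(C \oplus W_{m,d}) \cap c(T_{m,d}).
\]
Here we use $s(C \oplus W) = s(C) \cap c(W)^{-1}$, which is standard for the Segre class of the closure of a cone direct-summed with a bundle.

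Now apply Lemma \ref{lemmasegreclasscomputation} to the cone $C^{X/\bP^N}_{m,d} \oplus W_{m,d}$, with $B' = X_{m,d}$, $B'' = X_{0,d}$, $\pi$ the forgetful map, and $W'' = T_{0,d}$. This is legitimate because $T_{m,d} = \pi^* T_{0,d}$. By Lemma \ref{lemmadimensionofX}, $\dim X_{0,d} \le (N+1)\deg(d)+N-3 \le e_d$, so the lemma applies with $\delta = e_d$. It gives
\[
s(C^{X/\bP^N}_{m,d}\oplus W_{m,d})\cap c(T_{m,d}) = \sum_{\ell=0}^{e_d}\sum_{k=0}^{e_d-\ell}(-1)^\ell\binom{k+\ell}{\ell}\, s\bigl(C^{X/\bP^N}_{m,d}\oplus T_{m,d}^{\ell}\oplus W_{m,d}\bigr).
\]
The truncation is valid because every Chern class of $T_{m,d}$ is pulled back from a space of dimension at most $e_d$, so $(c^{-1}(T_{m,d})-1)^k = 0$ for $k > e_d$.

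\emph{Step 3.} Integrate $\prod_j \ev_j^* a_j$ term by term. The sequences $C^{X/\bP^N}_\bullet \oplus T^\ell_\bullet \oplus W_\bullet$ form systems of cones, compatible with the forgetful maps, so each term is by definition $\langle a_1,\dots,a_m\rangle^{s(C^{X/\bP^N}_\bullet\oplus T^\ell_\bullet\oplus W_\bullet)}_{m,d}$. This yields \eqref{eqnsegregw}.

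The main point needing care is Step 1's identification together with the dimension bookkeeping. Specifically, one must check that the morphism of cones \eqref{eqnmorphismofcones1} used by Siebert coincides with $\iota_C$, so that Siebert's $s(C_{m,d})$, pushed into $V_{m,d}$, equals $s(C^{X/\bP^N}_{m,d})$. Lemma \ref{lemmaembedding} shows $\iota_C$ is a closed immersion, so the Segre classes agree. A second check is that Segre classes of direct sums with pulled-back bundles behave multiplicatively for non-closed cones; this holds after passing to closures, which is how $s(\cdot)$ is defined in Definition \ref{defnsegrefromsystemsofcones}.
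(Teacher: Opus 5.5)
Your proposal is correct and follows the paper's proof essentially verbatim. The paper likewise uses the short exact sequence $0\to V_{m,d}\to T_{m,d}\to W_{m,d}\to 0$ to rewrite $s(C^{X/\bP^N}_{m,d})\cap c(V_{m,d})$ as $s(C^{X/\bP^N}_{m,d}\oplus W_{m,d})\cap c(T_{m,d})$, then applies Lemma \ref{lemmasegreclasscomputation} with the same choices ($B'=X_{m,d}$, $B''=X_{0,d}$, the forgetful map, $\delta=e_d$), together with Proposition \ref{propcomparisonwithvfc} and Corollary \ref{corollarynormalconenonsence}; your extra remarks (the degree condition isolating the virtual-dimension component, and $T_{m,d}=\pi^*T_{0,d}$) only make explicit what the paper leaves implicit.
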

\begin{proof}
	Since $V,T, W$ are convex, we have a short exact sequence:
	\begin{equation}
		0 \to V_{m,d} \to T_{m,d} \to W_{m,d} \to 0
	\end{equation}
	and so
	\begin{equation}
		s(C^{X/\bP^N}) \cap c(V_{m,d}) = s(C^{X/\bP^N}_{m,d} \oplus W_{m,d}) \cap c(T_{m,d}).
	\end{equation}
	Therefore, by Lemma \ref{lemmasegreclasscomputation} with
	\begin{itemize}
		\item $B' = X_{m,d}$, $B'' = X_{0,d}$,
		\item $\pi$ the map forgetting all marked points,
		\item $C = C^{X/\bP^N}_{m,d} \oplus W_{m,d}$,
		\item $W' = T_{m,d}$ and
		\item $\delta = e_d$ (Lemma \ref{lemmadimensionofX})
	\end{itemize}
	combined with Proposition \ref{propcomparisonwithvfc} and Corollary \ref{corollarynormalconenonsence}
	we get that \eqref{eqnsegregw} holds.

\end{proof}

\begin{lemma} \label{lemmavolumeboundsimplyGWbounds}
	Let $D, D'$ be divisors on $X$ so that
	\begin{enumerate}
		\item $D$ is linearly equivalent to $4D'$ where $D'$ ample,
		\item $\calO(D')$, $V^* \otimes \calO(D')$, $W^* \otimes \calO(D')$, $T^* \otimes \calO(D')$ are all globally generated and
		\item $D$ is smooth and generically transverse to the system of normal cones $C^{X/\bP^N}_\bullet$ (Definitions \ref{defngenericallytransverse} and \ref{defnsystemofnormalcones})
		      and is $V$, $T$ and $W$-large (Definition \ref{defncompactieab}).
	\end{enumerate}
	Let $\omega$ be a K\"{a}hler form Poincar\'{e} dual to $D'$.
	Also, let $m \in \bN$, $d \in H_2^+(X)$ and let $\alpha_1,\cdots,\alpha_m$ be closed homogenous de Rham forms on $X$.
	Then
	\begin{equation} \label{eqndvolinequalityforbundles}
		\begin{aligned}
			 & \left| \langle [\alpha_1],\cdots,[\alpha_m] \rangle^X_{m,d} \right| \leq \\ &
			\sum_{\ell = 0}^{r_d} \sum_{k=0}^{r_d-\ell}  {k+\ell \choose \ell} (23r_d + 2 m N + 18m +\rho_{d,T}+23)! (\ell\rho_{d,T})!
			D\Vol(C^{X/\bP^N}_{m,d}, T_{m,d}^\ell, W_{m,d})
			\prod_{j=1}^m |\alpha_j|_{C^0,\omega}
		\end{aligned}
	\end{equation}
	if $X_{m,d}$ is non-empty
	where
	\begin{equation} \label{eqnaefaefforrd}
		r_d :=  (N+1)\deg(d) + N + c_1(V)(d) + c_1(W)(d) + \dim(V) + \dim(W)
	\end{equation}
	\begin{equation} \label{eqnrhodT}
		\rho_{d,T} := c_1(T)(d) + \dim(T).
	\end{equation}
\end{lemma}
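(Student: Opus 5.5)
The plan is to combine Lemma \ref{lemmaesfsefesf} with Proposition \ref{propboundsforsegreclass}, applied term by term. Since $\alpha_j$ is homogeneous, the invariant vanishes unless $\sum_j \deg(\alpha_j) = 2\dim([X_{m,d}]^{\virt})$, so we may assume this. We take $e_d := r_d \geq (N+1)\deg(d)+N-3$ in Lemma \ref{lemmaesfsefesf}. This is harmless: the extra terms in the $\ell,k$ sums only make the upper bound larger, and the expansion $c(W')=\sum_k(-1)^k(c^{-1}(W')-1)^k$ in Lemma \ref{lemmasegreclasscomputation} is valid for any cutoff at least $\dim B''$. This writes $\langle[\alpha_1],\cdots,[\alpha_m]\rangle^X_{m,d}$ as a signed sum, with coefficients ${k+\ell\choose\ell}$, of the Segre counts
\[
\langle [\alpha_1],\cdots,[\alpha_m]\rangle_{m,d}^{s(C^{X/\bP^N}_\bullet \oplus T^\ell_\bullet\oplus W_\bullet)}.
\]
The triangle inequality then reduces the lemma to bounding each such term by
\[
(23r_d+2mN+18m+\rho_{d,T}+23)!\,(\ell\rho_{d,T})!\;D\Vol(C^{X/\bP^N}_{m,d},T^\ell_{m,d},W_{m,d})\prod_j|\alpha_j|_{C^0,\omega}.
\]

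For a fixed $\ell$, apply Proposition \ref{propboundsforsegreclass} to the vector bundle $\mathcal V_\ell := V\oplus T^{\oplus\ell}\oplus W$ with the single system of cones $C^{X/\bP^N}_\bullet\oplus T^\ell_\bullet\oplus W_\bullet$. The hypotheses are checked as follows.
\begin{itemize}
\item $\mathcal V_\ell$ is strongly convex because $V,T,W$ are.
\item $\mathcal V_\ell^*\otimes\calO(D')$ is globally generated because each summand is.
\item $D$ is $\mathcal V_\ell$-large, taking the summands $V,T,\dots,T,W$ in Definition \ref{defncompactieab}.
\item $D$ is generically transverse to the sum cone. Over a dense open set of $C^{X/\bP^N}_{m,d}$ the underlying curve is transverse to $D$, and the vector bundle factors do not affect the base curve.
\end{itemize}
The proposition then yields the constant $(9e_d+2mN+6m+9)!\,(\rho+1)^{4(e_d+m+1)}\,\rho!$ multiplying $D\Vol$ of the sum cone, where $e_d=(N+1)\deg(d)+N$ and $\rho = c_1(V)(d)+c_1(W)(d)+\dim V+\dim W+\ell\rho_{d,T}$.

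Next we compare the single sum cone with the tuple of cones. Proposition \ref{propmallestcompactificationinequality} says $D\Vol$ of the fiber product is at most $D\Vol$ of the tuple $(C^{X/\bP^N}_{m,d},T_{m,d},\dots,T_{m,d},W_{m,d})$. Its hypothesis requires $D-\sum_i D'_i$ to be nef, with one $D'_i$ for each of the $\ell+2$ summands. This fails as soon as $\ell+2>4$ if we take each $D'_i=D'$. This is the main obstacle. Two routes are available:
\begin{itemize}
\item Interpret $D\Vol(C^{X/\bP^N}_{m,d},T^\ell_{m,d},W_{m,d})$ as a three-cone volume, with $T^\ell_{m,d}=(T^{\oplus\ell})_{m,d}$ a single cone. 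Then only three $D'_i$ are needed, namely $D'$ for $V$, for $T^{\oplus \ell}$ (its dual twisted by $D'$ is globally generated) and for $W$, and $D-3D'=D'$ is ample, hence nef.
\item Alternatively, skip Proposition \ref{propmallestcompactificationinequality} and use the sum cone directly, since the target expression's notation allows this.
\end{itemize}
I would adopt the first reading. The fiber product of the three cones is exactly the sum cone in $(\mathcal V_\ell)_{m,d}$, so the proposition applies.

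Finally, the constants are absorbed using \eqref{eqnreallreallyterribleinequality} and $e_d\le r_d$. We have $\rho!\le(\rho_0+\ell\rho_{d,T})!\le$ something of the form $(\rho_0)!\cdot 2^{\cdots}(\ell\rho_{d,T})!$, where $\rho_0\le r_d$, and the binomial factor ${\rho_0+\ell\rho_{d,T}\choose \rho_0}$ is bounded by $(\ell\rho_{d,T}+1)^{\rho_0}$. The power $(\rho+1)^{4(e_d+m+1)}$ is likewise split into powers of $(r_d+1)$ and $(\ell\rho_{d,T}+1)$. All the $r_d$- and $m$-dependent factorials and powers are then merged into a single factorial $(23r_d+2mN+18m+\rho_{d,T}+23)!$. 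The stray factor $(\ell\rho_{d,T}+1)^{O(r_d+m)}$ is the delicate part of this bookkeeping: its $\ell$-dependence must be absorbed either into $(\ell\rho_{d,T})!$ or, using $\ell\le r_d$, into the big factorial.
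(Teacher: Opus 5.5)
Your proposal is correct and follows the paper's proof essentially step for step. The steps are: Lemma~\ref{lemmaesfsefesf} to pass to Segre counts of $C^{X/\bP^N}_\bullet\oplus T^\ell_\bullet\oplus W_\bullet$; Proposition~\ref{propboundsforsegreclass} for the bundle $V\oplus T^{\oplus\ell}\oplus W$; Proposition~\ref{propmallestcompactificationinequality} to pass from the sum cone to the tuple; and then the same absorption of the $\ell$-dependence using $\ell\le r_d$ and \eqref{eqnreallreallyterribleinequality}. Your three-cone reading of $D\Vol(C^{X/\bP^N}_{m,d},T^\ell_{m,d},W_{m,d})$, with $D-3D'\sim D'$ nef, is the reading the paper implicitly uses, since it later treats $T^k$ as a single bundle with a single $D'_T$; this also supplies the hypothesis check for Proposition~\ref{propmallestcompactificationinequality} that the paper leaves unstated.
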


Note that $r_d$ and $\rho_{d,T}$ are non-negative numbers since
\begin{equation} \label{eqnnonnegativity}
	c_1(V)(d) \geq 0, \quad c_1(W)(d) \geq 0, \quad c_1(T)(d) \geq 0
\end{equation}
due to the fact that $V,W,T$ are strongly convex.

\begin{proof}[Proof of Lemma \ref{lemmavolumeboundsimplyGWbounds}.]
	We can assume that $\langle [\alpha_1],\cdots,[\alpha_m] \rangle^X_{m,d}$ is non-zero. Hence,
	$\sum_{j=1}^m \deg(\alpha_j) = \dim{[X_{m,d}]^{\virt}}$.
	Let $e_d$ be as in Equation \eqref{eqnedeqn}.
	By Lemma \ref{lemmaesfsefesf}, we have
	\begin{equation} \label{eqnaefaefeaf}
		\left| \langle [\alpha_1],\cdots,[\alpha_m] \rangle^X_{m,d} \right|
	\end{equation}
	\begin{equation} \label{eqnrighofequanfromlemma77}
		\leq  \sum_{\ell = 0}^{e_d} \sum_{k=0}^{e_d-\ell} {k+\ell \choose \ell} \left| \langle [\alpha_1],\cdots,[\alpha_m] \rangle_{m,d}^{s(C^{X/\bP^N}_\bullet \oplus T^\ell_\bullet \oplus W_\bullet)} \right|.
	\end{equation}
	For each vector bundle $W'$ define
	\begin{equation} \label{eqndefonrorho}
		\rho_{d,W'} := c_1(W')(d) + \dim(W').
	\end{equation}
	Since $D$ is $V$, $T$ and $W$-large, it is also $V \oplus T^\ell \oplus W$-large for each $\ell \in \bN$.
	Hence, by Proposition \ref{propboundsforsegreclass}, we have that \eqref{eqnrighofequanfromlemma77} is bounded above by:
	\begin{equation} \label{eqnconsequenceofprop54}
		\begin{aligned}
			\sum_{\ell = 0}^{e_d} \sum_{k=0}^{e_d-\ell}  {k+\ell \choose \ell}
			(9e_d + 2 m N + 6m +9)! (\rho_{d,V \oplus T^\ell \oplus W}+1)^{4(e_d+m+1)}(\rho_{d,V \oplus T^\ell \oplus W})!
			\\
			\times D\Vol(C^{X/\bP^N}_{m,d} \oplus T_{m,d}^\ell \oplus W_{m,d})
			\prod_{j=1}^m |\alpha_j|_{C^0,\omega}.
		\end{aligned}
	\end{equation}
	By Proposition \ref{propmallestcompactificationinequality}, this is bounded above by:
	\begin{equation}
		\begin{aligned}
			\sum_{\ell = 0}^{e_d} \sum_{k=0}^{e_d-\ell}  {k+\ell \choose \ell} (9e_d + 2 m N + 6m +9)! (\rho_{d,V \oplus T^\ell \oplus W}+1)^{4(e_d+m+1)}(\rho_{d,V \oplus T^\ell \oplus W})!
			\\
			\times D\Vol(C^{X/\bP^N}_{m,d}, T_{m,d}^\ell, W_{m,d})
			\prod_{j=1}^m |\alpha_j|_{C^0,\omega}
		\end{aligned}
	\end{equation}
	\begin{equation}
		\begin{aligned}
			\leq \sum_{\ell = 0}^{e_d} \sum_{k=0}^{e_d-\ell}  {k+\ell \choose \ell} (9e_d + 2 m N + 6m +9)! (\ell\rho_{d,T} + \rho_{d,V\oplus W}+1)^{4(e_d+m+1)}(\ell\rho_{d,T} + \rho_{d,V\oplus W})!
			\\
			\times D\Vol(C^{X/\bP^N}_{m,d}, T_{m,d}^\ell, W_{m,d})
			\prod_{j=1}^m |\alpha_j|_{C^0,\omega}
		\end{aligned}
	\end{equation}
	\begin{equation}
		\begin{aligned}
			= \sum_{\ell = 0}^{e_d} \sum_{k=0}^{e_d-\ell}  {k+\ell \choose \ell} (9e_d + 2 m N + 6m +9)! (\ell\rho_{d,T} + \rho_{d,V\oplus W}+1)^{4(e_d+m+1)}
			\\ \times
			{\ell\rho_{d,T} + \rho_{d,V\oplus W} \choose \rho_{d,V \oplus W}}(\rho_{d,V \oplus W})!(\ell\rho_{d,T})!
			D\Vol(C^{X/\bP^N}_{m,d}, T_{m,d}^\ell, W_{m,d})
			\prod_{j=1}^m |\alpha_j|_{C^0,\omega}
		\end{aligned}
	\end{equation}
	\begin{equation}
		\begin{aligned}
			\leq \sum_{\ell = 0}^{e_d} \sum_{k=0}^{e_d-\ell}  {k+\ell \choose \ell} (9e_d + 2 m N + 6m +9)! (\ell\rho_{d,T} + \rho_{d,V\oplus W}+1)^{4(e_d+m+1)}
			\\ \times
			(\ell\rho_{d,T} + \rho_{d,V\oplus W})^{\rho_{d,V \oplus W}}(\rho_{d,V \oplus W})!(\ell\rho_{d,T})!
			D\Vol(C^{X/\bP^N}_{m,d}, T_{m,d}^\ell, W_{m,d})
			\prod_{j=1}^m |\alpha_j|_{C^0,\omega}
		\end{aligned}
	\end{equation}
	\begin{equation}\label{eqnpresetidlingaprox}
		\begin{aligned}
			\leq \sum_{\ell = 0}^{e_d} \sum_{k=0}^{e_d-\ell}  {k+\ell \choose \ell} (9e_d + 2 m N + 6m +9)! (\ell\rho_{d,T} + \rho_{d,V\oplus W}+1)^{4(e_d+m+1)+\rho_{d,V \oplus W}}
			\\
			\times
			\rho_{d,V \oplus W}!(\ell\rho_{d,T})!
			D\Vol(C^{X/\bP^N}_{m,d}, T_{m,d}^\ell, W_{m,d})
			\prod_{j=1}^m |\alpha_j|_{C^0,\omega}
		\end{aligned}
	\end{equation}
	\begin{equation}\label{eqnpresetidlingaprox25}
		\begin{aligned}
			\leq \sum_{\ell = 0}^{e_d} \sum_{k=0}^{e_d-\ell}  {k+\ell \choose \ell} (9e_d + 2 m N + 6m +9)! (\ell\rho_{d,T} + \rho_{d,V\oplus W}+1)^{4(e_d+m+1)+\rho_{d,V \oplus W}}
			\\
			\times
			\rho_{d,V \oplus W}^{\rho_{d,V \oplus W}}(\ell\rho_{d,T})!
			D\Vol(C^{X/\bP^N}_{m,d}, T_{m,d}^\ell, W_{m,d})
			\prod_{j=1}^m |\alpha_j|_{C^0,\omega}
		\end{aligned}
	\end{equation}
	\begin{equation} \label{eqnpresetidlingaprox2}
		\begin{aligned}
			\leq \sum_{\ell = 0}^{e_d} \sum_{k=0}^{e_d-\ell}  {k+\ell \choose \ell} (9e_d + 2 m N + 6m +9)! (\ell\rho_{d,T} + \rho_{d,V\oplus W}+1)^{4(e_d+m+1)+2\rho_{d,V \oplus W}}
			\\
			\times(\ell\rho_{d,T})!
			D\Vol(C^{X/\bP^N}_{m,d}, T_{m,d}^\ell, W_{m,d})
			\prod_{j=1}^m |\alpha_j|_{C^0,\omega}
		\end{aligned}
	\end{equation}
	\begin{equation}
		\begin{aligned}
			\overset{\textnormal{\eqref{eqnedeqn},\eqref{eqndefonrorho},\eqref{eqnaefaefforrd}},\eqref{eqnnonnegativity}}{\leq} \sum_{\ell = 0}^{e_d} \sum_{k=0}^{e_d-\ell}  {k+\ell \choose \ell} (9r_d + 2 m N + 6m +9)! (\ell\rho_{d,T} + r_d +1)^{4(r_d+m+1)+2r_d}
			\\ \times (\ell\rho_{d,T})! D\Vol(C^{X/\bP^N}_{m,d}, T_{m,d}^\ell, W_{m,d})
			\prod_{j=1}^m |\alpha_j|_{C^0,\omega}
		\end{aligned}
	\end{equation}
	\begin{equation}
		\begin{aligned}
			\leq \sum_{\ell = 0}^{e_d} \sum_{k=0}^{e_d-\ell}  {k+\ell \choose \ell} (9r_d + 2 m N + 6m +9)! (\ell\rho_{d,T} + r_d +1)^{6(r_d+m+1)}
			\\ \times (\ell\rho_{d,T})! D\Vol(C^{X/\bP^N}_{m,d}, T_{m,d}^\ell, W_{m,d})
			\prod_{j=1}^m |\alpha_j|_{C^0,\omega}
		\end{aligned}
	\end{equation}
	\begin{equation}
		\begin{aligned}
			\leq \sum_{\ell = 0}^{e_d} \sum_{k=0}^{e_d-\ell}  {k+\ell \choose \ell} (9r_d + 2 m N + 6m +9)! ((\ell+1)(\rho_{d,T} + r_d +1))^{6(r_d+m+1)}
			\\ \times (\ell\rho_{d,T})! D\Vol(C^{X/\bP^N}_{m,d}, T_{m,d}^\ell, W_{m,d})
			\prod_{j=1}^m |\alpha_j|_{C^0,\omega}
		\end{aligned}
	\end{equation}
	\begin{equation}
		\begin{aligned}
			= \sum_{\ell = 0}^{e_d} \sum_{k=0}^{e_d-\ell}  {k+\ell \choose \ell} (9r_d + 2 m N + 6m +9)! (\ell+1)^{6(r_d+m+1)}(\rho_{d,T} + r_d +1)^{6(r_d+m+1)}
			\\ \times (\ell\rho_{d,T})! D\Vol(C^{X/\bP^N}_{m,d}, T_{m,d}^\ell, W_{m,d})
			\prod_{j=1}^m |\alpha_j|_{C^0,\omega}
		\end{aligned}
	\end{equation}
	\begin{equation}
		\begin{aligned}
			\leq \sum_{\ell = 0}^{e_d} \sum_{k=0}^{e_d-\ell}  {k+\ell \choose \ell} (9r_d + 2 m N + 6m +9)! (r_d+1)^{6(r_d+m+1)}(\rho_{d,T} + r_d +1)^{6(r_d+m+1)}
			\\ \times (\ell\rho_{d,T})! D\Vol(C^{X/\bP^N}_{m,d}, T_{m,d}^\ell, W_{m,d})
			\prod_{j=1}^m |\alpha_j|_{C^0,\omega}
		\end{aligned}
	\end{equation}
	\begin{equation}
		\begin{aligned}
			\overset{\textnormal{\eqref{eqnreallreallyterribleinequality}}}{\leq}
			\sum_{\ell = 0}^{e_d} \sum_{k=0}^{e_d-\ell}  {k+\ell \choose \ell} \\ \times (9r_d + 2 m N + 6m +9+r_d+1+6(r_d+m+1)+\rho_{d,T} + r_d +1+6(r_d+m+1))!
			\\ \times (\ell\rho_{d,T})! D\Vol(C^{X/\bP^N}_{m,d}, T_{m,d}^\ell, W_{m,d})
			\prod_{j=1}^m |\alpha_j|_{C^0,\omega}
		\end{aligned}
	\end{equation}
	\begin{equation}
		\begin{aligned}
			\leq \sum_{\ell = 0}^{e_d} \sum_{k=0}^{e_d-\ell}  {k+\ell \choose \ell} (23r_d + 2 m N + 18m +\rho_{d,T}+23)!
			(\ell\rho_{d,T})! D\Vol(C^{X/\bP^N}_{m,d}, T_{m,d}^\ell, W_{m,d})
			\prod_{j=1}^m |\alpha_j|_{C^0,\omega}.
		\end{aligned}
	\end{equation}

\end{proof}

\section{Coarse Bounds for Gromov-Witten invariants in Products of Projective space} \label{sectionboundsforproductsofprojhectve}

In this section we wish to give crude, but explicit, bounds for twisted Gromov-Witten invariants of products of projective space. We will use the localization technique of \cite{kontsevich1995enumeration} to do this.

\subsection{Computing Gromov-Witten Invariants for Products of Projective Space} \label{subsectioncomputingviaequivariantlocalization}

Let \begin{equation} \label{eqn49a94f4f4}
	N, M, m, a, a_1,\cdots,a_m, a'_1,\cdots,a'_m, b_1,\cdots,b_m, \ell, i_1,\cdots,i_\ell
\end{equation}
be natural numbers.
Define $P := \bP^N \times \bP^M$ and let $\calO(c,d) := \pi_1^*\calO_{\bP^N}(c) \otimes \pi_2^* \calO_{\bP^M}(d)$ for each $c,d \in \bZ$
where $\pi_1 : P \to \bP^N$ and $\pi_2 : P \to \bP^M$ are the natural projection maps.
Let $d \in H_2^+(P)-0$.
We are interested in an explicit bound for the norm of the following twisted Gromov-Witten invariant:
\begin{equation} \label{eqngromovwitteninvariantbdd}
	\left< c_1(\calO(a_1,a'_1))^{b_1},\cdots,c_1(\calO(a_m,a'_m))^{b_m} ; \prod_{j=1}^lc_{i_j}(\calO(a,0)) \right>^P_{m,d}
\end{equation}
\begin{equation}
	:= \int_{[P_{m,d}]^{\virt}} \prod_{j=1}^\ell c_{i_j}((\pi_{m+1})_*(\ev_{m+1}^*\calO(a,0))) \cup \cup_{j=1}^m \ev_j^*c_1(\calO(a_j,a'_j))^{b_j}.
\end{equation}
We will do this by an equivariant localization argument as in \cite{kontsevich1995enumeration}.
This bound will be very crude. For instance, it is not good enough to prove the corresponding generating function converges, unlike results in \cite{iritani2007convergence} or \cite{zinger2014genus}.
On the other hand, this is an explicit bound depending on $d$ and the parameters \eqref{eqn49a94f4f4} above.

We write $([X_0,\cdots,X_N],[Y_0,\cdots,Y_M])$ for the natural homogenous coordinates on $P$.
We have that $H_2^+(P) \cong \bN^2$
where the basis elements $(1,0)$ and $(0,1)$ correspond to lines in $\bP^N \times [1,0,\cdots,0]$ and $[1,0,\cdots,0] \times \bP^M$ respectively.
Let $d_1,d_2 \in \bN$ be such that $d = (d_1,d_2)$ under this identification.

The group
\begin{equation}
	\bT:= (\bC^*)^{N+1+M+1}
\end{equation}
acts on $P$ via the action:
\begin{equation}
	\bT \times P \to P,
\end{equation}
\begin{equation}
	\begin{aligned}
		(\xi_0,\cdots,\xi_N,\nu_0,\cdots,\nu_M) \cdot ([X_0,\cdots,X_N],[Y_0,\cdots,Y_M]) \\ \to ([\xi_0 X_0,\cdots,\xi_N X_N],[\nu_0Y_0,\cdots,\nu_MY_M]).
	\end{aligned}
\end{equation}
The fixed points of this action are given by $(p_i,q_j)$, $i=0,\cdots,N$, $j=0,\cdots,N$
where
\begin{equation}
	p_i = [\underbrace{0,\cdots,0}_i,1,\underbrace{0,\cdots,0}_{N-i}] \in \bP^N, \quad q_j = [\underbrace{0,\cdots,0}_j,1,\underbrace{0,\cdots,0}_{M-j}] \in \bP^M.
\end{equation}

We have that $\bT$ acts on $P_{m,d}$ by post-composition.
We define $P^{\bT}_{m,d} \subset P_{m,d}$ to be the preimage of the fixed point locus of this action on the underlying coarse moduli space of $P_{m,d}$.

\begin{defn}
	For us a \emph{tree} $T$ will be a finite one dimensional CW complex with some $0$-cells removed so that the resulting space is contractible with at least one zero cell.
	We let $\ver(T)$ be its set of vertices ($0$-cells) and $\edge(T)$ its set of edges ($1$-cells). We let $\leaf(T) \subset \edge(T)$ be the subset of edges where one vertex on that edge has been removed. These are called the \emph{leaves} of $T$. Any edge that is not a leaf is called an \emph{internal edge}.
	We let $i\edge(\Gamma) := \edge(\Gamma) - \leaf(\Gamma)$ be the set of internal edges.
	An \emph{isomorphism} between two such trees is a homeomorphism sending the vertices of one tree to the vertices of the other.

	For each vertex $v \in \ver(T)$, let $\val(v)$ be the number of edges with $v$ as a vertex.
	We define $\ival(v)$ to be the number of internal edges with $v$ as a vertex.
	For each edge $e \in \edge(\Gamma)$, we let $v_e, v'_e$ be its vertices.
\end{defn}

Note that there are no edges with two vertices removed since that would force $T$ to be disconnected or have no vertices.
Let $f : \Sigma \to P$ represent a $\bC$-point of $P^{\bT}_{m,d}$.
We construct a labeled tree $\Gamma_f$ as follows:
We assign one vertex $v$ for each connected component of $f^{-1}(\cup_{i=0}^N \cup_{j=0}^M \{(p_i,q_j)\})$. Let us call this connected component $\Sigma_v \subset \Sigma$.
We label this vertex by the pair of natural numbers $(i_v,j_v)$ satisfying $f(\Sigma_v) = \{(p_{i_v},q_{j_v})\}$.
Note that this connected component $\Sigma_v$ could be a union of irreducible components of $\Sigma$, or it could just be a single point.
We assign one internal edge to each non-constant irreducible component of $\Sigma$. Let $\Sigma_e \subset \Sigma$ be this irreducible component.
We have that $\Sigma_e$ intersects exactly two connected components $\Sigma_v$, $\Sigma_{v'}$ where $v$, $v'$ are vertices of $\Gamma_f$.
In this case we require the edge $e$ to have endpoints $v$ and $v'$ respectively.
We label $e$ by a pair of natural numbers $(d_{e,1},d_{e,2})$ corresponding to $[f|_{\Sigma_e}] \in H_2^+(P)$.
Finally, we assign one leaf for each marked point and label this leaf by the element of $\{1,\cdots,m\}$ corresponding to this marking.
The vertex $v$ of this leaf is the connected component $\Sigma_v$ that the marked point lives in.
Note that if $d_{e,1} \neq 0$ then $d_{e,2} = 0$ and if $d_{e,2}\neq 0$ then $d_{e,1} = 0$.


\begin{defn}
	An \emph{$(m,d)$-admissible tree} is a labelled tree $\Gamma$ where vertices $v$ are labelled by pairs of natural numbers $(i_v,j_v) \in \{0,\cdots,N\} \times \{0,\cdots,M\}$,
	each internal edge $e$ is labelled by a pair of natural numbers $(d_{e,1},d_{e,2}) \in \bN^2 - \{(0,0)\}$ and where there are $m$ leaves labelled by $1,\cdots,m$.
	We also require that
	\begin{enumerate}
		\item $\Gamma$ has at least one internal edge,
		\item if $d_{e,1} \neq 0$ then $d_{e,2} = 0$, if $d_{e,2} \neq 0$ then $d_{e,1} = 0$,
		\item if $v, v'$ are the vertices of an internal edge $e$, then $(i_v,j_v) \neq (i'_v,j'_v)$ and $i_v=i'_v$ if $d_{e,1}=0$ and $j_v=j'_v$ if $d_{e,2}=0$,
		\item and $\sum_{e \in i\edge(\Gamma)} d_{e,1} = d_1$ and $\sum_{e \in i\edge(\Gamma)} d_{e,2} = d_2$.
	\end{enumerate}
	An \emph{isomorphism} between two such $(m,d)$-admissible trees $\Gamma$ and $\Gamma'$ is an isomorphism of the corresponding trees so that
	labelings of the vertices, internal edges and leaves are respected.

	For each $(m,d)$-admissible tree $\Gamma$, define $P^\Gamma \subset P_{m,d}^{\bT}$
	to be the subspace of curves $f$ where $\Gamma_f$ is isomorphic to $\Gamma$.
	Also, define $\edge_1(\Gamma) \subset \edge(\Gamma)$ to be the subset of edges where $d_2 = 0$ and $d_1 \neq 0$
	and $\edge_2(\Gamma) \subset \edge(\Gamma)$ the subset of edges where $d_1 = 0$ and $d_2 \neq 0$.
\end{defn}

Observe that if the $(m,d)$-admissible trees $\Gamma_{f_1}$ and $\Gamma_{f_2}$ are isomorphic for some $\bC$-points $f_1,f_2$ of $P_{m,d}^{\bT}$ then $f_1$ and $f_2$ lie in the same connected component of $P_{m,d}^{\bT}$.

\begin{defn}
	Let $\Gamma$ be a $(m,d)$-admissible tree.
	We let $\pi_\Gamma : \Sigma_\Gamma \to P^\Gamma$ be the corresponding universal curve.
	Let $\ev : \Sigma_\Gamma \to P$ be the evaluation map.

	Let $I(\Gamma)$ be the set of irreducible components of $\Sigma_\Gamma$ and $I_{\edge}(\Gamma) \subset I(\Gamma)$ the subset of non-constant components
	and $I_{\ver}(\Gamma) \subset I(\Gamma)$ the subset of constant components.
	We define $I_1(\Gamma) \subset I(\Gamma)$ the subspace of irreducible components $\Sigma$ so that $\pi_2 \circ \ev|_{\Sigma}$ is constant but $\pi_1 \circ \ev|_{\Sigma}$ is not.
	We define $I_2(\Gamma) \subset I(\Gamma)$ the subspace of irreducible components $\Sigma$ so that $\pi_1 \circ \ev|_{\Sigma}$ is constant but $\pi_2 \circ \ev|_{\Sigma}$ is not.

	Let $n(\Gamma)$ be the set of sections of $\pi_\Gamma$ whose image is contained in the nodal locus of $\Sigma_\Gamma$.
	Define $n_{2\edge}(\Gamma) \subset n(\Gamma)$ to be the subspace of sections whose image is contained in two non-constant components,
	$n_{1\edge}(\Gamma) \subset n(\Gamma)$ to be the subspace of sections whose image is contained in exactly one non-constant component of $\Sigma_\Gamma$.
	For each $s \in n(\Gamma)$, let $\Sigma_s$ and $\Sigma'_s$ be the irreducible components of $\Sigma_\Gamma$ containing the image of $s$.
	If $s \in n_{1\edge}(\Gamma)$, then we will order these components so that $\Sigma_s$ is the constant component of $\Sigma_\Gamma$.

	For each irreducible component $\Sigma' \in I(\Gamma)$, let $T_{\Sigma'/P^\Gamma}$ be the dual of the relative dualizing sheaf.
	For each $s \in n(\Gamma)$, define $T^\otimes_s$ to be $s^* (T_{\Sigma_s/P^\Gamma} \otimes T_{\Sigma'_s/P^\Gamma})$
	and define $T^\oplus_s$ to be $s^* (T_{\Sigma_s/P^\Gamma} \oplus T_{\Sigma'_s/P^\Gamma})$.
	Also, for each $s \in n_{1\edge}(\Gamma)$, define $T^{\ver}_s := s^*T_{\Sigma_s/P^\Gamma}$ and $T^{\edge}_s := s^*T_{\Sigma'_s/P^\Gamma}$.

	We let $h(\Gamma)$ be the set of marked point sections of $\Sigma_\Gamma$.
	For each $s \in h(\Gamma)$ let $\Sigma_s$ be the irreducible component of $\Sigma_\Gamma$ containing the image of $s$ and define $T_s := s^* T_{\Sigma_s/P^\Gamma}$.

\end{defn}

We have that $n(\Gamma) = n_{2\edge}(\Gamma) \sqcup n_{1\edge}(\Gamma)$ since a generic element of $P^\Gamma$ has no nodes connecting constant components since any such node can be smoothed and also since $(d_1,d_2) \neq (0,0)$.

We wish to compute \eqref{eqngromovwitteninvariantbdd}
using the Atiyah-Bott localization formula.
As a result, we need to compute the equivariant Euler class of the normal bundle $N_\Gamma$ to $P^\Gamma$ inside $P_{m,d}$ for each $(m,d)$-admissible tree $\Gamma$.
We will first compute it in the equivariant $K$-theory group $K^0_{\bT}(P^\Gamma) \otimes \bQ = K^0(P^\Gamma) \otimes \bQ[\bT^\vee]$
where $\bT^\vee$ is the group of characters $\bT\to \bC^*$ of $\bT$.

By reasoning as in \cite[Section 3.3.1]{kontsevich1995enumeration} and \cite[Section 3.3.5]{kontsevich1995enumeration}, we have
the following identities in $K^0_{\bT}(P^\Gamma) \otimes \bQ$:
\begin{equation}
	[N_\Gamma] = [T(P_{m,d})|_{P^\Gamma}] - [TP^\Gamma],
\end{equation}
\begin{equation}
	[T(P_{m,d})|_{P^\Gamma}] = [(TP)_{m,d}|_{P^\Gamma}] + \sum_{s \in n(\Gamma)} T^\otimes_s + T^\oplus_s - \sum_{\Sigma \in I(\Gamma)} [(\pi_\Gamma|_{\Sigma})_*T_{\Sigma/P^\Gamma}] +
	\sum_{s \in h(\Gamma)} T_s
\end{equation}
and
\begin{equation}
	[TP^\Gamma] = \sum_{s \in n_{1\edge}(\Gamma)} [T^\ver_\Gamma] - \sum_{\Sigma \in I_{\ver}(\Gamma)} [(\pi_\Gamma|_\Sigma)_*T_{\Sigma/P^\Gamma}] + \sum_{s \in h(\Gamma)} [T_s].
\end{equation}

Hence:
\begin{equation} \label{eqnaefaef444422}
	[N_\Gamma] = [(TP)_{m,d}|_{P^\Gamma}] + [N'_\Gamma]
\end{equation}
where
\begin{equation}
	[N'_\Gamma] = \sum_{s \in n(\Gamma)} T^\otimes_s + T^\oplus_s - \sum_{\Sigma \in I(\Gamma)} [(\pi_\Gamma|_{\Sigma})_*T_{\Sigma/P^\Gamma}]
	- \sum_{s \in n_{1\edge}(\Gamma)} [T^\ver_\Gamma] + \sum_{\Sigma \in I_{\ver}(\Gamma)} [(\pi_\Gamma)_*T_{\Sigma/P^\Gamma}]
\end{equation}
\begin{equation}
	= \sum_{s \in n(\Gamma)} [T^\otimes_s] + \sum_{s \in n_{2\edge}(\Gamma)} [T^{\oplus s}] + \sum_{s \in n_{1\edge}(\Gamma)} [T^{\oplus s}] - \sum_{s \in n_{1\edge}(\Gamma)} [T^\ver_\Gamma]
	- \sum_{\Sigma \in I_{\edge}(\Gamma)} [(\pi_\Gamma)_*T_{\Sigma/P^\Gamma}]
\end{equation}
\begin{equation}
	= \sum_{s \in n(\Gamma)} [T^\otimes_s] + \sum_{s \in n_{2\edge}(\Gamma)} [T^{\oplus s}] + \sum_{s \in n_{1\edge}(\Gamma)} [T^{\edge}_s]
	- \sum_{\Sigma \in I_{\edge}(\Gamma)} [(\pi_\Gamma)_*T_{\Sigma/P^\Gamma}]
\end{equation}
\begin{equation} \label{eqnaefaef4sum2}
	= \sum_{s \in n_{2\edge}(\Gamma)} [T^\otimes_s] + \sum_{s \in n_{1\edge}(\Gamma)} [T^\otimes_s] + \sum_{s \in n_{2\edge}(\Gamma)} [T^{\oplus}_s] + \sum_{s \in n_{1\edge}(\Gamma)} [T^{\edge}_s] - \sum_{\Sigma \in I_{\edge}(\Gamma)} [(\pi_\Gamma)_*T_{\Sigma/P^\Gamma}]
\end{equation}
\begin{equation} \label{eqnaefaef4sum3}
	= \sum_{s \in n_{2\edge}(\Gamma)} ([T^\otimes_s]+[T^{\oplus}_s]) + \sum_{s \in n_{1\edge}(\Gamma)} ([T^\otimes_s] + [T^{\edge}_s]) - \sum_{\Sigma \in I_{\edge}(\Gamma)} [(\pi_\Gamma)_*T_{\Sigma/P^\Gamma}]
\end{equation}
\begin{equation} \label{eqnaefaef4sum4}
	= \sum_{s \in n_{2\edge}(\Gamma)} ([T^\otimes_s]+[T^{\oplus}_s]) + \sum_{s \in n_{1\edge}(\Gamma)} ([T^{\ver}_s \otimes T^{\edge}_s] + [T^{\edge}_s]) - \sum_{\Sigma \in I_{\edge}(\Gamma)} [(\pi_\Gamma)_*T_{\Sigma/P^\Gamma}].
\end{equation}

Combining the calculation above with \eqref{eqnaefaef444422} gives
\begin{equation} \label{eqnaefaefforaefe}
	[N_\Gamma] =  [(TP)_{m,d}|_{P^\Gamma}] + \sum_{s \in n_{2\edge}(\Gamma)} ([T^\otimes_s]+[T^{\oplus}_s]) + \sum_{s \in n_{1\edge}(\Gamma)} ([T^{\ver}_s \otimes T^{\edge}_s] + [T^{\edge}_s]) - \sum_{\Sigma \in I_{\edge}(\Gamma)} [(\pi_\Gamma)_*T_{\Sigma/P^\Gamma}].
\end{equation}

We now wish to compute the integral of the equivariant Euler class of
\begin{equation} \label{eqnaefaef24054295}
	\sum_{j=1}^m b_j[\ev_j^* \calO(a_j,a'_j)|_{P^\Gamma}] - [N_\Gamma] \in K^0_{\bT}(P^\Gamma) \otimes \bQ
\end{equation}
plus an additional term coming from summands of $(\pi_{m+1})_*(\ev_{m+1}^* \calO(a,0))|_{P^\Gamma}$
against $[P^\Gamma]$.
By the Atiyah-Bott localization formula the sum of such integrals over all $(m,d)$-admissible trees $\Gamma$ will give \eqref{eqngromovwitteninvariantbdd}.
The only vector bundle in the sum \eqref{eqnaefaef24054295} which is non-trivial as a non-equivariant vector bundle is $-[N_\Gamma]$
and the only sum in \eqref{eqnaefaefforaefe} which is non-trivial as a non-equivariant vector bundle is $\sum_{s \in n_{1\edge}(\Gamma)} [T^{\ver}_s \otimes T^{\edge}_s]$.
Also, $(\pi_{m+1})_*(\ev_{m+1}^* \calO(a,0))|_{P^\Gamma}$ is trivial as a non-equivariant vector bundle.
As a result, we need to compute the integral over $[P^\Gamma]$ of the equivariant Euler class $e_{\bT}$ of
\begin{equation} \label{eaefanefsumne}
	-\sum_{s \in n_{1\edge}(\Gamma)} [T^{\ver}_s \otimes T^{\edge}_s].
\end{equation}
and then multiply this by the equivariant Euler classes of the other bundles in \eqref{eqnaefaef24054295}, \eqref{eqnaefaefforaefe} and summands of $(\pi_{m+1})_*(\ev_{m+1}^* \calO(a,0))$, viewed as Laurent polynomials in the equivariant parameters
since they are pullbacks of $\bT$-representations via the map $P^\Gamma \to \textnormal{point}$.

We have that $\bT^\vee$ is generated by the characters $\lambda_0,\cdots,\lambda_N, \mu_0,\cdots,\mu_N$
where $\lambda_i$ is the projection map to the $(i+1)$\textsuperscript{st} factor of $\bT= (\bC^*)^{N+1+M+1}$ for each $i=0,\cdots,N$
and $\mu_i$ is the projection map to the $(i+N+2)$\textsuperscript{nd} factor for each $i=0,\cdots,M$.
Hence,
\begin{equation}
	H^*_{\bT}(P^\Gamma;\bQ)_{\textnormal{loc}} = H^*(P^\Gamma;\bQ)(\lambda_0,\cdots,\lambda_N,\mu_0,\cdots,\mu_N)
\end{equation}
where ``$\textnormal{loc}$'' means that we localize with respect to the multiplicative system of polynomials in the equivariant parameters with coefficients in $H^0(\bP^\Gamma;\bQ) \cong \bQ$.

Let us start with the term
\begin{equation} \label{eqnaaefaef4440045}
	[N_\Gamma] - [(TP)_{m,d}|_{P^\Gamma}].
\end{equation}
\begin{defn} \label{defnflags}
	A \emph{flag} of $\Gamma$ is a pair $(v,e) \in \ver(\Gamma) \times i\edge(\Gamma)$ where $v$ is a vertex of $e$.
	For any such flag $F$, we write $v_F := v$ and $v'_F$ to be the other vertex of the internal edge $e$.
	A \emph{$\pi_1$-flag} is a flag $(v,e)$ where $e \in \edge_1(\Gamma)$.
	A \emph{$\pi_2$-flag} is a flag $(v,e)$ where $e \in \edge_2(\Gamma)$.
	For each $\pi_1$-flag $F = (v,e)$, we define
	\begin{equation} \label{eqnaeforwF0}
		w_F := \frac{\lambda_{i_{v_F}} - \lambda_{i_{v'_F}}}{d_{e,1}}.
	\end{equation}
	For each $\pi_2$-flag $F = (v,e)$, we define
	\begin{equation} \label{eqnaeforwF1}
		w_F := \frac{\mu_{j_{v_F}} - \mu_{j_{v'_F}}}{d_{e,2}}.
	\end{equation}
	Also, for each flag $F = (v,e)$ we define $s_F \in n(\Gamma)$ to be the unique element with the property that either $\Sigma_{s_F}$ or $\Sigma'_{s_F}$ corresponds to $e$
	and that $(i_v,j_v)$ is in the image of $\ev \circ s_F$.

\end{defn}
Here $w_F$ represents the equivariant Euler class of $T^{\edge}_{s_F}$ if $s_F \in n_{1\edge}(\Gamma)$.
If $F,F'$ are distinct flags with the same vertex $v$ and $s_F \in n_{2\edge}(\Gamma)$ then $w_F + w_{F'}$ represents the equivariant Euler class of $T^\otimes_{s_F}$.
By reasoning as in \cite[Sections 3.3.2 and 3.3.3]{kontsevich1995enumeration}, we get:
\begin{equation} \label{eqnaefaefaefaefeafeafeafaefee}
	\begin{aligned}
		\int_{[P^\Gamma]} e_\bT([N_\Gamma] - [(TP)_{m,d}|_{P^\Gamma}] + \sum_{e \in i\edge(\Gamma)}[\bC]) = \\
		\prod_{v \in \ver(\Gamma)} \left(\left(\sum_{\textnormal{flags} \ F = (v,e)} w_F^{-1} \right)^{\val(v)-3} \prod_{\textnormal{flags} \ F = (v,e)} w_F^{-1} \right)
	\end{aligned}
\end{equation}
where $[\bC]$ is the class of the trivial vector bundle of rank $1$ where $\bT$ acts trivially.
Here the term $\sum_{s \in n_{1\edge}(\Gamma)} [T^{\ver}_s \otimes T^{\edge}_s]$ in
\eqref{eqnaefaef4sum4} corresponds to the product over all $v$ in \eqref{eqnaefaefaefaefeafeafeafaefee} where the valency of $v$ is $\geq 3$. The term $\sum_{s \in n_{2\edge}(\Gamma)} [T^\otimes_s]$ in \eqref{eqnaefaef4sum4} corresponds to the product over all $v$ in \eqref{eqnaefaefaefaefeafeafeafaefee} where the valency of $v$ is $2$.
The remaining terms in \eqref{eqnaefaef4sum4} correspond to the product over all $v$ in \eqref{eqnaefaefaefaefeafeafeafaefee} where the valency of $v$ is $1$.

Now let us now compute the equivariant Euler class of the term
\begin{equation} \label{eqnrqeewnaefaef0404}
	[(TP)_{m,d}|_{P^\Gamma}] = [(\pi_1^*T\bP^N)_{m,d}|_{P^\Gamma}] + [(\pi_2^*T\bP^M)_{m,d}|_{P^\Gamma}].
\end{equation}

\begin{defn}
	Let $Q_\Gamma^1$ be the set of connected components of $(\pi_1 \circ \ev)^{-1}(\{p_0,\cdots,p_N\}) \subset \Sigma_\Gamma$.
	Similarly, let $Q_\Gamma^2$ be the set of connected components of $(\pi_2 \circ \ev)^{-1}(\{q_0,\cdots,q_N\}) \subset \Sigma_\Gamma$.
	Let $k=0$ or $1$.
	For each connected component $\Sigma' \in Q_\Gamma^k$, let $\ival(\Sigma') \in \bN$ be the number of $e \in \edge_k(\Gamma)$
	satisfying $\Sigma_e \cap Q_\Gamma \neq \emptyset$.
	Define $\check{Q}_\Gamma^k \subset Q_\Gamma^k$ to be the subset of $\Sigma'$ satisfying $\ival(\Sigma')>1$.
	For each $\Sigma' \in Q_{\Gamma}^1$, let $i_{\Sigma'} \in \{0,\cdots,N\}$ be such that $\{p_{i_{\Sigma'}}\} = \pi_1(\ev(\Sigma'))$.
	For each $\Sigma' \in Q_{\Gamma}^2$, let $j_{\Sigma'} \in \{0,\cdots,M\}$ be such that $\{q_{j_{\Sigma'}}\} = \pi_2(\ev(\Sigma'))$.
\end{defn}


By the discussion in \cite[3.3.4]{kontsevich1995enumeration}, we get:
\begin{equation} \label{eqnaefaeaefegamma1}
	\begin{aligned}
		e_\Gamma([(\pi_1^*T\bP^N)_{m,d}|_{P^\Gamma}] - \sum_{e \in \edge_1(\Gamma)}[\bC]) = \\
		\prod_{e \in \edge_1(\Gamma)} \left(
		\frac{(-1)^{d_{e,1}}\left(\frac{d_{e,1}}{\lambda_{i_{v_e}} - \lambda_{i_{v'_e}}} \right)^{2d_{e,1}}}{(d_{e,1}!)^2} \prod_{k \neq i_{v_e},i_{v'_e}} \prod_{\overset{a',b' \geq 0}{a'+b'=d_{e,1}}}
		\frac{1}{\frac{a'}{d_{e,1}}\lambda_{i_{v_e}}+\frac{b'}{d_{e,1}}\lambda_{i_{v'_e}}-\lambda_k}
		\right) \times                                                                      \\
		\prod_{\Sigma' \in \check{Q}_\Gamma^1}  \left( \prod_{j \neq i_{\Sigma'}} \lambda_{i_{\Sigma'}} - \lambda_j \right)^{\ival(\Sigma')-1}
	\end{aligned}
\end{equation}
and
\begin{equation} \label{eqnaefaeaefegamma12}
	\begin{aligned}
		e_\Gamma(([(\pi_2^*T\bP^N)_{m,d}|_{P^\Gamma}] - \sum_{e \in \edge_2(\Gamma)}[\bC])) = \\
		\prod_{e \in \edge_2(\Gamma)} \left(
		\frac{(-1)^{d_{e,2}}\left(\frac{d_{e,2}}{\mu_{j_{v_e}} - \mu_{j_{v'_e}}} \right)^{2d_{e,2}}}{(d_{e,2}!)^2} \prod_{k \neq j_{v_e},j_{v'_e}} \prod_{\overset{a',b' \geq 0}{a'+b'=d_{e,2}}}
		\frac{1}{\frac{a'}{d_{e,2}}\mu_{j_{v_e}}+\frac{b'}{d_{e,2}}\mu_{j_{v'_e}}-\mu_k}
		\right) \times                                                                        \\
		\prod_{\Sigma' \in \check{Q}_\Gamma^2}  \left( \prod_{j \neq j_{\Sigma'}} \lambda_{j_{\Sigma'}} - \lambda_j \right)^{\ival(\Sigma')-1}
	\end{aligned}
\end{equation}
So, $e_\bT([(TP)_{m,d}|_{P^\Gamma}])$ is the product of \eqref{eqnaefaeaefegamma1} with \eqref{eqnaefaeaefegamma12}.
%
%
%
\begin{defn} \label{defnequivariantchernclass}
	We define $c_i^\bT(W)$ to be the $i$th equivariant Chern class of any $\bT$-equivariant vector bundle $W$. We define $c^\bT(W) = \sum_{i \in \bN} c_i^\bT(W)$ to be the total equivariant Chern class.
\end{defn}

The bundle $(\pi_{m+1})_*(\ev_{m+1}^*\calO(a,0))|_{P^\Gamma}$ sits in an exact sequence:
\begin{equation} \label{eqnshortexactsequecneoa0}
	\begin{aligned}
		0 \to (\pi_\Gamma)_*(\ev^*\calO(a,0)) \to \bigoplus_{e \in \edge_1(\Gamma)} (\pi_\Gamma|_{\Sigma_e})_*(\ev^* \calO(a,0)|_{\Sigma_e})
		\\
		\to \bigoplus_{\Sigma' \in \check{Q}_\Gamma^1} (\pi_\Gamma|_{\Sigma'})_* (\ev^*\calO(a,0)|_{\Sigma'}) \otimes \bC^{\ival(\Sigma')-1} \to 0.
	\end{aligned}
\end{equation}
For each $e \in \edge_1(\Gamma)$, we get $(\pi_\Gamma|_{\Sigma_e})_*(\ev^* \calO(a,0)|_{\Sigma_e})$
splits as a direct sum of line bundles $\oplus_{a' = 0}^{ad_{e,1}} L_{e,a'}$
where
\begin{equation} \label{eqnc1caluclation1}
	c_1^{\bT}(L_{e,a'}) = \frac{a' \lambda_{i_{v_e}}+(ad_{e,1}-a') \lambda_{i_{v'_e}}}{d_{e,1}}.
\end{equation}
Also, for $\Sigma' \in Q_\Gamma^1$, we have:
\begin{equation} \label{eqnc1caluclation2}
	c_1^{\bT}((\pi_\Gamma|_{\Sigma'})_* (\ev^*\calO(a,0))|_{\Sigma'}) = a \lambda_{i_{\Sigma'}}.
\end{equation}

\begin{defn} \label{defneagamma}
	We define
	\begin{equation}
		\edge(\Gamma,a) := \left\{ (e,a')  \ : \ e \in \edge(\Gamma), \ d_{e,1} \neq 0, \ a' \in \{0,\cdots,ad_{e,1}\} \right\}.
	\end{equation}
\end{defn}


By \eqref{eqnshortexactsequecneoa0},  \eqref{eqnc1caluclation1} and  \eqref{eqnc1caluclation2}, we have
\begin{equation}
	\begin{aligned}
		 & c^{\bT}((\pi_\Gamma)_*(\ev^*\calO(a,0))) = \\ & \prod_{(e,a) \in \edge(\Gamma,a)} \left(1+\frac{a' \lambda_{i_{v_e}}+(ad_{e,1}-a') \lambda_{i_{v'_e}}}{d_{e,1}} \right)
		\prod_{\Sigma' \in \check{Q}_\Gamma^1} \left(1 + a\lambda_{i_{\Sigma'}}\right)^{1-\ival(\Sigma')}
		=                                             \\ &
		\prod_{(e,a) \in \edge(\Gamma,a)} \left(1+\frac{a' \lambda_{i_{v_e}}+(ad_{e,1}-a') \lambda_{i_{v'_e}}}{d_{e,1}} \right)
		\prod_{\Sigma' \in \check{Q}_\Gamma^1} \left(\sum_{k \in \bN} (-1)^k {k + \ival(\Sigma') - 2 \choose k}(a \lambda_{i_{\Sigma'}})^k \right).
	\end{aligned}
\end{equation}
Hence:
\begin{equation} \label{eqnawfawfrrr4}
	\begin{aligned}
		\prod_{j=1}^\ell c_{i_j}^{\bT}((\pi_\Gamma)_*(\ev^*\calO(a,0))) =
		\prod_{j=1}^\ell \sum_{\overset{S_1 \subset \edge(\Gamma,a), \ f : \check{Q}_\Gamma^1 \to \bN,}{|S_1|+\sum_{\Sigma' \in \check{Q}_\Gamma^1}f(\Sigma')=i_j}}
		\\
		\left(\prod_{(e,a') \in S_1} \frac{a' \lambda_{i_{v_e}}+(ad_{e,1}-a') \lambda_{i_{v'_e}}}{d_{e,1}} \right)
		\prod_{\Sigma' \in \check{Q}_\Gamma^1} (-1)^{f(\Sigma')} {f(\Sigma')+\ival(\Sigma')-2 \choose f(\Sigma')} (a\lambda_{i_{\Sigma'}})^{f(\Sigma')}.
	\end{aligned}
\end{equation}

\begin{defn} \label{defnnumberofmarkedpoitsine}
	For each $h \in \leaf(\Gamma)$, define $(i_h,j_h) \in \{0,\cdots,N\} \times \{0,\cdots,M\}$ so that the image of $\ev_h|_{\Sigma_\Gamma}$ is $(p_{i_h},q_{j_h})$.
\end{defn}

We have
\begin{equation} \label{aef494a0w9kg40w49gkw0g9kdsvsk0}
	e_\bT([\ev_h^* \calO(a_h,a'_h)|_{P^\Gamma}]) = \lambda_{i_h}^{a_h} \mu_{j_h}^{a'_h} \quad \forall \ h \in \leaf(\Gamma).
\end{equation}

Hence, by Atiyah-Bott localization we have:
\begin{equation} \label{eqnaefaefeat4t4t}
	\textnormal{\eqref{eqngromovwitteninvariantbdd}} =
	\sum_{(m,d)\textnormal{-admissible trees} \ \Gamma}
	\frac{1}{|\Aut(\Gamma)|\prod_{e \in \edge(\Gamma)}(d_{e,1}+d_{e,2})}  \textnormal{\eqref{eqnaefaefaefaefeafeafeafaefee}} \times \textnormal{\eqref{eqnaefaeaefegamma1}} \times \textnormal{\eqref{eqnaefaeaefegamma12}} \times \textnormal{\eqref{eqnawfawfrrr4}} \times \textnormal{\eqref{aef494a0w9kg40w49gkw0g9kdsvsk0}}.
\end{equation}

\subsection{Bounds for Gromov-Witten Invariants of Products of Projective Space} \label{subsectionaefeafaef4t4g4g4}

We need to find an upper bound for the absolute value of \eqref{eqnaefaefeat4t4t}.
The left-hand side of this equation does not depend on the parameters $\lambda_0,\cdots,\lambda_N,\mu_0,\cdots,\mu_M$.
As a result, we only need to bound the right-hand side of \eqref{eqnaefaefeat4t4t} for specific values of these parameters.
Let
\begin{equation}
	|d| = \sum_{e \in \edge(\Gamma)} (d_{e,1} + d_{e,2}).
\end{equation}
We will now fix the following values for $\lambda_j$ and $\mu_j$ for each appropriate $j$:
\begin{equation} \label{eqnaefaef494944}
	\lambda_j = 1 + j + \left(\frac{1}{|d|+1}\right)^{N+1-j}, \quad \mu_j = 1 + j + \left(\frac{1}{|d|+1}\right)^{M+1-j}.
\end{equation}

Cayley's formula states that the number of trees with $k$ vertices, each one uniquely labeled from $1$ to $k$, is $k^{k-2}$.
Hence:
\begin{equation} \label{eqnverticesesseg}
	\sum_{\textnormal{trees} \ T \ \textnormal{with} \ k \ \textnormal{vertices}} \frac{1}{|\textnormal{Aut}(T)|} \leq k^{k-2}/k!.
\end{equation}
By Stirling approximation we have
\begin{equation} \label{eqnstirlingupperobuund}
	k^k/k! \leq e^{12k+1} \leq 4^{12k+1}, \quad \forall \ k \in \bN.
\end{equation}
Hence,
\begin{equation} \label{eqnverticesesseg2}
	\sum_{\textnormal{trees} \ T \ \textnormal{with} \ k \ \textnormal{vertices}} \frac{1}{|\textnormal{Aut}(T)|} \leq 4^{12k+1}
\end{equation}
where $\Aut(T)$ is the group of automorphism of the tree $T$.
Hence,
\begin{equation} \label{eqne49f949f49}
	\begin{aligned}
		\sum_{(m,d)\textnormal{-admissible trees} \ \Gamma}
		\frac{1}{|\Aut(\Gamma)|} \leq 4^{12(|d|+m+1)+1} \times m! \times |d|^{|d|} & \overset{\textnormal{\eqref{eqnstirlingupperobuund}}}{\leq}  4^{12(|d|+m+1)+1} m! |d|! 4^{12|d|+1}
		\\
		                                                                           & \leq 4^{24(|d|+m+1)} m!|d|!
	\end{aligned}
\end{equation}
where $\Aut(\Gamma)$ is the group of automorphisms of the $(m,d)$-admissible tree $\Gamma$.

\begin{defn}
	Recall that a \emph{partition} of a number $k$ is a sequence $1 \leq a_1 \leq \cdots \leq a_\ell$
	whose sum is $k$.
	The \emph{norm} of such a partition is the product $\prod_{j=1}^\ell a_\ell$.
\end{defn}

Using the fact that the norm of a partition of $k$ is at most $4^k$ (\cite[Page 188]{Halmostproblemsformathematicians}) for each $k \in \bN$
and the fact that the valency of each vertex of each $(m,d)$-admissible tree $\Gamma$ is at most $M+N$, we get that the norm of \eqref{eqnaefaefaefaefeafeafeafaefee} with the substitutions \eqref{eqnaefaef494944}
is less than or equal to:
\begin{equation} \label{eqnaefaef8282}
	\prod_{v \in \ver(\Gamma)} \left( \left((N+M)\max_{\textnormal{flags} \ F=(v,e)}(d_{e,1}+d_{e,2})\right)^{M+N} \prod_{\textnormal{flags} \ F=(v,e)} (d_{e,1}+d_{e,2}) \right)
\end{equation}
\begin{equation}
	\leq \prod_{v \in \ver(\Gamma)}\left((M+N) \left( \prod_{\textnormal{flags} \ F=(v,e)} (d_{e,1}+d_{e,2}) \right)^2  \right)^{M+N}
\end{equation}

\begin{equation}
	\leq \left(\prod_{v \in \ver(\Gamma)}(M+N)^{(M+N)} \right)\left( \prod_{v \in \ver(\Gamma)}\prod_{\textnormal{flags} \ F=(v,e)} (d_{e,1}+d_{e,2}) \right)^{2(M+N)}
\end{equation}
\begin{equation}
	\leq (M+N)^{(M+N)(|d|+1)} \left( \prod_{v \in \ver(\Gamma)}\prod_{\textnormal{flags} \ F=(v,e)} (d_{e,1}+d_{e,2}) \right)^{2(M+N)}
\end{equation}
\begin{equation}
	\leq (M+N)^{(M+N)(|d|+1)} \left( (4^{|d|})^2 \right)^{2(M+N)}
\end{equation}
Hence
\begin{equation} \label{eqnaefaef828}
	|\textnormal{\eqref{eqnaefaefaefaefeafeafeafaefee}}| \leq (4(M+N))^{5(M+N)(|d|+1)}.
\end{equation}

Under the substitutions \eqref{eqnaefaef494944},
each of the terms
\begin{equation}
	\frac{a'}{d_{e,1}}\lambda_{i_{v_e}}+\frac{b'}{d_{e,1}}\lambda_{i_{v'_e}}-\lambda_k, \quad a',b' \geq 0, \ a'+b' = d_{e,1}, \ k \in \{0,\cdots,N\} - \{i_{v_e},i_{v'_e}\}, \ e \in \edge_1(\Gamma)
\end{equation}
has norm at least $\frac{1}{(|d|+1)^{N+2}}$ and hence at least $\frac{1}{(|d|+1)^{N+M+2}}$.
Hence:
\begin{equation} \label{aeaee49494}
	|\textnormal{\eqref{eqnaefaeaefegamma1}}| \leq \prod_{e \in \edge_1(\Gamma)} \frac{d_{e,1}^{2d_{e,1}}}{(d_{e,1}!)^2}  \left( \prod_{k \neq i_{v_e},i_{v'_e}} \prod_{\overset{a',b' \geq 0}{a'+b'=d_{e,1}}}(|d|+1)^{N+M+2} \right) \prod_{v \in \ver(\Gamma)} (N+M+2)^{(N+M+2)}
\end{equation}
\begin{equation}
	\leq |d|^{2|d|} \left(\prod_{e \in \edge_1(\Gamma)}(|d|+1)^{(N+M+2)d_{e,1}(N+M)}\right)(N+M+2)^{(N+M+2)(|d|+1)}
\end{equation}
\begin{equation}
	\leq |d|^{2|d|} (|d|+1)^{(N+M+2)|d|(N+M+2)}(N+M+2)^{(N+M+2)(|d|+1)}
\end{equation}
\begin{equation}
	\leq |d|^{2|d|} (|d|+1)^{|d|(N+M+2)^2}(N+M+2)^{(N+M+2)(|d|+1)}
\end{equation}
\begin{equation}
	\leq |d|^{(N+M+2)|d|} (|d|+1)^{|d|(N+M+2)^2}(N+M+2)^{(N+M+2)(|d|+1)}
\end{equation}
\begin{equation}
	\leq (|d|+1)^{|d|(N+M+2)^2}(|d|(N+M+2))^{(N+M+2)^2(|d|+1)}.
\end{equation}
\begin{equation} \label{aeaee4949423}
	|\textnormal{\eqref{eqnaefaeaefegamma1}}| \leq ((N+M+2)(|d|+1))^{2(|d|+1)(N+M+2)^2}.
\end{equation}
Similarly,
\begin{equation} \label{aeaee4949423M}
	|\textnormal{\eqref{eqnaefaeaefegamma12}}| \leq ((N+M+2)(|d|+1))^{2(|d|+1)(N+M+2)^2}.
\end{equation}

We have
\begin{equation} \label{eqnvalencayofeachvertex}
	\begin{aligned}
		 & |\textnormal{\eqref{eqnawfawfrrr4}}| \leq \prod_{j=1}^\ell \sum_{\overset{S_1 \subset \edge(\Gamma,a) \ f : Q'_\Gamma \to \bN}{|S_1|+\sum_{\Sigma' \in \check{Q}_\Gamma^1}f(\Sigma')=i_j}} \\ &
		\left(\prod_{(e,a') \in S_1}  a(N+2)\right) \prod_{\Sigma' \in \check{Q}_\Gamma^1} {f(\Sigma')+\ival(\Sigma')-2 \choose f(\Sigma')} (a(N+2))^{f(\Sigma')}.
	\end{aligned}
\end{equation}
Again, using the fact that the valency of each vertex of each $(m,d)$-admissible tree $\Gamma$ is at most $N+M$, we get \eqref{eqnvalencayofeachvertex} is less than or equal to
\begin{equation}
	\prod_{j=1}^\ell \sum_{\overset{S_1 \subset \edge(\Gamma,a) \ f : \check{Q}_\Gamma^1 \to \bN}{|S_1|+\sum_{\Sigma' \in \check{Q}_\Gamma^1}f(\Sigma')=i_j}}
	\left(a(N+2)\right)^{i_j} (a(N+2))^{i_j} 2^{i_j + (N+M)-2}
\end{equation}
\begin{equation}
	\leq \prod_{j=1}^\ell \sum_{\overset{S_1 \subset \edge(\Gamma,a) \ f : \check{Q}_\Gamma^1 \to \bN}{|S_1|+\sum_{\Sigma' \in \check{Q}_\Gamma^1}f(\Sigma')=i_j}}
	\left(2a(N+2)\right)^{2i_j+(N+M)-2}
\end{equation}
\begin{equation}
	\leq \prod_{j=1}^\ell (|d| \times a|d|)^{i_j} {|d| + i_j \choose i_j}
	\left(2a(N+2)\right)^{2i_j+(N+M)-2}
\end{equation}
\begin{equation}
	\leq \prod_{j=1}^\ell (|d| \times a|d|)^{i_j} (|d|+i_j)^{i_j}
	\left(2a(N+2)\right)^{2i_j+(N+M)-2}
\end{equation}
\begin{equation}
	\leq (|d| \times a|d|)^{\sum_{j=1}^\ell i_j}
	\left(2a(N+2)\right)^{2\sum_{j=1}^\ell i_j+\ell(N+M)-2\ell} \prod_{j=1}^\ell(|d|+i_j)^{i_j}
\end{equation}
\begin{equation}
	\leq (a|d|)^{2\sum_{j=1}^\ell i_j} \left(2a(N+2)\right)^{2\sum_{j=1}^\ell i_j+\ell(N+M)-2\ell} \prod_{j=1}^\ell(|d|+i_j)^{i_j}.
\end{equation}

Hence,
\begin{equation} \label{eqn445903406936424t204}
	|\textnormal{\eqref{eqnawfawfrrr4}}| \leq (a|d|)^{2\sum_{j=1}^\ell i_j} \left(2a(N+2)\right)^{2\sum_{j=1}^\ell i_j+\ell(N+M)-2\ell} \prod_{j=1}^\ell(|d|+i_j)^{i_j}.
\end{equation}

We have
\begin{equation} \label{aef494a0w9kg40w49gkw0g9kdsvsk02222}
	|\textnormal{\eqref{aef494a0w9kg40w49gkw0g9kdsvsk0}}| \leq (N+2)^{a_j}(M+2)^{a'_j}.
\end{equation}

Hence, by \eqref{eqnaefaefeat4t4t}, \eqref{eqne49f949f49}, \eqref{eqnaefaef828}, \eqref{aeaee4949423}, \eqref{aeaee4949423M}, \eqref{eqn445903406936424t204}, \eqref{aef494a0w9kg40w49gkw0g9kdsvsk02222} we get
\begin{equation} \label{eqnaefaef4f4f00040411w17}
	\begin{aligned}
		|\textnormal{\eqref{eqngromovwitteninvariantbdd}}| \leq &
		m! 4^{24(|d|+m+1)} |d|! \times                            \\ &
		(4(M+N))^{5(M+N)(|d|+1)}
		(((N+M+2)(|d|+1))^{2(|d|+1)(N+M+2)^2})^2
		\times
		\\ & (a|d|)^{2\sum_{j=1}^\ell i_j} \left(2a(N+2)\right)^{2\sum_{j=1}^\ell i_j+\ell(N+M)-2\ell}\prod_{j=1}^\ell(|d|+i_j)^{i_j}  \prod_{j=1}^m (N+2)^{a_j}(M+2)^{a'_j}.
	\end{aligned}
\end{equation}
Now, \eqref{eqngromovwitteninvariantbdd} vanishes if
$\sum_{j=1}^\ell i_j > \dim(P_{0,d})$.
Hence, we can assume
\begin{equation} \label{eqn4vansihesefe}
	\sum_{j=1}^\ell i_j \leq (N+M+2)(|d|+1).
\end{equation}
Combining this with \eqref{eqnaefaef4f4f00040411w17} gives:
\begin{equation} \label{eqnaefaef4f4f00040411w1720}
	\begin{aligned}
		|\textnormal{\eqref{eqngromovwitteninvariantbdd}}| \leq &
		m! 4^{24(|d|+m+1)} |d|!\times                                                                       \\ &
		(4(M+N))^{5(M+N)(|d|+1)}
		(((N+M+2)(|d|+1))^{2(|d|+1)(N+M+2)^2})^2 \times                                                     \\ &
		(a|d|)^{2(M+N+2)(|d|+1)} \left(2a(N+2)\right)^{2(M+N+2)(|d|+1)+(M+N+2)(|d|+1)(N+M)-2(M+N+2)(|d|+1)} \\ &
		(|d|+(M+N+2)(|d|+1))^{(M+N+2)(|d|+1)} \prod_{j=1}^m (N+2)^{a_j}(M+2)^{a'_j}
	\end{aligned}
\end{equation}
\begin{equation} \label{eqnaefaef4f4f00040411w17202}
	\begin{aligned}
		\leq &
		m! 4^{24(|d|+m+1)} |d|!\times                                                                       \\ &
		(4(M+N))^{5(M+N)(|d|+1)}
		(((N+M+2)(|d|+1))^{2(|d|+1)(N+M+2)^2})^2 \times                                                     \\ &
		(a|d|)^{2(M+N+2)(|d|+1)} \left(2a(N+2)\right)^{2(M+N+2)(|d|+1)+(M+N+2)(|d|+1)(N+M)-2(M+N+2)(|d|+1)} \\ &
		((M+N+3)(|d|+1))^{(M+N+2)(|d|+1)} \prod_{j=1}^m (N+2)^{a_j}(M+2)^{a'_j}.
	\end{aligned}
\end{equation}
\begin{equation}
	\begin{aligned}
		\leq &
		m! 4^{24(|d|+m+1)} |d|!\times                                            \\ &
		(4(M+N))^{5(M+N)(|d|+1)}
		(((N+M+2)(|d|+1))^{2(|d|+1)(N+M+2)^2})^2 \times                          \\ &
		(a|d|)^{2(M+N+2)(|d|+1)} \left(2a(N+2)\right)^{3(M+N+2)^2(|d|+1)} \times \\ &
		((M+N+3)(|d|+1))^{(M+N+2)(|d|+1)}\prod_{j=1}^m (N+2)^{a_j}(M+2)^{a'_j}.
	\end{aligned}
\end{equation}
\begin{equation}
	\begin{aligned}
		\overset{\textnormal{\eqref{eqnreallreallyterribleinequality}}}{\leq}
		((m+4+24(|d|+m+1))+|d|+ 4(M+N) + 5(M+N)(|d|+1)                  \\ +2((N+M+2)(|d|+1)+2(|d|+1)(N+M+2)^2) + a|d| + 2(M+N+2)(|d|+1) +\\
		(2a(N+1)) + 3(M+N+2)^2(|d|+1) + (M+N+3)(|d|+1)+(M+N+2)(|d|+1))! \\ \prod_{j=1}^m (N+2)^{a_j}(M+2)^{a'_j}
	\end{aligned}
\end{equation}
\begin{equation}
	\begin{aligned}
		\leq (25m+4+(24+1+4+5+2+4+a+2+2a+3+1+1)(M+N+3)^2(|d|+1))! \\ \times \prod_{j=1}^m (N+2)^{a_j}(M+2)^{a'_j}.
	\end{aligned}
\end{equation}
\begin{equation}
	\begin{aligned}
		\leq (25m+(51+3a)(M+N+3)^2(|d|+1))!  \prod_{j=1}^m (N+2)^{a_j}(M+2)^{a'_j}.
	\end{aligned}
\end{equation}

Hence, we get the following proposition:
\begin{prop} \label{propGWboundsnonses}
	\begin{equation} \label{eqnaefaef4f4f00040411w172}
		\begin{aligned}
			|\textnormal{\eqref{eqngromovwitteninvariantbdd}}| \leq (25m+(51+3a)(M+N+3)^2(|d|+1))!  \prod_{j=1}^m (N+2)^{a_j}(M+2)^{a'_j}.
		\end{aligned}
	\end{equation}
\end{prop}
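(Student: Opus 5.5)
The bound is obtained by bounding each factor of the localization formula \eqref{eqnaefaefeat4t4t} separately and then multiplying. Since the left-hand side \eqref{eqngromovwitteninvariantbdd} is independent of the equivariant parameters, I fix the specialization \eqref{eqnaefaef494944}. This choice makes every weight difference appearing in a denominator bounded below by a negative power of $|d|+1$, and every numerator bounded above by a constant of order $N+M+2$. The invariant is then at most the sum over $(m,d)$-admissible trees $\Gamma$ of $1/|\Aut(\Gamma)|$ times a uniform bound on the absolute value of the product of the five factors in \eqref{eqnaefaefeat4t4t}.

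The argument proceeds in the following order.

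First, I count trees. Cayley's formula gives \eqref{eqnverticesesseg}, and combined with Stirling this yields \eqref{eqne49f949f49}. Here one uses that a tree has at most $|d|+1$ vertices, that there are at most $|d|^{|d|}$ choices of edge degrees, and at most $m!$-type choices for attaching the leaves.

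Second, I bound the vertex contribution \eqref{eqnaefaefaefaefeafeafeafaefee}. Each $w_F^{-1}$ has size at most about $d_{e,1}+d_{e,2}$ times a bounded factor, and each valency is at most $N+M$. Using the partition-norm bound $4^k$, this gives \eqref{eqnaefaef828}.

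Third, I bound the edge factors \eqref{eqnaefaeaefegamma1} and \eqref{eqnaefaeaefegamma12}. Each denominator $\frac{a'}{d_{e,1}}\lambda_{i}+\frac{b'}{d_{e,1}}\lambda_{i'}-\lambda_k$ is at least $(|d|+1)^{-(N+2)}$ in absolute value, and this is exactly what the specialization is designed to guarantee. The number of such factors is at most $(N+M)|d|$, which yields \eqref{aeaee4949423} and \eqref{aeaee4949423M}.

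Fourth, I bound the twisting Chern class factor \eqref{eqnawfawfrrr4}. I expand it as a sum over subsets $S_1$ and functions $f$, bound each monomial by $(a(N+2))^{i_j}$ times binomial factors, and count the terms. I may also assume $\sum_j i_j\le \dim P_{0,d}$, since otherwise the invariant vanishes.

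Fifth, the insertion factor \eqref{aef494a0w9kg40w49gkw0g9kdsvsk0} is trivially at most $\prod_j (N+2)^{a_j}(M+2)^{a'_j}$.

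Finally, I multiply all these bounds and absorb every factor of the form $g!$ or $e^f$ into a single factorial using \eqref{eqnreallreallyterribleinequality}. The resulting exponents are linear in $|d|+1$ with coefficients polynomial in $N+M+3$ and in $a$, and collecting them gives an argument of the form $25m+(51+3a)(M+N+3)^2(|d|+1)$.

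The main obstacle is the third step: verifying the lower bound on the denominators under the specialization \eqref{eqnaefaef494944}. One must check that $\frac{a'}{d_{e,1}}\lambda_i+\frac{b'}{d_{e,1}}\lambda_{i'}=\lambda_k$ cannot nearly hold for $k\neq i,i'$. I would handle this by splitting $\lambda_j$ into its integer part $1+j$ and its tiny part $(|d|+1)^{-(N+1-j)}$. If the integer parts do not cancel, the difference is of order $1/d_{e,1}$. If they do cancel, the tiny parts have distinct scales, so the dominant tiny term (the one with largest index) survives with coefficient at least $1/d_{e,1}$ minus geometrically smaller terms. Either way the difference is bounded below by $(|d|+1)^{-(N+2)}$.

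The remaining bookkeeping with \eqref{eqnreallreallyterribleinequality} is routine but error-prone. I would check it only up to verifying that each summed contribution to the factorial's argument is dominated by the corresponding term in the final expression.
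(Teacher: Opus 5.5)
Your outline follows the paper's proof step for step: the same specialization \eqref{eqnaefaef494944}, the same factor-by-factor estimates, and the same absorption via \eqref{eqnreallreallyterribleinequality}. Your integer-part/tiny-part argument also correctly justifies the edge-denominator bound, which the paper only asserts.

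The gap is your claim that \eqref{eqnaefaef494944} bounds \emph{every} denominator from below. Step two never checks this, and it is false. At a vertex $v$ carrying exactly two flags $F,F'$ and no leaves, one has $\val(v)=2$. Its factor in \eqref{eqnaefaefaefaefeafeafeafaefee} is $(w_F^{-1}+w_{F'}^{-1})^{-1}w_F^{-1}w_{F'}^{-1}=1/(w_F+w_{F'})$, the inverse weight of the node-smoothing direction $T^\otimes_s$. Upper bounds on $|w_F^{-1}|$ are useless here; you need a lower bound on $|w_F+w_{F'}|$.

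When $F$ is a $\pi_1$-flag and $F'$ a $\pi_2$-flag there is no such bound, because $\lambda$ and $\mu$ are perturbed by the same powers of $\epsilon=1/(|d|+1)$. For $N\geq M\geq 1$ take $d_{e,1}=d_{e',2}=1$, $(i_v,j_v)=(N-M,1)$, $i_{v'_F}=N-M+1$, $j_{v'_{F'}}=0$. Then $w_F=-1+\epsilon^{M+1}-\epsilon^{M}$ and $w_{F'}=1+\epsilon^{M}-\epsilon^{M+1}$, so $w_F+w_{F'}=0$; the case $1\leq N<M$ is symmetric. Trees containing such a vertex are $(m,d)$-admissible as soon as $d_1,d_2\geq 1$. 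That is exactly the situation $d=(d_1,2rd_1)$ used in Proposition \ref{propdvolumeboundsproductsfo} and Theorem \ref{theoremproperestimates}. So the corresponding terms of \eqref{eqnaefaefeat4t4t} are undefined at your evaluation point, and no termwise bound is possible there; the poles cancel only in the sum over $\Gamma$. The paper's estimate \eqref{eqnaefaef828} passes over the same point.

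To repair this, use a specialization in which the $\lambda$- and $\mu$-perturbations live on disjoint scales. For example, take $\lambda_j=1+j+\epsilon^{N+M+2-j}$ and $\mu_j=1+j+\epsilon^{M+1-j}$ with $\epsilon=1/(4(|d|+1))$. Your integer-part/tiny-part argument then gives a lower bound of order $\epsilon^{N+M+1}$ for the edge denominators \emph{and} for every $|w_F+w_{F'}|$ at a bivalent vertex, whether same-factor or mixed. There are at most $|d|+1$ bivalent vertices, so this extra loss should still fit into a factorial of the same shape, but the constants must be rechecked.

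Two further claims in your bookkeeping, also made in the paper, are false and need replacing:
\begin{itemize}
\item Valencies are not bounded by $N+M$. A contracted component at $(p_0,q_0)$ can carry arbitrarily many degree-one lines to $(p_1,q_0)$, and vertices also carry leaves. Use $\sum_v\val(v)\leq 2|d|+m$ and $\sum_{\Sigma'\in\check{Q}^1_\Gamma}(\ival(\Sigma')-1)\leq|d|$ instead.
\item The count of trees must include the vertex labels $(i_v,j_v)$, which contribute an extra factor of at most $((N+1)(M+1))^{|d|+1}$.
\end{itemize}
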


\subsection{\texorpdfstring{$D$}--Volume Bounds in Products of Projective Space} \label{subsectionDVolumeboundsproductsofprojective}

Let us fix the notation from the last subsection.

\begin{prop} \label{propdvolumeboundsproductsfo}
	Let $h' \in \bN$ satisfy $h' \geq a+2(N+1)(M+1)$ and suppose $D$ is a very general divisor representing $\calO_{\bP^N \times \bP^M}(h',h')$.
	Recall that $T = \calO_{\bP^N \times \bP^M}(a,0)^b$ for some $b \in \bN$.
	Then
	\begin{equation} \label{eqneafeafaeifaoeg}
		\begin{aligned}
			 & D\Vol(T_{m,d}) \leq  (b+(52+3a)(M+N+3)^2(|d|+1) + (N+M+2h'+30)m)!
			/ (abd_1 + b)!
		\end{aligned}
	\end{equation}
\end{prop}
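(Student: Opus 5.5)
The plan is to convert the $D$-volume into Segre-class integrals via Proposition \ref{propsegreboundsforDvolume}, and then to evaluate those integrals with the localization bound of Proposition \ref{propGWboundsnonses}. Here $X = P = \bP^N \times \bP^M$, $V = T = \calO(a,0)^b$ and $C_\bullet = T_\bullet$. We first check the hypotheses.

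\textbf{Step 1: checking the hypotheses.}
\begin{enumerate}
\item $T$ is strongly convex, since $\calO(a,0)$ pulled back to any genus zero stable map is a sum of nonnegative line bundles, hence globally generated with vanishing $H^1$.
\item $\calO(D)$ and $T^*\otimes\calO(D)$ are globally generated because $h' \geq 1$ and $h'\geq a$.
\item $T$ is convex, so $T_{0,d}$ is a vector bundle of rank $h = b(ad_1+1)$, and $P_{0,d}$ is smooth of pure dimension.
\item $D$ is generically transverse to $T_\bullet$: for very general $D$, a generic point of each component of $P_{m,d}$ is transverse to $D$, and $T_{m,d}$ is a bundle over it.
\item $D$ is $T$-large. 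Apply Lemma \ref{lemmavlargeexist} to the Segre embedding of $P$ (or argue directly on each factor) with the split $T = \bigoplus \calO(a,0)$. The bound $h' \geq a + 2(N+1)(M+1)$ is meant to exceed $\delta + 2\cdot\dim$ of the ambient space, where $\delta - 1 \geq a$.
\end{enumerate}

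\textbf{Step 2: reduction to twisted invariants.} Proposition \ref{propsegreboundsforDvolume} gives
\[ D\Vol(T_{m,d}) \leq \frac{1}{h!\,m!\,(D\cdot d)!}\sum_{i_1,\dots,i_{m'}} \bigl|\langle D^{i_1},\dots,D^{i_{m'}}\rangle^{s(T_\bullet)}_{m',d}\bigr|, \]
where $m' = m + D\cdot d = m + h'(d_1+d_2)$. Since $T_{m',d}$ is a bundle, $s(T_{m',d}) = c(T_{m',d})^{-1} = c((\pi_{m'+1})_*\ev^*\calO(a,0))^{-b}$. Expanding $c^{-b}$ as a power series in the Chern classes $c_i$, truncated at $\dim P_{0,d} \leq (N+M+2)(|d|+1)$, writes each summand as a signed combination of products $\prod_j c_{i_j}(\calO(a,0)_{m',d})$. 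Each such product is a twisted invariant of the form \eqref{eqngromovwitteninvariantbdd} with insertions $c_1(\calO(h',h'))^{i_k}$. The number of monomials and the sizes of their coefficients are bounded by a crude multinomial estimate such as $(b+\dim)^{\dim}$, which gets absorbed into a factorial.

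\textbf{Step 3: localization estimate.} Each term is bounded by Proposition \ref{propGWboundsnonses}, with $m'$ marked points and $a_j = a'_j = h'$ appearing with exponent $i_j$. This produces the factor $(25m' + (51+3a)(M+N+3)^2(|d|+1))!\,\prod (N+2)^{h' i_j}(M+2)^{h' i_j}$. By dimension, $\sum_j i_j \leq \dim P_{m',d}$, so the last product is at most $(N+M+2)^{h'(\dim)}$. The sum over the tuples $(i_1,\dots,i_{m'})$ contributes at most $(\dim P+1)^{m'}$.

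The main obstacle is the bookkeeping. The factor $25m'$ contains $25h'(d_1+d_2)$, and this term must be cancelled against the denominator $(D\cdot d)!\,m!$ (and $h!$). The target bound only allows $(N+M+2h'+30)m$ and one extra $(M+N+3)^2(|d|+1)$ in the factorial, with $(abd_1+b)!$ remaining in the denominator. To absorb the $h'|d|$ growth, I will use $(A+B)!/B! \leq (A+B)^A$ and then \eqref{eqnreallreallyterribleinequality}. If the $25m'$ really cannot be cancelled this way, the fallback is to apply Lemma \ref{lemmaboundDvolbyanother}-type manipulations so that the extra $D\cdot d$ marked points carry classes $D$ whose count is cancelled by $1/(D\cdot d)!$, through the symmetry of relabelling these points.
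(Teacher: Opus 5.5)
Your route is the paper's route: Proposition \ref{propsegreboundsforDvolume}, then $s(T_{m',d})=c(T_{m',d})^{-1}$, then Proposition \ref{propGWboundsnonses}. The step you call the main obstacle is a genuine gap, and it cannot be closed.

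\textbf{Why your cancellation fails.} Proposition \ref{propsegreboundsforDvolume} supplies only the denominator $h!\,m!\,(D\cdot d)!$. Proposition \ref{propGWboundsnonses}, applied with $m'=m+K$ marked points where $K=D\cdot d=h'|d|$, produces a factor $(25(m+K)+\cdots)!$. On top of that comes the factor $\prod_j (N+2)^{h'i_j}(M+2)^{h'i_j}$ you wrote down, with $\sum_j i_j$ as large as $\dim P_{0,d}+m+K$. Since $(25(m+K)+A)!/(m!\,K!)\geq (25K)!/K!\geq (24K)!$, the bound your route produces is at least $(24h'|d|)!/(abd_1+b)!$. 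No use of $(A+B)!/B!\leq (A+B)^A$ or \eqref{eqnreallreallyterribleinequality} can bring this below the target, because for $m=0$ the target's right-hand side does not involve $h'$ at all. Your fallback goes the wrong way: Lemma \ref{lemmaboundDvolbyanother} bounds $D\Vol$ by a volume with $F\cdot d=2D\cdot d$ extra marked points.

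\textbf{The statement itself fails for large $h'$.} So no bookkeeping can work. Take $N=2$, $M=1$, $a=0$, $b=1$ (so $T=\calO$), $d=(1,0)$, $m=0$ and $h'\geq 12$.
\begin{enumerate}
\item Here $D\cdot d=h'$. The fibre direction contributes a factor $1$, since the closure of the diagonal line in $\bC^{h'}$ meets $\infty$ once. Hence $D\Vol(T_{0,d})=\frac{1}{3!\,h'!}\int_{P_{0+D!,d}}\bigl(\sum_j \ev_j^*D\bigr)^3$.
\item Put $x_j=\ev_j^*H_1$ and $y=\ev_j^*H_2$; $y$ is independent of $j$ and $y^2=0$. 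The integrand is $h'^3\bigl(\sum_j x_j+h'y\bigr)^3$, a positive combination of products of nef classes.
\item The single term $3h'^4\, y\bigl(\sum_j x_j\bigr)^2$ already integrates to $3h'^4\cdot h'^2\,h'!$. On a general fibre $\bP^2\times\{q\}$ one has $x_j^2=0$, because $\ev_j$ lands in the curve $D_q$. For $i\neq j$, $\int x_ix_j$ counts the $h'^2$ lines joining a point of $\ell_1\cap D_q$ to a point of $\ell_2\cap D_q$, times $(h'-2)!$ labellings of the remaining intersection points.
\item So $D\Vol(T_{0,d})\geq h'^6/2$, whereas the right-hand side equals $3745!$ for every $h'$.
\end{enumerate}

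\textbf{The paper's proof.} It contains exactly the slip you ran into. It bounds $\sum_{i_1,\dots,i_m}|\langle D^{i_1},\dots,D^{i_m}\rangle^{s(T_\bullet)}_{m,d}|$ with $m$ insertions and then cites Proposition \ref{propsegreboundsforDvolume}, silently replacing $m'=m+D\cdot d$ by $m$.

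What your argument does prove is the estimate with $m$ replaced by $m+h'|d|$ inside the factorial, and with the extra $m!\,(D\cdot d)!$ in the denominator. That is the form Theorem \ref{theoremproperestimates} can legitimately use, with worse constants there.
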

\begin{proof}
	The dimension of $P_{0,d}$ is at most $\ell := (N+M+2)(|d|+1)$.
	We have for each $i_1,\cdots,i_m \in \bN$ that
	\begin{equation}
		\langle D^{i_1},\cdots,D^{i_m} \rangle_{m,d}^{s(T_\bullet)} =
	\end{equation}
	\begin{equation}
		\langle D^{i_1},\cdots,D^{i_m}; c(\calO(a,0)^b)^{-1} \rangle^{\bP^N \times \bP^M}_{m,d}
	\end{equation}
	\begin{equation}
		= \langle D^{i_1},\cdots,D^{i_m}; (1+c_1(\calO(a,0)))^{-b} \rangle^{\bP^N \times \bP^M}_{m,d}
	\end{equation}
	\begin{equation}
		=\sum_{k = 0}^\ell (-1)^k {k+b-1 \choose k} \langle D^{i_1},\cdots,D^{i_m} ; c_1(\calO(a,0))^k \rangle^{\bP^N \times \bP^M}_{m,d}.
	\end{equation}
	Hence, by Proposition \ref{propGWboundsnonses},
	\begin{equation}
		\sum_{i_1,\cdots,i_m=0}^{NM} \left| \langle D^{i_1},\cdots,D^{i_m} \rangle_{m,d}^{s(T_\bullet)} \right|
	\end{equation}
	\begin{equation}
		\begin{aligned}
			\leq (NM+1)^m \left(\sum_{k=0}^\ell {k+b-1 \choose k} \right) \\
			\times (25m+(51+3a)(M+N+3)^2(|d|+1))! \prod_{j=1}^m (N+2)^{h'}(M+2)^{h'}
		\end{aligned}
	\end{equation}
	\begin{equation}
		\begin{aligned}
			\leq (NM+1)^m 2^{\ell+b-1}
			\times (25m+(51+3a)(M+N+3)^2(|d|+1))! \prod_{j=1}^m (N+2)^{h'}(M+2)^{h'}
		\end{aligned}
	\end{equation}
	\begin{equation}
		\begin{aligned}
			\overset{\textnormal{\eqref{eqnreallreallyterribleinequality}}}{\leq} (NM+1+m+2+\ell+b-1+ \\
			25m+(51+3a)(M+N+3)^2(|d|+1) + (N+2 +h' +M+2+h')m)!
		\end{aligned}
	\end{equation}
	\begin{equation}
		\leq (b+\ell+26m+NM+2+
		(51+3a)(M+N+3)^2(|d|+1) + (N+M+2h'+4)m)!
	\end{equation}
	\begin{equation}
		\begin{aligned}
			= (b+(N+M+2)(|d|+1)+26m+NM+2+ \\
			(51+3a)(M+N+3)^2(|d|+1) + (N+M+2h'+4)m)!
		\end{aligned}
	\end{equation}
	\begin{equation}
		\leq (b+
		(52+3a)(M+N+3)^2(|d|+1) + (N+M+2h'+30)m)!
	\end{equation}
	Since $D$ is the restriction of a very general divisor of degree $h'$ pulled back via the Segre embedding
	\begin{equation}
		\bP^N \times \bP^M \hookrightarrow \bP^{(N+1)(N+1)-1},
	\end{equation}
	we have that $D$ is $T$-large by Lemma \ref{lemmavlargeexist}.
	Our result now follows from Proposition \ref{propsegreboundsforDvolume}.
\end{proof}


\section{Main Argument} \label{sectionmainargument}

In this section we prove the main theorem of this paper, Theorem \ref{maintheorem}. We will first construct bounded partial resolutions of the normal bundle of our variety in projective space by sums of twists of the structure sheaf. Such bounds depend on the Castelnuovo-Mumford regularity of the defining ideal sheaf and its square. We use such bounds to embed the projective compactification of the normal bundle into products of projective space. After that we assemble all the ingredients together to prove the main theorem.
Throughout this section we will fix the following notation:
\begin{enumerate}
	\item We let $X \subset \bP^N$ be a smooth projective subvariety with defining ideal sheaf $\frI_X$.
	\item We let $\iota_X : X \hookrightarrow \bP^N$ be the corresponding closed immersion.
	\item We let $\omega$ be the restriction of the Fubini Study form on $\bP^N$ to $X$.
\end{enumerate}

\begin{defn} \label{defncastelnuovo}
	Let $\scrF$ be a sheaf on $\bP^N$.
	We say that $\scrF$ is \emph{$r$-regular} or \emph{$r$-regular in the sense of Castelnuovo-Mumford} if
	\begin{equation}
		H^i(\scrF(r-i)) = 0
	\end{equation}
	for each $i \in \bN_{>0}$.
	Define $\reg(\scrF) \in \bN$ to be the infimum over all $r \in \bN$ so that $\scrF$ is $r$-regular.
\end{defn}


\begin{prop} \label{propositionrgeqkregular} (Castelnuovo) \cite[Lecture 14]{mumford2016lectures}.
	If a sheaf $\scrF$ is $r$-regular then it is $k$-regular for any $k \geq r$.
\end{prop}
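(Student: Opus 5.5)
The plan is the classical argument by induction on $N$ via a general hyperplane section, taking $\scrF$ coherent, which is the only case used. Since $k$-regularity for all $k \geq r$ follows by iterating, it suffices to prove that an $r$-regular sheaf $\scrF$ is $(r+1)$-regular. The base case $N = 0$ is trivial, because $\bP^0$ is a point and all higher cohomology vanishes. The same holds whenever $\scrF$ has zero-dimensional support, so we may freely apply the induction hypothesis to sheaves on linear subspaces.

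For the inductive step, first choose a hyperplane $H \subset \bP^N$ containing none of the (finitely many) associated points of $\scrF$. This is possible since $\bC$ is infinite. Multiplication by a defining linear form of $H$ is then injective on $\scrF(-1)$, giving a short exact sequence
\begin{equation*}
0 \to \scrF(-1) \to \scrF \to \scrF_H \to 0,
\end{equation*}
where $\scrF_H := \scrF \otimes \calO_H$ is a coherent sheaf on $H \cong \bP^{N-1}$. Its cohomology on $H$ agrees with its cohomology as a sheaf on $\bP^N$.

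Next, show that $\scrF_H$ is $r$-regular. Twist the sequence by $r-i$ and take the long exact sequence in cohomology, which contains
\begin{equation*}
H^i(\scrF(r-i)) \to H^i(\scrF_H(r-i)) \to H^{i+1}(\scrF(r-(i+1))).
\end{equation*}
For $i \geq 1$ both outer groups vanish by $r$-regularity of $\scrF$, so $H^i(\scrF_H(r-i)) = 0$. By the induction hypothesis on $\bP^{N-1}$, $\scrF_H$ is therefore also $(r+1)$-regular.

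Finally, twist the sequence by $r+1-i$ to get
\begin{equation*}
H^i(\scrF(r-i)) \to H^i(\scrF(r+1-i)) \to H^i(\scrF_H(r+1-i)).
\end{equation*}
For $i \geq 1$ the left group vanishes by $r$-regularity of $\scrF$, and the right group vanishes by $(r+1)$-regularity of $\scrF_H$. Hence the middle group vanishes, which is exactly $(r+1)$-regularity of $\scrF$.

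The only step requiring care is the choice of $H$ that makes the restriction sequence left exact; this is where the finiteness of the set of associated points of a coherent sheaf is used. Everything else is a diagram chase in the long exact sequences.
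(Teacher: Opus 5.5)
Your proof is correct and is essentially the argument of Mumford's Lecture 14, which the paper cites in place of a proof. You induct on $N$ using a hyperplane $H$ that avoids the associated points of $\scrF$, show that $\scrF_H$ is $r$-regular, apply the induction hypothesis to get $(r+1)$-regularity of $\scrF_H$, and then chase the twisted restriction sequence. Restricting to coherent $\scrF$ is harmless, since every sheaf the paper applies the proposition to ($\frI_X$, $\frI_X^\ell$, the kernels $K_i$, etc.) is coherent.
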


\begin{prop}(Castelnuovo)  \cite[Lecture 14]{mumford2016lectures}. \label{propositioncastelnuovothm}
	If a coherent sheaf $\scrF$ is $r$-regular then $\scrF(k)$ is globally generated for each $k \geq r$.
\end{prop}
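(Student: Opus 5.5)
By Proposition \ref{propositionrgeqkregular}, if $\scrF$ is $r$-regular then it is $k$-regular for every $k \geq r$. So it suffices to prove that an $r$-regular coherent sheaf $\scrF$ has $\scrF(r)$ globally generated. Replacing $\scrF$ by $\scrF(r)$, which is $0$-regular, we may assume $r=0$ and must show $\scrF$ is globally generated. I will follow Mumford's argument. The plan is to establish a multiplication surjectivity statement and then combine it with Serre's theorem on global generation of high twists.

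\emph{Step 1 (hyperplane sections).} Choose a hyperplane $H \subset \bP^N$ avoiding the finitely many associated points of $\scrF$. Then multiplication by a defining linear form gives a short exact sequence
\[
0 \to \scrF(-1) \to \scrF \to \scrF_H \to 0.
\]
From the long exact cohomology sequence of its twists by $(k-i)$, the $r$-regularity of $\scrF$ gives $H^i(\scrF_H(r-i))=0$ for $i>0$. Hence $\scrF_H$ is $r$-regular on $H \cong \bP^{N-1}$. This sets up an induction on $N$; the case $N=0$ is trivial.

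\emph{Step 2 (surjectivity of multiplication).} I will show that for a $0$-regular $\scrF$ and every $k \geq 0$ the map
\[
H^0(\scrF(k)) \otimes H^0(\calO(1)) \to H^0(\scrF(k+1))
\]
is surjective. By induction on $N$ the analogous statement holds for $\scrF_H$ on $H$. Since $\scrF(k)$ is $1$-regular, we have $H^1(\scrF(k))=0$, so $H^0(\scrF(k+1)) \to H^0(\scrF_H(k+1))$ is surjective. Its kernel is the image of $H^0(\scrF(k))$ under multiplication by the linear form defining $H$. A diagram chase then gives surjectivity for $\scrF$: lift a section of $\scrF_H(k+1)$ via the inductive surjectivity on $H$ using sections of $\scrF(k)|_H$, which lift to $\scrF(k)$ because $H^1(\scrF(k-1))=0$. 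The remaining discrepancy is a multiple of the linear form defining $H$, hence lies in the image of multiplication.

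\emph{Step 3 (conclusion).} By Serre's theorem, $\scrF(n)$ is globally generated for $n \gg 0$. Iterating Step 2, every section of $\scrF(n)$ is a sum of products of sections of $\scrF$ with degree $n$ polynomials. So at each point $x$, the stalk $\scrF(n)_x$ is generated by the images of $H^0(\scrF)\otimes H^0(\calO(n))$. Twisting back by $\calO(-n)$, which is invertible, shows that $H^0(\scrF)$ generates $\scrF_x$.

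I expect the main obstacle to be the diagram chase in Step 2, specifically tracking which vanishing ($H^1(\scrF(k-1))=0$ and $H^1(\scrF(k))=0$) is used where. Another delicate point is choosing $H$ generically enough that $\scrF(-1)\to\scrF$ is injective. Over $\bC$ such an $H$ exists since there are only finitely many associated points.
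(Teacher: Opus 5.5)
Your proposal is correct and is essentially Mumford's argument from Lecture 14, which the paper cites in place of giving a proof of its own. The steps match: restrict to a hyperplane $H$ avoiding the associated points, induct on $N$ to get surjectivity of $H^0(\scrF(k))\otimes H^0(\calO(1))\to H^0(\scrF(k+1))$, then use Serre's theorem to descend global generation from $\scrF(n)$, $n\gg 0$, to $\scrF$.

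The only difference is that you take $k$-regularity for $k\ge r$ from Proposition \ref{propositionrgeqkregular} rather than proving it in the same induction; that is legitimate here. The surjectivity of $H^0(\scrF(k+1))\to H^0(\scrF_H(k+1))$ that you mention is not actually needed for the diagram chase. The kernel description together with $H^1(\scrF(k-1))=0$ suffices.
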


\begin{lemma} \label{lemmaregularityca}
	Let $0 \to \scrF \to \scrG \to \scrH \to 0$
	be a short exact sequence of sheaves on $\bP^N$.
	Then
	\begin{equation} \label{eqnregequations1}
		\reg(\scrH) \leq \max(\reg(\scrG),\reg(\scrF)-1),
	\end{equation}
	\begin{equation} \label{eqnregequations2}
		\reg(\scrG) \leq \max(\reg(\scrF),\reg(\scrH)).
	\end{equation}
	Also, if the induced map $H^0(\scrG(r-1)) \to H^0(\scrH(r-1))$ is surjective for some $r \geq \max(\reg(\scrH)+1,\reg(\scrG))$ then
	\begin{equation} \label{eqnregequations3}
		\reg(\scrF) \leq r.
	\end{equation}
\end{lemma}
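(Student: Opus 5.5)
The plan is to read all three inequalities off the long exact cohomology sequence of
\[
0 \to \scrF(k) \to \scrG(k) \to \scrH(k) \to 0, \qquad k \in \bZ,
\]
namely
\[
H^i(\scrF(k)) \to H^i(\scrG(k)) \to H^i(\scrH(k)) \to H^{i+1}(\scrF(k)) \to \cdots,
\]
combined with Proposition \ref{propositionrgeqkregular}. That proposition lets us treat $\reg$ as a threshold: a sheaf is $r$-regular for every integer $r \geq \reg$, so it is enough to check vanishing at a single $r$.

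For \eqref{eqnregequations1}, set $r := \max(\reg(\scrG),\reg(\scrF)-1)$. Then $\scrG$ is $r$-regular and $\scrF$ is $(r+1)$-regular. Fix $i \geq 1$ and take $k = r-i$ in the sequence, which gives
\[
H^i(\scrG(r-i)) \to H^i(\scrH(r-i)) \to H^{i+1}(\scrF(r-i)).
\]
The left group vanishes because $\scrG$ is $r$-regular. The right group is $H^{i+1}(\scrF((r+1)-(i+1)))$, which vanishes because $\scrF$ is $(r+1)$-regular. Hence $H^i(\scrH(r-i))=0$, so $\scrH$ is $r$-regular.

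For \eqref{eqnregequations2}, set $r:=\max(\reg(\scrF),\reg(\scrH))$ and use the portion $H^i(\scrF(r-i)) \to H^i(\scrG(r-i)) \to H^i(\scrH(r-i))$ for $i \geq 1$. Both outer groups vanish, so the middle one does too.

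For \eqref{eqnregequations3}, $\scrG$ is $r$-regular because $r \geq \reg(\scrG)$. Also $\scrH$ is $(r-1)$-regular because $r-1 \geq \reg(\scrH)$. To show $H^i(\scrF(r-i))=0$ for $i\geq 1$, use
\[
H^{i-1}(\scrG(r-i)) \to H^{i-1}(\scrH(r-i)) \to H^i(\scrF(r-i)) \to H^i(\scrG(r-i)).
\]
The right group vanishes by $r$-regularity of $\scrG$. When $i \geq 2$, the group $H^{i-1}(\scrH(r-i)) = H^{i-1}(\scrH((r-1)-(i-1)))$ vanishes by $(r-1)$-regularity of $\scrH$, and the middle term is then zero. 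When $i=1$, the first map is $H^0(\scrG(r-1)) \to H^0(\scrH(r-1))$, which is surjective by hypothesis. So the connecting map is zero and $H^1(\scrF(r-1))=0$.

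The main subtlety is bookkeeping rather than any real obstacle: the paper defines $\reg$ as an infimum over $\bN$, and every step must be applied at an integer at which regularity actually holds. Proposition \ref{propositionrgeqkregular} is exactly what makes it legitimate to evaluate the vanishing at the chosen $r$ instead of at $\reg$ itself.
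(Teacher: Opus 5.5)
Your proof is correct and matches the paper's argument essentially step for step. You use the same three segments of the long exact cohomology sequence, Proposition \ref{propositionrgeqkregular} to pass from $\reg$ to the chosen integer thresholds, and the $H^0$-surjectivity hypothesis to handle the $i=1$ case of \eqref{eqnregequations3}.
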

\begin{proof}
	Let $r_\scrH = \max(\reg(\scrG),\reg(\scrF)-1)$, $r_\scrG = \max(\reg(\scrF),\reg(\scrH))$
	and  $r_\scrF = \max(\reg(\scrH)+1,\reg(\scrG))$.
	We have an exact sequence:
	\begin{equation} \label{eqnshortexactsequecneregular}
		H^i(\scrF(r_\scrG-i)) \to H^i(\scrG(r_\scrG-i)) \to H^i(\scrH(r_\scrG-i)).
	\end{equation}
	The outer terms vanish for all $i>0$ by Proposition \ref{propositionrgeqkregular} and hence the middle term does. This proves \eqref{eqnregequations2}.

	We have an exact sequence:
	\begin{equation} \label{eqnshortexactsequecneregular2}
		H^i(\scrG(r_\scrH-i)) \to H^i(\scrH(r_\scrH-i)) \to H^{i+1}(\scrF(r_\scrH+1-(i+1))).
	\end{equation}
	Again, the outer terms vanish for each $i>0$ and hence middle term does. This proves  \eqref{eqnregequations1}.

	Let $r$ be as in the statement of this lemma.
	We have an exact sequence
	\begin{equation} \label{eqnshortexactsequecneregular3}
		H^{i-1}(\scrH(r-1-(i-1))) \to H^i(\scrF(r-i)) \to H^i(\scrG(r-i)).
	\end{equation}
	The outer terms vanish for each $i>1$ since $r \geq r_\scrF$ and hence so does the middle term. We only need to consider the case $i=1$. In this case we have an exact sequence:
	\begin{equation} \label{eqnshortexactsequecneregular4}
		H^0(\scrG(r-1)) \lra{f} H^0(\scrH(r-1)) \to H^1(\scrF(r-1)) \to H^1(\scrG(r-1)).
	\end{equation}
	Since $f$ is surjective by assumption, we get $H^1(\scrF(r-1)) = 0$ since $r \geq \reg(\scrG)$.
\end{proof}

\begin{lemma} \label{lemmadimensionfoquotientsinxe}
	Let $K$ be an $\calO_{\bP^N}$-submodule of $(\frI_X^\ell)^{\oplus k}$ containing $(\frI_X^{\ell+1})^{\oplus k}$ for some $k,\ell \in \bN$. Suppose that $K$ and $\frI_X^{\ell+1}$ are $r$-regular for some $r \in \bN$.
	Then $K/(\frI_X^{\ell+1})^{\oplus k}$ is $r$-regular and
	\begin{equation} \label{eqneeKholds}
		\dim H^0((K/(\frI_X^{\ell+1})^{\oplus k})(r)) \leq k{N+r \choose N}.
	\end{equation}
\end{lemma}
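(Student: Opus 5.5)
We apply Lemma \ref{lemmaregularityca} to the short exact sequence
\begin{equation*}
	0 \to (\frI_X^{\ell+1})^{\oplus k} \to K \to Q \to 0, \qquad Q := K/(\frI_X^{\ell+1})^{\oplus k}.
\end{equation*}
A direct sum of $r$-regular sheaves is $r$-regular, since cohomology commutes with finite direct sums. Hence $\reg((\frI_X^{\ell+1})^{\oplus k}) \leq r$ and $\reg(K)\leq r$. Inequality \eqref{eqnregequations1} then gives
\begin{equation*}
	\reg(Q) \leq \max(\reg(K), \reg((\frI_X^{\ell+1})^{\oplus k})-1) \leq r,
\end{equation*}
so $Q$ is $r$-regular. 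This proves the first claim.

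For the dimension bound \eqref{eqneeKholds}, we use that $Q$ is annihilated by $\frI_X$. Indeed, $\frI_X \cdot K \subset \frI_X\cdot(\frI_X^\ell)^{\oplus k} \subset (\frI_X^{\ell+1})^{\oplus k}$. So $Q$ is an $\calO_X$-module, and in fact a submodule of $(\frI_X^\ell/\frI_X^{\ell+1})^{\oplus k}$.

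Since $Q$ is $r$-regular, Proposition \ref{propositioncastelnuovothm} shows that $Q(r)$ is globally generated. Mumford's theory gives more: for an $r$-regular sheaf the multiplication map $H^0(Q(r))\otimes H^0(\calO(1)) \to H^0(Q(r+1))$ is surjective. Direct global generation alone, however, gives no upper bound on $h^0$.

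Instead we bound $h^0(Q(r))$ through the surjection $K(r)\to Q(r)$. Twisting the sequence by $r$, the long exact sequence gives
\begin{equation*}
	H^0(K(r)) \to H^0(Q(r)) \to H^1((\frI_X^{\ell+1})^{\oplus k}(r)).
\end{equation*}
The right-hand term vanishes because $\frI_X^{\ell+1}$ is $r$-regular, hence $(r+1)$-regular by Proposition \ref{propositionrgeqkregular}, and so $H^1(\frI_X^{\ell+1}(r))=0$. Therefore $h^0(Q(r)) \leq h^0(K(r))$. Since $K\subset \calO_{\bP^N}^{\oplus k}$ and taking global sections is left exact, $h^0(K(r)) \leq k\, h^0(\calO_{\bP^N}(r)) = k{N+r \choose N}$. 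This gives \eqref{eqneeKholds}.

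The main obstacle is the last step. One has to avoid trying to bound $h^0(Q(r))$ using $\calO_X$-module structure or Hilbert polynomials, and instead reduce to sections of $K(r)$ through the $H^1$ vanishing supplied by regularity of $\frI_X^{\ell+1}$. Once that vanishing is in hand, the remaining steps are formal.
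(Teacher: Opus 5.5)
Your proof is correct and follows the paper's argument. You get regularity of the quotient from \eqref{eqnregequations1} applied to the same short exact sequence. You get surjectivity of $H^0(K(r)) \to H^0((K/(\frI_X^{\ell+1})^{\oplus k})(r))$ from $H^1(\frI_X^{\ell+1}(r))=0$, and the bound $h^0(K(r)) \leq k{N+r \choose N}$ from $K \subset \calO_{\bP^N}^{\oplus k}$. The digression about the quotient being an $\calO_X$-module and Mumford's multiplication map is not used and can be dropped.
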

\begin{proof}
	We have a short exact sequence
	\begin{equation}
		0 \to (\frI_X^{\ell+1})^{\oplus k} \to K \to K / (\frI_X^{\ell+1})^{\oplus k} \to 0.
	\end{equation}
	Then by Lemma \ref{lemmaregularityca} we get that $K / (\frI_X^{\ell+1})^{\oplus k}$ is $r$-regular.
	We have that $H^0(K(r)) \subset H^0(\calO_{\bP^N}^{\oplus k}(r))$.
	Hence,
	\begin{equation} \label{eqndimnplusrbounds}
		\dim H^0(K(r)) \leq \dim H^0(\calO_{\bP^N}^{\oplus k}(r)) = k{N + r \choose r}.
	\end{equation}

	Since $H^1((\frI_X^{\ell+1})^{\oplus k}(r)) = 0$ by Proposition \ref{propositionrgeqkregular} we get $H^0(K(r)) \to H^0((K/(\frI_X^{\ell+1})^{\oplus k})(r))$ is surjective.
	Hence,
	\begin{equation} \label{equpperboundforkover}
		\dim H^0((K/(\frI_X^{\ell+1})^{\oplus k})(r)) \leq \dim H^0(K(r)) \overset{\textnormal{\eqref{eqndimnplusrbounds}}}{\leq} k{N + r \choose r}.
	\end{equation}
\end{proof}

\begin{lemma} \label{lemmainductionstepforexactsequences}
	Let $K$ be a coherent subsheaf of $(\frI_X^\ell)^{\oplus k}$ containing $(\frI_X^{\ell+1})^{\oplus k}$ for some $k,\ell \in \bN$. Suppose that $K$ and $\frI_X^{\ell+1}$ are $r$-regular for some $r \in \bN$.
	Let $h \geq k {N + r \choose N}$.
	Then there is a coherent subsheaf $K'$ of $\calO_{\bP^N}^{\oplus h}$ containing $\frI_X^{\oplus h}$ which is $1$-regular together with a short exact sequence:
	\begin{equation} \label{eqnasersersedsiere}
		0 \to K'/ \frI_X^{\oplus h} \to (\calO_{\bP^N}/\frI_X)^{\oplus h} \to (K/(\frI_X^{\ell+1})^{\oplus k})(r) \to 0.
	\end{equation}
\end{lemma}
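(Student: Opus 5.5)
We produce the surjection first and then read off the kernel. Set $Q := K/(\frI_X^{\ell+1})^{\oplus k}$. Since $\frI_X \cdot (\frI_X^\ell)^{\oplus k} \subset (\frI_X^{\ell+1})^{\oplus k} \subset K$, the sheaf $Q$ is annihilated by $\frI_X$, so $Q$ is an $\calO_{\bP^N}/\frI_X = \calO_X$-module. By Lemma \ref{lemmadimensionfoquotientsinxe}, $Q$ is $r$-regular and $\dim H^0(Q(r)) \leq k{N+r \choose N} \leq h$. By Proposition \ref{propositioncastelnuovothm}, $Q(r)$ is globally generated. Choose $h$ global sections of $Q(r)$ spanning $H^0(Q(r))$, padding with zeros if necessary. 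They give a surjection $\calO_{\bP^N}^{\oplus h} \to Q(r)$.

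Because $Q(r)$ is killed by $\frI_X$, this map factors through $(\calO_{\bP^N}/\frI_X)^{\oplus h} \to Q(r)$, which is still surjective. Let $K' \subset \calO_{\bP^N}^{\oplus h}$ be the kernel of the composite $\calO_{\bP^N}^{\oplus h} \to Q(r)$. Then $K' \supset \frI_X^{\oplus h}$, and $K'/\frI_X^{\oplus h}$ is the kernel of $(\calO_{\bP^N}/\frI_X)^{\oplus h} \to Q(r)$. This gives the short exact sequence \eqref{eqnasersersedsiere}, and $K'$ is coherent as the kernel of a map of coherent sheaves.

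It remains to show that $K'$ is $1$-regular, which is the main point. Apply the last part of Lemma \ref{lemmaregularityca} to
\[
0 \to K' \to \calO_{\bP^N}^{\oplus h} \to Q(r) \to 0
\]
with the value $1$ in place of $r$. We need to check three things:
\begin{itemize}
\item $\reg(\calO_{\bP^N}^{\oplus h}) = 0 \leq 1$, since $H^i(\calO(-i)) = 0$ for $i>0$.
\item $Q(r)$ is $0$-regular, because $Q$ is $r$-regular; hence $1 \geq \reg(Q(r))+1$.
\item The map $H^0(\calO_{\bP^N}^{\oplus h}) \to H^0(Q(r))$ is surjective, which holds by the choice of sections spanning $H^0(Q(r))$.
\end{itemize}
The lemma then gives $\reg(K') \leq 1$. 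The one subtlety is that $\reg$ is defined as an infimum over $\bN$, so a regularity index of $0$ is allowed, and the hypotheses of Lemma \ref{lemmaregularityca} must be read with that convention.
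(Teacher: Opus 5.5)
Your proposal is correct and follows the paper's proof essentially step for step. You use Lemma \ref{lemmadimensionfoquotientsinxe} and Proposition \ref{propositioncastelnuovothm} to get a map $\calO_{\bP^N}^{\oplus h} \to (K/(\frI_X^{\ell+1})^{\oplus k})(r)$ surjective on sheaves and on $H^0$, deduce $1$-regularity of its kernel $K'$ from the last part of Lemma \ref{lemmaregularityca} (both $\calO_{\bP^N}^{\oplus h}$ and the target being $0$-regular), and factor through $(\calO_{\bP^N}/\frI_X)^{\oplus h}$ because $\frI_X$ annihilates the target; the only differences from the paper are the order of these steps and your more explicit check that $\frI_X$ kills the quotient.
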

\begin{proof}
	By Lemma \ref{lemmadimensionfoquotientsinxe} we have $K / (\frI_X^{\ell+1})^{\oplus k}$ is $r$-regular and Equation \eqref{eqneeKholds} holds.
	Therefore, by Proposition \ref{propositioncastelnuovothm} there is a surjection
	\begin{equation}
		f : \calO_{\bP^N}^{\oplus h} \to (K/(\frI_X^{\ell+1})^{\oplus k})(r)
	\end{equation}
	which is also a surjection on $H^0$. Let $K'$ be in the kernel of $f$.
	Since $\calO_{\bP^N}^{\oplus h}$ is $0$-regular by \cite[Proposition 2.1.12]{EGA3}, $(K/(\frI_X^{\ell+1})^{\oplus k})(r)$ is $0$-regular
	and $H^0(f)$ is surjective, we have by Lemma \ref{lemmaregularityca}
	that $K'$ is $1$-regular.
	Since $K$ is a subsheaf of $(\frI_X^\ell)^{\oplus k}$, we get that $\frI_X^{\oplus h} \subset \calO_{\bP^N}^{\oplus h}$ gets sent to $0$ under $f$.
	Therefore, we have an induced surjection
	\begin{equation}
		(\calO_{\bP^N}/\frI_X)^{\oplus h} \to (K/(\frI_X^{\ell+1})^{\oplus k})(r).
	\end{equation}
	The kernel of this map is $K' / \frI_X^{\oplus h}$.
	Hence, we have an exact sequence \eqref{eqnasersersedsiere}.
\end{proof}

\begin{lemma} \label{lemmapulledbackexactsequence}
	Suppose that $\frI_X$, $\frI_X^\ell$ and $\frI_X^{\ell+1}$ are $r$-regular for some $r \in \bN$ and $\ell \in \bN$. Let $r_1 \geq r$ and $r_2,r_3 \geq 1$.
	Let $a_1,a_2,a_3 \in \bN$ satisfy
	\begin{equation}
		a_1 \geq {N+r_1 \choose N}, \ a_2 \geq a_1 {N+r_2 \choose N}, \ a_3 \geq a_2 {N + r_3 \choose N}.
	\end{equation}
	Then there is an exact sequence of sheaves:
	\begin{equation} \label{eqnpulledbackexactsequence}
		\calO_X^{\oplus a_3}(-r_1-r_2-r_3) \to
		\calO_X^{\oplus a_2}(-r_1-r_2) \to \calO_X^{\oplus a_1}(-r_1) \to \iota_X^*(\frI_X^\ell) \to 0.
	\end{equation}
\end{lemma}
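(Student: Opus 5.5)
We build the sequence \eqref{eqnpulledbackexactsequence} in three stages on $\bP^N$, each stage produced by Lemma \ref{lemmainductionstepforexactsequences}, and then restrict everything to $X$. The first step resolves $\iota_X^*(\frI_X^\ell) = \frI_X^\ell/\frI_X^{\ell+1}$, viewed as a sheaf on $\bP^N$ supported on $X$. We apply Lemma \ref{lemmainductionstepforexactsequences} with $k=1$, with $K = \frI_X^\ell$ (which is $r$-regular, hence $r_1$-regular by Proposition \ref{propositionrgeqkregular}), and with $\frI_X^{\ell+1}$ also $r_1$-regular. Taking $h = a_1 \geq {N+r_1 \choose N}$, this gives a $1$-regular sheaf $K_1 \subset \calO_{\bP^N}^{\oplus a_1}$ containing $\frI_X^{\oplus a_1}$ and an exact sequence
\[
0 \to K_1/\frI_X^{\oplus a_1} \to \calO_X^{\oplus a_1} \to (\frI_X^\ell/\frI_X^{\ell+1})(r_1) \to 0 .
\]
Twisting by $-r_1$ gives the last surjection $\calO_X^{\oplus a_1}(-r_1) \to \iota_X^*(\frI_X^\ell) \to 0$.

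For the second stage we repeat the argument with $\ell = 0$, $k = a_1$ and $K = K_1$, which sits between $\frI_X^{\oplus a_1}$ and $\calO_{\bP^N}^{\oplus a_1}$. We need $K_1$ and $\frI_X$ to be $r_2$-regular. The sheaf $K_1$ is $1$-regular, and $r_2 \geq 1$. For $\frI_X$ the hypothesis only gives $r$-regularity, and $r_2$ may be smaller than $r$; this is the point I expect to be the main obstacle. The plan is to check exactly which regularity Lemma \ref{lemmainductionstepforexactsequences} really uses. Its proof only needs $K/\frI_X^{\oplus k}$ to be $r_2$-regular, together with the dimension bound coming from $H^0(K(r_2)) \to H^0((K/\frI_X^{\oplus k})(r_2))$ being surjective. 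So I would try to obtain that regularity directly, using Lemma \ref{lemmaregularityca} \eqref{eqnregequations1} on $0 \to \frI_X^{\oplus k} \to K \to K/\frI_X^{\oplus k}\to 0$. If that fails, I would fall back on increasing $r_2, r_3$ to at least $r$, which is harmless for the later application. Assuming this step works, with $h = a_2 \geq a_1{N+r_2\choose N}$ we get $K_2$ and an exact sequence
\[
0\to K_2/\frI_X^{\oplus a_2}\to \calO_X^{\oplus a_2}\to (K_1/\frI_X^{\oplus a_1})(r_2)\to 0 .
\]
Twisting by $-r_1-r_2$ and splicing with the first sequence gives the map $\calO_X^{\oplus a_2}(-r_1-r_2)\to \calO_X^{\oplus a_1}(-r_1)$. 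Its image is $(K_1/\frI_X^{\oplus a_1})(-r_1)$, which is exactly the kernel of the previous surjection, so the sequence is exact at $\calO_X^{\oplus a_1}(-r_1)$.

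The third stage is one more application with $K = K_2$, $k = a_2$ and $h = a_3$. It yields $\calO_X^{\oplus a_3}\to (K_2/\frI_X^{\oplus a_2})(r_3)\to 0$. Twisting by $-r_1-r_2-r_3$ and composing with the inclusion of $(K_2/\frI_X^{\oplus a_2})(-r_1-r_2)$ into $\calO_X^{\oplus a_2}(-r_1-r_2)$ gives the first map, and its image is the kernel of the next map. This gives exactness at $\calO_X^{\oplus a_2}(-r_1-r_2)$. Throughout, the sheaves $\calO_{\bP^N}/\frI_X$ are identified with $\iota_{X*}\calO_X$; since every sheaf in these sequences is annihilated by $\frI_X$, the sequences descend to exact sequences on $X$.
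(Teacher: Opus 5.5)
Your construction (three applications of Lemma \ref{lemmainductionstepforexactsequences}, splicing, then descending to $X$) is exactly the paper's proof. The point you flag is a genuine gap, and the paper's proof has the same gap: at the second and third stages it asserts that $\frI_X$ is $r_2$- (resp.\ $r_3$-)regular ``by Proposition \ref{propositionrgeqkregular}'', which is only valid when $r_2,r_3\ge r$.

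Your proposed repair does not close the gap in the stated generality.
\begin{itemize}
\item Carrying it out, \eqref{eqnregequations1} gives only $\reg(K_1/\frI_X^{\oplus a_1})\le\max(1,r-1)$.
\item The dimension count also needs $H^1(\frI_X(r_2))=0$, so that $H^0(K_1(r_2))\to H^0((K_1/\frI_X^{\oplus a_1})(r_2))$ is onto. The hypotheses guarantee this only for $r_2\ge r-1$.
\end{itemize}
So your route proves the lemma for $r_2,r_3\ge\max(1,r-1)$ and no further, and this limit is built into the method. For $r_2=1$ the hypothesis of Lemma \ref{lemmainductionstepforexactsequences} fails outright unless $X$ is linear, since $1$-regularity of $\frI_X$ would make $\frI_X(1)$ globally generated. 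Even your weaker requirement fails. Take $X$ a curve of positive genus. Since $\frI_X^\ell/\frI_X^{\ell+1}$ is $r$-regular, $H^1((\frI_X^\ell/\frI_X^{\ell+1})(r_1))=0$. Hence $H^1(K_1/\frI_X^{\oplus a_1})$ surjects onto $H^1(\calO_X)^{\oplus a_1}\neq 0$, and $K_1/\frI_X^{\oplus a_1}$ is not $1$-regular.

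Your fallback of enlarging $r_2,r_3$ proves a strictly weaker statement. Global generation of the kernel after twisting by $r_1+r_2'$ does not imply global generation after twisting by a smaller $r_1+r_2$, so you do not recover the sequence with the stated twists. What you and the paper actually establish is the lemma under the extra hypothesis $r_2,r_3\ge r$, or $r_2,r_3\ge\max(1,r-1)$ with your refinement. That is all the paper needs, since the applications in Lemmas \ref{lemmaexactsequence} and \ref{lemmaeafefaefaecastelnuovo} use $r_1=r_2=r_3=r$. The clean fix is to add that hypothesis to the statement, rather than leaving the step as an assumption.
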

\begin{proof}
	By Lemma \ref{lemmainductionstepforexactsequences}, there exists a coherent subsheaf $K_1$ of $\calO_{\bP^N}^{\oplus a_1}$ containing $\frI_X^{a_1}$ which is $1$-regular
	together with a short exact sequence:
	\begin{equation} \label{eqnshortexactsequence1}
		0 \to K_1/\frI_X^{\oplus a_1} \to (\calO_{\bP^N}/\frI_X)^{\oplus a_1} \to (\frI_X^\ell/\frI_X^{\ell+1})(r_1) \to 0.
	\end{equation}
	By Proposition \ref{propositionrgeqkregular}, $K_1$ and $\frI_X$ are $r_2$-regular. Hence, by Lemma \ref{lemmainductionstepforexactsequences} there exists a coherent subsheaf $K_2$ of $\calO_{\bP^N}^{\oplus a_2}$ containing $\frI_X^{a_2}$ which is $1$-regular together with a short exact sequence:
	\begin{equation} \label{eqnshortexactsequence2}
		0 \to K_2/\frI_X^{\oplus a_2} \to (\calO_{\bP^N}/\frI_X)^{\oplus a_2} \to \left(K_1/\frI_X^{\oplus a_1}\right)(r_2) \to 0.
	\end{equation}
	By Proposition \ref{propositionrgeqkregular}, $K_2$ and $\frI_X$ are  $r_3$-regular. Hence, by Lemma \ref{lemmainductionstepforexactsequences} there exists a coherent subsheaf $K_3$ of $\calO_{\bP^N}^{\oplus a_3}$ containing $\frI_X^{a_3}$ which is $1$-regular together with a short exact sequence:
	\begin{equation} \label{eqnshortexactsequence3}
		0 \to K_3/\frI_X^{\oplus a_3} \to (\calO_{\bP^N}/\frI_X)^{\oplus a_3} \to \left(K_2/\frI_X^{\oplus a_2}\right)(r_3) \to 0.
	\end{equation}

	By combining \eqref{eqnshortexactsequence1},\eqref{eqnshortexactsequence2} and \eqref{eqnshortexactsequence3} we get a long exact sequence:
	\begin{equation} \label{eqncombiningthreeexact}
		\begin{aligned}
			(\calO_{\bP^N}/\frI_X)^{\oplus a_3} \to
			(\calO_{\bP^N}/\frI_X)^{\oplus a_2}(r_3) \to \\ (\calO_{\bP^N}/\frI_X)^{\oplus a_1}(r_3+r_2) \to \left(\frI_X^\ell/\frI_X^{\ell+1}\right)(r_3+r_2+r_1) \to 0.
		\end{aligned}
	\end{equation}
	Hence, we get a long exact sequence:
	\begin{equation} \label{eqncombiningthreeexact2}
		\begin{aligned}
			(\calO_{\bP^N}/\frI_X)^{\oplus a_3}(-r_1-r_2-r_3) \to
			(\calO_{\bP^N}/\frI_X)^{\oplus a_2}(-r_1-r_2) \\
			\to  (\calO_{\bP^N}/\frI_X)^{\oplus a_1}(-r_1) \to \left(\frI_X^\ell/\frI_X^{\ell+1}\right) \to 0.
		\end{aligned}
	\end{equation}

	By \cite[Lemma 04CJ]{stacks-project} this restricts to an exact sequence \eqref{eqnpulledbackexactsequence}.
\end{proof}

\begin{lemma} \label{lemmaexactsequence}
	Suppose that $\frI_X$ and $\frI_X^2$ are $r$-regular for some $r>0$.
	Then there is an exact sequence of sheaves
	\begin{equation} \label{eqnexactsequence}
		0 \to V \lra{e_V} \calO_X(r)^{{N+r \choose N}} \to \calO_X(2r)^{{N+r \choose N}^2} \to \calO_X(3r)^{{N+r \choose N}^3}
	\end{equation}
	on $X$
	where $V$ is the normal bundle of $X$.
\end{lemma}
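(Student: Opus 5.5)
The plan is to obtain \eqref{eqnexactsequence} by dualizing a resolution of the conormal bundle, which Lemma \ref{lemmapulledbackexactsequence} supplies. Apply Lemma \ref{lemmapulledbackexactsequence} with $\ell = 1$ and $r_1 = r_2 = r_3 = r$. Its hypotheses hold:
\begin{itemize}
\item $\frI_X = \frI_X^\ell$ and $\frI_X^2 = \frI_X^{\ell+1}$ are $r$-regular by assumption;
\item $r_1 \geq r$, and $r_2, r_3 \geq 1$ because $r > 0$;
\item the choices $a_1 = {N+r \choose N}$, $a_2 = {N+r \choose N}^2$, $a_3 = {N+r \choose N}^3$ satisfy the required inequalities, with equality.
\end{itemize}
This gives an exact sequence
\begin{equation*}
A_3 \lra{g_3} A_2 \lra{g_2} A_1 \lra{g_1} \iota_X^*\frI_X \to 0, \qquad A_i := \calO_X^{\oplus a_i}(-ir).
\end{equation*}
Since $X$ is smooth, $\iota_X^*\frI_X = \frI_X/\frI_X^2$ is the conormal bundle $V^*$, which is locally free.

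The next step is to dualize with $\Hom(-,\calO_X)$. Note that $A_i^\vee = \calO_X(ir)^{\oplus a_i}$ and $(V^*)^\vee = V$. Left exactness of $\Hom(-,\calO_X)$ only gives exactness of $0 \to V \to A_1^\vee \to A_2^\vee$. It does not immediately give exactness at $A_2^\vee$, and I expect this to be the main point needing care.

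To handle it, split the resolution into short exact sequences and use local freeness throughout. Let $K_1 := \ker g_1$. Since $V^*$ is locally free, $K_1$ is locally free and $0 \to K_1 \to A_1 \to V^* \to 0$ is locally split. Its dual $0 \to V \to A_1^\vee \to K_1^\vee \to 0$ is therefore exact. Next, $g_2$ surjects onto $K_1$; set $K_2 := \ker(A_2 \to K_1)$. Again $K_2$ is locally free and the dual sequence $0 \to K_1^\vee \to A_2^\vee \to K_2^\vee \to 0$ is exact. Finally $g_3$ surjects onto $K_2$, so $K_2^\vee \to A_3^\vee$ is injective.

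It remains to assemble these. The map $A_1^\vee \to A_2^\vee$ factors as $A_1^\vee \twoheadrightarrow K_1^\vee \hookrightarrow A_2^\vee$, so its image is $K_1^\vee$. The map $A_2^\vee \to A_3^\vee$ factors as $A_2^\vee \twoheadrightarrow K_2^\vee \hookrightarrow A_3^\vee$, so its kernel is $\ker(A_2^\vee \to K_2^\vee) = K_1^\vee$. Hence the dualized sequence is exact at $A_2^\vee$. Together with injectivity of $e_V : V \to A_1^\vee$ and exactness at $A_1^\vee$, this yields \eqref{eqnexactsequence}.
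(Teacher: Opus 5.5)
Your proposal is correct and follows the paper's route: apply Lemma \ref{lemmapulledbackexactsequence} with $\ell=1$, $r_1=r_2=r_3=r$ and $a_i={N+r \choose N}^i$, then dualize. The paper only asserts that the dual of that resolution is the desired sequence. Your splitting into locally split short exact sequences of locally free sheaves supplies the missing justification for exactness at $\calO_X(2r)^{{N+r \choose N}^2}$.
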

\begin{proof}
	By Lemma \ref{lemmapulledbackexactsequence} we get an exact sequence:
	\begin{equation}
		\calO_X^{\oplus {N+r \choose N}^3}(-3r) \to
		\calO_X^{\oplus {N+r \choose N}^2}(-2r) \to \calO_X^{\oplus {N+r \choose N}}(-r) \to V^* \to 0.
	\end{equation}
	We then have that \eqref{eqnexactsequence} is the dual of the sequence above.
\end{proof}

\begin{lemma} \label{lemmaeafefaefaecastelnuovo}
	Suppose that $\frI_X$ is $r$-regular for some $r>0$.
	Then there is an exact sequence
	\begin{equation} \label{eqnexactsequence2}
		0 \to \calO_X \lra{} \calO_X(2r)^{{N+r \choose N}^2} \lra{} \calO_X(3r)^{{N+r \choose N}^3}
	\end{equation}
\end{lemma}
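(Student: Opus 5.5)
The plan is to obtain \eqref{eqnexactsequence2} as the dual of a right exact presentation of $\calO_X$ by twisted free sheaves. Such a presentation comes from Lemma \ref{lemmapulledbackexactsequence} with $\ell = 0$, and the twists are chosen so that the first step has twist $2r$ instead of $r$.

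\textbf{Step 1: check the hypotheses of Lemma \ref{lemmapulledbackexactsequence} with $\ell = 0$.}
\begin{itemize}
\item We have $\frI_X^0 = \calO_{\bP^N}$ and $\frI_X^1 = \frI_X$.
\item $\calO_{\bP^N}$ is $0$-regular, hence $r$-regular by Proposition \ref{propositionrgeqkregular}.
\item $\frI_X$ is $r$-regular by assumption.
\end{itemize}
So the hypotheses hold.

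\textbf{Step 2: choose the parameters.} Take
\[
r_1 = 2r \geq r, \qquad r_2 = r \geq 1, \qquad r_3 = 1.
\]
Take $a_1 = {N+r \choose N}^2$ and $a_2 = {N+r \choose N}^3$, and $a_3$ arbitrarily large. The required bound $a_2 \geq a_1{N+r_2 \choose N}$ holds with equality. The only nontrivial check is
\[
a_1 \geq {N+2r \choose N}, \quad\text{that is,}\quad {N+2r \choose N} \leq {N+r \choose N}^2 .
\]
This follows because a monomial of degree at most $2r$ in $N$ variables can be written as a product of two monomials of degree at most $r$. Hence the multiplication map from pairs of monomials of degree $\leq r$ onto monomials of degree $\leq 2r$ is surjective.

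\textbf{Step 3: apply the lemma and truncate.} Since $\iota_X^*\frI_X^0 = \calO_X$, Lemma \ref{lemmapulledbackexactsequence} gives an exact sequence whose last three terms are
\[
\calO_X^{\oplus {N+r \choose N}^3}(-3r) \to \calO_X^{\oplus {N+r \choose N}^2}(-2r) \to \calO_X \to 0 .
\]
We discard the leftmost term $\calO_X^{\oplus a_3}(-3r-1)$. This only affects exactness at the $(-3r)$-term, which we do not need.

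\textbf{Step 4: dualize.} Apply $\Hom(-,\calO_X)$ to the displayed sequence, as in the proof of Lemma \ref{lemmaexactsequence}. This functor is left exact, so a right exact sequence $A \to B \to C \to 0$ goes to an exact sequence $0 \to C^\vee \to B^\vee \to A^\vee$. All terms are locally free, and their duals are $\calO_X$, $\calO_X(2r)^{{N+r \choose N}^2}$ and $\calO_X(3r)^{{N+r \choose N}^3}$. This yields exactly \eqref{eqnexactsequence2}.

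\textbf{Main obstacle.} I expect Step 2 to be the only point needing care: the lemma forces $r_1 \geq r$, and the statement skips a twist by $r$. The intended fix is to take $r_1 = 2r$ and absorb the cost through the monomial-count inequality above. If that inequality were too weak, the fallback would be to run Lemma \ref{lemmainductionstepforexactsequences} directly, starting from a surjection $\calO_{\bP^N}(-2r)^{\oplus h} \to \calO_{\bP^N}/\frI_X$.
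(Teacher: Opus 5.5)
Your proposal is correct and follows the paper's proof: apply Lemma \ref{lemmapulledbackexactsequence} with $\ell = 0$, using that $\calO_{\bP^N}$ is $r$-regular, and then dualize the resulting right exact sequence. The paper leaves the parameters implicit, and your choice $r_1 = 2r$, $r_2 = r$, $a_1 = {N+r \choose N}^2$, $a_2 = {N+r \choose N}^3$ (justified by ${N+2r \choose N} \leq {N+r \choose N}^2$) is exactly what makes the lemma produce the paper's displayed three-term sequence.
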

\begin{proof}
	We have $\calO_{\bP^N}$ is $r$-regular by \cite[Proposition 2.1.12]{EGA3} combined with Proposition \ref{propositionrgeqkregular}.
	Hence, by Lemma \ref{lemmapulledbackexactsequence} we get an exact sequence:
	\begin{equation}
		\calO_X^{\oplus {N+r \choose N}^3}(-3r) \to \calO_X^{\oplus {N+r \choose N}^2}(-2r) \to \calO_X \to 0.
	\end{equation}
	We then have that \eqref{eqnexactsequence2} is the dual of the sequence above.
\end{proof}

Let us now fix $r \in \bN_{>0}$ so that $\frI_X$ and $\frI_X^2$ are $r$-regular.
Let $W$ be the cokernel of the map $e_V$ from the exact sequence \eqref{eqnexactsequence}.
This is a vector bundle since $e_V$ is injective and its domain and codomain are vector bundles.
Define
\begin{equation} \label{eqndefnofmu}
	\mu := {N + r \choose N}.
\end{equation}
By \eqref{eqnexactsequence} and \eqref{eqnexactsequence2}, we have an exact sequence:
\begin{equation} \label{eqnexateeuaenfeaiet}
	0 \to W \oplus \calO_X \lra{e_W} \calO_X(2r)^{2\mu^2} \to \calO_X(3r)^{2\mu^3}.
\end{equation}
The map $e_W$ in \eqref{eqnexateeuaenfeaiet} induces the following closed immersion:
\begin{equation} \label{eqnclosedummersionofPnbundles}
	\iota_{P_W} : P_W \lra{} \bP(\calO_X(2r)^{2\mu^2}) \cong X \times \bP^{2\mu^2-1}
\end{equation}
where $P_W$ is as in Definition \ref{defnprojectivecompactification}.
As a result we have a closed immersion
\begin{equation} \label{eqnclosedimmersionf}
	f : P_W \lra{\iota_{P_W}} X \times \bP^{2\mu^2-1} \lra{\iota_X \times \id} \bP^N \times \bP^{2\mu^2-1}.
\end{equation}
We let \begin{equation}
	\widetilde{V} \to X \times \bP^{2\mu^2-1}
\end{equation} be the normal bundle of $X \times \bP^{2\mu^2-1}$ inside $\bP^N \times \bP^{2\mu^2-1}$.
This is the pullback of $V$ to $X \times \bP^{2\mu^2-1}$ via the natural projection map to $X$.
Let
\begin{equation}
	\widetilde{Q} \to P_W
\end{equation}
be the normal bundle of $\iota_{P_W}(P_W)$ inside $X \times \bP^{2\mu^2-1}$.
Let us now compute this bundle.
The map $\iota_{P_W}$ corresponds to the exact sequence:
\begin{equation} \label{firstexactsequence}
	0 \to W(-2r) \oplus \calO_X(-2r) \lra{e_W \otimes \id_{\calO_X(-2r)}} \calO_X^{2\mu^2}.
\end{equation}
Let $Q'$ be the cokernel of $e_W \otimes \id_{\calO_X(-2r)}$.
Then
\begin{equation} \label{eqnequationformalbundle0}
	\widetilde{Q} \cong \pi_{P_W}^*Q' \otimes f^*\calO_{\bP^N \times \bP^{2\mu^2-1}}(0,1).
\end{equation}
We also have that:
\begin{equation} \label{eqnWprimeW}
	Q' \cong Q \otimes \calO_X(-2r)
\end{equation}
where $Q$ is the cokernel of the map $e_W$.
Hence,
\begin{equation} \label{eqnequationformalbundle}
	\widetilde{Q} \cong \pi_{P_W}^*(Q \otimes \calO_X(-2r)) \otimes \calO_{\bP^N \times \bP^{2\mu^2-1}}(0,1)|_{P_W}
	= \pi_{P_W}^*Q \otimes \calO_{\bP^N \times \bP^{2\mu^2-1}}(-2r,1)|_{P_W}.
\end{equation}


\begin{defn} \label{defnprojectionmapandex}
	Define
	$\calO_{\bP^N \times \bP}(a,b) := \calO_{\bP^N \times \bP^{2\mu^2-1}}(a,b)$
	and $\calO_{P_W}(a,b) := \calO_{\bP^N \times \bP}(a,b)|_{P_W}$
	for each $a,b \in \bZ$.
	We let $\pi_X  : X \times \bP^{2\mu^2-1} \to X$ be the natural projection map.
	Define $e_X$ to be the natural composition:
	\begin{equation}
		e_X : X \hookrightarrow W \hookrightarrow P_W.
	\end{equation}
	Define $d_{P_W} = (e_X)_*d$.
	Also, we define $T := \calO_{\bP^N \times \bP}(r,0)^\mu$.
	We let
	$d_{X \times \bP}$ be the image of $d$ in $H_2^+(X \times \bP^{2\mu^2-1})$
	and $d_{\bP^N \times \bP}$ its image in $H_2^+(\bP^N \times \bP^{2\mu^2-1})$.
\end{defn}

\begin{lemma} \label{lemmapullbackbundles}
	We have
	\begin{equation} \label{eqntrivialrestriction}
		f^*(\calO_{P_W})(a,b) = \pi_{P_W}^*(\calO_X(a + 2br)) \otimes \calO(\infty_W)^{\otimes b}
	\end{equation}
	and
	\begin{equation}  \label{eqntrivialrestriction2}
		e_X^* f^*(\calO_{\bP^N \times \bP})(a,b) = \calO_X(a + 2br)
	\end{equation}
	for each $a,b \in \bZ$.
\end{lemma}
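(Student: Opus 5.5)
The plan is to compute the pullbacks of the two factors separately and then tensor them together, since $\calO_{\bP^N \times \bP}(a,b) = \calO_{\bP^N \times \bP}(a,0) \otimes \calO_{\bP^N \times \bP}(0,b)$. (Here I read $f^*(\calO_{P_W})(a,b)$ as $f^*\calO_{\bP^N \times \bP}(a,b) = \calO_{P_W}(a,b)$.)

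For the first factor, $f = (\iota_X \times \id) \circ \iota_{P_W}$ and $\calO_{\bP^N \times \bP}(a,0)$ is pulled back from $\bP^N$. The composition of $f$ with the projection to $\bP^N$ equals $\iota_X \circ \pi_{P_W}$. Hence
\[
f^*\calO_{\bP^N \times \bP}(a,0) = \pi_{P_W}^*\calO_X(a).
\]

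For the second factor, it suffices to treat $b=1$. The composite $P_W \to \bP^{2\mu^2-1}$ is the map induced by the embedding of bundles
\[
e_W \otimes \id_{\calO_X(-2r)} : (W \oplus \calO_X)(-2r) \hookrightarrow \calO_X^{2\mu^2}
\]
from \eqref{firstexactsequence}. The identification $\bP((W\oplus \calO_X)(-2r)) = \bP(W \oplus \calO_X) = P_W$ does not depend on the twist. Under this closed immersion, $\calO(1)$ on $\bP(\calO_X^{2\mu^2}) = X \times \bP^{2\mu^2-1}$ (the dual of the tautological line) restricts to the dual of the tautological subline of $\pi_{P_W}^*(W\oplus\calO_X)(-2r)$. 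Twisting $W \oplus \calO_X$ by a line bundle $M$ multiplies the tautological subbundle by $\pi^*M$. So this restriction is
\[
\calO_{\bP(W\oplus\calO_X)}(1) \otimes \pi_{P_W}^*\calO_X(2r) = L_W \otimes \pi_{P_W}^*\calO_X(2r).
\]
This is the same bookkeeping that appears in the proof of Corollary \ref{corollarytwistedgloballygenerated}.

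It remains to identify $L_W = \calO_{P_W}(1)$ with $\calO(\infty_W)$. I expect this to be the only point requiring care, with the paper's $\Proj(\Sym(\cdot^*))$ convention. The projection $W \oplus \calO_X \to \calO_X$ onto the trivial summand is a linear form, so it gives a section of $\calO_{P_W}(1)$. This section vanishes exactly on the lines contained in $W \oplus 0$, that is on $\infty_W = \bP(W)$. The vanishing is transverse, since fibrewise it is a linear hyperplane in projective space. Hence $L_W \cong \calO(\infty_W)$.

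Combining the two factors and taking tensor powers for general $b \in \bZ$ gives \eqref{eqntrivialrestriction}.

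For \eqref{eqntrivialrestriction2}, pull back \eqref{eqntrivialrestriction} along $e_X$. We have $\pi_{P_W} \circ e_X = \id_X$. The image of $e_X$ is the zero section $X \subset W \subset P_W$, which is disjoint from $\infty_W$. Therefore $e_X^*\calO(\infty_W)$ is trivial, and
\[
e_X^* f^*\calO_{\bP^N \times \bP}(a,b) = \calO_X(a + 2br).
\]
Alternatively one can see this directly: the section above restricts to the constant nonvanishing section $1$ along the zero section.
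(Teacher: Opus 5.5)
Your proposal is correct and follows essentially the same route as the paper: you split $\calO_{\bP^N \times \bP}(a,b)$ into its two factors, get the $(1,0)$ factor from the projection to $\bP^N$ and the $(0,1)$ factor from the twisted sequence \eqref{firstexactsequence}, extend to general $(a,b)$ by linearity, and deduce \eqref{eqntrivialrestriction2} from $\pi_{P_W}\circ e_X=\id_X$ and the disjointness of the zero section from $\infty_W$. Beyond the paper's proof, you make explicit the twist bookkeeping and the identification $\calO_{P_W}(1)\cong\calO(\infty_W)$, which the paper leaves implicit.
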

\begin{proof}
	Because $\pi_X \circ f = \pi_{P_W}$, we have
	\begin{equation} \label{eqnfirstoneforab}
		f^*\calO_{\bP^N \times \bP}(1,0) = \pi_{P_W}^*\calO_X(1).
	\end{equation}
	Also, since Equation \eqref{firstexactsequence} is the map inducing the inclusion map $\iota_{P_W}$, we have
	\begin{equation} \label{eqnfirstoneforab2}
		f^*\calO_{P_W}(0,1) = \pi_{P_W}^*\calO_X(2r) \otimes \calO(\infty_W).
	\end{equation}
	Equation \eqref{eqntrivialrestriction} now follows from \eqref{eqnfirstoneforab}, \eqref{eqnfirstoneforab2} and linearity.
	Equation \eqref{eqntrivialrestriction2} also holds because $e_X^* \calO(\infty_W) = \calO_X$ and $\pi_{P_W} \circ e_X = \id_X$.
\end{proof}

\begin{lemma} \label{lemmapullbacks}
	We have the following identifications:
	\begin{equation} \label{eqnvectorbundlepullbackidentifications}
		f^*T \cong \pi_{P_W}^*(f|_X^*T)
	\end{equation}
	\begin{equation} \label{eqnvectorbundlepullbackidentifications2}
		\widetilde{V} \cong \pi_X^*(e_X^*\iota_{P_W}^*\widetilde{V})
	\end{equation}
	\begin{equation} \label{eqnvectorbundlepullbackidentifications3}
		\widetilde{Q}|_W \cong \pi_{P_W}^*(e_X^*\widetilde{Q})|_W \cong (\pi_{P_W}^*Q)|_W.
	\end{equation}
	Also, for each $d \in H_2^+(X)$, $m \in \bN$, we have an identification
	\begin{equation} \label{restrictionofcones}
		C^{X \times \bP^{2\mu^2-1}/\bP^N \times \bP^{2\mu^2-1}}_{m,d_{X \times \bP}}|_W \cong (\pi_{P_W})_{m,d}^*C^{X/\bP^N}_{m,d}|_W
	\end{equation}
	induced by the natural identification
	\begin{equation}
		(\widetilde{V}|_{P_W})_{m,d_{P_W}} \cong (\pi_{P_W})_{m,d}^*V_{m,d}
	\end{equation}
	where
	\begin{equation} \label{eqnidentifactionprojection}
		(\pi_{P_W})_{m,d} : (\widetilde{V}|_{P_W})_{m,d_{P_W}} \to V_{m,d}
	\end{equation}
	is the map of moduli spaces induced by $\pi_{P_W}$.
	We also have a natural identification
	\begin{equation} \label{eqnrestriction2}
		C^{X \times \bP^{2\mu^2-1}/\bP^N \times \bP^{2\mu^2-1}}_{m,d_{X \times \bP}}|_X \cong C^{X/\bP^N}_{m,d}.
	\end{equation}
\end{lemma}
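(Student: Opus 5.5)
The identifications in Lemma \ref{lemmapullbacks} are all consequences of one observation: every map in sight is compatible with the projection $\pi_X : X \times \bP^{2\mu^2-1} \to X$. Concretely, $\pi_X \circ \iota_{P_W} = \pi_{P_W}$, and the inclusion $\bP^N \times \bP^{2\mu^2-1} \supset X \times \bP^{2\mu^2-1}$ is the product of $\iota_X$ with the identity. I will treat the bundle identifications first and then the cone identifications.

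For \eqref{eqnvectorbundlepullbackidentifications}, I note that $T = \calO_{\bP^N\times\bP}(r,0)^\mu$ is pulled back from $\bP^N$. By Lemma \ref{lemmapullbackbundles} with $b=0$ we have $f^*T \cong \pi_{P_W}^*\calO_X(r)^\mu$. Restricting to $X$ along $e_X$ gives $f|_X^*T \cong \calO_X(r)^\mu$, and pulling this back along $\pi_{P_W}$ recovers $f^*T$.

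For \eqref{eqnvectorbundlepullbackidentifications2}, the normal bundle of a product $X \times Z \subset \bP^N \times Z$ is $\pi_X^* V$. The composition $\pi_X \circ \iota_{P_W} \circ e_X$ is the identity on $X$, so $e_X^*\iota_{P_W}^*\widetilde{V} \cong V$, and the identity follows.

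For \eqref{eqnvectorbundlepullbackidentifications3}, I use \eqref{eqnequationformalbundle}, which gives $\widetilde{Q} \cong \pi_{P_W}^*Q \otimes \calO_{P_W}(-2r,1)$. By \eqref{eqntrivialrestriction}, $\calO_{P_W}(-2r,1) \cong \calO(\infty_W)$, and this line bundle is canonically trivial on the open set $W = P_W - \infty_W$. Hence $\widetilde{Q}|_W \cong (\pi_{P_W}^*Q)|_W$. Pulling back along $e_X$, whose image lies in $W$, gives $e_X^*\widetilde{Q} \cong Q$, which yields the middle isomorphism.

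For the cone identities, I consider the cartesian square formed by the inclusion $X \times \bP^{2\mu^2-1} \subset \bP^N \times \bP^{2\mu^2-1}$ and the inclusion $X \subset \bP^N$, with the projections to the first factor as vertical maps. On moduli spaces, a stable map to $\bP^N \times \bP^{2\mu^2-1}$ of class $d_{\bP^N\times\bP}$ has degree zero in the second factor, so it is a map to $\bP^N$ together with a constant point of $\bP^{2\mu^2-1}$. Therefore
\[
(\bP^N \times \bP^{2\mu^2-1})_{m,d_{\bP^N\times\bP}} \cong \bP^N_{m,(\iota_X)_* d}\times \bP^{2\mu^2-1},
\]
and the analogous product decomposition holds for $(X\times\bP^{2\mu^2-1})_{m,d_{X\times\bP}}$. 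The normal cone of $A\times Z$ inside $B \times Z$ is the pullback of $C_{A/B}$, since normal cones commute with flat base change. This compatibility persists after passing to the images $C^{X\times\bP/\bP^N\times\bP}_{\bullet}$ under $\iota_C$, because the evaluation maps factor through $\pi_X$.

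Restricting to curves lying in $P_W$ and then to $W$, I obtain \eqref{restrictionofcones}, with the identification $(\widetilde V|_{P_W})_{m,d_{P_W}}\cong(\pi_{P_W})_{m,d}^*V_{m,d}$ supplied by \eqref{eqnvectorbundlepullbackidentifications2}. Restricting instead to the slice $X = e_X(X)$, that is, to curves with constant second coordinate equal to the chosen point, gives \eqref{eqnrestriction2}.

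I expect the main obstacle to be making the restriction of cones precise. The issue is that restricting a cone to the locally closed subspace of curves in $W$ is not obviously compatible with taking images under $\iota_C$. I would handle this by invoking Lemma \ref{lemmaembedding}: since $\iota_C$ is a closed immersion, images are isomorphic to the cones themselves, and restriction to an open subset commutes with flat base change.
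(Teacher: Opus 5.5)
Your three bundle identities are correct and follow essentially the paper's argument: Lemma \ref{lemmapullbackbundles}, the identity $\pi_X\circ\iota_{P_W}\circ e_X=\id_X$, and triviality of $\calO_{P_W}(-2r,1)\cong\calO(\infty_W)$ on $W$.

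The gap is in the cone identities. You claim that a stable map of class $d_{\bP^N\times\bP}$ has degree zero in the $\bP^{2\mu^2-1}$ factor, so that the moduli space is $\bP^N_{m,(\iota_X)_*d}\times\bP^{2\mu^2-1}$. This is false. By \eqref{eqnhomologyclassofcurveinproduct} the class is $(\deg(d),2r\deg(d))$, and the proof of Theorem \ref{theoremproperestimates} uses exactly $d_2=2rd_1$. By \eqref{eqntrivialrestriction2}, the second coordinate of $\iota_{P_W}\circ e_X$ pulls $\calO(1)$ back to $\calO_X(2r)$, so it is a non-constant map $X\to\bP^{2\mu^2-1}$. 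The statement itself forces this: the curves $e_X\circ u$, $u\in X_{m,d}$, lie in $W$ and must have class $d_{X\times\bP}$ for \eqref{restrictionofcones} to make sense. Hence $e_X(X)$ is the graph of a degree $2r$ map, not a slice $X\times\{\mathrm{pt}\}$. For the same reason, your derivation of \eqref{eqnrestriction2} via curves with constant second coordinate fails.

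The missing idea is the decomposition the paper uses: the stack of prestable maps to a product is the fiber product over $\frM_m$ of the stacks of prestable maps to the factors.
\begin{itemize}
\item Let $U\subset(X\times\bP^{2\mu^2-1})_{m,d_{X\times\bP}}$ be the open locus of curves that remain stable after composing with $\pi_X$, and let $U'\subset(\bP^N\times\bP^{2\mu^2-1})_{m,d_{\bP^N\times\bP}}$ be the analogous locus. The fiber-product description gives a cartesian square $U\cong U'\times_{\bP^N_{m,(\iota_X)_*d}}X_{m,d}$.
\item The map $U'\to\bP^N_{m,(\iota_X)_*d}$ is smooth, not a trivial $\bP^{2\mu^2-1}$-bundle. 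Its fibres are spaces of degree $2r\deg(d)$ maps to $\bP^{2\mu^2-1}$ from a fixed prestable domain, and smoothness follows from convexity of $T\bP^{2\mu^2-1}$.
\item Flat base change of normal cones then gives $C^{X\times\bP^{2\mu^2-1}/\bP^N\times\bP^{2\mu^2-1}}_{m,d_{X\times\bP}}|_U\cong(\pi_X)_{m,d}^*C^{X/\bP^N}_{m,d}|_U$.
\item You also need that curves in $W$ lie in $U$. A component that became unstable after composing with $\pi_X$ would be contracted by $\pi_X\circ u$ but not by $u$. That would give a non-constant map from $\bP^1$ into an affine fibre of $W\to X$, which is impossible.
\item With this, \eqref{restrictionofcones} follows from $\pi_X\circ\iota_{P_W}=\pi_{P_W}$. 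For \eqref{eqnrestriction2}, pull back along the section $X_{m,d}\to U$, $u\mapsto\iota_{P_W}\circ e_X\circ u$, using $\pi_X\circ\iota_{P_W}\circ e_X=\id_X$.
\end{itemize}
Your instinct to use flat base change is right; the error is in the map along which you base change.
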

\begin{proof}
	Equation \eqref{eqnvectorbundlepullbackidentifications} follows from Lemma \ref{lemmapullbackbundles}.
	We have that Equation \eqref{eqnvectorbundlepullbackidentifications2} holds because $V = e_X^*\iota_{P_W}^*\widetilde{V}$.
	We have
	\begin{equation} \label{eqnidxeqn}
		\id_X = \pi_{P_W} \circ e_X.
	\end{equation}
	Equation \eqref{eqnvectorbundlepullbackidentifications3} holds by Lemma \ref{lemmapullbackbundles} combined with Equations \eqref{eqnequationformalbundle} and \eqref{eqnidxeqn} and the fact that $\infty_W$ has support disjoint from $W$.

	Let \begin{equation} \label{eqnidentifactionprojection}
		(\pi_X)_{m,d} : \widetilde{V}_{m,d_{X \times \bP}} \to V_{m,d}
	\end{equation}
	be the map of moduli spaces induced by $\pi_X$.
	Let $U \subset (X \times \bP^{2\mu^2-1})_{m,d_{X \times \bP}}$ be the subspace of curves which are still stable after post composing them with $\pi_X$.
	Recall that $\frM_m$ is the moduli stack of genus zero pre-stable curves with $m$ marked points.
	Because the moduli space of prestable curves on a product is the fiber product over $\frM_{m}$ of the respective moduli spaces on each factor, we have an identification
	\begin{equation} \label{eqnpullbacknormalbundlesf}
		C^{X \times \bP^{2\mu^2-1}/\bP^N \times \bP^{2\mu^2-1}}_{m,d_{X \times \bP}}|_U \cong (\pi_X)_{m,d}^*C^{X/\bP^N}_{m,d}|_U.
	\end{equation}
	Equation \eqref{restrictionofcones}
	now holds since $\pi_X \circ \iota_{P_W} = \pi_{P_W}$.
	Also Equation \eqref{eqnrestriction2} holds by \eqref{eqnpullbacknormalbundlesf} combined with $\pi_X \circ \iota_{P_W} \circ e_X = \id_X$.
\end{proof}

\begin{lemma} \label{lemmaconvexityofbundles}
	We have that $V$, $W$, $Q$ and $\widetilde{Q}|_W$ are strongly convex (Definition \ref{defnconvex}).
	Also, we have $c_1(\widetilde{Q})(d_{p_W})=c_1(Q)(d)$ and
	\begin{equation} \label{eqnhomologyclassofcurveinproduct}
		f_*(d_{P_W}) = (\deg(d),2r\deg(d)) \in \bN^2 \cong H_2^+(\bP^N \times \bP^{2\mu^2-1})
	\end{equation}
	for each $d \in H_2^+(X)$
	where
	$\deg(d)$ is the degree of the image of $d$ in $H_2^+(\bP^N) \cong \bN$.
\end{lemma}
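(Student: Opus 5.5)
The convexity claims all follow from the exact sequences constructed above together with Lemma \ref{lemmaconvexquotient}; the degree claims follow from Lemma \ref{lemmapullbackbundles}.

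\emph{Convexity of $V$, $W$, $Q$.} The normal bundle $V$ is a quotient of $T\bP^N|_X$, and $T\bP^N|_X$ is strongly convex. Indeed, for any genus zero stable map $u$, the bundle $u^*T\bP^N$ is a quotient of $u^*\calO(1)^{N+1}$. So it is globally generated, and $H^1$ vanishes on each component because all line bundle summands have nonnegative degree; on a nodal genus zero tree, global generation on each component together with $H^1=0$ on each component glue to $H^1=0$ on the whole tree. Lemma \ref{lemmaconvexquotient} then gives that $V$ is strongly convex.

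By definition $W$ is the cokernel of $e_V$, so it is a quotient of $\calO_X(r)^{\mu}$. Since $r>0$, the same argument shows $\calO_X(r)^\mu$ is strongly convex, and hence so is $W$. For $Q$, I would check that $e_W$ in \eqref{eqnexateeuaenfeaiet} is fiberwise injective. Then $Q$ is a vector bundle quotient of $\calO_X(2r)^{2\mu^2}$, and the same reasoning applies.

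\emph{Convexity of $\widetilde{Q}|_W$.} By \eqref{eqnvectorbundlepullbackidentifications3}, $\widetilde{Q}|_W \cong (\pi_{P_W}^*Q)|_W$. A genus zero stable map $u$ into $W$ composes to $\pi_{P_W}\circ u$ in $X$, and $u^*(\pi_{P_W}^*Q) = (\pi_{P_W}\circ u)^*Q$. The composite may be unstable, but this does not matter: vanishing of $H^1$ and global generation are statements about the pulled-back bundle on the nodal tree, and it is a quotient of the pullback of $\calO_X(2r)^{2\mu^2}$, which has nonnegative degree on every component. So the earlier argument applies verbatim. This nonstable-composition point is the only step needing care; I would phrase the argument componentwise to sidestep stability entirely.

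\emph{Degree identities.} Using \eqref{eqnequationformalbundle} and \eqref{eqntrivialrestriction},
\[
c_1(\widetilde{Q}) = \pi_{P_W}^*c_1(Q) + \operatorname{rk}(Q)\, c_1\!\left(\pi_{P_W}^*\calO_X(-2r+2r)\otimes \calO(\infty_W)\right) = \pi_{P_W}^*c_1(Q) + \operatorname{rk}(Q)[\infty_W].
\]
Pairing with $d_{P_W}=(e_X)_*d$ kills the $[\infty_W]$ term, since $e_X(X)$ lies in the zero section, which is disjoint from $\infty_W$. Because $\pi_{P_W}\circ e_X=\id_X$, the pairing equals $c_1(Q)(d)$.

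For \eqref{eqnhomologyclassofcurveinproduct}, pair $f_*d_{P_W}$ with $\calO(1,0)$ and $\calO(0,1)$. By \eqref{eqntrivialrestriction2} these pull back under $f\circ e_X$ to $\calO_X(1)$ and $\calO_X(2r)$ respectively, giving $\deg(d)$ and $2r\deg(d)$.
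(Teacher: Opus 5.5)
Your proposal is correct and is essentially the paper's proof: the paper also deduces strong convexity of $V$, $W$, $Q$ from the surjections $T\bP^N|_X \twoheadrightarrow V$, $\calO_X(r)^{\mu} \twoheadrightarrow W$, $\calO_X(2r)^{2\mu^2} \twoheadrightarrow Q$ via Lemma \ref{lemmaconvexquotient}, passes to $\widetilde{Q}|_W$ through Lemma \ref{lemmapullbacks}, and reads off both degree identities from $\pi_{P_W}\circ e_X=\id_X$ and Lemma \ref{lemmapullbackbundles}; you additionally justify convexity of the source bundles, and you rightly flag that $e_W$ must be fiberwise injective, which does hold since the kernels in the dualized resolution are locally free. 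Your concern that $\pi_{P_W}\circ u$ might be unstable is moot, because the fibers of $W \to X$ are affine, so any component contracted by $\pi_{P_W}$ is already constant; your componentwise phrasing covers it anyway.
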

\begin{proof}
	We have surjections $T\bP^N|_X \twoheadrightarrow V$,
	$\calO_X(r)^{\mu} \twoheadrightarrow W$ and $\calO_X(2r)^{2\mu^2} \to Q$.
	Hence, by Lemma \ref{lemmaconvexquotient}, we get that $V$, $W$ and $Q$ are strongly convex.
	Lemma \ref{lemmapullbacks} then tells us that $\widetilde{Q}|_W$ is also strongly convex.
	By Equation \eqref{eqnidxeqn}, we have $(\pi_{P_W})_*(d_{P_W}) = d$.
	Hence, by Lemma \ref{lemmapullbacks}, we have
	$c_1(\widetilde{Q})(d_{p_W})=c_1(Q)(d)$.
	Equation \eqref{eqnhomologyclassofcurveinproduct}
	follows from Lemma \ref{lemmapullbackbundles}.
\end{proof}

\begin{lemma} \label{lemmanormalconeforW0hd}
	Let $d \in H_2^+(X)$ and $m \in \bN$.
	We have that
	$C^{P_W/X \times \bP^{2\mu^2-1}}_{m,d_{P_W}}|_W$ is equal to
	$\widetilde{Q}_{m,d_{P_W}}|_W$ (See Definition \ref{defnrestrictionofcones}).
\end{lemma}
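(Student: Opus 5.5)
The plan is to show that over the open locus of curves mapping into $W \subset P_W$, the moduli space $(P_W)_{m,d_{P_W}}$ is unobstructed inside $(X \times \bP^{2\mu^2-1})_{m,d_{X \times \bP}}$ and is a smooth substack of the expected codimension. Once that holds, the normal cone equals the normal bundle there, and we can identify that normal bundle with $\widetilde{Q}_{m,d_{P_W}}|_W$ using the closed immersion of Lemma \ref{lemmaembedding} (equivalently, Corollary \ref{corollarynormalconenonsence}).

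First, set $Y := X \times \bP^{2\mu^2-1}$ and work over the open subset $Y^\circ$ consisting of curves whose image lies in $W$, viewed inside $P_W \subset Y$. $TY$ is not convex globally because $X$ need not be convex, so Corollary \ref{corollarynormalconenonsence} does not apply directly. Instead I would use the relative deformation theory over $(X)_{m,d}$.

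The key observation is that the projection $\pi_X : Y \to X$ restricts to $\pi_{P_W}$ on $P_W$, which is a projective bundle. A stable curve in $P_W$ with image in $W$ therefore projects to a curve in $X$; after possibly contracting components, this is a point of $X_{m,d}$. Conversely, lifts of a fixed curve $u$ in $X$ to $W$ are exactly the sections $H^0(u^*W)$. So over the locus where the projected curve is stable, $(P_W)_{m,d_{P_W}}|_W \cong W_{m,d}$. Since $W$ is strongly convex (Lemma \ref{lemmaconvexityofbundles}), $W_{m,d}$ is a vector bundle over $X_{m,d}$. Similarly, $Y_{m,d_{X\times \bP}}$ near such curves is fibered over $X_{m,d}$ with fiber the space of maps to $\bP^{2\mu^2-1}$ of degree $2r\deg(d)$, described via $H^0(u^*\calO(2r))^{2\mu^2}$ modulo scaling, which is smooth over $X_{m,d}$.

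Hence both spaces are smooth over $X_{m,d}$ near the relevant locus. The relative normal sheaf of the embedding is then computed fiberwise from the exact sequence $0 \to W\oplus\calO_X \to \calO_X(2r)^{2\mu^2} \to Q \to 0$ pulled back along $u$, twisted as in \eqref{eqnequationformalbundle}. By \eqref{eqnvectorbundlepullbackidentifications3}, on $W$ the bundle $\widetilde{Q}$ is $\pi_{P_W}^*Q$, and $Q$ is strongly convex, so $H^1(u^*\widetilde{Q})=0$. The long exact sequence in cohomology then gives that the normal sheaf is $H^0(u^*\widetilde{Q})$, i.e.\ the fiber of $\widetilde{Q}_{m,d_{P_W}}$. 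Surjectivity of $H^0(u^*\calO(2r)^{2\mu^2}) \to H^0(u^*Q)$ also follows from the vanishing of $H^1$ of the kernel, which is $u^*(W\oplus \calO_X)$ and is strongly convex.

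Consequently the embedding is regular with normal bundle $\widetilde{Q}_{m,d_{P_W}}$. The normal cone coincides with the normal bundle, and the map $\iota_C$ of \eqref{eqnmapfocones} identifies it with $\widetilde{Q}_{m,d_{P_W}}|_W$. To handle curves whose projection to $X$ is unstable, I would use the (Forget.)-type compatibility: the cones are pulled back under forgetful maps, so it suffices to check the claim after adding marked points, or directly by the same $H^1$-vanishing over the full prestable stack $\frM_m$.

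The main obstacle I expect is making the smoothness and regular-embedding claim rigorous at the level of stacks over $\frM_m$, in particular at curves with components contracted by $\pi_X$. For these I would argue with the relative perfect obstruction theories over $\frM_m$. Their $H^1$ terms vanish because $\widetilde{Q}|_W$ and the vertical tangent bundle of $P_W \to X$ restricted to $W$ are strongly convex, so the relative obstruction for $P_W$ inside $Y$ is zero and the cone is the bundle.
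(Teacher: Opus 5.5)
Your proposal is correct and follows essentially the same route as the paper: both $W_{m,d}$ and the open locus of lifts of stable curves in $(X \times \bP^{2\mu^2-1})_{m,d_{X\times\bP}}$ are smooth over $X_{m,d}$ by convexity of the vertical tangent bundles, so the inclusion is a regular immersion whose normal bundle is identified with $\widetilde{Q}_{m,d_{P_W}}|_W$ by convexity. The ``main obstacle'' in your last paragraph does not actually arise: the fibers of $W \to X$ are affine, so no non-constant component of a curve with image in $W$ is contracted by $\pi_{P_W}$, and this is why the paper can simply work on such an open neighbourhood.
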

\begin{proof}
	Let $T^{\ver} P_W = \ker(d\pi_{P_W})$ and $T^{\ver} (X \times \bP^{2\mu^2-1}) = X \times T\bP^{2\mu^2-1}$ be the vertical tangent bundles of $P_W$ and $X \times \bP^{2\mu^2-1}$ with respect to the natural projection maps to $X$.
	Let $\scrM \subset (X \times \bP^{2\mu^2-1})_{m,d_{P_W}}$ be the open subspace consisting of those curves $u$ that are lifts of ones $v$ from $X_{m,d}$ and with the additional property that non-constant components of $u$ are lifts of non-constant components of $v$.
	We have
	\begin{equation}
		T^{\ver} P_W|_W \cong \pi_{P_W}^*W|_W.
	\end{equation}
	Hence, $T^{\ver} P_W|_W$ is convex by Lemma \ref{lemmaconvexityofbundles}. Also, $T^{\ver} (X \times \bP^{2\mu^2-1})$ is convex.

	Hence, $W_{m,d}$ and $\scrM$
	are smooth over $X_{m,d}$.
	This implies that the natural inclusion map
	\begin{equation}
		W_{m,d} \hookrightarrow \scrM \subset (X \times \bP^{2\mu^2-1})_{m,d_{P_W}}
	\end{equation}
	is a regular immersion.
	Hence, $C^{P_W/X \times \bP^{2\mu^2-1}}_{m,d_{P_W}}|_W$ is equal to the normal bundle of $W_{m,d}$ inside $(X \times \bP^{2\mu^2-1})_{m,d_{P_W}}$.
	Since $\widetilde{Q}|_W$ is convex as well by Lemma \ref{lemmaconvexityofbundles}, we have that this normal bundle is also identified with $\widetilde{Q}_{m,d_{P_W}}|_W$ giving our result.
\end{proof}

\begin{lemma} \label{lemmaboundsforfirstchernclasses}
	For each irreducible curve $C$ in $X$ we have
	\begin{equation} \label{chernclassinequality1}
		c_1(V)(C) \leq \mu r \deg(C),
	\end{equation}
	\begin{equation} \label{chernclassinequality2}
		c_1(W)(C) \leq \mu r \deg(C)
	\end{equation}
	\begin{equation} \label{chernclassinequality3}
		c_1(Q)(C) \leq 4r\mu^2 \deg(C).
	\end{equation}
\end{lemma}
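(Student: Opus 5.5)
The three inequalities all follow the same pattern. Each bundle is a quotient of a sum of copies of a single line bundle of known degree; pulling back to the normalization of $C$, this surjection bounds the degree of the pulled back bundle. Precisely, let $\nu : \widetilde{C} \to X$ be the normalization of $C$, so $\widetilde{C} \cong \bP^1$ only if $C$ is rational. The argument below does not need rationality, since degrees of quotients of globally generated-type bundles on any smooth curve are controlled the same way. If $E \twoheadrightarrow F$ is a surjection of vector bundles on a smooth projective curve and $E \cong L^{\oplus k}$ for a line bundle $L$, then $\deg F \leq \operatorname{rank}(F)\deg L$. Indeed, $\det F$ is a quotient of $\wedge^{\operatorname{rank} F} E$, which is a sum of copies of $L^{\otimes \operatorname{rank} F}$. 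Hence $\det F \otimes L^{-\operatorname{rank}F}$ is a line bundle that is a quotient of a trivial bundle. It is therefore globally generated and has non-negative degree, and equivalently $c_1(F)(C) \leq \operatorname{rank}(F)\, c_1(L)(C)$.

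For \eqref{chernclassinequality2}, I use the surjection $\calO_X(r)^{\mu} \twoheadrightarrow W$ coming from \eqref{eqnexactsequence}. This gives $c_1(W)(C) \leq \operatorname{rank}(W)\, r\deg(C) \leq \mu r \deg(C)$, since $\operatorname{rank}(W) \leq \mu$. For \eqref{chernclassinequality1}, I use the injection $V \hookrightarrow \calO_X(r)^\mu$ with cokernel $W$, together with additivity of $c_1$: $c_1(V) = \mu r H - c_1(W)$. So I need a lower bound on $c_1(W)(C)$, namely $c_1(W)(C) \geq 0$. This holds because $W$ is strongly convex (Lemma \ref{lemmaconvexityofbundles}), or directly because $W$ is a quotient of the globally generated bundle $\calO_X(r)^\mu$. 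That yields $c_1(V)(C) \leq \mu r\deg(C)$. Alternatively, $V$ is a quotient of $T\bP^N|_X$, which is a quotient of $\calO_X(1)^{N+1}$, giving $c_1(V)(C)\leq (N+1)\deg C$; I would use whichever is cleaner, but the first route matches the stated constant.

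For \eqref{chernclassinequality3}, $Q$ is the cokernel of $e_W : W\oplus\calO_X \hookrightarrow \calO_X(2r)^{2\mu^2}$ from \eqref{eqnexateeuaenfeaiet}, so $Q$ is a quotient of $\calO_X(2r)^{2\mu^2}$. The quotient lemma then gives $c_1(Q)(C) \leq \operatorname{rank}(Q)\cdot 2r\deg(C) \leq 2\mu^2\cdot 2r\deg(C) = 4r\mu^2\deg(C)$.

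The only mildly delicate step is the one point where the cokernel $Q$ might fail to be locally free. If so, I would instead argue with $c_1(Q) = 4r\mu^2 H - c_1(W)$, using additivity on the exact sequence together with $c_1(W)(C)\geq 0$. This gives the bound directly without needing any rank estimate, and I expect it to be the cleanest route for all three inequalities.
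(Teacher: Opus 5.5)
Your quotient lemma has its inequality reversed. If $L^{\oplus k}\twoheadrightarrow F$ is a surjection of bundles on a smooth projective curve, then $F\otimes L^{-1}$ is a quotient of a trivial bundle. It is therefore globally generated, so $\deg F\geq \operatorname{rank}(F)\deg L$. Your own computation, which shows $\deg(\det F\otimes L^{-\operatorname{rank}F})\geq 0$, proves exactly this lower bound, not the upper bound you state. Already $\calO_{\bP^1}^{\oplus 2}\twoheadrightarrow\calO_{\bP^1}(1)$ violates your version. Consequently the presentations $\calO_X(r)^{\mu}\twoheadrightarrow W$ and $\calO_X(2r)^{2\mu^2}\twoheadrightarrow Q$ only bound $c_1(W)(C)$ and $c_1(Q)(C)$ from below, and your main arguments for \eqref{chernclassinequality2} and \eqref{chernclassinequality3} fail. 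Your alternative bound $c_1(V)(C)\leq (N+1)\deg(C)$ is false for the same reason. For a smooth plane curve $X\subset\bP^2$ of degree $e\geq 4$ one has $V\cong\calO_X(e)$ and $c_1(V)(X)=e^2>3e$.

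The argument you list as a fallback is the correct one, and it is exactly the paper's proof; you should use it for all three inequalities, not only for $Q$. Additivity of $c_1$ on $0\to V\to\calO_X(r)^{\mu}\to W\to 0$ gives $c_1(V)(C)+c_1(W)(C)=\mu r\deg(C)$. Both terms are non-negative, because $V$ (a quotient of $T\bP^N|_X$) and $W$ (a quotient of $\calO_X(r)^{\mu}$) are globally generated. This gives \eqref{chernclassinequality1} and \eqref{chernclassinequality2} at once. In particular, \eqref{chernclassinequality2} needs $c_1(V)(C)\geq 0$, which your proposal never states. Additivity on $0\to W\oplus\calO_X\to\calO_X(2r)^{2\mu^2}\to Q\to 0$, together with $c_1(W)(C)\geq 0$, then gives $c_1(Q)(C)=4r\mu^2\deg(C)-c_1(W)(C)\leq 4r\mu^2\deg(C)$. ($Q$ is in fact locally free, which the paper needs elsewhere, but additivity does not use this.) Justifying non-negativity by global generation, as you do, is slightly more robust than the paper's appeal to strong convexity. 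By Definition \ref{defnconvex}, strong convexity only controls genus-zero maps, while the lemma is stated for arbitrary irreducible curves.
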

\begin{proof}
	We have a short exact sequence:
	\begin{equation} \label{eqnshortexactseuqnecforVW}
		0 \to V \to \calO_X(r)^{\mu} \to W \to 0.
	\end{equation}
	Hence,
	\begin{equation} \label{eqnsumofc1s}
		c_1(V)(C) + c_1(W)(C) = \mu r\deg(C).
	\end{equation}
	Since $V$ and $W$ are strongly convex by Lemma \ref{lemmaconvexityofbundles},
	we get that $c_1(V)(C) \geq 0$ and $c_1(W)(C) \geq 0$.
	Hence, by Equation \eqref{eqnsumofc1s}, we get
	Equations \eqref{chernclassinequality1} and \eqref{chernclassinequality2} hold.

	We have a short exact sequence $0 \to W \oplus \calO_X \to \calO_{\bP^N}(2r)^{2\mu^2} \to Q \to 0$ and so
	\begin{equation}\label{eqnsumofc1s2}
		c_1(W)(C) + c_1(Q)(C) = 4r\mu^2 \deg(C).
	\end{equation}
	Since $W$ and $Q$ are strongly convex by Lemma \ref{lemmaconvexityofbundles}, we have $c_1(W)(C) \geq 0$
	and $c_1(W)(Q) \geq 0$.
	Hence, by Equation \eqref{eqnsumofc1s2}, we get
	Equation \eqref{chernclassinequality3} holds.
\end{proof}

\begin{lemma} \label{lemmaWgloballygenerated}
	Suppose $N>0$.
	Let $D'_T$, $D'_{\widetilde{V}}$, $D'_W$, $D'_{\widetilde{Q}}$, $D$, $E$, $F$, $F'$ be very general divisors corresponding to the line bundles:
	\begin{equation}
		\begin{aligned}
			 & \calO_{\bP^N \times \bP}(r,1), \ \calO_{\bP^N \times \bP}(r,1), \ \calO_{\bP^N \times \bP}(2r,0), \ \calO_{\bP^N \times \bP}(r,1),                                             \\
			 & \calO_{\bP^N \times \bP}(72\mu^4,72\mu^4), \ \calO_{\bP^N \times \bP}(36\mu^4,36\mu^4), \ \calO_{\bP^N \times \bP}(12\mu^3,12\mu^3), \ \calO_{\bP^N \times \bP}(3\mu^3,3\mu^3)
		\end{aligned}
	\end{equation}
	respectively.
	Then
	\begin{enumerate}
		\item $T^* \otimes \calO(D'_T)$, $\widetilde{V}^* \otimes \calO(D'_{\widetilde{V}}|_{X \times \bP^{2\mu^2-1}})$, $W^* \otimes \calO(D'_W|_X)$, $\widetilde{Q}^* \otimes \calO(D'_{\widetilde{Q}}|_{P_W})$, $V^* \otimes \calO(F'|_X)$, $T^* \otimes \calO(F')$ and $W^* \otimes \calO(F'|_X)$ are globally generated,
		\item $D-D'_T$ is $\BL_{X \times \bP^{2\mu^2-1}}(\bP^N \times \bP^{2\mu^2-1})$-compatible and its tensor product with the ideal sheaf defining $X \times \bP^{2\mu^2-1}$ in $\bP^N \times \bP^{2\mu^2-1}$ is globally generated,
		\item $(D - D'_T - D'_{\widetilde{V}})|_{X \times \bP^{2\mu^2-1}}$ is $\BL_{P_W}(X \times \bP^{2\mu^2-1})$-compatible and also its tensor product with the ideal sheaf defining $P_W$ in $X \times \bP^{2\mu^2-1}$ is globally generated,
		\item the following divisors are nef:
		      \begin{equation} \label{eqndivisorstobenef}
			      \begin{aligned}
				       & D - D'_T, \quad D - D'_T - D'_{\widetilde{V}}, \quad D - E - D'_{\widetilde{Q}},                                                           \\
				       & E - D'_T - D'_{\widetilde{V}} - D'_{\widetilde{Q}}, \quad F - D'_T - D'_{\widetilde{V}} - D'_W, \quad E|_{P_W} - \pi_W^*(F|_X) - \infty_W,
			      \end{aligned}
		      \end{equation}
		\item $D|_{P_W}$ is $\widetilde{Q}$-large and
		\item $F|_X$ is $V, T, W$-large.
	\end{enumerate}
\end{lemma}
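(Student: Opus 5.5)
Each of the six items reduces to global generation or positivity of explicit line bundles on $\bP^N \times \bP^{2\mu^2-1}$ and its subvarieties, and I will check them in the order listed. Throughout I use the following facts. $\calO_{\bP^N\times\bP}(a,b)$ is globally generated for $a,b\geq 0$ and ample for $a,b>0$. A sum of a nef and a globally generated divisor is nef. The surjections recorded in the proof of Lemma \ref{lemmaconvexityofbundles}, together with the exact sequences \eqref{eqnexactsequence} and \eqref{eqnexateeuaenfeaiet}, control the duals of the bundles involved.

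\textbf{Item (1).} For $T^*\otimes\calO(D'_T)$: $T^*=\calO(-r,0)^\mu$, so this is $\calO(0,1)^\mu$, which is globally generated. For $\widetilde V$: dualising the surjection $T\bP^N|_X\to V$ would go the wrong way, so instead I use $V\hookrightarrow\calO_X(r)^\mu$. This gives a surjection $\calO_X(-r)^\mu\twoheadrightarrow V^*$, and hence $V^*\otimes\calO(r)$ is globally generated; the pullback of this to $X\times\bP$ handles $\widetilde V$, with $(r,1)$ more than sufficient. For $W$: $W\oplus\calO_X\hookrightarrow\calO_X(2r)^{2\mu^2}$ gives $W^*\otimes\calO(2r)$ globally generated. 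For $\widetilde Q$: by \eqref{eqnequationformalbundle}, $\widetilde Q^*=\pi^*Q^*\otimes\calO(2r,-1)$. Here $Q^*\subset\calO_X(-2r)^{2\mu^2}$ is a kernel, not a quotient, so I would instead use the normal-bundle description. $\widetilde Q$ is a quotient of $T(X\times\bP)|_{P_W}$, hence $\widetilde Q^*\subset\Omega$, and that is also the wrong direction. So I would write $\widetilde Q^*$ as a quotient of the conormal sheaf $\frI_{P_W}/\frI_{P_W}^2$, which is a quotient of the generators of $\frI_{P_W}$. These generators are bilinear of degree $(2r,1)$, coming from the $2\mu^2$-row matrix whose rows are sections of the cokernel twisted by $\calO(3r)$ via \eqref{eqnexateeuaenfeaiet}. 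The degree $(r,1)$ twist should then suffice after accounting for the $\calO(2r,-1)$ shift; I expect to adjust bookkeeping here. The statements with $F'$ follow from the same surjections, since $3\mu^3\geq 2r$ (indeed $\mu\geq r+1$).

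\textbf{Items (2) and (3).} I apply Lemma \ref{lemmanicesufficientcondtionforblxycompatible}. For (2), $D-D'_T$ has class $(72\mu^4-r,72\mu^4-1)$, and the ideal of $X\times\bP$ is $\frI_X$ pulled back. Since $\frI_X$ is $r$-regular, $\frI_X(k)$ is globally generated for $k\geq r$ by Proposition \ref{propositioncastelnuovothm}, and the $\bP$-factor has nonnegative degree. For (3), the ideal of $P_W$ in $X\times\bP$ is generated by the minors, or rather the linear-in-$\bP$ equations of degree $(3r,1)$ coming from $\calO(2r)^{2\mu^2}\to\calO(3r)^{2\mu^3}$. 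Twisting by $(D-D'_T-D'_{\widetilde V})|$, whose degree far exceeds $(3r,1)$, gives global generation.

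\textbf{Item (4).} Each of the first five differences has nonnegative bidegree, using $72\mu^4-36\mu^4\geq r+1$ and similar inequalities, so these divisors are nef. The last one uses Lemma \ref{lemmapullbackbundles}: $E|_{P_W}=\pi^*\calO_X(36\mu^4+72r\mu^4)+36\mu^4\infty_W$, so the difference is $\pi^*\calO_X(\text{positive})+(36\mu^4-1)\infty_W$. I would check nefness via $\infty_W+\pi^*\calO(2r)=f^*\calO(0,1)$, which is globally generated; this absorbs the $\infty_W$ terms.

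\textbf{Items (5) and (6), the main obstacle.} Here I apply Lemma \ref{lemmavlargeexist}, via the Segre embedding of $\bP^N\times\bP$ for (5) and directly for (6). This requires first Chern class bounds in terms of degree. For (6), Lemma \ref{lemmaboundsforfirstchernclasses} gives $c_1\leq\mu r\deg$, and $3\mu^3\geq\mu r+1+2N$ holds since $\mu\geq r+1$ and $\mu\geq N+1$. For (5), Lemma \ref{lemmaconvexityofbundles} gives $c_1(\widetilde Q)(d)=c_1(Q)(d)\leq 4r\mu^2\deg$. Curves in $P_W$ have Segre degree at least $\deg(d)$, and the Segre dimension is roughly $\mu^2N$, so I need $72\mu^4\geq 4r\mu^2+1+2\dim$. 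This is the step to verify carefully. One subtlety is that $\widetilde Q$ is only strongly convex on $W$, and bounding $c_1(\widetilde Q)$ on curves meeting $\infty_W$ needs \eqref{eqnequationformalbundle}.
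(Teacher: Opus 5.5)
Items (2) and (4) match the paper. Two steps fail as written, and item (3) shares part of the first problem.

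\textbf{The twist of $\widetilde Q^*$ in item (1).} You discard the direct route because $Q^*\subset\calO_X(-2r)^{2\mu^2}$ is a kernel. But \eqref{eqnexateeuaenfeaiet} continues with $Q\hookrightarrow\calO_X(3r)^{2\mu^3}$. Since that sequence is the dual of the resolution in Lemma \ref{lemmapulledbackexactsequence}, this inclusion is a subbundle, so $\calO_X^{2\mu^3}\to Q^*\otimes\calO_X(3r)$ is onto. This is the same dualisation you already use for $V$ and $W$. Combined with \eqref{eqnequationformalbundle}, it gives $\widetilde Q^*\otimes\calO_{P_W}(r,1)=\pi_{P_W}^*(Q^*\otimes\calO_X(3r))$, which is globally generated; that is the paper's argument.

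Your conormal route does not produce the $(r,1)$ twist with the numbers you give. The sheaf $\frI_{P_W}/\frI_{P_W}^2$ \emph{is} $\widetilde Q^*$, so there is no $\calO(2r,-1)$ shift to absorb, and generators of bidegree $(2r,1)$ would only make $\widetilde Q^*\otimes\calO(2r,1)$ globally generated. The equations $\sum_j\phi_{ij}(x)y_j$ of $P_W$ in fact have bidegree $(r,1)$, because the entries of $\calO_X(2r)^{2\mu^2}\to\calO_X(3r)^{2\mu^3}$ are sections of $\calO_X(r)$. Your $(3r,1)$ in item (3) is also wrong, though harmless there, since only an upper bound is needed. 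More importantly, you use in both (1) and (3) that these equations generate $\frI_{P_W}$ scheme-theoretically. That holds only because the map has constant rank, which is exactly the surjectivity onto $Q^*\otimes\calO_X(3r)$ that you said was unavailable. Compare the paper's computation \eqref{eqnforpushforward2}.

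\textbf{Item (6).} Lemma \ref{lemmavlargeexist} requires the divisor to be cut out by a very general hypersurface of the projective space in which $X$ is embedded. $F|_X$ is only known to be a very general member of the image of $H^0(\calO_{\bP^N\times\bP}(12\mu^3,12\mu^3))$ in $H^0(\calO_X(12\mu^3(1+2r)))$. It is not known to be the restriction of a very general hypersurface of the original $\bP^N$, so you cannot apply the lemma ``directly'' there. As in (5), you must use the Segre embedding into $\bP^{2(N+1)\mu^2-1}$, where a curve $C\subset X$ has degree $(1+2r)\deg(C)\ge\deg(C)$. The requirement then becomes $12\mu^3-1-2(2(N+1)\mu^2-1)\ge\mu r$, which holds since $\mu\ge\max(N+1,r+1)$. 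The inequality you checked uses $3\mu^3$, which is the degree of $F'$ rather than $F$, and $2N$, which belongs to the wrong ambient space. In the Segre space $3\mu^3$ fails, for instance when $r=1$ and hence $\mu=N+1$.

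For (5), the subtlety you flag is real; the paper writes the bound only for curves coming from $X$. It is settled by \eqref{eqnequationformalbundle}: a curve $C\subset P_W$ of bidegree $(a,b)$ satisfies $c_1(\widetilde Q)(C)=c_1(Q)(\pi_*C)+\mathrm{rk}(\widetilde Q)(b-2ra)\le 4r\mu^2a+2\mu^2b\le 4r\mu^2(a+b)$, and $a+b$ is its Segre degree.
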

\begin{proof}
	We have $T^* \otimes \calO(D'_T) = \calO_{\bP^N \otimes \bP^{2\mu^2 -1}}(0,1)^\mu$ and so is globally generated.
	By Equation \eqref{eqnexactsequence}
	combined with the fact that $\widetilde{V}$ is the pullback of $V$ via the projection map to $X$, we have an exact sequence:
	\begin{equation} \label{eqnexactsequencesurjaection}
		\calO_{X \times \bP^{2\mu^2-1}}^{\mu} \to \widetilde{V}^* \otimes \calO_{X \times \bP^{2\mu^2-1}}(r,0) \to 0.
	\end{equation}
	Hence, $\widetilde{V}^* \otimes \calO(D'_{\widetilde{V}}|_{X \times \bP^{2\mu^2-1}})$ is globally generated.
	Since $W$ is the cokernel of the map $e_V$ from \eqref{lemmaexactsequence}, we get an exact sequence:
	\begin{equation} \label{eqnwglobalygenerated}
		\calO_X^{\mu^2} \to W^* \otimes \calO_X(2r) \to 0
	\end{equation}
	and hence $W^* \otimes \calO(D'_W|_X)$ is globally generated since $D'_W|_X = \calO_X(2r)$ by Lemma \ref{lemmapullbackbundles}.

	By \eqref{eqnexateeuaenfeaiet}, we have an exact sequence:
	\begin{equation} \label{eqnfirstWexactsequence}
		0 \to Q \to \calO_X(3r)^{2\mu^3}.
	\end{equation}
	Taking duals and tensoring with $\calO_X(3r)$ gives an exact sequence:
	\begin{equation} \label{eqnfirstWexactsequencedual}
		\calO_X^{2\mu^3} \to Q^* \otimes \calO_X(3r) \to 0.
	\end{equation}
	Hence, we have an exact sequence:
	\begin{equation}  \label{eqnsecondWexactsequence}
		\calO_{P_W}^{2\mu^3} \to \pi_{P_W}^*Q^* \otimes \calO_{P_W}(3r,0) \to 0.
	\end{equation}
	By \eqref{eqnequationformalbundle} we get:
	\begin{equation} \label{eqnwidetildeWdual}
		\widetilde{Q}^* \otimes \calO_{P_W}(-2r,1) = \pi_{P_W}^*Q^*.
	\end{equation}
	Combining \eqref{eqnsecondWexactsequence} with \eqref{eqnwidetildeWdual},
	we get the exact sequence
	\begin{equation} \label{eqnaefaefa}
		\calO_{P_W}^{2\mu^3} \to \widetilde{Q}^* \otimes \calO_{P_W}(r,1) \to 0.
	\end{equation}
	Hence, $\widetilde{Q}^* \otimes \calO(D'_{\widetilde{Q}}|_{P_W})$ is globally generated.

	Since $N > 0$, we have
	\begin{equation} \label{eqnmuinequality}
		\mu \geq \max(N,r).
	\end{equation}
	Equations \eqref{eqnmuinequality} and \eqref{eqnexactsequence} and Lemma \ref{lemmapullbackbundles} tell us that $V^* \otimes \calO(F'|_X)$ is globally generated.
	We have $T^* \otimes \calO(F'|_X) = \calO_X(4\mu^3-r)^\mu$ is globally generated by Equation \eqref{eqnmuinequality} too.
	Equations \eqref{eqnmuinequality} and \eqref{eqnwglobalygenerated} tell us that $W^* \otimes \calO(F'|_X)$ is globally generated.

	Because $\frI_X \otimes \calO_X(r)$ is globally generated by Proposition \ref{propositioncastelnuovothm}
	we get that $\calO_X(r)$ is $\BL_X \bP^N$-compatible by Lemma \ref{lemmanicesufficientcondtionforblxycompatible}.
	Hence, $D-D'_T$ is $\BL_{X \times \bP^{2\mu^2-1}}(\bP^N \times \bP^{2\mu^2-1})$-compatible and also this divisor and its tensor product with the ideal sheaf defining $X \times \bP^{2\mu^2-1}$ in $\bP^N \times \bP^{2\mu^2-1}$ is globally generated.

	Let $\frI_{P_W}$ be the ideal sheaf defining $P_W$ inside $X \times \bP^{2\mu^2-1}$.
	Let $\scrF = \frI_{P_W} \otimes \calO_{X \times \bP^{2\mu^2-1}}(r,1)$.
	Since $\scrF$ is the kernel of the natural map
	\begin{equation}
		\calO_{\bP(W(-3r) \oplus \calO_X(-3r))}(1) \to \calO_{\bP(\calO_X^{2\mu^2}(-r))}(1),
	\end{equation}
	induced by $e_W \otimes \id_{\calO_X(-3r)}$, we get
	that
	\begin{equation} \label{eqnforpushforward2}
		(\pi_X)_*(\scrF) = \ker(e_W \otimes \id_{\calO_X(-3r)}) = Q^* \otimes \calO_X(3r)
	\end{equation}
	since pushforward is left exact.
	Therefore, since $Q^* \otimes \calO_X(3r)$ is globally generated by \eqref{eqnfirstWexactsequencedual},
	we get by \eqref{eqnforpushforward2} that
	$(\pi_X)_*(\scrF)$ is globally generated.
	Also,
	\begin{equation} \label{eqnsurjective}
		\pi_X^* (\pi_X)_* \scrF \to \scrF
	\end{equation}
	is surjective since it is surjective at the level of fibers and by Nakayama's lemma.
	Combining the previous two facts
	gives us
	that $\scrF$ is globally generated.
	Hence, $$(D - D'_T - D'_{\widetilde{V}})|_{X \times \bP^{2\mu^2-1}}$$ is $\BL_{P_W}(X \times \bP^{2\mu^2-1})$-compatible
	by Lemma \ref{lemmanicesufficientcondtionforblxycompatible}. Also, its tensor product with $\frI_{P_W}$ is globally generated.

	We have $\calO(\infty_W) = \calO_{P_W}(-2r,1)$ by Lemma \ref{lemmapullbackbundles}.
	We have $F|_X = \calO_X(12\mu^3 + 24 \mu^3 r)$ by Lemma \ref{lemmapullbackbundles} too.
	Since $\pi_{P_W} = \pi_X \circ \iota_{P_W}$, this implies $\pi_{P_W}^*(F|_X) = \calO_{P_W}(12\mu^3 + 24 \mu^3 r,0)$.
	Combining this with Equation \eqref{eqnmuinequality}, we get that the divisors in \eqref{eqndivisorstobenef} are all nef.

	Let $C$ be an irreducible curve in $X$ whose normalization has genus zero.
	Since $N>0$ and by \eqref{eqnmuinequality}, we have
	\begin{equation}
		8\mu^3 \geq 1+2(2(N+1)\mu^2-1).
	\end{equation}
	Hence, by Lemma \ref{lemmaboundsforfirstchernclasses}, we have
	\begin{equation} \label{eqnc1ineqforV}
		(12\mu^3-1-2(2(N+1)\mu^2-1))\deg(C) \geq 2 + c_1(V)(C),
	\end{equation}
	\begin{equation} \label{eqnfirstcaeca}
		(12\mu^3-1 - 2(2(N+1)\mu^2-1)) \deg(C) \geq 2 + c_1(W)(C)
	\end{equation}
	and
	\begin{equation} \label{eqnfirstcaeca2}
		(72\mu^3-1 - 2(2(N+1)\mu^2-1)) \deg(C) \geq 2 + c_1(Q)(C).
	\end{equation}
	Therefore, by Equation \eqref{eqnequationformalbundle}
	\begin{equation} \label{eqnfirstcaeca3}
		(72\mu^3-1 - 2(2(N+1)\mu^2-1)) \deg(C) \geq 2 + c_1(\widetilde{Q})(C).
	\end{equation}
	Also,
	\begin{equation} \label{eqnc1ineqforT}
		(12\mu^3-1-2(2(N+1)\mu^2-1))\deg(C) \geq 2 + r \deg(C) = 2 + c_1(T)(C).
	\end{equation}

	Let $\phi : \bP^N \times \bP^{2\mu^2-1} \hookrightarrow \bP^{2(N+1)\mu^2-1}$
	be the Segre embedding.
	Since $D$ is the restriction of a very general divisor in $\calO_{\bP^{2(N+1)\mu^2-1}}(72\mu^4)$,
	we have by Equation \eqref{eqnfirstcaeca3} combined with Lemma \ref{lemmavlargeexist},
	that $D|_{P_W}$ is $\widetilde{Q}$-large.

	We have $F|_X$ is $V,T,W$-large by Equations \eqref{eqnc1ineqforV},\eqref{eqnc1ineqforT} and \eqref{eqnfirstcaeca}
	combined with Lemmas \ref{lemmaconvexityofbundles} and \ref{lemmavlargeexist}.

\end{proof}

\begin{lemma} \label{lemmamainargumentstuffesf}
	Let $d \in H_2^+(X)-0$.
	Suppose that there exists $C>0$ so that
	\begin{equation} \label{eqnaefaef20}
		\left|\langle [\alpha_1],\cdots,[\alpha_m] \rangle^X_{m,d} \right| \leq C \prod_{j=1}^m |\alpha_i|_{C^0,\omega}
	\end{equation}
	for each $m \leq 2((N+1)\deg(d)+ N-3)$ and each collection of closed homogenous differential forms $\alpha_1,\cdots,\alpha_m$ on $X$.
	Then
	\begin{equation} \label{eqnaefaef2}
		\left|\langle [\alpha_1],\cdots,[\alpha_m] \rangle^X_{m,d} \right| \leq m! C e^{\omega \cdot d} \prod_{j=1}^m |\alpha_i|_{C^0,\omega}
	\end{equation}
	for each $m \in \bN$ and each collection of closed homogenous differential forms $\alpha_1,\cdots,\alpha_m$ on $X$.
\end{lemma}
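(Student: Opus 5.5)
The plan is to strip off insertions using the standard axioms of genus zero Gromov-Witten invariants until at most $K := (N+1)\deg(d) + N-3$ insertions remain. At that point the hypothesis \eqref{eqnaefaef20} applies. Since every $\alpha_j$ is homogeneous, I will sort the insertions by degree: degree $0$, degree $2$, and degree $\geq 4$.

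\emph{Degree $0$.} A homogeneous closed $0$-form is a constant $c$, so $[\alpha_j] = c\cdot 1$. Since $d \neq 0$, the fundamental class axiom (a consequence of the (Forget.) axiom of Example \ref{examplevirtual}) shows that the invariant vanishes. So we may assume no $\alpha_j$ has degree $0$.

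\emph{Degree $\geq 4$: the count is bounded.} If the invariant is nonzero, then
\[
\sum_{j}\Big(\tfrac{\deg \alpha_j}{2} - 1\Big) = c_1(TX)(d) + \dim X - 3 ,
\]
and this is at most $K$. The bound $\dim [X_{m,d}]^{\virt} - m \leq K$ follows as in Lemma \ref{lemmadimensionofX}, since $c_1(TX)(d) + \dim X \leq c_1(T\bP^N)(d) + N$. Here I expect a small technical point: one should use that $c_1(V)(d) \geq 0$ for the normal bundle $V$, which holds because $V$ is a quotient of $T\bP^N|_X$ and is therefore strongly convex (Lemma \ref{lemmaconvexquotient}). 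Each insertion of degree $\geq 4$ contributes at least $1$ to the left-hand side, and degree-$2$ insertions contribute $0$. Hence the number $k$ of insertions of degree $\geq 4$ satisfies $k \leq K$.

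\emph{Degree $2$.} Let $j = m-k$ be the number of degree-$2$ insertions. Applying the divisor axiom $j$ times gives
\[
\langle [\alpha_1],\cdots,[\alpha_m]\rangle^X_{m,d} = \prod_{\deg \alpha_i = 2} ([\alpha_i]\cdot d)\,\cdot \langle \text{remaining } k \text{ classes}\rangle^X_{k,d}.
\]
This is valid because $d \neq 0$, so each step stays in the stable range. Since $k \leq K \leq 2K$, the hypothesis \eqref{eqnaefaef20} bounds the remaining invariant by $C\prod |\alpha_i|_{C^0,\omega}$, the product running over the insertions of degree $\geq 4$.

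For each degree-$2$ factor, represent $d$ by an effective curve; this is possible because $d \in H_2^+(X)$. Apply Remark \ref{remarkcomparisonofnorms} on its smooth locus, or on its normalization. On a curve, the relevant part of $\exp(\omega)$ is $\omega$, so
\[
|[\alpha_i]\cdot d| \leq |\alpha_i|_{C^0,\omega}\,(\omega\cdot d).
\]

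\emph{Combining.} The total is at most $C\,(\omega\cdot d)^j \prod_i |\alpha_i|_{C^0,\omega}$. Since $\omega\cdot d > 0$, we have $(\omega\cdot d)^j \leq j!\, e^{\omega\cdot d}$, and $j! \leq m!$. This gives \eqref{eqnaefaef2}.

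The only step needing real care is the dimension inequality bounding $k$; the rest is a routine application of the axioms.
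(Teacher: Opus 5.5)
Your strategy matches the paper's: the divisor axiom removes the degree-$2$ insertions, a dimension count bounds how many others remain, and then $(\omega\cdot d)^j\leq j!\,e^{\omega\cdot d}$. There is, however, a gap. Your case split ``degree $0$, degree $2$, degree $\geq 4$'' silently assumes every $\alpha_j$ has even degree, while the statement allows arbitrary homogeneous closed forms. Odd insertions are not vacuous: take $X=E\times\bP^1$ with $E$ an elliptic curve, $d=[\mathrm{pt}\times\bP^1]$ and $a,b\in H^1(E)$. Then $\langle a\otimes\mathrm{pt},b\otimes\mathrm{pt}\rangle^X_{2,d}=\pm\int_E a\cup b$, which can be nonzero.

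With $K=(N+1)\deg(d)+N-3$, your count $\sum_j(\deg\alpha_j/2-1)\leq K$ breaks in two ways.
\begin{enumerate}
\item A degree-$1$ insertion contributes $-1/2$, so the count gives no bound at all. You must show such insertions kill the invariant, as the paper asserts. This works exactly as for degree $0$: $[X_{m,d}]^{\virt}=\pi_m^*[X_{m-1,d}]^{\virt}$ and $\ev_j=\ev_j\circ\pi_m$ for $j<m$. Integrating $\ev_m^*\alpha_m$ along the fibres of $\pi_m$ lowers degree by $2$, so it vanishes.
\item A degree-$3$ insertion contributes only $1/2$. So your claim that each insertion of degree $>2$ contributes at least $1$, and hence $k\leq K$, is false.
\end{enumerate}

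The repair is to count real degrees, as the paper does. A nonzero invariant forces $\sum_j\deg\alpha_j=2\dim[X_{m,d}]^{\virt}\leq 2K+2m$. The paper gets this from $\dim X_{m,d}\leq K+m$ (Lemma \ref{lemmadimensionofX}), since the virtual class vanishes above the actual dimension; your virtual-dimension estimate via $c_1(V)(d)\geq 0$ works equally well. Suppose $k$ insertions have degree $\geq 3$ and the remaining $m-k$ have degree exactly $2$. Then $3k+2(m-k)\leq 2K+2m$, i.e.\ $k\leq 2K$.

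This is precisely why the hypothesis is stated for $m\leq 2((N+1)\deg(d)+N-3)$ rather than $m\leq K$. The fact that your argument never needed the factor $2$ should have flagged the missing case. With these two repairs the rest of your proof goes through as written; reordering odd classes only changes signs, which does not affect the absolute values.
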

\begin{proof}
	Let $m \in \bN$
	and $\alpha_1,\cdots,\alpha_m$ a collection of closed homogenous differential forms.
	Let $\delta := 2((N+1)\deg(d)+ N-3)$.
	If the degree of $\alpha_j$ is equal to one or zero for some $j \in \{1,\cdots,m\}$ then
	\begin{equation} \label{eqnaefaegt494ggag}
		\langle [\alpha_1],\cdots,[\alpha_m] \rangle^X_{m,d}
	\end{equation}
	vanishes
	and so our inequality is satisfied.
	As a result we can assume that the degree of $\alpha_j$ is $\geq 2$ for each $j = 1,\cdots, m$.
	After permuting the $\alpha_j$'s we can assume that there exists $k \in \{1,\cdots,m\}$ so that
	the degree of $\alpha_j$ is equal to $2$ for each $j=1,\cdots,k$ and is greater than $2$ for each $j \in \{k+1,\cdots,m\}$.
	Also, since the dimension of $X_{m,d}$ is bounded above by $\delta/2 + m$ by Lemma \ref{lemmadimensionofX},
	we get \eqref{eqnaefaegt494ggag} vanishes if $3(m-k) + 2k > \delta + 2m$.
	Hence, we can assume that $m - k \leq \delta$.
	We have
	\begin{equation}
		\left|\langle [\alpha_1],\cdots,[\alpha_m] \rangle_{m,d}^X \right| =
	\end{equation}
	\begin{equation}
		\left|\langle [\alpha_{k+1}],\cdots,[\alpha_m] \rangle_{m,d}^X \prod_{j=1}^k \alpha_j \cdot d \right|
	\end{equation}
	\begin{equation}
		\leq \left|\langle [\alpha_{k+1}],\cdots,[\alpha_m] \rangle_{m,d}^X  \right| \prod_{j=1}^k (|\alpha_j|_{C^0,\omega} \omega \cdot d)
	\end{equation}
	\begin{equation}
		= \left|\langle [\alpha_{k+1}],\cdots,[\alpha_m] \rangle_{m,d}^X   \right| k!\frac{(\omega \cdot d)^k}{k!} \prod_{j=1}^k |\alpha_j|_{C^0,\omega}
	\end{equation}
	\begin{equation}\label{eqnafa0elaster}
		\leq \left|\langle [\alpha_{k+1}],\cdots,[\alpha_m] \rangle_{m,d}^X   \right| m! e^{\omega \cdot d} \prod_{j=1}^k |\alpha_j|_{C^0,\omega}.
	\end{equation}
	Since $m-k \leq \delta$ we have
	\begin{equation}
		\left|\langle [\alpha_{k+1}],\cdots,[\alpha_m] \rangle_{m,d}^X \right| \leq C \prod_{j={k+1}}^m |\alpha_i|_{C^0,\omega}
	\end{equation}
	by assumption, and hence \eqref{eqnafa0elaster} is less than or equal to:
	\begin{equation}
		C \prod_{j=k+1}^m |\alpha_j|_{C^0,\omega} m! e^{\omega \cdot d} \prod_{j=1}^k |\alpha_j|_{C^0,\omega}
	\end{equation}
	\begin{equation}
		= m! C e^{\omega \cdot d} \prod_{j=1}^m |\alpha_j|_{C^0,\omega}.
	\end{equation}
\end{proof}

\begin{theorem} \label{theoremproperestimates}
	Let $r>0$ be so that $\frI_X$ and $\frI_X^2$ are $r$-regular.
	Let $m \in \bN$, $d \in H_2^+(X)$ and $\alpha_1,\cdots,\alpha_m$ closed homogenous differential forms on $X$.
	Also, let $\deg(d) \in \bN$
	be the degree of the image of $d$ in $H_2^+(\bP^N) \cong \bN$.
	Then
	\begin{equation}
		\left|\langle [\alpha_1],\cdots,[\alpha_m] \rangle_{m,d} \right| \leq
		m!\left(4952{N+r \choose N}^6(\deg(d)+3)\right)!
		\prod_{j=1}^m |\alpha_j|_{C^0,\omega}.
	\end{equation}
\end{theorem}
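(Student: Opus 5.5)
First reduce to the range of $m$ where the moduli space has controlled dimension. By Lemma \ref{lemmamainargumentstuffesf} it suffices to prove, for each $d\neq 0$ and each $m\le 2((N+1)\deg(d)+N-3)$, a bound $|\langle[\alpha_1],\dots,[\alpha_m]\rangle_{m,d}|\le C_d\prod_j|\alpha_j|_{C^0,\omega}$. Here $C_d$ must be small enough that $m!\,C_d\,e^{\deg(d)}$ is dominated by the stated factorial. The case $d=0$ is classical: only $m=3$ contributes, and it is the triple integral over $X$. That integral is bounded by a constant times the volume $\deg(X)$, which Bézout controls by $\delta_X^N$.

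For $d\neq0$ and small $m$, start from Lemma \ref{lemmavolumeboundsimplyGWbounds}. Use the short exact sequence \eqref{eqnshortexactseuqnecforVW} $0\to V\to T|_X\to W\to 0$ with $T=\calO(r,0)^\mu$. Take the divisor $F|_X$ of Lemma \ref{lemmaWgloballygenerated} in the role of $D$, and $F'|_X$ in the role of $D'$. Then $F=4F'$, the global generation hypotheses hold, and $F|_X$ is $V,T,W$-large. The Kähler form Poincaré dual to $F'|_X$ is $3\mu^3(1+2r)\omega$. Comparing $C^0$ norms therefore costs a factor $(3\mu^3(1+2r))^{\deg\alpha_j/2}$ per insertion, which is absorbed into the factorial. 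This reduces the problem to bounding $F|_X\Vol(C^{X/\bP^N}_{m,d},T^\ell_{m,d},W_{m,d})$ for $\ell\le r_d$.

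To bound that $D$-volume, climb up the tower $X\subset P_W\subset X\times\bP^{2\mu^2-1}\subset\bP^N\times\bP^{2\mu^2-1}$.
\begin{enumerate}
\item First trade the $W$-cone for curves in $P_W$. Apply Proposition \ref{propcurvesincompatificationsofconvexvecctorbundles} with $E|_{P_W}$, using the nefness of $E|_{P_W}-\pi_W^*F|_X-\infty_W$ and Lemma \ref{lemmapullbacks} to identify the pulled-back cones. This bounds the volume by $E|_{P_W}\Vol(C^{X\times\bP/\bP^N\times\bP}|_{P_W},T^\ell)$ in class $d_{P_W}$.
\item Next insert the normal cone of $P_W$ in $X\times\bP$. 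By Lemma \ref{lemmanormalconeforW0hd} it equals $\widetilde Q_{m,d_{P_W}}$ over $W$. Proposition \ref{propboundsforsummands}, with $D|_{P_W}$ being $\widetilde Q$-large, bounds the previous volume by $g_d!$ times the $D|_{P_W}$-volume that includes $\widetilde Q$. Here $g_d=c_1(Q)(d)+\dim Q\le 4r\mu^2\deg d+2\mu^2$ by Lemma \ref{lemmaboundsforfirstchernclasses}.
\item Then apply Proposition \ref{propositionboundsfornormalcones} twice, using the compatibility statements in Lemma \ref{lemmaWgloballygenerated}. The first application is to $P_W\subset X\times\bP^{2\mu^2-1}$, and the second is to $X\times\bP\subset\bP^N\times\bP$. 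The endpoint is $D\Vol(T^\ell_{m,\delta})$ on $\bP^N\times\bP^{2\mu^2-1}$, in bidegree $(\deg d,2r\deg d)$ by \eqref{eqnhomologyclassofcurveinproduct}.
\item Finally, Proposition \ref{propdvolumeboundsproductsfo} bounds that volume explicitly, with $M=2\mu^2-1$, $a=r$, $b=\ell\mu$ and $h'=72\mu^4$. The factor $(\ell\rho_{d,T})!$ from Lemma \ref{lemmavolumeboundsimplyGWbounds} cancels against the denominator $(abd_1+b)!$ there, because $\rho_{d,T}=\mu(r\deg d+1)$.
\end{enumerate}

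The main obstacle is bookkeeping, and it has two parts. First, each proposition's hypotheses must hold: generic transversality, smoothness, and the nefness conditions in \eqref{eqndivisorstobenef}. Second, the constants must be chased through \eqref{eqnreallreallyterribleinequality} to collapse everything into the single factorial $(4952\mu^6(\deg d+3))!$. Writing $|d|=(1+2r)\deg d\le 3\mu\deg d$, the dominant factorial comes from Proposition \ref{propdvolumeboundsproductsfo}. Its argument is of size $(52+3r)(2\mu^2+N+2)^2(|d|+1)+(N+2\mu^2+144\mu^4)m$, with $m\le 2(N+1)\deg d+2N$. Every term is $O(\mu^6(\deg d+1))$ using $\mu\ge\max(N,r)$. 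I would collect all the remaining factors into the factorial with \eqref{eqnreallreallyterribleinequality}: the $r_d$-sums, $g_d!$, $e^{\deg d}$, and the norm-comparison factors. I would check the numerical constant last.
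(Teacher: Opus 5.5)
Your proposal is correct and follows the paper's proof essentially step for step. It uses the same reduction via Lemma~\ref{lemmamainargumentstuffesf}, the same divisors $F|_X$, $E|_{P_W}$, $D|_{P_W}$, $D$ from Lemma~\ref{lemmaWgloballygenerated}, and the same chain: Proposition~\ref{propcurvesincompatificationsofconvexvecctorbundles}, then Proposition~\ref{propboundsforsummands} (with Lemma~\ref{lemmanormalconeforW0hd}), then Proposition~\ref{propositionboundsfornormalcones} twice, then Proposition~\ref{propdvolumeboundsproductsfo}. It also has the same cancellation of $(\ell\rho_{d,T})!$ against $(r\ell\mu d_1+\ell\mu)!$.

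Two small corrections. First, passing from the K\"ahler form $3\mu^3(1+2r)\omega$ dual to $F'|_X$ back to $\omega$ costs nothing: $|\alpha|_{C^0,c\omega}=c^{-\deg(\alpha)/2}|\alpha|_{C^0,\omega}\leq|\alpha|_{C^0,\omega}$ for $c\geq 1$. The paper gets the same fact from Lemma~\ref{lemmaaboutpositivedefiniteforms} applied to $\omega+\omega'|_X$. Second, in the $d=0$ case you should bound $\deg(X)$ in terms of $r$ rather than $\delta_X$, which is not a hypothesis of this theorem. For instance, $X$ is cut out by forms of degree $r$ by Proposition~\ref{propositioncastelnuovothm}.
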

\begin{proof}
	We can assume that $d \neq 0$ since the $d=0$ case is straightforward.
	This also implies $N>0$ too.

	We write $d_{\bP^N \times \bP} = (d_1,d_2) \in \bN^2 \cong H_2^+(\bP^N \times \bP^{2\mu^2-1})$.
	We have that $d_1$ is the degree of the image of $d$ in $\bP^N$ and $d_2 = 2rd_1$ by Lemma \ref{lemmaconvexityofbundles}.
	Let $T$ be equal to $\calO_{\bP^N \times \bP^{2\mu^2-1}}(r,0)^{\mu}$.
	Let $D$, $E$ and $F$ be as in Lemma \ref{lemmaWgloballygenerated}.
	By Proposition \ref{propdvolumeboundsproductsfo} we have:
	\begin{equation} \label{eqnbounedsed2222}
		\begin{aligned}
			D\Vol((T^k)_{m,(d_1,d_2)}) \leq                                                 & \\
			(k\mu+(52+3r)(N+2\mu^2-1+3)^2(d_1+d_2+1) + (N+2\mu^2-1+2 \times 72\mu^4 +30)m)! &
			/ (rk\mu d_1 + k\mu)!
		\end{aligned}
	\end{equation}
	\begin{equation}
		=(k\mu+(52+3r)(N+2\mu^2+2)^2((1+2r)d_1+1) + (N+2\mu^2+144\mu^4+29)m)!
		/ (rk\mu d_1 + k\mu)!
	\end{equation}
	\begin{equation}
		\begin{aligned}
			\overset{\textnormal{\eqref{eqnmuinequality}}}{\leq}
			(k\mu+(52+3\mu)(\mu+2\mu^2+2)^2((1+2\mu)d_1+1) + \\ (\mu+2\mu^2+144\mu^4+29)m)!
			/ (rk\mu d_1 + k\mu)!
		\end{aligned}
	\end{equation}
	\begin{equation}
		\begin{aligned}
			\leq
			(k\mu+(52\mu+3\mu)(\mu^2+2\mu^2+2\mu^2)^2((\mu+2\mu)d_1+3\mu) + \\ (\mu^4+2\mu^4+144\mu^4+29\mu^4)m)!
			/ (rk\mu d_1 + k\mu)!
		\end{aligned}
	\end{equation}
	\begin{equation}
		\begin{aligned}
			\leq
			(k\mu+(55\mu)(5\mu^2)^2(3\mu)(d_1+1)) + 144\mu^4m)!
			/ (rk\mu d_1 + k\mu)!
		\end{aligned}
	\end{equation}
	\begin{equation}
		\begin{aligned}
			\leq
			((k\mu+ 4125\mu^6(d_1+1)) + 144\mu^4m)!
			/ (rk\mu d_1 + k\mu)!
		\end{aligned}
	\end{equation}
	and so
	\begin{equation} \label{eqnbounedsed222}
		\begin{aligned}
			D\Vol((T^k)_{m,(d_1,d_2)}) \leq
			\eta_k :=
			(k\mu+ 4125\mu^6(d_1+1) + 144\mu^4m)!
			/ (rk\mu d_1 + k\mu)!.
		\end{aligned}
	\end{equation}

	Let \begin{equation}
		\overline{C}_{m,d} = C^{X \times \bP^{2\mu^2-1}/\bP^N \times \bP^{2\mu^2-1}}_{m,d_{X \times \bP}} \subset \widetilde{V}_{m,d_{X \times \bP}}
	\end{equation} be as in Definition \ref{defnsystemofnormalcones}.
	By Proposition \ref{propositionboundsfornormalcones} combined with Lemma \ref{lemmaWgloballygenerated},
	we have
	\begin{equation}
		D\Vol((T^k)_{m,(d_1,d_2)}) \geq (D|_{X \times \bP^{2\mu^2-1}})\Vol((T^k|_{X \times \bP^{2\mu^2-1}})_{m,d_{X \times \bP}},\overline{C}_{m,d})
	\end{equation}
	and hence by \eqref{eqnbounedsed222} we have
	\begin{equation} \label{eqnaefaef04404040}
		(D|_{X \times \bP^{2\mu^2-1}})\Vol((T^k|_{X \times \bP^{2\mu^2-1}})_{m,d_{X \times \bP}},\overline{C}_{m,d}) \leq \eta_k.
	\end{equation}
	By Proposition \ref{propositionboundsfornormalcones} combined with Lemma \ref{lemmaWgloballygenerated},
	we have
	\begin{equation} \label{eqnaefaef044040402333prev}
		(D|_{X \times \bP^{2\mu^2-1}})\Vol((T^k)_{m,d}|_{X \times \bP^{2\mu^2-1}},\overline{C}_{m,d}) \geq (D|_{P_W})\Vol((T^k)_{m,d}|_W,\overline{C}_{m,d}|_W,C^{P_W/X \times \bP^{2\mu^2-1}}_{m,d_{P_W}}|_W)
	\end{equation}
	(Definition \ref{defnrestrictionofcones}).

	Also, by Lemma \ref{lemmanormalconeforW0hd}, we have that
	$C^{P_W/X \times \bP^{2\mu^2-1}}_{m,d_{P_W}}|_W$ is
	$\widetilde{Q}_{m,d_{P_W}}|_W$.
	Hence, by Proposition \ref{propboundsforsummands} and
	Lemmas \ref{lemmaconvexityofbundles} and \ref{lemmaWgloballygenerated}, we have
	\begin{equation} \label{eqnaefaef044040402333}
		(E|_{P_W})\Vol((T^k)_{m,d}|_W,\overline{C}_{m,d}|_W) \leq g_d!(D|_{P_W})\Vol((T^k)_{m,d}|_W,\overline{C}_{m,d}|_W, C^{P_W/X \times \bP^{2\mu^2-1}}_{m,d_{P_W}}|_W)
	\end{equation}
	where
	\begin{equation} \label{eqndefiningofgd}
		\begin{aligned}
			g_d =
			c_1(\widetilde{Q})\cdot d_{P_W} + \dim(\widetilde{Q}) \\
			\overset{\textnormal{Lemma \ref{lemmaconvexityofbundles}}}{=}
			c_1(Q)(d) + \dim(Q).
		\end{aligned}
	\end{equation}
	Combining equations \eqref{eqnaefaef044040402333prev}, \eqref{eqnaefaef044040402333} and \eqref{eqnaefaef04404040}, gives
	\begin{equation} \label{eqnaevae9e4e9g4}
		(E|_{P_W})\Vol((T^k)_{m,d}|_W,\overline{C}_{m,d}|_W) \leq g_d! \eta_k.
	\end{equation}
	By Proposition \ref{propcurvesincompatificationsofconvexvecctorbundles} combined with Lemmas \ref{lemmaWgloballygenerated} and \ref{lemmapullbacks}
	we have
	\begin{equation}
		(E|_{P_W})\Vol((T^k|_W)_{m,d_{P_W}},\overline{C}_{m,d}|_W) \geq (F|_X)\Vol((T^k|_X)_{m,d},\overline{C}_{m,d}|_X,W_{m,d})
	\end{equation}
	and hence
	\begin{equation} \label{eqnefe852}
		(F|_X)\Vol((T^k|_X)_{m,d},\overline{C}_{m,d}|_X,W_{m,d}) \leq g_d! \eta_k.
	\end{equation}

	By Lemma \ref{lemmaboundsforfirstchernclasses} combined with the fact that
	\begin{equation}
		\dim(Q) \leq \dim(\calO_X(2r)^{2\mu^2}) = 2\mu^2
	\end{equation}
	and by Equation \eqref{eqndefiningofgd}
	we get
	\begin{equation}
		g_d \leq
		4r\mu^2d_1  + 2\mu^2
	\end{equation}
	\begin{equation}
		\overset{\textnormal{\eqref{eqnmuinequality}}}{\leq} 4\mu^3(d_1+1).
	\end{equation}
	Hence, by Equation \eqref{eqnefe852}, we get
	\begin{equation} \label{eqnefe853}
		(F|_X)\Vol((T^k|_X)_{m,d},\overline{C}_{m,d}|_X,W_{m,d}) \leq (4\mu^3(d_1+1))! \eta_k
	\end{equation}
	\begin{equation}
		\overset{\textnormal{\eqref{eqnreallreallyterribleinequality}}}{\leq} (4\mu^3(d_1+1)+k\mu+ 4125\mu^6(d_1+1) + 144\mu^4m)!
		/ (rk\mu d_1 + k\mu)!
	\end{equation}
	\begin{equation}
		\leq (k\mu+ 4129\mu^6(d_1+1) + 144\mu^4m)!
		/ (rk\mu d_1 + k\mu)!.
	\end{equation}
	Hence,
	\begin{equation} \label{eqnefe8541}
		(F|_X)\Vol((T^k|_X)_{m,d},\overline{C}_{m,d}|_X,W_{m,d}) \leq (k\mu+ 4129\mu^6(d_1+1) + 144\mu^4m)!
		/ (rk\mu d_1 + k\mu)!.
	\end{equation}
	Since $\overline{C}_{m,d}|_X = C^{X/\bP^N}_{m,d}$ by Lemma \ref{lemmapullbacks}, we have
	\begin{equation} \label{eqnefe854}
		(F|_X)\Vol((T^k|_X)_{m,d}, C^{X/\bP^N}_{m,d}, W_{m,d}) \leq (k\mu+ 4129\mu^6(d_1+1) + 144\mu^4m)!
		/ (rk\mu d_1 + k\mu)!.
	\end{equation}

	Let $r_d$ be as in Equation \eqref{eqnaefaefforrd} and $\rho_{d,T}$ is as in Equation \eqref{eqnrhodT}.
	We have $c_1(T)d = r\mu d_1$ and $\dim(T) = \mu$ and so
	\begin{equation} \label{eqnrehodty}
		\rho_{d,T} = r\mu d_1 + \mu + 2 \overset{\textnormal{\eqref{eqnmuinequality}}}{\leq} \mu^2 d_1 + \mu + 2\mu \leq \mu^2(d_1+3).
	\end{equation}

	Let $\pi_X : \bP^N \times \bP^{2\mu^2-1} \to \bP^N$ be the natural projection map as in Definition \ref{defnprojectionmapandex}.
	Choose a K\"{a}hler form $\omega'$ on $\bP^N \times \bP^{2\mu^2-1}$ Poincar\'{e} dual to $\calO_{\bP^N \times \bP}(3\mu^3-1,3\mu^3)$.
	Then $\omega + \omega'|_X$ is Poincar\'{e} dual to $\frac{1}{4}F|_X = F'|_X$ where $F'$ is as in Lemma \ref{lemmaWgloballygenerated}.
	Therefore, since we have a short exact sequence of convex vector bundles
	\begin{equation}
		0 \to V \to T|_X \to W \to 0,
	\end{equation}
	we get by Lemmas \ref{lemmavolumeboundsimplyGWbounds}
	and
	\ref{lemmaWgloballygenerated}
	that
	\begin{equation}
		\left| \langle [\alpha_1],\cdots,[\alpha_m] \rangle_{m,d}^X \right| \leq
	\end{equation}
	\begin{equation}
		\begin{aligned}
			\sum_{\ell = 0}^{r_d} \sum_{k=0}^{r_d-\ell}  {k+\ell \choose \ell} (23r_d + 2 m N + 18m +\rho_{d,T}+23)!(\ell\rho_{d,T})! \\
			(F|_X)\Vol(C^{X/\bP^N}_{m,d}, T_{m,d}^\ell, W_{m,d})
			\prod_{j=1}^m |\alpha_j|_{C^0,\omega+\omega'|_X}
		\end{aligned}
	\end{equation}
	\begin{equation}
		\begin{aligned}
			\overset{\textnormal{Lemma \ref{lemmaaboutpositivedefiniteforms}}}{\leq}
			\sum_{\ell = 0}^{r_d} \sum_{k=0}^{r_d-\ell}  {k+\ell \choose \ell} (23r_d + 2 m N + 18m +\rho_{d,T}+23)!(\ell\rho_{d,T})! \\
			(F|_X)\Vol(C^{X/\bP^N}_{m,d}, T_{m,d}^\ell, W_{m,d})
			\prod_{j=1}^m |\alpha_j|_{C^0,\omega}
		\end{aligned}
	\end{equation}
	\begin{equation}
		\begin{aligned}
			\overset{\textnormal{\eqref{eqnefe854}}}{\leq}
			\sum_{\ell = 0}^{r_d} \sum_{k=0}^{r_d-\ell}  {k+\ell \choose \ell} (23r_d + 2 m N + 18m +\rho_{d,T}+23)!(\ell\rho_{d,T})! \\
			(\ell\mu+ 4129\mu^6(d_1+1) + 144\mu^4m)!
			/ (rl\mu d_1 + \ell\mu)!
			\prod_{j=1}^m |\alpha_j|_{C^0,\omega}
		\end{aligned}
	\end{equation}
	\begin{equation}
		\begin{aligned}
			\overset{\textnormal{\eqref{eqnrehodty}}}{=}
			\sum_{\ell = 0}^{r_d} \sum_{k=0}^{r_d-\ell}  {k+\ell \choose \ell} (23r_d + 2 m N + 18m+\mu^2(d_1+3)+23)!(lr\mu d_1 + \ell\mu+2\ell)! \\
			(\ell\mu+ 4129\mu^6(d_1+1) + 144\mu^4m)!
			/ (rl\mu d_1 + \ell\mu)!
			\prod_{j=1}^m |\alpha_j|_{C^0,\omega}
		\end{aligned}
	\end{equation}
	\begin{equation}
		\begin{aligned}
			\leq
			\sum_{\ell = 0}^{r_d} \sum_{k=0}^{r_d-\ell}  {k+\ell \choose \ell} (23r_d + 2 m N + 18m+\mu^2(d_1+3)+23)!
			(\ell\mu+ 4129\mu^6(d_1+1) + 144\mu^4m)! \\  \times (lrd_1+\ell\mu+2\ell)^{2\ell}
			\prod_{j=1}^m |\alpha_j|_{C^0,\omega}
		\end{aligned}
	\end{equation}
	\begin{equation}
		\begin{aligned}
			\leq
			(r_d+1)^2 2^{2r_d} (23r_d + 2 m N + 18m+\mu^2(d_1+3)+23)! \\ \times
			(r_d\mu+ 4129\mu^6(d_1+1) + 144\mu^4m)! (r_dr d_1+r_d\mu+2r_d)^{2r_d}
			\prod_{j=1}^m |\alpha_j|_{C^0,\omega}
		\end{aligned}
	\end{equation}
	\begin{equation}
		\begin{aligned}
			\leq
			(r_d+1)^2 2^{2r_d} (23r_d + 2 m N + 18m+\mu^2(d_1+3)+23)! \\ \times
			(r_d\mu+ 4129\mu^6(d_1+1) + 144\mu^4m)! (r_d\mu d_1+r_d\mu+2r_d)^{2r_d}
			\prod_{j=1}^m |\alpha_j|_{C^0,\omega}
		\end{aligned}
	\end{equation}
	\begin{equation} 		\label{eqnaefef94f4}
		\begin{aligned}
			\leq
			(r_d+1)^2 2^{2r_d} (23r_d + 2 m N + 18m+\mu^2(d_1+3)+23)! \\ \times
			(r_d\mu+ 4129\mu^6(d_1+1) + 144\mu^4m)! (r_d\mu(d_1+3))^{2r_d}
			\prod_{j=1}^m |\alpha_j|_{C^0,\omega}
		\end{aligned}
	\end{equation}

	Now by Equation \eqref{eqnaefaefforrd},
	\begin{equation}
		r_d = (N+1)d_1 + N + c_1(V)(d) + c_1(W)(d) + \dim(V) + \dim(W)
	\end{equation}
	\begin{equation} \label{eqnaef40040a49999333}
		\overset{\textnormal{Lemma \ref{lemmaboundsforfirstchernclasses}}}{\leq}
		(N+1)d_1 + N + \mu r d_1 + \mu r d_1 + \dim(V)+\dim(W).
	\end{equation}
	Now
	\begin{equation}
		\dim(V)+\dim(W) \leq \dim(\calO_X(r)^{\mu}) = \mu.
	\end{equation}
	Hence, \eqref{eqnaef40040a49999333} is less than or equal to:
	\begin{equation}
		(N+1)d_1 + N + \mu r d_1 + \mu r d_1 +  \mu
	\end{equation}
	\begin{equation}
		=((N+1)+2\mu r)d_1 + N + \mu
	\end{equation}
	\begin{equation}
		\overset{\textnormal{\eqref{eqnmuinequality}}}{\leq}
		((\mu+1)+2\mu^2)d_1 + \mu + \mu
	\end{equation}
	\begin{equation}
		\leq 4\mu^2 d_1 + 2\mu^2
	\end{equation}
	\begin{equation}
		\leq 4\mu^2(d_1+1).
	\end{equation}
	Combining this with \eqref{eqnrehodty} we get \eqref{eqnaefef94f4} is less than or equal to:
	\begin{equation}
		\begin{aligned}
			\leq
			(4\mu^2(d_1+1)+1)^2 2^{2(4\mu^2(d_1+1))} (23 \times 4\mu^2(d_1+1) + 2 m N + 18m +\mu^2(d_1+3)+23)! \\
			\times ((4\mu^2(d_1+1))\mu+ 4129\mu^6(d_1+1) + 144\mu^4m)! ((4\mu^2(d_1+1))\mu(d_1+3))^{8\mu^2(d_1+1)}
			\prod_{j=1}^m |\alpha_j|_{C^0,\omega}
		\end{aligned}
	\end{equation}
	\begin{equation}
		\begin{aligned}
			\overset{\textnormal{\eqref{eqnmuinequality}}}{\leq}
			(4\mu^2(d_1+1)+1)^2 2^{8\mu^2(d_1+1)} (92\mu^2(d_1+1) + 2m\mu + 18m + 3\mu^2(d_1+1) +23)! \\
			\times ((4\mu^2(d_1+1))\mu+ 4129\mu^6(d_1+1) + 144\mu^4m)! ((12\mu^3(d_1+1)^2))^{8\mu^2(d_1+1)}
			\prod_{j=1}^m |\alpha_j|_{C^0,\omega}
		\end{aligned}
	\end{equation}
	\begin{equation}
		\begin{aligned}
			\leq
			(4\mu^2(d_1+1)+1)^2 2^{8\mu^2(d_1+1)} (92\mu^2(d_1+1) + 2m\mu + 18m + 3\mu^2(d_1+1) +23)! \\
			\times ((4\mu^3(d_1+1))+ 4129\mu^6(d_1+1) + 144\mu^4m)! ((4\mu^2(d_1+1)))^{16\mu^2(d_1+1)}
			\prod_{j=1}^m |\alpha_j|_{C^0,\omega}
		\end{aligned}
	\end{equation}
	\begin{equation}
		\begin{aligned}
			\overset{\textnormal{\eqref{eqnreallreallyterribleinequality}}}{\leq}
			(4\mu^2(d_1+1)+1+2+2+8\mu^2(d_1+1)+92\mu^2(d_1+1) + 2m\mu + 18m + 3\mu^2(d_1+1) + 23 + \\
			4\mu^3(d_1+1)+ 4129\mu^6(d_1+1) + 144\mu^4m + 4\mu^2(d_1+1)+16\mu^2(d_1+1))!
			\prod_{j=1}^m |\alpha_j|_{C^0,\omega}
		\end{aligned}
	\end{equation}
	\begin{equation}
		\begin{aligned}
			=
			(28+2m\mu+131\mu^2(d_1+1)+ 18m + \\
			4\mu^3(d_1+1)+ 4129\mu^6(d_1+1) + 144\mu^4m)!
			\prod_{j=1}^m |\alpha_j|_{C^0,\omega}
		\end{aligned}
	\end{equation}
	\begin{equation}
		\begin{aligned}
			\leq
			(4292\mu^6(d_1+1)+ 164\mu^4m)!
			\prod_{j=1}^m |\alpha_j|_{C^0,\omega}.
		\end{aligned}
	\end{equation}

	So, putting everything together so far we get:
	\begin{equation} \label{eqn3tr943t049}
		\begin{aligned}
			 & \left| \langle [\alpha_1],\cdots,[\alpha_m] \rangle_{m,d}^X \right| \leq
			(4292\mu^6(d_1+1)+ 164\mu^4m)!
			\prod_{j=1}^m |\alpha_j|_{C^0,\omega}.
		\end{aligned}
	\end{equation}
	Therefore, if
	\begin{equation}
		m \leq 2((N+1)d_1+ N-3),
	\end{equation}
	we get
	\begin{equation} \label{eqn3tr943t0493}
		\left| \langle [\alpha_1],\cdots,[\alpha_m] \rangle_{m,d}^X \right|
		\leq
	\end{equation}
	\begin{equation}
		\begin{aligned}
			(4292\mu^6(d_1+1)+ 164\mu^4(2((N+1)d_1+ N-3)))!
			\prod_{j=1}^m |\alpha_j|_{C^0,\omega}
		\end{aligned}
	\end{equation}
	\begin{equation}
		\begin{aligned}
			\overset{\textnormal{\eqref{eqnmuinequality}}}{\leq}
			(4292\mu^6(d_1+1)+ 164\mu^4(2((\mu+1)d_1+ \mu-3)))!
			\prod_{j=1}^m |\alpha_j|_{C^0,\omega}
		\end{aligned}
	\end{equation}
	\begin{equation}
		\begin{aligned}
			\leq
			(4292\mu^6(d_1+1)+ 164\mu^4(2((\mu+\mu)d_1+ \mu)))!
			\prod_{j=1}^m |\alpha_j|_{C^0,\omega}
		\end{aligned}
	\end{equation}
	\begin{equation}
		\begin{aligned}
			\leq
			(4292\mu^6(d_1+1)+ 164\mu^4(4\mu d_1+ 4\mu))!
			\prod_{j=1}^m |\alpha_j|_{C^0,\omega}
		\end{aligned}
	\end{equation}
	\begin{equation}
		\leq
		(4948\mu^6(d_1+1))!
		\prod_{j=1}^m |\alpha_j|_{C^0,\omega}.
	\end{equation}

	Combining this with Lemma \ref{lemmamainargumentstuffesf},
	we get:
	\begin{equation} \label{eqn3tr943t049444}
		\left| \langle [\alpha_1],\cdots,[\alpha_m] \rangle_{m,d}^X \right|
		\leq m!e^{\omega \cdot d}(4948\mu^6(d_1+1))!
		\prod_{j=1}^m |\alpha_j|_{C^0,\omega}
	\end{equation}
	for each $m \in \bN$.
	Hence, for each $m \in \bN$,
	\begin{equation} \label{eqn3tr943t0494444}
		\left| \langle [\alpha_1],\cdots,[\alpha_m] \rangle_{m,d}^X \right|
		\leq m!4^{d_1}(4948\mu^6(d_1+1))!
		\prod_{j=1}^m |\alpha_j|_{C^0,\omega}
	\end{equation}
	\begin{equation}
		\overset{\textnormal{\eqref{eqnreallreallyterribleinequality}}}{\leq}
		m!(4+d_1+4948\mu^6(d_1+1))!
		\prod_{j=1}^m |\alpha_j|_{C^0,\omega}
	\end{equation}
	\begin{equation}
		\leq
		m!(4952\mu^6(d_1+1))!
		\prod_{j=1}^m |\alpha_j|_{C^0,\omega}
	\end{equation}
	\begin{equation}
		\overset{\textnormal{\eqref{eqndefnofmu}}}{=}
		m!\left(4952{N+r \choose N}^6(d_1+1)\right)!
		\prod_{j=1}^m |\alpha_j|_{C^0,\omega}.
	\end{equation}
\end{proof}

We will now prove the main theorem \ref{maintheorem}.
Let us recall its statement:
\begin{theorem*}
	Suppose $X \subset \bP^N$ is cut out by polynomials of degree at most $\delta_X \in \bN$.
	Let $d \in H_2(X;\bZ)$ be a curve class and let $\deg(d) \in \bN$ be its image in $H_2(\bP^N;\bZ) \cong \bZ$.
	Then for each tuple $\alpha_1,\cdots,\alpha_m$ of homogenous closed differential forms on $X$ we have
	\begin{equation} \label{eqnmaintheoremineqluaty1}
		\left|\langle [\alpha_1],\cdots,[\alpha_m] \rangle^X_{m,d} \right| \leq
		m!\left(4952{N+(4\delta_X)^{2^{N-1}}+1 \choose N}^6(\deg(d)+1)\right)!
		\prod_{j=1}^m |\alpha_j|_{C^0,\omega}
	\end{equation}
	where $|\alpha_j|_{C^0,\omega}$ is the $C^0$ norm of $\alpha_j$ with respect to the induced Fubini Study metric on $X$ for each $j$.
\end{theorem*}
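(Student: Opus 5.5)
The plan is to deduce the statement from Theorem \ref{theoremproperestimates} by exhibiting an explicit integer $r>0$ such that both $\frI_X$ and $\frI_X^2$ are $r$-regular. The target is
\[
r \leq (4\delta_X)^{2^{N-1}}+1 .
\]
Given such an $r$, Theorem \ref{theoremproperestimates} bounds the invariant by $m!\left(4952{N+r \choose N}^6(\deg(d)+1)\right)!\prod_j|\alpha_j|_{C^0,\omega}$. The proof of that theorem ends with $(d_1+1)$ in the final line, even though the displayed statement writes $(d_1+3)$. Since ${N+r \choose N}$ is increasing in $r$ and the factorial is increasing in its argument, \eqref{eqnmaintheoremineqluaty1} then follows. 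The case $d=0$ is the classical cup-product case and is handled as in that theorem.

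The key input is an effective Castelnuovo--Mumford regularity bound for an ideal generated in degree at most $\delta$. The Bayer--Mumford / Galligo--Giusti type doubly exponential bound says that a homogeneous ideal $I\subset k[x_0,\dots,x_N]$ generated in degrees $\le \delta$ has saturation/sheaf regularity at most $(2\delta)^{2^{N-1}}$; the precise constant depends on the version cited. I would apply this to a defining ideal $I$ of $X$ generated by polynomials of degree $\le \delta_X$, which gives $\reg(\frI_X)\le (2\delta_X)^{2^{N-1}}$.

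For $\frI_X^2$ I would use that it is the sheaf associated to $I^2$, and $I^2$ is generated by products of generators, hence in degree $\le 2\delta_X$. The same bound then gives $\reg(\frI_X^2)\le (4\delta_X)^{2^{N-1}}$. Taking $r:=(4\delta_X)^{2^{N-1}}+1$, and using Proposition \ref{propositionrgeqkregular} to pass from smaller regularity to $r$, both sheaves are $r$-regular with $r>0$.

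The main obstacle is making sure the cited regularity theorem applies to the ideal sheaf rather than the saturated ideal, and that its constant matches $(2\delta)^{2^{N-1}}$. I would first try the version in which the regularity of the sheafified ideal is bounded by $(2\delta)^{2^{N-1}}$ in all characteristic-zero cases (Galligo, Giusti; see also Bayer--Mumford). If only a weaker exponent or constant is available, the same argument still works, with the binomial coefficient in the final bound adjusted accordingly.
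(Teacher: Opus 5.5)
Your proposal is correct and matches the paper's argument. The paper cites Bayer--Mumford's Theorem 3.7, the Galligo--Giusti bound $(2\delta)^{2^{N-1}}$ applied to $I$ and to $I^2$ (generated in degree $\le 2\delta_X$), to make $\frI_X$ and $\frI_X^2$ both $r$-regular with $r \le (4\delta_X)^{2^{N-1}}$; it then invokes Theorem \ref{theoremproperestimates}, implicitly using the $(\deg(d)+1)$ that its proof actually yields rather than the stated $(\deg(d)+3)$, as you noticed.

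Your worry about ideal sheaf versus saturation is unnecessary, because that bound controls the regularity of the graded ideal itself, which dominates the regularity of its sheafification.
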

\begin{proof}
	Recall that $r>0$ is such that $\frI_X$ and $\frI_X^2$ are $r$-regular.
	By \cite[Theorem 3.7]{BayerMumford1993}, we can assume that $r \leq (4\delta_X)^{2^{N-1}}$.
	Hence, our result follows from Theorem \ref{theoremproperestimates} above.
\end{proof}

\bibliographystyle{mcleanalpha}
\bibliography{references}

\end{document}